\newcommand{\lebesgue}{\lambda\mspace{-7mu}\lambda}
\newcommand{\Cat}{\mathscr{C}}
\newcommand{\Prob}{\mathds{P}}
\newcommand{\E}{\mathds{E}}
\newcommand{\V}{\mathds{V}}
\newcommand{\C}{\mathbb{C}}
\newcommand{\R}{\mathbb{R}}
\newcommand{\N}{\mathbb{N}}
\newcommand{\K}{\mathbb{K}}
\newcommand{\mc}[1]{\mathcal{#1}}
\newcommand{\abs}[1]{|{#1}|} 
\newcommand{\bigabs}[1]{\left|{#1}\right|} 
\newcommand{\one}{\mathds{1}}
\newcommand{\Mcal}{\mathcal{M}}
\newcommand{\Acal}{\mathcal{A}}
\newcommand{\Bcal}{\mathcal{B}}
\newcommand{\Ccal}{\mathcal{C}}
\newcommand{\Fcal}{\mathcal{F}}
\newcommand{\Pcal}{\mathcal{P}}
\newcommand{\Gcal}{\mathcal{G}}
\newcommand{\Qcal}{\mathcal{Q}}
\newcommand{\Wcal}{\mathcal{W}}
\newcommand{\Xcal}{\mathcal{X}}
\newcommand{\Tcal}{\mathcal{T}}
\newcommand{\Lcal}{\mathcal{L}}
\newcommand*{\defeq}{\mathrel{\vcenter{\baselineskip0.5ex \lineskiplimit0pt
                     \hbox{\scriptsize.}\hbox{\scriptsize.}}}%
                     =}
\newcommand{\integrala}[2]{\left\langle{#1},{#2}\right\rangle}
\newcommand{\integralb}[2]{\int{#2}\,\mathrm{d}{#1}}
\newcommand{\integralc}[3]{\int_{#3}{#2}\,{#1}}
\newcommand{\de}{\text{d}}
\newcommand{\supnorm}[1]{\|#1\|_{\infty}}
\newcommand{\opnorm}[1]{\|#1\|_{\mathrm{op}}}
\newcommand{\norm}[1]{\|#1\|}
\newcommand{\bignorm}[1]{\left\|#1\right\|}
\newcommand{\ubar}[1]{\underline{#1}}
\newcommand{\oneto}[1]{[{#1}]}
\DeclareMathOperator{\Cov}{Cov}
\newcommand{\Mat}[2]{\textrm{Mat}_{#1}(#2)}
\newcommand{\SMat}[2]{\textrm{SMat}_{#1}(#2)}
\renewcommand{\Re}{\operatorname{Re}}
\renewcommand{\Im}{\operatorname{Im}}
\DeclareMathOperator{\supp}{supp}
\DeclareMathOperator{\CovMat}{CovMat}
\DeclareMathOperator{\tr}{tr}
\DeclareMathOperator{\diag}{diag}
\DeclareMathOperator{\dbl}{d_{BL}}
\newcommand{\PePe}[1]{\mathcal{PP}(#1)}
\theoremstyle{plain}
\newtheorem{lemma}{Lemma}[chapter]
\newtheorem{theorem}[lemma]{Theorem}
\newtheorem{corollary}[lemma]{Corollary}
\theoremstyle{definition}
\newtheorem{definition}[lemma]{Definition}
\newtheorem{example}[lemma]{Example}
\newtheorem{remark}[lemma]{Remark}
\theoremstyle{remark}
\newcommand{\map}[5]{
		\begin{align}
		{#1}: {#2}\ &\longrightarrow \ {#3}\notag\\
		      {#4}\ &\longmapsto \ \notag {#5}
		\end{align}
		}
\begin{document}

\begin{titlepage}

\vspace{6cm}

\begin{center}
{\LARGE
\textsf{\textbf{Proof Methods in Random Matrix Theory}}
}

\vspace{0.8cm}

\vspace{0.8cm}
{\Large Michael Fleermann and Werner Kirsch
\vspace{2mm}

FernUniversität in Hagen\\
Fakultät für Mathematik und Informatik\\
Universitätsstraße 1\\
58097 Hagen, Germany\\
\texttt{michael.fleermann@fernuni-hagen.de}\\
\texttt{werner.kirsch@fernuni-hagen.de}}
\end{center}

\vspace{3cm}
\textbf{Abstract.} In this survey article, we give an introduction to two methods of proof in random matrix theory: The method of moments and the Stieltjes transform method. We thoroughly develop these methods and apply them to show both the semicircle law and the Marchenko-Pastur law for random matrices with independent entries. The material is presented in a pedagogical manner and is suitable for anyone who has followed a course in measure-theoretic probability theory.

\vspace{3mm}

\textbf{2020 Mathematics Subject Classification.} 60B20.

\vspace{2mm}

\textbf{Key words and phrases.} Random matrix theory, method of moments, Stieltjes transform method, semicircle law, Marchenko-Pastur law.

\clearpage
\thispagestyle{empty}

\end{titlepage}

\tableofcontents

\chapter{Introduction}

The goal of this article is to give a digestible yet concise introduction to random matrix theory. We focus on the tools and concepts that allow us to comprehend the results which marked the very beginnings of this theory: The semicircle law discovered in \cite{Wigner1955, Wigner1958} and the Marchenko-Pastur law established in \cite{MP1967}. These are statements pertaining to probabilistic weak convergence -- namely weak convergence in expectation resp.\ in probability resp.\ almost surely -- which is a framework also encountered in probability theory when studying the Glivenko-Cantelli theorem, for example. We thoroughly investigate the subtleties of probabilistic weak convergence in Chapter~\ref{chp:weakconvergence} of this text.

Statements about weak convergence -- such as the central limit theorem -- may be proved in numerous ways, two of them being the analysis of the moments of the distributions or the analysis of certain transforms of the distributions involved. Concerning the proof of the central limit theorem, see Chapter 30 in \cite{Billingsley:1995} for the use of moments, and Chapter 27 in \cite{Billingsley:1995} for the use of transforms. When studying statements of probabilistic weak convergence in random matrix theory, it turns out that again, moments and transforms can be employed with great success and in numerous settings. Therefore, we carefully develop the method of moments in Chapter~\ref{chp:methodofmoments} and the Stieltjes transform method in Chapter~\ref{chp:stieltjesprops}. We employ these methods to show both the semicircle law and the Marchenko-Pastur law in Chapters~\ref{chp:MomentsSCLMPL} and~\ref{chp:StieltjesSCLMPL}.

During the past decades, random matrix theory has evolved into a huge field of study. Both the results and the techniques to derive them have become rather sophisticated, making an entry into this field cumbersome. This text aims to alleviate this barrier of entry and can be followed after completing a basic course of measure-theoretic probability theory. It is based on the works \cite{FleermannMT, FleermannDiss} of the first author, but has also benefitted greatly from the research endeavors of both authors. Further, the techniques presented are employed in many contemporary research articles and are thus highly relevant for researchers aiming to contribute to random matrix theory.

\chapter{Weak Convergence}
\label{chp:weakconvergence}
\section{Spaces of Continuous Functions}

On the set $\R$\label{sym:realnumbers} of real numbers we will always consider the standard topology and the associated Borel $\sigma$-algebra $\Bcal$\label{sym:realborel}. To study convergence of probability measures on $(\R,\Bcal)$, it is useful to get acquainted with certain spaces of functions $\R\to\R$ first. If $f:\R\to\R$ is a function, we define the \emph{support} of $f$ as 
\[
\supp(f)\defeq \overline{\{ x\in\R: f(x)\neq 0\}}.\label{sym:support}
\]
Note that by definition, the support of $f$ is always a closed subset of $\R$, and it is immediate that a point $x\in\R $ lies in the support of $f$ if and only if for any $\varepsilon>0$ there is a $y\in B_{\varepsilon}(x)$, such that $f(y)\neq 0$. Here and later, $B_{\delta}(z)$\label{sym:openball} denotes the open $\delta$-ball around the element $z$ in a metric space which is clear from the context.

We say that a function $f:\R\to\R$ \emph{vanishes at infinity}, if
\[
\lim_{x\to\pm\infty} f(x) =0.
\]

Denote by $\Ccal(\R)$ \label{sym:continous} the vector space of continuous functions $\R\to\R$. We define the three subspaces
\begin{enumerate}
\item $\Ccal_b(\R)\defeq \{f:\R\to\R \,|\, f \text{ is continuous and bounded}\}$, \label{sym:contbounded}
\item $\Ccal_0(\R)\defeq \{f:\R\to\R \,|\, f \text{ is continuous and vanishes at infinity}\}$ and\label{sym:contvanish}
\item $\Ccal_c(\R)\defeq \{f:\R\to\R \,|\, f \text{ is continuous with compact support}\}$.\label{sym:contcompact}
\end{enumerate}
It is clear that
\[
\Ccal_c(\R)\subsetneq \Ccal_0(\R) \subsetneq \Ccal_b(\R) \subsetneq \Ccal(\R),
\]
since the function $x\mapsto \min(1,1/\abs{x})$ lies in $\Ccal_0(\R)\backslash \Ccal_c(\R)$, the function $x\mapsto \one_{\R}(x)$ \label{sym:indicator} lies in $\Ccal_b(\R)\backslash \Ccal_0(\R)$ and the function $x\mapsto x$ lies in $\Ccal(\R)\backslash \Ccal_b(\R)$. Since all functions in $\Ccal_c(\R)$, $\Ccal_0(\R)$ and $\Ccal_b(\R)$ are bounded, we can equip these spaces with the supremum norm $\supnorm{\cdot}$ defined by\label{sym:supnorm}
\[
\supnorm{f}\defeq \sup_{x\in\R}\abs{f(x)}.
\]
From now on, we will always consider the spaces $\Ccal_b(\R)$, $\Ccal_0(\R)$ and $\Ccal_c(\R)$ as vector spaces normed by the supremum norm. Convergence with respect to this norm is also called \emph{uniform convergence}. To analyze properties of these normed spaces, we introduce continuous cutoff-functions as in \autocite[8]{KirschMomentSurvey}:
\begin{definition}
\label{def:contcutoff}
For any real numbers $u>\ell\geq 0$ we define the function $\phi_{\ell}^u:\R\to[0,1]$ by\label{sym:contcutoff}
\[
\phi_{\ell}^u(x)\defeq
\begin{cases}
1 &	\text{ if } \abs{x}\leq \ell,\\
\frac{u-\abs{x}}{u-\ell} & \text{ if } \ell<\abs{x}<u,\\
0 & \text{ if } \abs{x} \geq u.
\end{cases}
\]
\end{definition}
Note that for any $u>\ell\geq 0$, $\phi_{\ell}^u$ is continuous with compact support $[-u,u]$.
The following theorem will summarize important properties of $\Ccal_b(\R)$, $\Ccal_0(\R)$ and $\Ccal_c(\R)$.
\begin{theorem}
\label{thm:functionspaces}
The following statements hold:
\begin{enumerate}[i)]
\item $\Ccal_b(\R)$ is complete, but not separable.
\item $\Ccal_0(\R)$ is complete and separable.
\item $\Ccal_c(\R)$ is not complete, but separable.
\item $\Ccal_c(\R)$ is dense in $\Ccal_0(\R)$.
\end{enumerate}
\end{theorem}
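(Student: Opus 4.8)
The plan is to treat the four claims largely separately, establishing iv) early since it feeds into both the separability of $\Ccal_0(\R)$ and the incompleteness of $\Ccal_c(\R)$. For completeness of $\Ccal_b(\R)$ I would invoke the classical fact that a uniformly Cauchy sequence converges uniformly to a limit which is again bounded and, being a uniform limit of continuous functions, continuous. For $\Ccal_0(\R)$ it then suffices to check that it is a \emph{closed} subspace of $\Ccal_b(\R)$: if $f_n\to f$ uniformly with each $f_n$ vanishing at infinity, then choosing $n$ with $\supnorm{f-f_n}<\varepsilon/2$ and then $R$ with $\abs{f_n}<\varepsilon/2$ outside $[-R,R]$ yields $\abs{f}<\varepsilon$ there, so $f\in\Ccal_0(\R)$; a closed subspace of a complete space is complete.

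For iv), given $f\in\Ccal_0(\R)$ and $\varepsilon>0$ I would pick $R$ with $\abs{f}<\varepsilon$ outside $[-R,R]$ and set $g\defeq f\cdot\phi_R^{R+1}\in\Ccal_c(\R)$; then $g=f$ on $[-R,R]$, while $\abs{f-g}=\abs{f}\,(1-\phi_R^{R+1})\le\abs{f}<\varepsilon$ elsewhere, so $\supnorm{f-g}\le\varepsilon$. The incompleteness of $\Ccal_c(\R)$ then follows by running the same truncation on a fixed $h\in\Ccal_0(\R)\setminus\Ccal_c(\R)$, say $h(x)=\min(1,1/\abs{x})$: the functions $f_n\defeq h\cdot\phi_n^{n+1}\in\Ccal_c(\R)$ satisfy $\supnorm{h-f_n}\le\sup_{\abs{x}\ge n}\abs{h(x)}=1/n\to0$, so $(f_n)$ is uniformly Cauchy, yet by uniqueness of limits in the ambient space $\Ccal_b(\R)$ it cannot converge in $\Ccal_c(\R)$, since its only possible limit, $h$, does not lie in $\Ccal_c(\R)$.

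Separability is the part that needs genuine input, and I expect it to be the main obstacle. I would first prove $\Ccal_c(\R)$ separable and then transfer via iv). Fix $f\in\Ccal_c(\R)$ with $\supp(f)\subseteq[-n,n]$ for some $n\in\N$; by the Weierstrass approximation theorem, and then by approximating its finitely many coefficients by rationals, there is $p\in\Q[x]$ with $\sup_{[-(n+1),n+1]}\abs{f-p}<\varepsilon$. Put $q\defeq p\cdot\phi_n^{n+1}$. Then $q=p$ on $[-n,n]$, so $\abs{f-q}<\varepsilon$ there; on $n<\abs{x}<n+1$ we have $f(x)=0$ and $\abs{p(x)}<\varepsilon$, hence $\abs{f(x)-q(x)}=\phi_n^{n+1}(x)\,\abs{p(x)}<\varepsilon$; and $f=q=0$ for $\abs{x}\ge n+1$. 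Thus $\supnorm{f-q}<\varepsilon$, so the countable family $\{\,p\cdot\phi_n^{n+1}:n\in\N,\ p\in\Q[x]\,\}$ is dense in $\Ccal_c(\R)$. Finally, by iv) and the triangle inequality any countable dense subset of $\Ccal_c(\R)$ is also dense in $\Ccal_0(\R)$, so $\Ccal_0(\R)$ is separable as well.

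For the non-separability of $\Ccal_b(\R)$ I would exhibit an uncountable family that is $1$-separated in supremum norm: for $A\subseteq\N$ set $f_A(x)\defeq\sum_{n\in A}\phi_0^{1/2}(x-n)$, which is bounded and continuous since the summands have pairwise disjoint supports $[n-\tfrac12,n+\tfrac12]$, and satisfies $f_A(n)=1$ iff $n\in A$; thus $\supnorm{f_A-f_B}=1$ whenever $A\neq B$, so the uncountably many balls $B_{1/2}(f_A)$ are pairwise disjoint and no countable set can be dense. The only genuinely delicate point throughout is retaining compact support while approximating, which the cutoff functions $\phi_\ell^u$ of Definition~\ref{def:contcutoff} dispose of cleanly; the remainder is routine bookkeeping.
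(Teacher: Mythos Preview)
Your proof is correct and follows essentially the same route as the paper's: density via multiplication by the cutoffs $\phi_n^{n+1}$, separability via the countable family $\{p\cdot\phi_n^{n+1}:p\in\Q[x],\,n\in\N\}$, non-separability via an uncountable $1$-separated family of bump-sums indexed by subsets of $\N$, and completeness of $\Ccal_0(\R)$ as a closed subspace of $\Ccal_b(\R)$. The only stylistic difference is that you argue incompleteness of $\Ccal_c(\R)$ by exhibiting a concrete Cauchy sequence with limit $h(x)=\min(1,1/\abs{x})\notin\Ccal_c(\R)$, whereas the paper observes more abstractly that a dense \emph{proper} subset cannot be closed; both are valid and equally short.
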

\begin{proof}
\underline{i)} 
If $(f_n)_n$ is Cauchy in $\Ccal_b(\R)$ and $x\in\R$, then $f_n(x)$ is Cauchy in $\R$, thus converges to a limit $f(x)\in \R$. Further, we can pick an $m\in\N$ such that $f_m$ is uniformly $\varepsilon$-close to all $f_n$ for $n$ large enough, from which it follows that $f_n\to f$ uniformly. From this, it easily follows that $f$ is bounded. It remains to show that $f$ is continuous for which we again choose an $f_m$ as above and utilize a standard $3\varepsilon$-argument.
To see that $\Ccal_b(\R)$ is not separable, we construct an uncountable subset $\Fcal\subseteq\Ccal_b$, such that for all $f,g\in\Fcal$ with $f\neq g$ we have $\supnorm{f-g}=1$. To this end, denote by $Z$ the set of $0$-$1$-sequences, so $Z=\{0,1\}^{\N}$. Note that $Z$ is uncountable. For any sequence $z\in Z$ we define
\[
\forall\, x\in\R: F_z(x) \defeq \sum_{i\in\N} z_i\cdot\phi_{0.1}^{0.2}(x-i)
\] 
and $\Fcal\defeq\{F_z\,|\,z\in Z\}$. Now $\Fcal$ is as desired.

\noindent
\underline{iii)/iv)} To show that $\Ccal_c(\R)$ is not complete, we show that it is not closed in the strict superset $\Ccal_0(\R)$. In fact, we show even more, that is, that $\Ccal_c(\R)$ is dense in $\Ccal_0(\R)$ (then since $\Ccal_c(\R)\subsetneq \Ccal_0(\R)$, $\Ccal_c(\R)$ cannot be closed). This fact is also needed for statements ii) and iv). So let $f\in\Ccal_0(\R)$ be arbitrary. Now consider the sequence of functions $(f_n)_n$, where 
\[
\forall\, n\in\N: \,\forall\, x\in\R:\, f_n(x)\defeq \phi_{n,n+1}(x)f(x).
\] 
Then $(f_n)_n$ is a sequence in $\Ccal_c(\R)$ which converges uniformly to $f$. Hence, $\Ccal_c(\R)$ is dense in $\Ccal_0(\R)$. Next, we will show that $\Ccal_c(\R)$ is separable. To this end, denote by $\Pcal$ the countable set of all polynomials with rational coefficients and set
\[
\Qcal \defeq \{p\cdot \phi_n^{n+1} \, |\, p\in\Pcal, n\in\label{sym:naturalnumbers}\N\}.
\]
Then $\Qcal$ is easily identified as a dense countable subset of $\Ccal_c(\R)$. 

\noindent
\underline{ii)} To show that $\Ccal_0(\R)$ is complete, let $(f_n)_n$ be an arbitrary Cauchy sequence in $\Ccal_0(\R)$. This is also a Cauchy sequence in $\Ccal_b(\R)$, so with i) we know that there is an $f\in\Ccal_b(\R)$ such that $f_n\to f$ uniformly. It is easily seen that $f$ vanishes at infinity, so that $f\in\Ccal_0(\R)$. To see that $\Ccal_0(\R)$ is separable, note that we have already seen that $\Ccal_c(\R)$ is separable and dense in $\Ccal_0(\R)$.  

\end{proof}

\section{Convergence of Probability Measures}

 We will denote the set of measures on $(\R,\Bcal)$ by $\Mcal(\R)$\label{sym:measures}, the set of finite measures by $\Mcal_f(\R)$\label{sym:finitemeasures}, the set of probability measures by $\Mcal_1(\R)$\label{sym:probmeasures}, and the set of sub-probability measures by $\Mcal_{\leq 1}(\R)$\label{sym:subprobmeasures}. Here, a measure $\mu$ on $(\R,\Bcal)$ is called \emph{sub-probability measure}, if $\mu(\R)\in[0,1]$. Note that
\[
\Mcal_{1}(\R) \subsetneq \Mcal_{\leq 1}(\R)\subsetneq \Mcal_{f}(\R)\subsetneq \Mcal(\R).
\]
As a shorthand notation, if $\mu\in\Mcal(\R)$ and $f:\R\to\R$ is measurable, we write
\[
\integrala{\mu}{f}\defeq \integralb{\mu}{f}\label{sym:muintegral}
\] 
with the convention that when in doubt, $x$ is the variable of integration:
\[
\integrala{\mu}{x^k} = \int{x^k \mu(\mathrm{d}x)}.\label{sym:muxintegral}
\]

\begin{definition}
Let $\Fcal\subseteq \Ccal_b(\R)$ be a linear subspace, then a \emph{positive linear bounded functional} $I$ on $\Fcal$ is a bounded $\R$-linear map $\Fcal\to\R$ with $I(f)\geq 0$ for all $f\in\Fcal$ with $f\geq 0$.
\end{definition}

\begin{lemma}
\label{lem:Imu}
Let $\Fcal\subseteq \Ccal_b(\R)$ be a linear subspace with $\Ccal_c(\R)\subseteq \Fcal$. Then for any $\mu\in\Mcal_f(\R)$, the map
\map{I_{\mu}}{\Fcal}{\R}{f}{I_{\mu}(f)\defeq\integrala{\mu}{f}}
defines a positive linear bounded functional on $\Fcal$ with operator norm $\mu(\R)$.
\end{lemma}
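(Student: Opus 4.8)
The plan is to verify in turn the three defining properties of $I_\mu$: $\R$-linearity, positivity, and boundedness, and then to pin down the operator norm as $\mu(\R)$. Linearity is immediate from the linearity of the integral, valid here because every $f \in \Fcal \subseteq \Ccal_b(\R)$ is bounded and measurable, hence $\mu$-integrable since $\mu$ is finite; so $f \mapsto \integrala{\mu}{f}$ is a well-defined $\R$-linear map into $\R$. Positivity is equally direct: if $f \geq 0$ then $\integralb{\mu}{f} \geq 0$ by monotonicity of the integral. For boundedness, I would estimate $\abs{\integrala{\mu}{f}} \leq \integralb{\mu}{\abs{f}} \leq \supnorm{f}\,\mu(\R)$, which shows $I_\mu$ is bounded with $\opnorm{I_\mu} \leq \mu(\R)$.

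It remains to show the reverse inequality $\opnorm{I_\mu} \geq \mu(\R)$, and this is the only step requiring a little care, since we cannot simply evaluate $I_\mu$ on the constant function $\one_\R$: that function need not lie in $\Fcal$ (e.g.\ if $\Fcal = \Ccal_c(\R)$). Here the hypothesis $\Ccal_c(\R) \subseteq \Fcal$ is exactly what is needed. I would use the cutoff functions from Definition~\ref{def:contcutoff}: consider $\phi_0^n \in \Ccal_c(\R) \subseteq \Fcal$ for $n \in \N$. Each satisfies $0 \leq \phi_0^n \leq 1$ and $\supnorm{\phi_0^n} = 1$, and $\phi_0^n \uparrow \one_\R$ pointwise as $n \to \infty$. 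Hence by monotone convergence $I_\mu(\phi_0^n) = \integrala{\mu}{\phi_0^n} \to \mu(\R)$, while $\abs{I_\mu(\phi_0^n)} \leq \opnorm{I_\mu}\,\supnorm{\phi_0^n} = \opnorm{I_\mu}$ for every $n$. Letting $n \to \infty$ gives $\mu(\R) \leq \opnorm{I_\mu}$, and combined with the upper bound we conclude $\opnorm{I_\mu} = \mu(\R)$.

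The main (and really the only) obstacle is the subtlety just described: the naive witness for the norm lower bound lies outside $\Fcal$ in general, so one must approximate the constant function from below within $\Ccal_c(\R)$ and invoke monotone convergence. Everything else is routine once one recalls that finiteness of $\mu$ makes all bounded measurable functions integrable.
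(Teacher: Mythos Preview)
Your proof is correct and follows essentially the same approach as the paper: both use compactly supported cutoff functions of sup norm $1$ approximating $\one_\R$ to obtain the lower bound $\opnorm{I_\mu}\geq\mu(\R)$, while the upper bound is the trivial estimate $\abs{\integrala{\mu}{f}}\leq\supnorm{f}\,\mu(\R)$. The only cosmetic difference is that the paper uses $\phi_k^{k+1}$ (which dominates $\one_{[-k,k]}$ directly, giving $I_\mu(\phi_k^{k+1})\geq\mu([-k,k])\to\mu(\R)$) whereas you use $\phi_0^n$ together with monotone convergence; both are equally valid.
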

\begin{proof}
We only need to show that the operator norm is indeed $\mu(\R)$. To see this, note that for any $k>0$, we have $\phi_{k}^{k+1} \in \Fcal$, $\phi_{k}^{k+1}\geq 0$ and $\supnorm{\phi_{k}^{k+1}}=1$. Further,
\[
I_{\mu}(\phi_{k}^{k+1})=\integrala{\mu}{\phi_{k}^{k+1}}\geq \mu([-k,k]).
\]
Thus, the operator norm of $I_{\mu}$ is at least $\mu([-k,k])$ for all $k>0$, hence at least $\mu(\R)$. On the other hand, the operator norm is at most $\mu(\R)$, since for any $f\in\Fcal$ we find $\abs{\integrala{\mu}{f}}\leq \integrala{\mu}{\abs{f}} \leq \mu(\R)\cdot \supnorm{f}$.
\end{proof}

The representation theorem of Riesz now states that \emph{any} positive linear bounded functional $I$ on a linear space $\Fcal$ with $\Ccal_c(\R)\subseteq \Fcal \subseteq \Ccal_0(\R)\}$ has the form $I=I_{\mu}$ as in Lemma~\ref{lem:Imu}.

\begin{theorem}
\label{thm:Rieszrep}
Let $\Fcal$ be a linear space with $\Ccal_c(\R)\subseteq \Fcal \subseteq \Ccal_0(\R)$ and equipped with the supremum norm. Then for any positive linear bounded functional $I$ on $\Fcal$, there exists exactly one $\mu\in\Mcal_f(\R)$ with $I=I_{\mu}$. It then holds $\opnorm{I}=\mu(\R)$.\end{theorem}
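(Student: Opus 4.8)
The plan is to first reduce to the case $\Fcal=\Ccal_0(\R)$ and then to construct $\mu$ from $I$ by the procedure underlying the classical Riesz--Markov--Kakutani representation theorem. For the reduction, note that $\Ccal_c(\R)\subseteq\Fcal$ together with Theorem~\ref{thm:functionspaces}~iv) shows that $\Fcal$ is dense in $\Ccal_0(\R)$; since $I$ is bounded and $\Ccal_0(\R)$ is complete (Theorem~\ref{thm:functionspaces}~ii)), $I$ extends uniquely to a bounded linear functional $\tilde I$ on $\Ccal_0(\R)$ with $\opnorm{\tilde I}=\opnorm{I}$. Positivity passes to $\tilde I$: for $f\in\Ccal_0(\R)$ with $f\geq 0$ the functions $\phi_n^{n+1}f\in\Ccal_c(\R)\subseteq\Fcal$ are nonnegative and converge to $f$ uniformly, so $\tilde I(f)=\lim_n I(\phi_n^{n+1}f)\geq 0$. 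Granting the theorem for $\Ccal_0(\R)$, we obtain $\mu\in\Mcal_f(\R)$ with $\tilde I=I_\mu$, and restricting to $\Fcal$ gives $I=I_\mu$ on $\Fcal$; the norm identity $\opnorm{I}=\opnorm{\tilde I}=\mu(\R)$ then follows from Lemma~\ref{lem:Imu}.

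For existence on $\Ccal_0(\R)$ it suffices to build $\mu$ from the restriction of $I$ to $\Ccal_c(\R)$. I would set, for open $U\subseteq\R$,
\[
\mu^*(U)\defeq\sup\{\, I(f) : f\in\Ccal_c(\R),\ 0\leq f\leq 1,\ \supp(f)\subseteq U \,\},
\]
and $\mu^*(A)\defeq\inf\{\mu^*(U) : A\subseteq U,\ U\text{ open}\}$ for arbitrary $A\subseteq\R$. One then verifies that $\mu^*$ is an outer measure; monotonicity is immediate, while countable subadditivity on open sets is the delicate point and requires writing a test function as a finite sum via a continuous partition of unity subordinate to a finite subcover of its (compact) support. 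Every open, hence every Borel, set is $\mu^*$-measurable in the sense of Carath\'eodory, so $\mu\defeq\mu^*|_{\Bcal}$ is a measure, and it is finite because $\mu(\R)=\mu^*(\R)=\sup\{I(f):f\in\Ccal_c(\R),\,0\leq f\leq 1\}\leq\opnorm{I}<\infty$, i.e.\ $\mu\in\Mcal_f(\R)$. Finally one shows $I(f)=\integrala{\mu}{f}$ for all $f\in\Ccal_c(\R)$; by linearity it is enough to prove $I(f)\leq\integrala{\mu}{f}$ for real $f$ (then apply this to $-f$ as well), which is done by slicing the range of $f$ into short intervals and covering the corresponding level sets by open sets of controlled $\mu$-measure. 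Extending by the density of $\Ccal_c(\R)$ in $\Ccal_0(\R)$ and the continuity of $I$ and $I_\mu$ yields $I=I_\mu$ on $\Ccal_0(\R)$.

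Uniqueness is comparatively routine and can be argued directly on $\Fcal$. If $\mu,\nu\in\Mcal_f(\R)$ both represent $I$, then $\integrala{\mu}{f}=\integrala{\nu}{f}$ for all $f\in\Ccal_c(\R)$. For an open interval $(a,b)$ pick $f_n\in\Ccal_c(\R)$ with $0\leq f_n\nearrow\one_{(a,b)}$ pointwise (piecewise linear, equal to $1$ on $[a+1/n,b-1/n]$ for large $n$); monotone convergence gives $\mu((a,b))=\lim_n\integrala{\mu}{f_n}=\lim_n\integrala{\nu}{f_n}=\nu((a,b))$, and taking $(a,b)=(-n,n)\nearrow\R$ shows $\mu(\R)=\nu(\R)<\infty$. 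Since the open intervals form a $\pi$-system generating $\Bcal$, Dynkin's $\pi$--$\lambda$ lemma forces $\mu=\nu$.

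I expect the existence half to be the main obstacle: specifically, the countable subadditivity of $\mu^*$ (where partitions of unity enter, though on $\R$ these can be written down explicitly, e.g.\ from the cutoffs $\phi_\ell^u$) and the two inequalities of the representation identity $I(f)=\integrala{\mu}{f}$, which need the careful range-slicing argument. Everything else — the reduction to $\Ccal_0(\R)$, finiteness of $\mu$, the norm formula, and uniqueness — is bookkeeping on top of Theorem~\ref{thm:functionspaces} and Lemma~\ref{lem:Imu}. Given the survey character of the text, a legitimate alternative would be to quote the Riesz--Markov--Kakutani theorem for $\Ccal_c$ of a locally compact Hausdorff space from the standard literature and apply it with $X=\R$.
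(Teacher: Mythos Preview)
Your proposal is correct, and it goes considerably further than the paper: the paper's proof consists of a single sentence deferring to a standard reference (Elstrodt), without any argument. Your closing remark anticipates exactly this --- citing Riesz--Markov--Kakutani from the literature is precisely what the authors do.

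Your sketch, by contrast, gives an actual outline: the reduction from general $\Fcal$ to $\Ccal_0(\R)$ via density and bounded extension (correctly checking that positivity survives the limit), the outer-measure construction of $\mu$ from $I|_{\Ccal_c(\R)}$, and a clean uniqueness argument via the $\pi$--$\lambda$ lemma. All of this is sound; the one place where you could shorten is uniqueness, since the paper states exactly this fact immediately afterward as Lemma~\ref{lem:equalityoffinitemeasures}~ii) (which, however, is proved \emph{using} Theorem~\ref{thm:Rieszrep}, so your self-contained argument is actually cleaner here). The norm identity you handle correctly via Lemma~\ref{lem:Imu}. In short: your version is a genuine proof sketch, the paper's is a citation; both are legitimate for a survey, and you correctly flagged the tradeoff yourself.
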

\begin{proof}
The statement is well-known, see e.g.\ \cite{Elstrodt}.
\end{proof}

The next lemma will help us infer equality of two finite measures. Notationally, if $A$ is a subset of a topological space, we denote its boundary by $\partial A$.

\begin{lemma}
\label{lem:equalityoffinitemeasures}
Let $\mu$ and $\nu$ be two finite measures on $(\R,\Bcal)$ and let $\Fcal\subseteq\Ccal_c(\R)$ be a dense subset. Then
\begin{enumerate}[i)]
\item $\mu=\nu \quad\Leftrightarrow\quad \mu(I)=\nu(I)$ for all bounded intervals $I$ with $\mu(\partial I)= \nu(\partial I)= 0$,
\item $\mu=\nu \quad\Leftrightarrow \quad\forall\, f\in\Ccal_c(\R): \integrala{\mu}{f}=\integrala{\nu}{f}\quad\Leftrightarrow \quad \forall\, f\in\Fcal: \integrala{\mu}{f}=\integrala{\nu}{f}$.
\end{enumerate}
\end{lemma}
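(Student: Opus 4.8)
The plan is to prove the two biconditionals by establishing the chains of implications, where the forward directions ($\mu=\nu$ implies everything else) are trivial, so the work lies entirely in the reverse directions. I would organize the argument around the following implications: for part ii), it suffices to show that equality of integrals against all $f\in\Fcal$ implies equality of integrals against all $f\in\Ccal_c(\R)$ (a density argument), and that the latter implies $\mu=\nu$ (an appeal to Riesz); for part i), it suffices to show that the interval condition implies equality of integrals against $\Ccal_c(\R)$, so that part i) reduces to part ii).

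First I would handle the step $\forall f\in\Fcal\colon \integrala{\mu}{f}=\integrala{\nu}{f}$ implies $\forall f\in\Ccal_c(\R)\colon \integrala{\mu}{f}=\integrala{\nu}{f}$. Given $f\in\Ccal_c(\R)$ and $\varepsilon>0$, pick $g\in\Fcal$ with $\supnorm{f-g}<\varepsilon$, and estimate $\abs{\integrala{\mu}{f}-\integrala{\nu}{f}}$ by inserting $\integrala{\mu}{g}=\integrala{\nu}{g}$ and bounding the two error terms by $\mu(\R)\varepsilon$ and $\nu(\R)\varepsilon$ respectively, using Lemma~\ref{lem:Imu}; letting $\varepsilon\to 0$ gives the claim. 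Next, the step $\forall f\in\Ccal_c(\R)\colon \integrala{\mu}{f}=\integrala{\nu}{f}$ implies $\mu=\nu$ is immediate from Theorem~\ref{thm:Rieszrep}: the functional $I_\mu = I_\nu$ on $\Ccal_c(\R)$ has, by uniqueness in the Riesz representation theorem, exactly one representing finite measure, so $\mu=\nu$. This closes part ii).

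For part i), the reverse direction assumes $\mu(I)=\nu(I)$ for all bounded intervals $I$ with $\mu(\partial I)=\nu(\partial I)=0$, and I want to deduce $\integrala{\mu}{f}=\integrala{\nu}{f}$ for all $f\in\Ccal_c(\R)$. The idea is to approximate such an $f$ uniformly by step functions built on intervals whose endpoints are ``good'' (i.e.\ not atoms of $\mu+\nu$). Since $\mu$ and $\nu$ are finite, the set of atoms of $\mu+\nu$ is at most countable, so outside a countable set every point is good; hence given $f$ with support in $[-R,R]$ and $\varepsilon>0$, I can choose a partition $-R=t_0<t_1<\dots<t_m=R$ with all $t_j$ good and mesh small enough that $f$ varies by less than $\varepsilon$ on each $[t_{j-1},t_j)$, and form the step function $s=\sum_j f(t_{j-1})\one_{[t_{j-1},t_j)}$. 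Then $\supnorm{f-s}<\varepsilon$, and $\integrala{\mu}{s}=\sum_j f(t_{j-1})\mu([t_{j-1},t_j))=\sum_j f(t_{j-1})\nu([t_{j-1},t_j))=\integrala{\nu}{s}$ by hypothesis, while the errors $\abs{\integrala{\mu}{f}-\integrala{\mu}{s}}$ and the corresponding $\nu$-term are at most $\mu([-R,R])\varepsilon$ and $\nu([-R,R])\varepsilon$; letting $\varepsilon\to 0$ finishes the reduction, and combined with part ii) we conclude $\mu=\nu$.

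The main obstacle is the bookkeeping in part i): one must be careful that the interval endpoints used in the approximating step functions have zero boundary measure for \emph{both} $\mu$ and $\nu$ simultaneously, which is why passing to the countability of the atom set of $\mu+\nu$ is the cleanest route, and one should note that half-open intervals $[t_{j-1},t_j)$ have boundary $\{t_{j-1},t_j\}$, so choosing good endpoints indeed makes the hypothesis applicable. Everything else is a routine $\varepsilon$-argument relying on finiteness of the measures and Lemma~\ref{lem:Imu} to control integrals of small functions; the density claim for $\Fcal$ and the Riesz step are essentially formal once part i)'s approximation lemma is in place.
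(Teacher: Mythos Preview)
Your proof is correct. Part ii) matches the paper exactly: the paper simply cites Theorem~\ref{thm:Rieszrep}, and your explicit density step from $\Fcal$ to $\Ccal_c(\R)$ via Lemma~\ref{lem:Imu} is the natural way to unpack that citation.

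For part i), your route is genuinely different from the paper's. The paper argues directly on the measure side: since the atom set $A_\mu\cup A_\nu$ is countable, for any open interval $(a,b)$ one can pick sequences $a_n\searrow a$, $b_n\nearrow b$ with $a_n,b_n\notin A_\mu\cup A_\nu$ and $a_n<b_n$, so that $\mu((a_n,b_n))=\nu((a_n,b_n))$ by hypothesis, and continuity from below gives $\mu((a,b))=\nu((a,b))$; agreement on the $\pi$-system of bounded open intervals then forces $\mu=\nu$. Your approach instead reduces i) to ii) by approximating each $f\in\Ccal_c(\R)$ uniformly by step functions on good-endpoint intervals. The paper's argument is more self-contained (it does not invoke Riesz for part i) at all, only elementary measure theory), while yours has the aesthetic benefit of funneling both parts through a single mechanism. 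Both are short and equally valid.
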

\begin{proof}
\underline{i)} "$\Rightarrow$" is clear, and for "$\Leftarrow$" we show that $\mu$ and $\nu$ agree on all finite open intervals. To this end, note that for any finite measure $\rho\in\Mcal_f(\R)$, the set of atoms $A_{\rho}\defeq \{x\in\R\,|\,\rho(x)>0\}$ is at most countable. As a result $\R\backslash (A_{\mu}\cup A_{\nu})$ is dense in $\R$. For arbitrary $a<b$ in $\R$, we find sequences $(a_n)_n$ and $(b_n)_n$ in $\R\backslash (A_{\mu}\cup A_{\nu})$ with $a_n \searrow a$ and $b_n \nearrow b$ as $n\to\infty$ and $a_n<b_n$ for all $n\in\N$. Then we obtain with continuity of measures from below (note that $\mu$ and $\nu$ agree on all intervals $(a_n,b_n)$):
\[
\mu((a,b)) = \lim_{n\to\infty}\mu((a_n,b_n)) = \lim_{n\to\infty}\nu((a_n,b_n)) = \nu((a,b)).
\]
\underline{ii)} This follows immediately with Theorem~\ref{thm:Rieszrep}.
\end{proof}

We are especially interested in convergence behavior of sequences in $\Mcal_1(\R)$, where the limit may lie in $\Mcal_{\leq 1}(\R)$.
\begin{definition}
Let $(\mu_n)_{n\in\N}$  be a sequence in in $\Mcal_1(\R)$.
\label{def:weaknvague}
\begin{enumerate}[i)]
\item
The sequence $(\mu_n)_{n\in\N}$ is said to converge \emph{weakly} to an element $\mu\in\Mcal_1(\R)$, if
\begin{equation}
\label{eq:weaknvague1}
\forall f\in \Ccal_b(\R): \lim\limits_{n \rightarrow \infty}\integrala{\mu_n}{f} = \integrala{\mu}{f}.
\end{equation}
\item
 The sequence $(\mu_n)_{n\in\N}$ is said to converge \emph{vaguely} to an element $\mu\in\Mcal_{\leq 1}(\R)$, if
\begin{equation}
\label{eq:weaknvague2}
\forall f\in \Ccal_c(\R): \lim\limits_{n \rightarrow \infty}\integrala{\mu_n}{f} = \integrala{\mu}{f}.
\end{equation}
\end{enumerate}
\end{definition}

\begin{remark}
\label{rem:weaknvague}
We would like to shed light on the seemingly innocent Definition~\ref{def:weaknvague}:

\begin{enumerate}
\item Weak convergence clearly implies vague convergence. Further, due to Lemma~\ref{lem:equalityoffinitemeasures}, weak and vague limits are unique.
\item In light of Theorem~\ref{thm:functionspaces}, it is appropriate to say that the set of test functions for weak convergence is considerably larger than the set of test functions for vague convergence. As a result, weak limits are much more restrictive than vague limits, as clarified by the next two points.
\item The target measures $\mu\in\Mcal(\R)$, for which \eqref{eq:weaknvague1} can be satisfied for some sequence $(\mu_n)_n$ of probability measures are exactly all $\mu\in\Mcal_1(\R)$. To see this, if \eqref{eq:weaknvague1} holds for some $\mu\in\Mcal(\R)$ and a sequence  $(\mu_n)_n$ in $\Mcal_1(\R)$, then we must have $\mu(\R)=1$, since $\one_{\R}\in\Ccal_b(\R)$. On the other hand, if $\mu\in\Mcal_1(\R)$ is arbitrary, then \eqref{eq:weaknvague1} is satisfied for the sequence $(\mu_n)_n$, where $\mu_n=\mu$ for all $n\in\N$.
\item The measures $\mu\in\Mcal(\R)$, for which \eqref{eq:weaknvague2} can be satisfied for some sequence $(\mu_n)_n$ of probability measures are (somewhat surprisingly) exactly all $\mu\in\Mcal_{\leq 1}(\R)$. To see this, if \eqref{eq:weaknvague2} holds for some $\mu\in\Mcal(\R)$ and a sequence  $(\mu_n)_n$ in $\Mcal_1(\R)$, then we have for any $m\in\N$ that $\integrala{\mu_n}{\phi_{m,m+1}} \to_n \integrala{\mu}{\phi_{m,m+1}}$, so $\integrala{\mu}{\phi_{m,m+1}}\leq 1$, which entails $\mu([-m,m])\leq 1$ for all $m\in\N$. Since measures are continous from below, we conclude that also $\mu(\R)\leq 1$, so $\mu$ is a sub-probability measure. On the other hand, if $\mu\in\Mcal_{\leq 1}(\R)$ is arbitrary, then define $\alpha\defeq 1 - \mu(\R)\in[0,1]$ and for all $n\in\N: \mu_n\defeq \mu + \alpha\delta_n$\label{sym:Diracmeasure}. Then $(\mu_n)_n$ is a sequence of probability measures and
 \eqref{eq:weaknvague2} is satisfied for the sequence $(\mu_n)_n$. To see this, let $f\in \Ccal_c(\R)$ be arbitrary and $N\in\N$ be so large that $\supp(f)\subseteq[-N,N]$. Then it holds for all $n\geq N$ that $\integrala{\mu_n}{f}= \integrala{\mu}{f} + \alpha f(n) = \integrala{\mu}{f}$.
\item As a result of points 3.\ and 4., the limit domains for weak and vague convergence in Definition~\ref{def:weaknvague} are  exact. The probability measures lie \emph{vaguely dense} in the sub-probability measures.
\end{enumerate}	
\end{remark}

\begin{lemma}
\label{lem:subsequential}
Let $(\mu_n)_{n\in\N}$ be a sequence of probability measures and  $\mu$ a sub-probability measure on $(\R,\Bcal)$. Then $(\mu_n)_{n\in\N}$ converges vaguely (resp.\ weakly) to $\mu$ if and only if every subsequence $(\mu_n)_{n\in J}$, $J\subseteq \N$, has a subsequence $(\mu_n)_{n\in I}$, $I\subseteq J$, that converges vaguely (resp.\ weakly) to $\mu$.
\end{lemma}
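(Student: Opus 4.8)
The plan is to prove the two implications of the equivalence for vague convergence (the weak case being verbatim identical, just with $\Ccal_c(\R)$ replaced by $\Ccal_b(\R)$). The forward direction is immediate from the definition: if $\mu_n \to \mu$ vaguely, then for every $f \in \Ccal_c(\R)$ we have $\integrala{\mu_n}{f} \to \integrala{\mu}{f}$; restricting the convergent sequence of reals $(\integrala{\mu_n}{f})_n$ to any subsequence $J$ yields a sequence still converging to $\integrala{\mu}{f}$, so the subsubsequence $I = J$ works trivially. So essentially all the content is in the reverse direction, and its proof is the standard subsequence principle for convergence in a topological (here, sequential) sense.

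For the reverse direction I would argue by contraposition. Suppose $(\mu_n)_n$ does \emph{not} converge vaguely to $\mu$. Unwinding Definition~\ref{def:weaknvague}, this means there exists some test function $f \in \Ccal_c(\R)$ for which $\integrala{\mu_n}{f}$ does not converge to $\integrala{\mu}{f}$. Hence there is an $\varepsilon > 0$ and an infinite set $J \subseteq \N$ with $\bigabs{\integrala{\mu_n}{f} - \integrala{\mu}{f}} \geq \varepsilon$ for all $n \in J$. Now take \emph{any} further subsequence $I \subseteq J$: along $I$ the quantity $\integrala{\mu_n}{f}$ stays $\varepsilon$-bounded away from $\integrala{\mu}{f}$, so $(\mu_n)_{n \in I}$ cannot converge vaguely to $\mu$. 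This exhibits a subsequence $(\mu_n)_{n\in J}$ with no subsubsequence converging vaguely to $\mu$, which is exactly the negation of the right-hand side. Contraposition then gives the claim.

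The only genuinely delicate point — and the one I would be most careful about — is that this is a single-limit-candidate statement: we are not claiming compactness or that some subsequence converges to \emph{something}, only that convergence to the \emph{specified} $\mu$ propagates from all subsubsequences back to the whole sequence. This is what makes the contrapositive argument clean and avoids any need for tightness, Prokhorov-type arguments, or the Riesz representation machinery: vague convergence to a fixed $\mu$ is, by definition, the statement that the real sequences $\integrala{\mu_n}{f}$ converge to the fixed reals $\integrala{\mu}{f}$ for all $f$ in the test class, and the subsequence principle for convergence of real sequences to a fixed limit is elementary. I would also remark that the weak case needs no separate argument: every step above goes through with $\Ccal_b(\R)$ in place of $\Ccal_c(\R)$ and $\mu \in \Mcal_1(\R)$, since nowhere did we use properties specific to compactly supported test functions or to sub-probability (as opposed to probability) limits.
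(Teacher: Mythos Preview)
Your proof is correct and follows essentially the same approach as the paper's: both isolate a test function $f$ and an infinite index set $J$ along which $\integrala{\mu_n}{f}$ stays $\varepsilon$-away from $\integrala{\mu}{f}$, then observe that no further subsequence of $J$ can converge to $\mu$. The only difference is framing---you argue by contraposition, the paper by contradiction---which is cosmetic.
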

\begin{proof}
Of course, we only need to show "$\Leftarrow$". We assume the statement to be false, that is, that it is not true that $( \mu_n)_{n\in\N}$ converges vaguely (resp.\ weakly) to $\mu$. Then we find a continuous function $f:\R\to\R$ which has compact support (resp.\ which is bounded) and an $\varepsilon>0$ such that  $\abs{\integrala{\mu_n}{f}-\integrala{\mu}{f}} \geq \varepsilon$ for all $n\in J$, where $J\subseteq \N$ is an infinite subset. But now we find a subsequence $(\mu_n)_{n\in I}$, $I\subseteq J$ that converges vaguely (resp.\ weakly) to $\mu$. In particular, we find an $n\in I\subseteq J$ such that $\abs{\integrala{\mu_n}{f}-\integrala{\mu}{f}} < \varepsilon$, which leads to a contradiction to our assumption that the statement is false.
\end{proof}

Vague convergence of probability measures can also be characterized by convergence of the integrals $\integrala{\mu_n}{f}$ for all $f\in\Ccal_0(\R)$.

\begin{lemma}
\label{lem:vagueczero}
A sequence $(\mu_n)_n$ in $\Mcal_1(\R)$ converges vaguely to an element $\mu\in\Mcal_{\leq 1}(\R)$, if and only if
\[
\forall f\in \Ccal_0(\R): \lim\limits_{n \rightarrow \infty}\integrala{\mu_n}{f} = \integrala{\mu}{f}.
\]
	
\end{lemma}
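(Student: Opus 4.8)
The plan is to prove the equivalence by a standard sandwiching argument, exploiting that $\Ccal_c(\R) \subsetneq \Ccal_0(\R)$ together with the fact that sequences of probability measures have uniformly bounded mass. One direction is immediate: since $\Ccal_c(\R) \subseteq \Ccal_0(\R)$, convergence of $\integrala{\mu_n}{f}$ for all $f \in \Ccal_0(\R)$ trivially implies it for all $f \in \Ccal_c(\R)$, which is exactly vague convergence to $\mu$.

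For the nontrivial direction, assume $(\mu_n)_n$ converges vaguely to $\mu$ and let $f \in \Ccal_0(\R)$ be arbitrary; I want to show $\integrala{\mu_n}{f} \to \integrala{\mu}{f}$. First I would use Theorem~\ref{thm:functionspaces}(iv): $\Ccal_c(\R)$ is dense in $\Ccal_0(\R)$, so given $\varepsilon > 0$ I can pick $g \in \Ccal_c(\R)$ with $\supnorm{f - g} < \varepsilon$. Then I split
\[
\bigabs{\integrala{\mu_n}{f} - \integrala{\mu}{f}} \leq \bigabs{\integrala{\mu_n}{f-g}} + \bigabs{\integrala{\mu_n}{g} - \integrala{\mu}{g}} + \bigabs{\integrala{\mu}{g-f}}.
\]
The middle term tends to $0$ as $n \to \infty$ by the assumed vague convergence, since $g \in \Ccal_c(\R)$. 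The first term is bounded by $\mu_n(\R)\supnorm{f-g} = \supnorm{f-g} < \varepsilon$ because each $\mu_n$ is a probability measure, and the last term is bounded by $\mu(\R)\supnorm{f-g} \leq \varepsilon$ because $\mu$ is a sub-probability measure. Hence $\limsup_n \bigabs{\integrala{\mu_n}{f} - \integrala{\mu}{f}} \leq 2\varepsilon$, and letting $\varepsilon \downarrow 0$ gives the claim.

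The only mild subtlety — and the step I would be most careful about — is making sure the bound on the tail terms genuinely uses only that $\mu_n(\R) \le 1$ and $\mu(\R) \le 1$, i.e.\ that the approximation error is controlled \emph{uniformly in $n$}; this is precisely why the statement works for sub-probability limits and why the uniform mass bound on the $\mu_n$ is essential (the estimate $\bigabs{\integrala{\rho}{h}} \le \rho(\R)\supnorm{h}$ for finite $\rho$ was already recorded in the proof of Lemma~\ref{lem:Imu}). Everything else is a routine $3\varepsilon$-argument, so there is no real obstacle here; the lemma is essentially a packaging of the density statement in Theorem~\ref{thm:functionspaces}(iv).
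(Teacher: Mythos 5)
Your proof is correct and is precisely the argument the paper intends: its own proof consists of the single remark that the claim ``follows easily with the fact that $\Ccal_c(\R)\subseteq\Ccal_0(\R)$ is dense,'' and your $3\varepsilon$-decomposition with the uniform bounds $\mu_n(\R)=1$ and $\mu(\R)\leq 1$ is exactly the standard way to fill in that remark. No gaps.
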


\begin{proof}
This follows easily with the fact that $\Ccal_c(\R)\subseteq\Ccal_0(\R)$ is dense.
\end{proof}

If $\mu_n\to\mu$ weakly, we know that $\integrala{\mu_n}{f}\to\integrala{\mu}{f} $ for all $f\in\Ccal_b(\R)$. Often, we would like to be able to conclude $\integrala{\mu_n}{f}\to\integrala{\mu}{f}$ for more general functions $f$. The next lemma will be of great use in this respect, see also \autocite[107]{Durrett}.
\begin{lemma}
\label{lem:integrationtolimit}
Let $(\mu_n)_n$ and $\mu$ be probability measures such that $\mu_n\to\mu$ weakly as $n\to\infty$. Let $h:\R\to\R$ be continuous. Then to show
\[
\integrala{\mu_n}{h} \xrightarrow[n\to\infty]{} \integrala{\mu}{h},
\]
it is sufficient to show that there is a strictly positive continuous function $g:\R\to (0,\infty)$ such that $\sup_{n\in\N}\integrala{\mu_n}{g}<\infty$ and $h/g$ vanishes at infinity.
 \end{lemma}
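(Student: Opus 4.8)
The plan is to reduce to the case of a compactly supported continuous test function — where weak convergence applies directly — and to control the two tail contributions uniformly, using that $h/g$ vanishes at infinity together with the uniform bound $\sup_n \integrala{\mu_n}{g} < \infty$.

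First I would record two preliminary observations. Since $g$ is continuous and strictly positive, the quotient $h/g$ is continuous on $\R$; as it vanishes at infinity it lies in $\Ccal_0(\R)$ and is in particular bounded, say $\supnorm{h/g} = M < \infty$, so that $\abs{h}\le M g$ pointwise. Writing $C\defeq \sup_{n}\integrala{\mu_n}{g} < \infty$, this gives $\integrala{\mu_n}{\abs{h}}\le MC$ for all $n$. Second, I claim $\integrala{\mu}{g}\le C$: for each $k\in\N$ the function $g\wedge k$ is bounded and continuous, so weak convergence yields $\integrala{\mu}{g\wedge k} = \lim_n \integrala{\mu_n}{g\wedge k}\le C$, and letting $k\to\infty$ by monotone convergence gives $\integrala{\mu}{g}\le C$; in particular $\integrala{\mu}{\abs{h}}\le MC < \infty$, so every integral appearing below is finite.

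Next, fix $\varepsilon > 0$. Since $h/g$ vanishes at infinity, there is $R > 0$ with $\abs{h(x)}\le \varepsilon\, g(x)$ whenever $\abs{x}\ge R$. Consider the cutoff $\phi\defeq \phi_{R}^{R+1}\in\Ccal_c(\R)$, which equals $1$ on $[-R,R]$, takes values in $[0,1]$, and is supported in $[-R-1,R+1]$. Then $h\phi\in\Ccal_c(\R)\subseteq\Ccal_b(\R)$, so $\integrala{\mu_n}{h\phi}\to\integrala{\mu}{h\phi}$ by weak convergence. Moreover $\abs{h - h\phi} = \abs{h}(1-\phi)\le \abs{h}\,\one_{\{\abs{x}>R\}}\le \varepsilon\, g$ pointwise, whence $\abs{\integrala{\mu_n}{h} - \integrala{\mu_n}{h\phi}}\le \varepsilon\integrala{\mu_n}{g}\le \varepsilon C$ and likewise $\abs{\integrala{\mu}{h} - \integrala{\mu}{h\phi}}\le \varepsilon\integrala{\mu}{g}\le \varepsilon C$. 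Combining via the triangle inequality,
\[
\limsup_{n\to\infty}\bigabs{\integrala{\mu_n}{h} - \integrala{\mu}{h}}\le \varepsilon C + 0 + \varepsilon C = 2\varepsilon C ,
\]
and since $\varepsilon > 0$ was arbitrary, the left-hand side is $0$, which is the assertion.

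The routine part is the $3\varepsilon$-type splitting; the one point I would treat with care is the preliminary claim $\integrala{\mu}{g} < \infty$, which legitimizes the tail estimate against the limit measure $\mu$ and is not immediate from weak convergence alone (as $g$ need not be bounded) — it is precisely handled by the truncation-plus-monotone-convergence argument above.
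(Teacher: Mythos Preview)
Your proof is correct and follows essentially the same strategy as the paper's: both establish $\integrala{\mu}{g}\le C$ via truncation and monotone convergence, then split $h=h\phi+h(1-\phi)$ with a compactly supported cutoff $\phi$, bound the tails using $|h|/g\to 0$ together with the uniform bound $C$, and pass to the limit on the compactly supported piece by weak convergence. The only cosmetic differences are your use of $g\wedge k$ rather than $g\phi_k^{k+1}$ for the preliminary bound and a slightly different normalization of $\varepsilon$.
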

\begin{proof}
Let $C\defeq \sup_{n\in\N}\integrala{\mu_n}{g}\in[0,\infty)$. Then also $\integrala{\mu}{g}\leq C$, since $g\phi_{k}^{k+1}\nearrow g$ pointwise as $k\to\infty$, so by monotone convergence $\integrala{\mu}{g\phi_{k}^{k+1}}\nearrow \integrala{\mu}{g}$ as $k\to\infty$. But for any fixed $k$, $\integrala{\mu}{g\phi_{k}^{k+1}}= \lim_n\integrala{\mu_n}{g\phi_{k}^{k+1}}\leq C$.
Now let $\varepsilon>0$ be arbitrary, then $k>0$ so large that $\abs{h}/g \leq \varepsilon/C$ on $[-k,k]^c$ (where if $A$ is a set, we denote its complement by $A^c$\label{sym:setcomplement}, where we assume that the superset of $A$ is clear from the context. For example, $[-k,k]^c=\R\backslash [-k,k]$). We conclude that for all $\nu\in\{\mu,(\mu_n)_n\}$,
\[
\abs{\integrala{\nu}{h(1-\phi_{k}^{k+1})}} \leq  \integrala{\nu}{\frac{\abs{h}}{g}\cdot g(1-\phi_{k}^{k+1})} \leq \frac{\varepsilon}{C}\cdot C = \varepsilon.
\]

In particular, these integrals are well-defined. Since also for any $\nu\in\{\mu,(\mu_n)_n\}$, $\integrala{\nu}{h\phi_k^{k+1}}$ is well-defined, $h$ is $\nu$-integrable. We find for $\varepsilon>0$ and $k>0$ as picked above, that for all $n\in\N$:
\begin{align*}
&\abs{\integrala{\mu_n}{h}-\integrala{\mu}{h}}\\
&\leq  \abs{\integrala{\mu_n}{h(1-\phi_k^{k+1})}-\integrala{\mu}{h(1-\phi_k^{k+1})}} + \abs{\integrala{\mu_n}{h\phi_k^{k+1}}-\integrala{\mu}{h\phi_k^{k+1}}} \\
&\leq \varepsilon + \abs{\integrala{\mu_n}{h\phi_k^{k+1}}-\integrala{\mu}{h\phi_k^{k+1}}},
\end{align*}
where the last summand converges to $0$ as $n\to\infty$, such that 
\[
\limsup_{n\to\infty}\abs{\integrala{\mu_n}{h}-\integrala{\mu}{h}}\leq \varepsilon.
\]
Since $\varepsilon>0$ was arbitrary, we find $\integrala{\mu_n}{h}\to\integrala{\mu}{h}$ as $n\to\infty$.
\end{proof}

As we just saw in Remark~\ref{rem:weaknvague}, vague convergence allows the escape of probability mass. The concept of tightness prevents this from happening:

\begin{definition}
\label{def:tight}
A sequence of probability measures $(\mu_n)_n$ on $(\R,\Bcal)$ is called \emph{tight}, if for all $\varepsilon>0$ there exists a compact subset $K\subseteq \R$ such that
\[
\forall\, n\in\N:\,\mu_n(K^c) \leq \varepsilon.
\]
\end{definition}

A sufficient condition for tightness is given in the next Lemma, which we adopted from \autocite[106]{Durrett}:
\begin{lemma}
\label{lem:tight}
Let $(\mu_n)_n$ be a sequence of probability measures on $(\R,\Bcal)$. If there exists a measurable non-negative function $\phi:\R\to\R$ with $\phi(x)\to\infty$ for $x\to\pm\infty$ and
\[
\sup_n\integrala{\mu_n}{\phi} <\infty,
\]
then $(\mu_n)_n$ is tight. In particular, this holds true if
\[
\sup_n\integrala{\mu_n}{x^2} <\infty.
\]
\end{lemma}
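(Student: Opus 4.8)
The plan is to run a Markov-type argument: a uniform bound on the integrals of $\phi$ forces each $\mu_n$ to place little mass on the set where $\phi$ is large, and since $\phi(x)\to\infty$ as $x\to\pm\infty$, that set is contained in the complement of a suitably large compact interval. So the estimate $\sup_n\integrala{\mu_n}{\phi}<\infty$ translates directly into the tightness condition of Definition~\ref{def:tight}.

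Concretely, first I would set $C\defeq\sup_n\integrala{\mu_n}{\phi}$, which is finite by assumption and non-negative since $\phi\geq 0$. Fixing $\varepsilon>0$, I would then use $\phi(x)\to\infty$ as $x\to\pm\infty$ to choose $R>0$ with $\phi(x)\geq (C+1)/\varepsilon$ for all $x$ with $\abs{x}>R$; using $C+1$ in place of $C$ here only serves to avoid dividing by zero in the degenerate case $C=0$. Next, from the inclusion $\{x:\abs{x}>R\}\subseteq\{x:\phi(x)\geq (C+1)/\varepsilon\}$ and Markov's inequality applied to each $\mu_n$, I would deduce
\[
\mu_n(\{\abs{x}>R\})\ \leq\ \mu_n\big(\{\phi\geq (C+1)/\varepsilon\}\big)\ \leq\ \frac{\varepsilon}{C+1}\integrala{\mu_n}{\phi}\ \leq\ \frac{\varepsilon C}{C+1}\ \leq\ \varepsilon.
\]
Hence the compact set $K\defeq[-R,R]$ satisfies $\mu_n(K^c)\leq\varepsilon$ for every $n$, which is exactly tightness. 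For the final assertion I would simply invoke the first part with $\phi(x)\defeq x^2$, which is measurable, non-negative, and satisfies $\phi(x)\to\infty$ as $x\to\pm\infty$.

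There is no real obstacle in this argument; it is a one-line application of Markov's inequality combined with the divergence of $\phi$. The only point that needs a moment's care is the degenerate situation $C=0$ (i.e.\ $\phi=0$ $\mu_n$-almost everywhere for every $n$), which is precisely why I build the small ``$+1$'' cushion into the threshold rather than working with $C/\varepsilon$ directly.
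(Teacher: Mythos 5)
Your proof is correct and is essentially the same argument as the paper's: both amount to the Markov-type bound $\mu_n([-R,R]^c)\cdot\inf_{\abs{x}>R}\phi(x)\leq\integrala{\mu_n}{\phi}\leq C$ combined with the divergence of $\phi$ at $\pm\infty$. Your ``$C+1$'' cushion for the degenerate case $C=0$ is a harmless cosmetic difference; the paper sidesteps this by letting $k\to\infty$ rather than fixing a threshold in advance.
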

\begin{proof}
Let $C\defeq \sup_n\integrala{\mu_n}{\phi}<\infty$. Then it holds for any $n\in\N$ and $k>0$ that
\[
C \geq \integrala{\mu_n}{\phi}\geq \integrala{\mu_n}{\one_{[-k,k]^c}\cdot \inf_{\abs{x}> k} \phi(x)} =  \integrala{\mu_n}{\one_{[-k,k]^c}}\cdot \inf_{\abs{x}> k} \phi(x).
\]
Since $\inf_{\abs{x}> k} \phi(x)\to\infty$ as $k\to\infty$, the statement follows.
\end{proof}

\begin{lemma}
\label{lem:vaguetoweak}
Let $(\mu_n)_n$ be a sequence in $\Mcal_1(\R)$ and $\mu\in \Mcal_{\leq 1}(\R)$ such that $\mu_n \to\mu$ vaguely as $n\to\infty$,
then the following statements are equivalent:
\begin{enumerate}[i)]
\item $(\mu_n)_n$ is tight.
\item $\mu$ is a probability measure.
\item $\mu_n$ converges weakly to $\mu$.
\end{enumerate}	
\end{lemma}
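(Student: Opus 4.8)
The plan is to close the cycle of implications (iii) $\Rightarrow$ (ii) $\Rightarrow$ (i) $\Rightarrow$ (iii), keeping the hypothesis $\mu_n \to \mu$ vaguely in force throughout. The workhorse will be the cutoff functions $\phi_k^{k+1} \in \Ccal_c(\R)$ of Definition~\ref{def:contcutoff}, for which one checks directly that $\one_{[-k,k]} \leq \phi_k^{k+1} \leq \one_{[-(k+1),k+1]}$ pointwise and $\phi_k^{k+1} \nearrow \one_{\R}$ as $k \to \infty$; in particular $1 - \phi_k^{k+1} \leq \one_{[-k,k]^c}$, and by monotone convergence $\integrala{\rho}{\phi_k^{k+1}} \nearrow \rho(\R)$ for every finite measure $\rho$.

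The implication (iii) $\Rightarrow$ (ii) is immediate: since $\one_{\R} \in \Ccal_b(\R)$, weak convergence forces $\mu(\R) = \lim_n \mu_n(\R) = 1$. For (ii) $\Rightarrow$ (i), fix $\varepsilon > 0$. Using $\integrala{\mu}{\phi_k^{k+1}} \nearrow \mu(\R) = 1$, choose $k_0$ with $\integrala{\mu}{\phi_{k_0}^{k_0+1}} > 1 - \varepsilon/2$. Since $\phi_{k_0}^{k_0+1} \in \Ccal_c(\R)$, vague convergence furnishes an $N$ with $\integrala{\mu_n}{\phi_{k_0}^{k_0+1}} > 1 - \varepsilon$ for all $n \geq N$, and then $\phi_{k_0}^{k_0+1} \leq \one_{[-(k_0+1),k_0+1]}$ gives $\mu_n\!\big([-(k_0+1),k_0+1]^c\big) < \varepsilon$ for $n \geq N$. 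The finitely many measures $\mu_1,\dots,\mu_{N-1}$ are each probability measures, so each admits a compact window carrying all but $\varepsilon$ of its mass; enlarging to a common compact $K$ then yields tightness.

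For (i) $\Rightarrow$ (iii), let $f \in \Ccal_b(\R)$ with $M \defeq \supnorm{f}$ (assume $M > 0$) and let $\varepsilon > 0$. Tightness provides a $k$ with $\mu_n([-k,k]^c) \leq \varepsilon$ for all $n$, hence $\integrala{\mu_n}{1 - \phi_k^{k+1}} \leq \varepsilon$ and $\integrala{\mu_n}{\phi_k^{k+1}} \geq 1 - \varepsilon$ for all $n$. Passing to the limit along the compactly supported $\phi_k^{k+1}$ gives $\integrala{\mu}{\phi_k^{k+1}} = \lim_n \integrala{\mu_n}{\phi_k^{k+1}} \geq 1 - \varepsilon$, so $\integrala{\mu}{1 - \phi_k^{k+1}} = \mu(\R) - \integrala{\mu}{\phi_k^{k+1}} \leq \varepsilon$, using $\mu(\R) \leq 1$. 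Now split $f = f\phi_k^{k+1} + f(1 - \phi_k^{k+1})$: the first term lies in $\Ccal_c(\R)$, so $\integrala{\mu_n}{f\phi_k^{k+1}} \to \integrala{\mu}{f\phi_k^{k+1}}$, while $\bigabs{\integrala{\nu}{f(1-\phi_k^{k+1})}} \leq M\,\integrala{\nu}{1-\phi_k^{k+1}} \leq M\varepsilon$ for $\nu \in \{\mu, \mu_n\}$, uniformly in $n$. A triangle-inequality estimate then yields $\limsup_n \bigabs{\integrala{\mu_n}{f} - \integrala{\mu}{f}} \leq 2M\varepsilon$; since $\varepsilon$ was arbitrary and $M$ fixed, $\integrala{\mu_n}{f} \to \integrala{\mu}{f}$, i.e.\ weak convergence.

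The only genuinely delicate point I anticipate is the transfer of the tail bound to the limit measure $\mu$ in the step (i) $\Rightarrow$ (iii): the function $1 - \phi_k^{k+1}$ is bounded and continuous but neither compactly supported nor vanishing at infinity, so it is not an admissible test function for vague convergence; one must instead control $\integrala{\mu}{1 - \phi_k^{k+1}}$ indirectly via $\integrala{\mu}{\phi_k^{k+1}} = \lim_n \integrala{\mu_n}{\phi_k^{k+1}}$ together with $\mu(\R) \leq 1$, as above. Everything else is a routine $\varepsilon$-chase with the cutoff functions.
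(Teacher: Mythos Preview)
Your proof is correct and follows essentially the same approach as the paper: the same cycle $iii)\Rightarrow ii)\Rightarrow i)\Rightarrow iii)$, the same cutoff functions $\phi_k^{k+1}$, and the same splitting $f=f\phi_k^{k+1}+f(1-\phi_k^{k+1})$. The only cosmetic difference is in $i)\Rightarrow iii)$, where the paper bounds the $\mu$-tail directly via continuity from below of the finite measure $\mu$ (choosing $k$ so that $\mu([-k,k]^c)$ is small), whereas you transfer the tightness bound to $\mu$ through the vague limit of $\integrala{\mu_n}{\phi_k^{k+1}}$ together with $\mu(\R)\leq 1$; both routes are valid and lead to the same $2M\varepsilon$ estimate.
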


\begin{proof}

\underline{$i)\Rightarrow iii)$} Let $f\in\Ccal_b(\R)$ be arbitrary and set $s\defeq \max(\supnorm{f},1)$.
Let $\varepsilon>0$ be arbitrary, then due to tightness of $(\mu_n)_n$ and continuity from below of $\mu$, we find a $k>0$ such that $\mu_n([-k,k]^c) \leq \frac{\varepsilon}{2s}$ and $\mu([-k,k]^c)\leq \frac{\varepsilon}{2s}$. Now for $n\in\N$ arbitrary we find
\begin{align*}
&\abs{\integrala{\mu_n}{f} - \integrala{\mu}{f}}\\
&\leq\abs{\integrala{\mu_n}{f} - \integrala{\mu_n}{f\phi_k^{k+1}}} + \abs{\integrala{\mu_n}{f\phi_k^{k+1}} - \integrala{\mu}{f\phi_k^{k+1}}} + \abs{\integrala{\mu}{f\phi_k^{k+1}} - \integrala{\mu}{f}}\\
&\leq \integrala{\mu_n}{\abs{f}\cdot\abs{1-\phi_k^{k+1}}} + \abs{\integrala{\mu_n}{f\phi_k^{k+1}} - \integrala{\mu}{f\phi_k^{k+1}}} + \integrala{\mu}{\abs{f}\cdot\abs{\phi_k^{k+1}-1}}\\
& \leq s\cdot\frac{\varepsilon}{2s} + \abs{\integrala{\mu_n}{f\phi_k^{k+1}} - \integrala{\mu}{f\phi_k^{k+1}}} + s\cdot\frac{\varepsilon}{2s}
\end{align*}
It follows that $\limsup_n\abs{\integrala{\mu_n}{f} - \integrala{\mu}{f}}\leq \varepsilon$.

\underline{$iii)\Rightarrow ii)$} This statement is obvious. Consider $\one_{\R}\in\Ccal_b(\R)$.

\underline{$ii)\Rightarrow i)$}. Let $\varepsilon>0$ be arbitrary. Then for $k>0$ we find
\[
\mu_n([-(k+1),k+1])\geq \integrala{\mu_n}{\phi_k^{k+1}}\geq \integrala{\mu}{\phi_k^{k+1}} - \abs{\integrala{\mu}{\phi_k^{k+1}}-\integrala{\mu_n}{\phi_k^{k+1}}}
\]
Now first choose $k$ large enough such that the first summand on the r.h.s.\ is larger than $1-\varepsilon/2$, then choose $N\in\N$ large enough such that for all $n> N$ the absolute value on the r.h.s.\ is at most $\varepsilon/2$. Then we obtain for all $n > N$ that $\mu_n([-(k+1),k+1])\geq 1 - \varepsilon$. On the other hand, we find $k_1,\ldots,k_{N}>0$ such that
\[
\forall\, i\in\{1,\ldots,N\}:~\mu_i([-k_i,k_i]) \geq 1 - \varepsilon.
\]
Let $k^*\defeq \max\{k+1, k_1,\ldots,k_N\}$, then we obtain for all $n\in\N$ that $\mu_n([-k^*,k^*])\geq 1-\varepsilon$. Therefore, $(\mu_n)_n$ is tight.

\end{proof}

\begin{lemma}
\label{lem:convergentsubsequence}
Let $(\mu_n)_n$ be a sequence in $\Mcal_1(\R)$, then the following statements hold:
\begin{enumerate}[i)]
\item $(\mu_n)_n$ has a subsequence converging vaguely to some $\mu\in\Mcal_{\leq 1}(\R)$.
\item If $(\mu_n)_n$ is tight, it has a subsequence converging weakly to some $\mu\in\Mcal_{1}(\R)$.
\end{enumerate}
\end{lemma}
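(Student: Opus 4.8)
The plan is to establish (i) by a Helly-type selection argument that exploits the separability of $\Ccal_c(\R)$ together with the Riesz representation theorem, and then to deduce (ii) from (i) and Lemma~\ref{lem:vaguetoweak}.

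For (i), I would begin by fixing, via statement iii) of Theorem~\ref{thm:functionspaces}, a countable dense subset $\{g_k : k\in\N\}$ of $\Ccal_c(\R)$. For each fixed $k$ the real sequence $(\integrala{\mu_n}{g_k})_n$ is bounded, since $\abs{\integrala{\mu_n}{g_k}}\leq\supnorm{g_k}$ for all $n$; hence, by Bolzano--Weierstrass and a diagonal argument over $k$, one can extract an infinite set $I\subseteq\N$ such that $\lim_{n\in I}\integrala{\mu_n}{g_k}$ exists in $\R$ for every $k\in\N$. The next step is to upgrade this to all of $\Ccal_c(\R)$ by a $3\varepsilon$-argument: given $f\in\Ccal_c(\R)$ and $\varepsilon>0$, pick $g_k$ with $\supnorm{f-g_k}<\varepsilon$, so that $\abs{\integrala{\mu_n}{f}-\integrala{\mu_m}{f}}\leq 2\varepsilon+\abs{\integrala{\mu_n}{g_k}-\integrala{\mu_m}{g_k}}$ for all $n,m\in I$; this shows $(\integrala{\mu_n}{f})_{n\in I}$ is Cauchy and hence converges to a limit $I(f)$. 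Linearity and positivity of $f\mapsto I(f)$ are inherited from the functionals $I_{\mu_n}$, and $\abs{I(f)}\leq\supnorm{f}$, so $I$ is a positive linear bounded functional on $\Ccal_c(\R)$ of operator norm at most $1$. Using density of $\Ccal_c(\R)$ in $\Ccal_0(\R)$ (statement iv) of Theorem~\ref{thm:functionspaces}), extend $I$ by uniform continuity to a positive linear bounded functional on $\Ccal_0(\R)$, still of norm at most $1$. By Theorem~\ref{thm:Rieszrep} there is a unique $\mu\in\Mcal_f(\R)$ with $I=I_\mu$ and $\mu(\R)=\opnorm{I}\leq 1$, i.e. $\mu\in\Mcal_{\leq 1}(\R)$. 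By construction $\integrala{\mu_n}{f}\xrightarrow{n\in I}\integrala{\mu}{f}$ for every $f\in\Ccal_c(\R)$, which is exactly vague convergence of $(\mu_n)_{n\in I}$ to $\mu$.

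For (ii), I would apply (i) to obtain a subsequence $(\mu_n)_{n\in I}$ converging vaguely to some $\mu\in\Mcal_{\leq 1}(\R)$. A subsequence of a tight sequence is again tight, since the same compact sets furnished by Definition~\ref{def:tight} work for the subsequence. Hence Lemma~\ref{lem:vaguetoweak} applies to $(\mu_n)_{n\in I}$: from tightness (i) of that lemma we get that $\mu$ is a probability measure and that $(\mu_n)_{n\in I}$ converges weakly to $\mu$, which is the claim.

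The only mildly delicate point is the transition from the countable dense family $\{g_k\}$ to arbitrary test functions and the continuous extension of the limit functional from $\Ccal_c(\R)$ to $\Ccal_0(\R)$; once that is in place, invoking Theorem~\ref{thm:Rieszrep} to produce $\mu$ and reading off $\mu(\R)\leq 1$ from the operator norm is immediate, and everything else is bookkeeping with the diagonal subsequence. An alternative route avoiding Riesz would construct $\mu$ directly as a Helly limit of the distribution functions of the $\mu_n$, but given that Theorem~\ref{thm:Rieszrep} is already available this is the shorter path.
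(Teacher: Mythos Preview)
Your proof is correct and follows essentially the same route as the paper: pick a countable dense set in $\Ccal_c(\R)$, use Bolzano--Weierstrass plus a diagonal argument, extend to all of $\Ccal_c(\R)$ by a Cauchy/$3\varepsilon$ argument, and invoke Riesz to produce the sub-probability limit; part (ii) is then immediate from Lemma~\ref{lem:vaguetoweak}. The only superfluous step is your extension of the limiting functional from $\Ccal_c(\R)$ to $\Ccal_0(\R)$: Theorem~\ref{thm:Rieszrep} is stated for any $\Fcal$ with $\Ccal_c(\R)\subseteq\Fcal\subseteq\Ccal_0(\R)$, so you can apply it directly on $\Ccal_c(\R)$ without first extending.
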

\begin{proof}
\underline{i)} Let $(g_m)_m$ be a dense sequence in $\Ccal_c(\R)$, then for all $m\in\N$, $(\integrala{\mu_n}{g_m})_n$ is a sequence in $\R$ whose absolute value is bounded by $\supnorm{g_m}<\infty$, thus has a convergent subsequence by Bolzano-Weierstrass. By a diagonal argument, we can find a subsequence $J\subseteq \N$, such that for all $m\in\N$, $(\integrala{\mu_n}{g_m})_{n\in J}$ converges. But since $(g_m)_m$ is dense in $\Ccal_c(\R), \lim_{n\in J}\integrala{\mu_n}{f}$ exists for all $f\in\Ccal_c(\R)$ (it can be shown that $(\integrala{\mu_n}{f})_n$ is Cauchy). The function 
\map{I}{\Ccal_c(\R)}{\R}{f}{I(f)\defeq\lim_{n\in J}\integrala{\mu_n}{f}}
is a linear bounded positive functional on $\Ccal_c(\R)$ with operator norm at most $1$, since $\abs{\integrala{\mu_n}{f}}\leq \supnorm{f}$ for all $n\in\N$ and $f\in\Ccal_c(\R)$. With Theorem~\ref{thm:Rieszrep}, we find an element $\mu\in\Mcal_{\leq 1}(\R)$ such that $I=I_{\mu}$, which entails $\mu_n \to \mu$ vaguely for $n\in J$.\newline
\underline{ii)} With $i)$ we find a subsequence $J\subseteq\N$ and a $\mu\in\Mcal_{\leq 1}(\R)$ such that $(\mu_n)_{n\in J}$ converges to $\mu$ vaguely. But Lemma~\ref{lem:vaguetoweak} yields that $\mu\in\Mcal_{1}(\R)$ and $\mu_n\to\mu$ weakly for $n\in J$.
\end{proof}

Note that statement $i)$ of Lemma~\ref{lem:convergentsubsequence} is the well-known Helly's selection theorem contained in most standard books on probability theory, see \autocite{Durrett} or \autocite{Klenke}, for example. However, we give a new proof here that differs completely from the standard proofs which utilize distribution functions.

So far we have discussed the intricacies of weak and vague convergence of probability measures. Our next goal is to better understand the topology of weak convergence on $\Mcal_1(\R)$, which will deepen our understanding of stochastic weak convergence to be discussed in the next section.
Our first goal will be to reduce the number of test functions for weak convergence to a countable subset of $\Ccal_b(\R)$. However, $(\Ccal_b(\R),\supnorm{\cdot})$ is large; it is not even separable. But there is no reason for despair, since the following theorem holds, which we adopted from our previous work \autocite{FleermannMT}.

\begin{theorem}
\label{thm:weakconvergenceonR}
Fix a sequence $(g_k)_{k\in\N}$ in $\Ccal_c(\R)$ which lies dense in $\Ccal_c(\R)$. Then the following statements hold:
\begin{enumerate}[i)]
	\item Let $\mu,\, (\mu_n)_n\in\Mcal_1(\R)$, then the following statements are equivalent:
	\begin{enumerate}[a)]
	\item $\mu_n\to\mu$ weakly.
	\item $\forall\, k\in\N: \integrala{\mu_n}{g_k} \xrightarrow[n\to\infty]{} \integrala{\mu}{g_k}$.
	\end{enumerate}
	\item Define for all $\mu, \nu \in \Mcal_1(\R)$:
	\[
	d_M(\mu,\nu) \defeq \sum_{k\in\N} \frac{\abs{\integrala{\mu}{g_k}-\integrala{\nu}{g_k}}}{2^k\cdot (1+\abs{\integrala{\mu}{g_k}-\integrala{\nu}{g_k}})}.\label{sym:dmmetric}
	\]
	Then $d_M$ forms a metric on $\Mcal_1(\R)$ which metrizes weak convergence. That is, a sequence $(\mu_n)_{n\in\N}$ in $\Mcal_1(\R)$ converges weakly to $\mu\in\Mcal_1(\R)$ iff $d_M(\mu_n,\mu)\to 0$ as $n\to\infty$.
	\item $(\Mcal_1(\R),d_M)$ is a separable, but not complete, metric space.
\end{enumerate}
\end{theorem}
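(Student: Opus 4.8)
The plan is to take the three parts in order, leaning throughout on the vague/weak-convergence machinery already assembled. For Part i), the implication a)$\Rightarrow$b) is trivial since each $g_k$ lies in $\Ccal_c(\R)\subseteq\Ccal_b(\R)$. For b)$\Rightarrow$a) I would first show that b) already forces $\mu_n\to\mu$ \emph{vaguely}: given $f\in\Ccal_c(\R)$ and $\varepsilon>0$, density of $(g_k)_k$ in $\Ccal_c(\R)$ yields a $k$ with $\supnorm{f-g_k}<\varepsilon$, and since $\mu_n$ and $\mu$ are probability measures, $\abs{\integrala{\mu_n}{f}-\integrala{\mu_n}{g_k}}\le\varepsilon$ and $\abs{\integrala{\mu}{f}-\integrala{\mu}{g_k}}\le\varepsilon$; a standard $3\varepsilon$-argument then gives $\integrala{\mu_n}{f}\to\integrala{\mu}{f}$. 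As $\mu$ is assumed to be a probability measure, Lemma~\ref{lem:vaguetoweak} (implication ii)$\Rightarrow$iii)) promotes this to weak convergence.

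For Part ii), I first verify that $d_M$ is a metric. Finiteness ($d_M\le 1$) and symmetry are immediate. The triangle inequality follows by applying, termwise, the monotonicity and subadditivity of $\psi(t)\defeq t/(1+t)$ on $[0,\infty)$ to the estimate $\abs{\integrala{\mu}{g_k}-\integrala{\nu}{g_k}}\le\abs{\integrala{\mu}{g_k}-\integrala{\rho}{g_k}}+\abs{\integrala{\rho}{g_k}-\integrala{\nu}{g_k}}$. And $d_M(\mu,\nu)=0$ forces $\integrala{\mu}{g_k}=\integrala{\nu}{g_k}$ for every $k$, hence $\integrala{\mu}{f}=\integrala{\nu}{f}$ for all $f\in\Ccal_c(\R)$ by density, so $\mu=\nu$ by Lemma~\ref{lem:equalityoffinitemeasures}~ii). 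That $d_M$ metrizes weak convergence is then a short reduction: since the $k$-th summand is always $\le 2^{-k}$, the series tail is uniformly negligible, so $d_M(\mu_n,\mu)\to0$ iff every summand tends to $0$; as $\psi$ is a homeomorphism of $[0,\infty)$ onto $[0,1)$, this holds iff $\integrala{\mu_n}{g_k}\to\integrala{\mu}{g_k}$ for each $k$, which by Part i) is equivalent to $\mu_n\to\mu$ weakly.

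For Part iii), separability: let $\Dcal$ be the countable set of probability measures $\sum_{i=1}^m q_i\delta_{x_i}$ with $m\in\N$, $x_i\in\Q$, $q_i\in\Q$, $q_i\ge0$ and $\sum_i q_i=1$. Given $\mu\in\Mcal_1(\R)$ and $\varepsilon>0$, I use Part ii): pick $K$ with $\sum_{k>K}2^{-k}<\varepsilon/2$, then $R>0$ with $\supp(g_k)\subseteq(-R,R)$ for all $k\le K$, partition $[-R,R]$ into half-open intervals $I_1,\dots,I_m$ of length $<\eta$ with rational endpoints, and set $\nu\defeq\sum_j\mu(I_j)\delta_{t_j}+\mu([-R,R]^c)\delta_{x_0}$ with $t_j\in I_j\cap\Q$ and a fixed rational $x_0$ outside every $\supp(g_k)$, $k\le K$. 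Uniform continuity of $g_1,\dots,g_K$ on $[-R,R]$ makes $\abs{\integrala{\mu}{g_k}-\integrala{\nu}{g_k}}$ small uniformly in $k\le K$ once $\eta$ is small, and replacing the weights $\mu(I_j)$ by nearby rationals summing to $1$ perturbs each $\integrala{\nu}{g_k}$ by at most $\supnorm{g_k}\sum_j\abs{q_j-\mu(I_j)}$; choosing $\eta$ and the rational weights well enough yields $\nu'\in\Dcal$ with $d_M(\mu,\nu')<\varepsilon$. For non-completeness, consider $\mu_n\defeq\delta_n$: for each fixed $k$, $g_k(n)=0$ once $n$ is large, so the tail estimate from Part ii) shows $(\delta_n)_n$ is $d_M$-Cauchy; but $\delta_n\to 0$ (the zero measure) vaguely, and since weak convergence implies vague convergence with a unique limit, no $\mu\in\Mcal_1(\R)$ can be a weak limit of $(\delta_n)_n$. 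Hence $(\Mcal_1(\R),d_M)$ is not complete. I expect the separability step to be the real work — producing a \emph{single} discrete measure with rational atoms and weights that is simultaneously close to $\mu$ in all of $\integrala{\cdot}{g_1},\dots,\integrala{\cdot}{g_K}$ — which the truncate/partition/rationalize scheme above is designed to handle; the rest is bookkeeping on top of Lemmas~\ref{lem:equalityoffinitemeasures} and~\ref{lem:vaguetoweak} and the elementary analysis of $t\mapsto t/(1+t)$.
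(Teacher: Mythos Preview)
Your argument is correct throughout. Part~i) and the non-completeness half of Part~iii) match the paper's proof essentially verbatim (the paper also reduces b)$\Rightarrow$a) to vague convergence plus Lemma~\ref{lem:vaguetoweak}, and also exhibits a sequence with a strict sub-probability vague limit; your $\delta_n$ is a concrete instance).

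The genuine difference is in how you handle Part~ii) and the separability in Part~iii). The paper packages both via a single device: the map $T:\Mcal_1(\R)\to\R^\N$, $\mu\mapsto(\integrala{\mu}{g_k})_{k\in\N}$, is injective (by Lemma~\ref{lem:equalityoffinitemeasures}) and satisfies $d_M(\mu,\nu)=\rho(T\mu,T\nu)$ for the standard product-topology metric $\rho$ on $\R^\N$. This makes $d_M$ a metric automatically (pullback of a metric along an injection), and separability of $(\Mcal_1(\R),d_M)$ follows for free since $T$ is an isometric embedding into the separable space $(\R^\N,\rho)$. You instead verify the metric axioms by hand via the subadditivity of $t\mapsto t/(1+t)$, and you build an explicit countable dense set of rational atomic measures. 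Your route is more self-contained and constructive---no external facts about $\R^\N$ are invoked---while the paper's route is shorter and more structural, getting the metric property and separability from general topology rather than from bespoke estimates. Both are standard; neither is clearly superior.
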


\begin{proof}
\underline{i)} Let $(\mu_n)_{n\in\N}$ and $\mu$ be probability measures. If $\mu_n\to \mu$ weakly, then surely we have for all $k\in\N$ that $\integrala{\mu_n}{g_k}\to\integrala{\mu}{g_k}$ as $n\to\infty$. If on the other hand we have for all $k\in\N$ that $\integrala{\mu_n}{g_k}\to\integrala{\mu}{g_k}$ as $n\to\infty$, then one easily sees that  $\mu_n$ converges vaguely to $\mu$, and then also weakly by Lemma~\ref{lem:vaguetoweak}.

\underline{ii) and iii)}:
From Lemma~\ref{lem:equalityoffinitemeasures}, we find for any $\mu,\nu\in\Mcal_1(\R)$ that 
\[
\mu=\nu~\Leftrightarrow~ \forall\, k\in\N:\integrala{\mu}{g_k} = \integrala{\nu}{g_k}.
\]

Next, we will inspect the space $\R^\N$ endowed with the product topology. With respect to this topology, a sequence $(z_n)_n$ in $\R^\N$ converges to a $z\in\R^\N$ iff for all $i\in\N$ the coordinates $z_n(i)$ in $\R$ converge to $z(i)$ as $n\to\infty$. Further, it is well-known that the topology on $\R^\N$ is metrizable through the metric $\rho$ with
\[
\forall\, x,y\in\R^{\N}:~~\rho(x,y)\defeq \sum_{k\in\N}\frac{\abs{x(k)-y(k)}}{2^k\cdot(1+\abs{x(k)-y(k)})}.
\]
This follows (for example) with 3.5.7 in 
\autocite[121]{Shirali}
in combination with Theorem 4.2.2 in 
\autocite[259]{Engelking}.
Further, $(\R^\N, \rho)$ is a  \emph{separable} metric space (Theorem 16.4 in 
\autocite[109]{Willard}).

We now define the following map (see \autocite[43]{Parthasarathy}):
\map{T}{\Mcal_1(\R)}{\R^\N}{\mu}{(\integrala{\mu}{g_1}, \integrala{\mu}{g_2},\ldots)}
Then surely, $T$ is injective, since if $T(\mu) = T(\nu)$, then also for all $k\in\N: \integrala{\mu}{g_k}=\integrala{\nu}{g_k}$ and then ${\mu=\nu}$. 
Additionally, we have for all $\mu,\nu\in\Mcal_1(\R)$ that
\begin{equation} 
\label{eq:isometric}
d_M(\mu,\nu) 	=   \sum_{k\in\N} \frac{\abs{\integrala{\mu}{g_k}-\integrala{\nu}{g_k}}}{2^k\cdot (1+\abs{\integrala{\mu}{g_k}-\integrala{\nu}{g_k}})} = \rho(T(\mu),T(\nu)).
\end{equation}

Since $T$ injective and $\rho$ is a metric, $d_M$ is a metric as well, so that $(\Mcal_1(\R), d_M)$ is a metric space. With equation \eqref{eq:isometric}
we see that $T: (\Mcal_1(X,d),d_M) \longrightarrow \R^\N$ is not only injective, but even isometric, especially continuous and a homeomorphism onto its image. Surely, the image is separable as a subspace of a separable metric space 
%(Proposition 2.3.16 in \autocite[87]{Shirali})
. Thus, $(\Mcal_1(\R), d_M)$, being homeomorphic to a separable space, is also separable (Corollary 1.4.11 in \autocite[31]{Engelking}).

With what we have shown so far we obtain for arbitrary $(\mu_n)_{n\in\N},\mu\in\Mcal_1(\R)$:
\begin{equation*}
\begin{aligned}
&\mu_n \textrm{ converges weakly to } \mu \\
\Leftrightarrow &~ \forall\, k\in\N: \integrala{\mu_n}{g_k} \xrightarrow[n\to\infty]{} \integrala{\mu}{g_k}\\
\Leftrightarrow &~ T(\mu_n) \xrightarrow[n\to\infty]{} T(\mu) \textrm{ in } \R^{\N}\\
\Leftrightarrow &~ \rho(T(\mu_n),T(\mu)) \xrightarrow[n\to\infty]{} 0\\
\Leftrightarrow &~ d_M(\mu_n,\mu) \xrightarrow[n\to\infty]{} 0.
\end{aligned}
\end{equation*}
We showed the first equivalence in the first part of this proof, the second equivalence holds per definition of $T$ and the above mentioned characterization of convergence in $\R^\N$, the third equivalence follows with the metrizability of $\R^\N$ through $\rho$, and the last equivalence follows from above equation~\eqref{eq:isometric}. What is left to show is that $(\Mcal_1(\R),d_M)$ is not complete. To this end, let $(\mu_n)_n$ be any sequence in $\Mcal_1(\R)$ which converges vaguely to a sub-probability measure $\nu$ with $\nu(\R) < 1$. Then for all $k\in\N$, $\integrala{\mu_n}{g_k}\to \integrala{\nu}{g_k}$ as $n\to\infty$. Thus, $d_M(\mu_n,\nu)\to 0$ as $n\to\infty$ (the function $d_M$ makes sense even with sub-probability measures as arguments). Since for any $n,m\in\N$, $d_M(\mu_n,\mu_m)\leq d_M(\mu_n,\nu) + d_M(\mu_m,\nu)$, we find that $(\mu_n)_n$ is a Cauchy sequence in $(\Mcal_1(\R),d_M)$ that does not converge weakly to an element in $\Mcal_1(\R)$.
\end{proof}

\section{Random Probability Measures on $(\R,\Bcal)$}

As we saw in Theorem~\ref{thm:weakconvergenceonR}, the set $\Mcal_1(\R)$ can be metrized in such a way that the resulting convergence is exactly "weak convergence of probability measures." This shows that Definition~\ref{def:weaknvague} was adequate in the sense that it defined weak convergence for sequences of probability measures rather than for nets. The reason is that in metric spaces (or more generally, in spaces which satisfy the first axiom of countability, which means that any point has a countable neighborhood basis), the topology can be reconstructed from the knowledge of convergent sequences rather than nets. This is due to the fact that a set in such a space is closed iff any limit of a convergent \emph{sequence} in the set is an element of the set.

From now on, we will always view $\Mcal_1(\R)$ as equipped with the topology of weak convergence and the associated Borel $\sigma$-algebra. We know that $\Mcal_1(\R)$ is separable and that $d_M$ as in Theorem~\ref{thm:weakconvergenceonR} is a metric yielding the topology of weak convergence. It is then a triviality that for any $f\in\Ccal_b(\R)$, the function
\map{I_f}{\Mcal_1(\R)}{\R}{\mu}{I_{f}(\mu)\defeq \integrala{\mu}{f}}
is continuous on $\Mcal_1(\R)$.

Since $\Mcal_1(\R)$ is now considered also as a measurable space, we can study $\Mcal_1(\R)$-valued random variables, which is the subject of this section.

\begin{definition}
\label{def:stochastickernel}
Let $(\Omega,\Acal,\Prob)$\label{sym:probability} be a probability space.
\begin{enumerate}[i)]
\item A \emph{random probability measure} on $(\R,\Bcal)$	is a measurable map $\mu:\Omega\to\Mcal_1(\R)$, $  \omega\mapsto\mu(\omega,\cdot)$.
\item A	\emph{stochastic kernel} from $(\Omega,\Acal)$ to $(\R,\Bcal)$ is a map $\mu: \Omega\times\Bcal\longrightarrow\R$, so that the following holds:
	 \begin{enumerate}[a)]
	\item For all $\omega\in\Omega$, $\mu(\omega,\cdot)$ is a probability measure on $(\R,\Bcal)$.
	\item For all $B\in\Bcal$, $\mu(\cdot,B)$ is $\Acal$-$\Bcal$-measurable.
\end{enumerate}
\end{enumerate}
\end{definition}

\begin{lemma}
\label{lem:randommeasureiskernel}
Let $(\Omega,\Acal,\Prob)$ be a probability space.
\begin{enumerate}[i)]
\item A map $\mu: \Omega\times\Bcal\longrightarrow\R$ is a random probability measure iff it is a stochastic kernel.
\item If $\mu$ is a stochastic kernel from $(\Omega,\Acal)$ to $(\R,\Bcal)$ and $f:\R\to\R$ is measurable and bounded, then $\omega\mapsto\integrala{\mu(\omega)}{f}$ is measurable and bounded by $\supnorm{f}$.
\end{enumerate}
  
\end{lemma}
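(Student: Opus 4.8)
The plan is to prove both parts of Lemma~\ref{lem:randommeasureiskernel} by reducing measurability questions to the generating sets of the relevant $\sigma$-algebras, and by using the standard measure-theoretic induction (first indicators, then simple functions, then bounded measurable functions via monotone/dominated convergence).

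\textbf{Part i).} First I would recall that the Borel $\sigma$-algebra on $\Mcal_1(\R)$ is, by the preceding discussion, the one generated by the topology of weak convergence, and since $d_M$ from Theorem~\ref{thm:weakconvergenceonR} metrizes this topology on a separable space, this Borel $\sigma$-algebra is generated by the maps $I_{g_k}:\mu\mapsto\integrala{\mu}{g_k}$, $k\in\N$ (indeed, for a separable metric space the Borel $\sigma$-algebra is generated by any countable family of continuous functions that separates points and induces the topology; alternatively, one argues that open balls in $d_M$ are countable combinations of the sets $\{I_{g_k}\in U\}$). More robustly, I would show the Borel $\sigma$-algebra on $\Mcal_1(\R)$ is generated by the collection of \emph{all} maps $I_f$, $f\in\Ccal_b(\R)$, or even just by $I_f$ for $f\in\Ccal_c(\R)$; each such $I_f$ is continuous hence Borel, and conversely these maps generate the topology. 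Then: ``$\mu$ is a random probability measure'' means $\omega\mapsto\mu(\omega,\cdot)$ is $\Acal$-Borel-measurable, which by the generating family is equivalent to $\omega\mapsto\integrala{\mu(\omega)}{g_k}$ being $\Acal$-$\Bcal$-measurable for every $k$ (equivalently for every $f\in\Ccal_c(\R)$). For the direction ``kernel $\Rightarrow$ random measure'': if $\mu(\cdot,B)$ is measurable for every $B\in\Bcal$, then $\omega\mapsto\integrala{\mu(\omega)}{f}$ is measurable for every bounded measurable $f$ — this is exactly the content of part ii), which I would prove first and then invoke here — in particular for $f=g_k$, so $\mu$ is a random probability measure. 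For the converse ``random measure $\Rightarrow$ kernel'': condition a) of Definition~\ref{def:stochastickernel} holds by assumption since $\mu(\omega,\cdot)\in\Mcal_1(\R)$; for condition b) I must show $\omega\mapsto\mu(\omega,B)$ is measurable for every $B\in\Bcal$. I know $\omega\mapsto\integrala{\mu(\omega)}{f}$ is measurable for $f\in\Ccal_c(\R)$ (composition of the measurable map $\omega\mapsto\mu(\omega,\cdot)$ with the Borel map $I_f$); I would then extend this to $f=\one_{(a,b)}$ by approximating the indicator of an open interval monotonically from below by functions in $\Ccal_c(\R)$ (e.g. suitable translates/rescalings of the cutoffs $\phi_\ell^u$), so $\mu(\omega,(a,b)) = \lim_n \integrala{\mu(\omega)}{h_n}$ is a pointwise limit of measurable functions, hence measurable; finally a Dynkin/$\pi$-$\lambda$ argument (the open intervals form a $\pi$-system generating $\Bcal$, and $\{B : \mu(\cdot,B)\text{ measurable}\}$ is a $\lambda$-system by linearity and monotone convergence for measures) upgrades this to all $B\in\Bcal$.

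\textbf{Part ii).} Given that $\mu$ is a stochastic kernel, fix $f:\R\to\R$ measurable and bounded, say $\supnorm{f}=:M<\infty$. Boundedness of $\omega\mapsto\integrala{\mu(\omega)}{f}$ is immediate: $\abs{\integrala{\mu(\omega)}{f}}\leq\integrala{\mu(\omega)}{\abs{f}}\leq M\cdot\mu(\omega,\R)=M$. For measurability I use the standard algebraic induction: if $f=\one_B$ for $B\in\Bcal$, then $\integrala{\mu(\omega)}{f}=\mu(\omega,B)$, measurable by hypothesis b); by linearity of the integral, measurability holds for nonnegative simple functions; for general bounded measurable $f\geq 0$ pick simple $0\leq s_n\nearrow f$ and use monotone convergence, $\integrala{\mu(\omega)}{f}=\lim_n\integrala{\mu(\omega)}{s_n}$, a pointwise limit of measurable functions; finally split a general bounded measurable $f$ into $f^+-f^-$. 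This gives measurability of $\omega\mapsto\integrala{\mu(\omega)}{f}$.

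The only genuine subtlety — the ``main obstacle'' — is identifying a convenient generating family for the Borel $\sigma$-algebra on $\Mcal_1(\R)$ and establishing the equivalence between Borel-measurability of $\omega\mapsto\mu(\omega,\cdot)$ and measurability of the scalar maps $\omega\mapsto\integrala{\mu(\omega)}{f}$. I would handle this cleanly by proving the auxiliary fact that $\sigma(I_f : f\in\Ccal_c(\R))$ equals the weak-topology Borel $\sigma$-algebra: ``$\subseteq$'' is clear since each $I_f$ is continuous; for ``$\supseteq$'' one notes that $d_M$-balls, which generate the topology, are of the form $\{\nu : \rho(T(\nu),T(\mu))<r\}$ and $\rho\circ(T\times T)$ depends measurably only on the countably many coordinates $I_{g_k}$, so every open ball — hence every open set, by separability — lies in $\sigma(I_{g_k}:k\in\N)\subseteq\sigma(I_f:f\in\Ccal_c(\R))$. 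Once this is in place, Part i) follows by chasing through the definitions together with Part ii), and everything else is routine measure-theoretic bookkeeping.
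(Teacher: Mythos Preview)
Your proposal is correct and follows essentially the same route as the paper. Part~ii) is identical (standard induction: indicators $\to$ simple $\to$ nonnegative via monotone convergence $\to$ bounded via $f^+-f^-$), and in part~i) the direction ``random measure $\Rightarrow$ kernel'' is also the same (approximate $\one_{(a,b)}$ from below by $\Ccal_c$ functions, then Dynkin). For ``kernel $\Rightarrow$ random measure'' the paper, rather than first abstracting out that the Borel $\sigma$-algebra on $\Mcal_1(\R)$ is $\sigma(I_{g_k}:k\in\N)$, works directly with the metric: it writes $d_M(\mu(\omega),\nu)$ as an explicit series in the measurable maps $\omega\mapsto\integrala{\mu(\omega)}{g_k}$ and concludes that preimages of $d_M$-balls are in $\Acal$, which suffices by separability. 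Your version and the paper's are the same argument in slightly different dress.
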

\begin{proof}
We first show $ii)$:
Surely, the indicated map is bounded by $\supnorm{f}$, since we have for all $\omega\in\Omega$:
\[
\abs{\integrala{\mu(\omega)}{f}} \leq \integrala{\mu(\omega)}{\abs{f}} \leq \integrala{\mu(\omega)}{\supnorm{f}} \leq \supnorm{f}.
\]
To show measurability, we employ a standard extension argument: $\omega\mapsto\mu(\omega,B)$ is measurable for all $B\in\Bcal$. Let $f$ be a simple function on $(\R,\Bcal)$, that is, $f=\sum_{i=1}^{n}{\alpha_i\cdot\one_{B_i}}$ for some $n\in\N$, $\alpha_i\in\left[0,\infty\right)$ and $B_i\in\Bcal$, $i=1,\ldots,n$, then also $\omega\mapsto\integrala{\mu(\omega)}{f}=\sum_{i=1}^{n}{\alpha_i\cdot\mu(\omega,B_i)}$ is measurable as a linear combination of finitely many measurable functions.
Now let $f\geq 0$ be measurable and bounded, then there exists sequence of simple functions $(f_n)_{n\in\N}$ such that $f_n \nearrow_n f$ pointwise. For $\omega\in\Omega$ arbitrary it follows per monotone convergence that $\integrala{\mu(\omega)}{f_n}\nearrow_n \integrala{\mu(\omega)}{f}$, so also $\omega\mapsto\integrala{\mu(\omega)}{f}$ is measurable as a pointwise limit of measurable functions.
Now if $f:\R\longrightarrow\R$ is measurable and bounded, then also the positive and negative parts $f_+$ and $f_-$ (then $f_+, f_-\geq 0$ with $f=f_+ - f_-$). Then $\omega\mapsto\integrala{\mu(\omega)}{f}= \integrala{\mu(\omega)}{f_+} - \integrala{\mu(\omega)}{f_-}$ is measurable as a difference of measurable functions.

We now show $i)$:\newline
\underline{"$\Leftarrow$" } 
We have just shown that for all $f\in\Ccal_b(\R)$ the map $\omega\mapsto\integrala{\mu(\omega)}{f}$ is measurable. Then we obtain for all $\nu\in\Mcal_1(\R)$ that the map $\omega\mapsto d_M(\mu(\omega),\nu)$ is measurable as a limit of measurable functions, since
\[
d_M(\mu(\omega),\nu) = \sum_{k\in\N} \frac{\abs{\integrala{\mu(\omega)}{g_k}-\integrala{\nu}{g_k}}}{2^k\cdot (1+\abs{\integrala{\mu(\omega)}{g_k}-\integrala{\nu}{g_k}})}.
\]
To show the measurability of $\omega\mapsto\mu(\omega,\cdot)$, it suffices to show that preimages of open balls from $(\Mcal_1(\R),d_M)$ are measurable, since the $\sigma$-algebra on $\Mcal_1(\R)$ is generated by the topology which is generated by the metric $d_M$, and the space $\Mcal_1(\R)$ is separable with respect to the topology of weak convergence. So let $\nu\in\Mcal_1(\R)$ and $\varepsilon>0$ be arbitrary, then it holds with $B_{\varepsilon}^{\Mcal_1(\R)}(\nu)\defeq \{\nu'\in\Mcal_1(\R): d_M(\nu',\nu)<\varepsilon\}$:
\[
\mu^{-1}\left(B_{\varepsilon}^{\Mcal_1(\R)}(\nu)\right) = \{\omega\in\Omega: d_M(\mu(\omega),\nu) < \varepsilon\} = d_M(\mu(\cdot),\nu)^{-1}(\left[0,\varepsilon\right)) \in\Acal,
\]
since above we already recognized $d_M(\mu(\cdot),\nu)$ as measurable.

\underline{"$\Rightarrow$"} If $\mu$ is a random probability measure, then for all $\omega\in\Omega$, $\mu(\omega,\cdot)$ is a probability measure on $(\R,\Bcal)$. We now argue that for any $B\in\Bcal$, $\omega\mapsto\mu(\omega,B)$ is measurable. We first prove this for all open bounded intervals in $\R$, since these intervals generate $\Bcal$. So let $a<b\in\R$ be arbitrary and define $\varepsilon\defeq (b-a)/4$. Then define for all $n\in\N$ the function $\phi_n:\R\to\R$ so that $\phi_n\equiv 1$ on $[a+\frac{1}{n}\varepsilon,b-\frac{1}{n}\varepsilon]$, $\phi_n\equiv 0$ on $(a,b)^c$ and $\phi_n$ is affine on the intervals $[a,a+\frac{1}{n}\varepsilon]$ and $[b-\frac{1}{n}a,b]$ in such a way that it is continuous. Then $\phi_n$ is bounded, continuous and $\phi_n(x)\nearrow_n \one_{(a,b)}(x)$ for all $x\in\R$. We know that for all $n\in\N$, $\omega\mapsto \integrala{\mu(\omega)}{\phi_n}$ is measurable as a composition of a measurable and a continuous map (see remark before Definition~\ref{def:stochastickernel}). Now for any $\omega\in\Omega$:
\[
\lim_{n\to\infty}\integrala{\mu(\omega)}{\phi_n} = \integrala{\mu(\omega)}{\one_{(a,b)}} = \mu(\omega,(a,b)).
\]
by monotone convergence. As a result, $\mu(\cdot,(a,b))$ is $\Acal$-$\Bcal$-measurable as the pointwise limit of measurable functions. Now define the set
\[
\Gcal\defeq\{B\in\Bcal\,|\, \omega\mapsto\mu(\omega,B) \text{ is measurable}\}.
\]
Surely, all open intervals lie in $\Gcal$ as we have just shown. If we can show that $\Gcal$ is a Dynkin system we can conclude that $\Gcal=\Bcal$, which is our goal.
 First of all, $\emptyset$ \label{sym:emptyset}, $\R\in\Gcal$, since constant functions are always measurable. Second, since $\mu(\cdot,B^c)=1-\mu(\cdot,B)$, we have that $B^c\in\Gcal$ whenever $B\in\Gcal$. Third, if $(B_n)_n$ is a sequence of pairwise disjoint sets in $\Gcal$, then $\mu(\cdot,\cup_n B_n)=\sum_n\mu(\cdot,B_n)$, so since all $\mu(\cdot,B_n)$ are measurable, then so is $\mu(\cdot,\cup_n B_n)$ as a pointwise limit of a sequence of measurable functions. This shows that $\cup_n B_n \in\Gcal$ so that $\Gcal$ is indeed a Dynkin system.
\end{proof}

Random probability measures are not so uncommon in probability theory. Consider the next example:

\begin{example}
\label{ex:empiricaldist}
Let $Y_1,\ldots,Y_n$ be real-valued random variables on a probability space $(\Omega,\Acal,\Prob)$. Then
\[
\rho \defeq \frac{1}{n}\sum_{i=1}^n\delta_{Y_i}
\]
is a random probability measure on $(\R,\Bcal)$, which we call \emph{empirical distribution} (of the $Y_i$).
Indeed, for any $\omega\in\Omega$,
\[
\rho(\omega) = \frac{1}{n}\sum_{i=1}^n\delta_{Y_i(\omega)}
\]
is a convex combination of probability measures and thus again a probability measure on $(\R,\Bcal)$. On the other hand, if $B\in\Bcal$ is arbitrary, then 
\[
\omega\mapsto \rho(\omega,B) = \frac{1}{n}\sum_{i=1}^n\delta_{Y_i(\omega)}(B) = \frac{1}{n}\sum_{i=1}^n\one_B(Y_i(\omega))
\]
is certainly measurable. Thus, we recognize the empirical distribution $\rho$ as a random probability measure on $(\R,\Bcal)$ via Lemma~\ref{lem:randommeasureiskernel}. For any measurable set $B$, $\rho(B)$ yields the proportion of the $Y_i$'s that fall into the set $B$. Connected to the empirical distribution $\rho$ is its empirical distribution function $F_{\rho}(x) \defeq \rho((-\infty,x])$ defined for all $x\in\R$. This is a random distribution function and the protagonist of the famous Glivenko-Cantelli theorem and the Dvoretzky–Kiefer–Wolfowitz inequality, see \autocite[553]{Witting2}.
\end{example}

Now, let us resume our study. If $\mu$ is a random probability measure and $B\in\Bcal$, then $\mu(B)$ is a bounded random variable. It is natural to consider its expectation $\E\mu(B)$\label{sym:expectedvalue} as the expected mass that $\mu $ prescribes to the set $B$. But as it turns out, $B\mapsto\E\mu(B)$ is yet another (deterministic) probability measure:

\begin{theorem}
\label{thm:expectedmeasure}
Let $(\Omega,\Bcal,\Prob)$ be a probability space and $\mu$ be a random probability measure on $(\R,\Bcal)$. Then the following statements hold:
\begin{enumerate}[i)]
\item  The map
\map{\bar{\mu}}{\Bcal}{\left[0,1\right]}{B}{\bar{\mu}(B)\defeq\integralc{\Prob(\de \omega)}{\mu(\omega,B)}{\Omega} = \E\mu(B)}

is an element of $\Mcal_1(\R)$, the so called \emph{expected measure} of $\mu$.
\item  Any non-negative measurable function $f:\R\longrightarrow\R_+$\label{sym:realnonneg} is $\bar{\mu}$-integrable iff $\integrala{\mu}{f}$ is $\Prob$-integrable, and in this case it holds
\[
\integrala{\bar{\mu}}{f}=\integralc{\bar{\mu}(\de x)}{f(x)}{\R}=
\integralc{\Prob(\de \omega)}{\integralc{\mu(\omega,\de x)}{f(x)}{\R}}{\Omega} = \E\integrala{\mu}{f}.
\]
In particular, this equation is valid for any bounded measurable function $f:\R\to\R$.
\item If $f:\R\to\R$ is $\bar{\mu}$-integrable, then $\integrala{\mu}{f}$ is $\Prob$-integrable and $\integrala{\bar{\mu}}{f}=\E\integrala{\mu}{f}$.
\item Heed must be taken: If $f:\R\to\R$ is measurable and such that $\integrala{\mu}{f}$ is $\Prob$-integrable so that $\E\integrala{\mu}{f}$ is well-defined, $f$ need not be $\bar{\mu}$-integrable, so that it is \emph{not true} that $\integrala{\bar{\mu}}{f}=\E\integrala{\mu}{f}$ whenever one of the two exists. In particular, statement $ii)$ cannot be generalized to arbitrary measurable functions $f:\R\to\R$.
\end{enumerate}

Due to these interrelations we will also write $\E \mu$ \label{sym:expectedmeasure}instead of $\bar{\mu}$, and with what we have seen so far it holds for all function $f$ with $\E\integrala{\mu}{\abs{f}}<\infty$ that $f$ is $\E\mu$-integrable with
\[
\integrala{\E\mu}{f}=\integrala{\bar{\mu}}{f} = \E \integrala{\mu}{f}.
\]
\end{theorem}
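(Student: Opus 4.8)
The plan is to handle the four parts in order, since part~(i) feeds into~(ii) and~(ii) into~(iii), whereas~(iv) is an independent counterexample. For part~(i): by Lemma~\ref{lem:randommeasureiskernel} the map $\omega\mapsto\mu(\omega,B)$ is measurable and $[0,1]$-valued for every $B\in\Bcal$, so $\bar\mu(B)=\E\mu(B)$ is well defined in $[0,1]$, and I would then just verify the measure axioms: $\bar\mu(\emptyset)=\E 0=0$ and $\bar\mu(\R)=\E 1=1$ because each $\mu(\omega,\cdot)$ is a probability measure, and for pairwise disjoint $(B_n)_n$ the identity $\mu(\omega,\bigcup_n B_n)=\sum_n\mu(\omega,B_n)$ holds for every fixed $\omega$, so interchanging $\E$ with the increasing partial sums by monotone convergence gives $\bar\mu(\bigcup_n B_n)=\sum_n\bar\mu(B_n)$. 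Hence $\bar\mu\in\Mcal_1(\R)$.

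The core of the theorem is part~(ii), and the heart of that is the single identity $\integrala{\bar\mu}{f}=\E\integrala{\mu}{f}$, read as an equality in $[0,\infty]$, for every non-negative measurable $f$: once this is in hand, ``$f$ is $\bar\mu$-integrable'' and ``$\integrala{\mu}{f}$ is $\Prob$-integrable'' are equivalent, being two ways of saying the common value is finite, and the claimed equality of integrals follows. I would prove the identity by the standard extension scheme: it holds for $f=\one_B$ by the definition of $\bar\mu$ from~(i); it extends to non-negative simple functions by linearity (over non-negative coefficients) of $\integrala{\bar\mu}{\cdot}$, of $\integrala{\mu(\omega)}{\cdot}$ for each $\omega$, and of $\E$; and for general non-negative measurable $f$ I take simple $f_n\nearrow f$ and apply monotone convergence twice -- in $x$ to get $\integrala{\mu(\omega)}{f_n}\nearrow\integrala{\mu(\omega)}{f}$ for each $\omega$, and in $\omega$ to get $\E\integrala{\mu}{f_n}\nearrow\E\integrala{\mu}{f}$ -- passing to the limit in $\integrala{\bar\mu}{f_n}=\E\integrala{\mu}{f_n}$. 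Along the way one needs $\omega\mapsto\integrala{\mu(\omega)}{f}$ to be measurable with values in $[0,\infty]$; since Lemma~\ref{lem:randommeasureiskernel} supplies this only for bounded $f$, I would get the general case by writing $\integrala{\mu(\omega)}{f}=\lim_n\integrala{\mu(\omega)}{\min(f,n)}$ as an increasing limit of measurable functions. The ``in particular'' clause is then trivial, since a bounded measurable $f$ has $\integrala{\mu}{|f|}\le\supnorm{f}$, so $\integrala{\mu}{f}$ is automatically $\Prob$-integrable.

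For part~(iii) and the concluding remark I would split $f=f_+-f_-$: if $f$ is $\bar\mu$-integrable then $\integrala{\bar\mu}{f_\pm}<\infty$, so by~(ii) each $\integrala{\mu}{f_\pm}$ is $\Prob$-integrable with expectation $\integrala{\bar\mu}{f_\pm}$; in particular $\integrala{\mu(\omega)}{|f|}<\infty$ for $\Prob$-a.e.\ $\omega$, so $\integrala{\mu(\omega)}{f}=\integrala{\mu(\omega)}{f_+}-\integrala{\mu(\omega)}{f_-}$ is well defined and $\Prob$-integrable, and subtracting the two identities yields $\E\integrala{\mu}{f}=\integrala{\bar\mu}{f}$. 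The closing display follows by applying~(ii) to $|f|$: $\E\integrala{\mu}{|f|}<\infty$ forces $\integrala{\bar\mu}{|f|}<\infty$, whence $f$ is $\bar\mu$-integrable and~(iii) applies.

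Finally, part~(iv) asks for a counterexample, and here is the one I would use: take $\Omega=\{1,2,3,\dots\}$ with $\Prob(\{k\})=2^{-k}$, set $\mu(k)\defeq\frac12(\delta_{2^k}+\delta_{-2^k})$ (automatically a random probability measure, since every map on a discrete space is measurable), and let $f(x)\defeq x$. Then $\integrala{\mu(k)}{f}=\frac12(2^k-2^k)=0$ for every $k$, so $\integrala{\mu}{f}\equiv 0$ is $\Prob$-integrable with $\E\integrala{\mu}{f}=0$, whereas $\bar\mu=\sum_{k\ge 1}2^{-k}\cdot\frac12(\delta_{2^k}+\delta_{-2^k})$ gives $\integrala{\bar\mu}{|f|}=\sum_{k\ge 1}2^{-k}\cdot 2^k=\infty$, so $f$ is not $\bar\mu$-integrable. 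Parts~(i)--(iii) present no real obstacle -- they are a kernel version of Tonelli's theorem, the only subtlety being the measurability point noted above. The one step that actually requires thought is~(iv): a Dirac-valued $\mu$ cannot work, since then $\E|\integrala{\mu}{f}|=\integrala{\bar\mu}{|f|}$; one must give each $\mu(\omega)$ internal cancellation, which the symmetric two-point masses combined with an odd, fast-growing $f$ achieve.
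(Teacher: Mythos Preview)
Your proof is correct and follows essentially the same route as the paper: the measure axioms via monotone (the paper uses dominated) convergence in~(i), the standard indicator--simple--monotone extension in~(ii), the $f_+-f_-$ split in~(iii), and a symmetric two-point-mass counterexample for~(iv). The paper's counterexample uses $\Prob(\mu=\tfrac12\delta_{-n}+\tfrac12\delta_n)=\tfrac{1}{cn^2}$ with $f(x)=x$, but the mechanism is identical to yours---internal cancellation in each $\mu(\omega)$ making $\integrala{\mu}{f}\equiv 0$ while $\integrala{\bar\mu}{|f|}$ diverges.
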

\begin{proof}
\underline{$i)$} Clearly, $\E \mu(\emptyset) = 0$ and $\E \mu(\R)=1$. Now if $(B_n)_n$ is a sequence of pairwise disjoint elements in $\Bcal$, then 
\[
\E \mu (\cup_n B_n) = \E \sum_n \mu(B_n) = \sum_n \E\mu(B_n),
\]
where in the last step we used dominated convergence. This shows that $\bar{\mu}$ is indeed a probability measure.\newline
\underline{$ii)$} Let $f:\R\to\R$ be a simple function, that is, $f=\sum_{i=1}^{n}{\alpha_i\cdot\one_{B_i}}$ for some $n\in\N$, $\alpha_i\in\left[0,\infty\right)$ and $B_i\in\Bcal$, $i=1,\ldots,n$. Then
\[
\integrala{\bar{\mu}}{f} = \sum_{i=1}^{n}{\alpha_i\cdot\bar{\mu}(B_i)} = \E \sum_{i=1}^{n}{\alpha_i\cdot\mu(B_i)} = \E\integrala{\mu}{f}.
\]
Now let $f:\R\to\R$ be non-negative and measurable witnessed by a sequence of simple functions $(f_n)_n$ with $f_n\nearrow_n f$ pointwise, then clearly
\[
\integrala{\bar{\mu}}{f} = \lim_{n\to\infty}\integrala{\bar{\mu}}{f_n} = \lim_{n\to\infty} \E\integrala{\mu}{f_n} = \E\integrala{\mu}{f},
\]
where in the first and the last step we used monotone convergence. In particular, the non-negative $f$ is $\bar{\mu}$-integrable iff $\integrala{\mu}{f}$ is $\Prob$-integrable and in this case it holds $\integrala{\bar{\mu}}{f} = \E\integrala{\mu}{f}$. Now if $f:\R\to\R$ is bounded, then there exists a $C\in\R$ such that $f+C$ is non-negative (and of course, it remains bounded, thus integrable). Then we immediately obtain $\integrala{\bar{\mu}}{f} = \integrala{\bar{\mu}}{f+C} -C = \E\integrala{\mu}{f+C} - C = \E\integrala{\mu}{f}$.\newline
\underline{$iii)$} If now $f:\R\to\R$ is $\bar{\mu}$-integrable, then $f=f_+ - f_-$ where $f_+,f_-\geq 0$ are $\bar{\mu}$-integrable. By $ii)$, the non-negative random variables $\integrala{\mu}{f_+}$ and $\integrala{\mu}{f_-}$ are both $\Prob$-integrable. Then their difference $\integrala{\mu}{f_+}-\integrala{\mu}{f_-} = \integrala{\mu}{f}$ is also $\Prob$-integrable and we obtain with $ii)$:
\[
\integrala{\bar{\mu}}{f}=\integrala{\bar{\mu}}{f_+}-\integrala{\bar{\mu}}{f_-}=\E\integrala{\bar{\mu}}{f_+}-\E\integrala{\bar{\mu}}{f_-} = \E\integrala{\bar{\mu}}{f}.
\]
\underline{$iv)$} Unfortunately, this point appears to be overlooked in the literature. We need to construct a counter-example to show what we state. To this end, consider the random probability measure $\mu$ on $(\R,\Bcal)$ with
\[
\forall\,n\in\N : \Prob\left(\mu = \frac{1}{2}\delta_{-n} + 
\frac{1}{2}\delta_{n}\right) = \frac{1}{c n^2},
\]
where $c\defeq \sum_n \frac{1}{n^2}<\infty$. Further, let $f$ be the identity on $\R$, that is, $f(x)=x$ for all $x\in\R$. Then surely, $f$ is measurable, and since almost all realizations of $\mu$ are symmetric measures, we have $\integrala{\mu}{f}=0$ almost surely, which is $\Prob$-integrable with $\E\integrala{\mu}{f}=0$. We now assume that $f$ is $\bar{\mu}$-integrable and lead this to a contradiction: If $f$ were $\bar{\mu}$-integrable, then so would $\abs{f}$ and by $ii)$ we would have $\integrala{\bar{\mu}}{\abs{f}}=\E\integrala{\mu}{\abs{f}} <\infty$. But with probability $\frac{1}{c n^2}$, $\mu$ takes the value $\frac{1}{2}\delta_{-n}+\frac{1}{2}\delta_{n}$, so $\integrala{\mu}{\abs{f}}$ takes the value $n$, leading to the calculation
\[
\E\integrala{\mu}{\abs{f}} = \sum_{n\in\N}\frac{n}{c n^2}=\infty,
\]
which is a contradiction.
\end{proof}

In the remainder of this section, we will derive and discuss three notions of convergence of random probability measures on $(\R,\Bcal)$, namely weak convergence in expectation, weak convergence in probability and weak convergence almost surely.

\begin{definition}
\label{def:convergenceinexpectation}
Let $(\mu_n)_{n\in\N}$ and $\mu$ be random probability measures on $(\R,\Bcal)$, then we say that $(\mu_n)_n$ converges weakly in expectation to $\mu$, if the sequence of expected measures $(\E\mu_n)_{n\in\N}$ converges weakly to the expected measure $\E\mu$, so if:
\[
\forall\, f\in\Ccal_b(\R):\integrala{\E\mu_n}{f}\xrightarrow[n\to\infty]{}\integrala{\E\mu}{f},
\] 
which is equivalent to (see Theorem~\ref{thm:expectedmeasure})
\[
\forall\, f\in\Ccal_b(\R):\E\integrala{\mu_n}{f}\xrightarrow[n\to\infty]{}\E\integrala{\mu}{f}.
\] 
\end{definition}

The concept of weak convergence in expectation is extremely important for investigations in the field of random matrix theory, since it lies the foundation for stronger convergence types. This is due to the fact that weak convergence $\Prob$-almost surely or in probability will also imply weak convergence in expectation, so the latter convergence type is  a necessary condition for stronger convergence types (see also Theorem~\ref{thm:randommomentconvergence}). The exact interrelations between the three concepts of convergence for random probability measures are summarized in the end of this section in Theorem~\ref{thm:randomconvergenceimplications}.

Before turning to the next convergence types, we wish to remind the reader what convergence in probability and almost surely means for random variables in metric spaces:
\begin{definition}
Let $(Y_n)_{n\in\N}$ and $Y$ be random variables defined on a probability space $(\Omega,\Acal,\Prob)$, which take values in a metric space $(\Xcal,d)$\label{sym:metricspace}.
\begin{enumerate}[i)]
\item We say that $(Y_n)_{n\in\N}$ converges to $Y$ in probability, if	$d(Y_n,Y)$ converges to $0$ in probability.
\item We say that $(Y_n)_{n\in\N}$ converges to $Y$ almost surely, if $d(Y_n,Y)$ converges to $0$ almost surely.
\end{enumerate}
\end{definition}

Let us collect a quick lemma:

\begin{lemma}
\label{lem:metricsubsequence}
Let $(Y_n)_{n\in\N}$ and $Y$ be random variables defined on a probability space $(\Omega,\Acal,\Prob)$, which take values in a metric space $(\Xcal,d)$. If $(Y_n)_{n\in\N}$ converges to $Y$ almost surely, then also in probability.
\end{lemma}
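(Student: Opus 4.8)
The plan is to reduce the statement to the classical fact that almost sure convergence of real-valued random variables implies convergence in probability, applied to the non-negative real random variables $Z_n \defeq d(Y_n,Y)$. By the very definitions of the two modes of convergence in a metric space, the hypothesis says precisely that $Z_n \to 0$ almost surely, and the conclusion says precisely that $Z_n \to 0$ in probability; so nothing about the structure of $\Xcal$ is needed beyond the measurability of $\omega\mapsto d(Y_n(\omega),Y(\omega))$, which holds because $d\colon\Xcal\times\Xcal\to\R$ is continuous, hence Borel, and $(Y_n,Y)$ is measurable into the product space.

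First I would fix $\varepsilon>0$ and set $A_n \defeq \{Z_n > \varepsilon\}$; the goal is $\Prob(A_n)\to 0$. I would apply the reverse Fatou lemma to the indicator functions $\one_{A_n}$, which are dominated by the integrable constant $1$, to obtain $\limsup_n \Prob(A_n) = \limsup_n \E\one_{A_n} \leq \E\limsup_n \one_{A_n} = \Prob(\limsup_n A_n)$.

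Then I would observe that $\limsup_n A_n \subseteq \{\omega : Z_n(\omega)\not\to 0\}$: if $\omega$ lies in infinitely many of the $A_n$, then $Z_n(\omega)$ does not converge to $0$. Since $Z_n\to 0$ almost surely, the set on the right has probability $0$, so $\Prob(\limsup_n A_n)=0$ and therefore $\limsup_n\Prob(A_n)=0$. As $\varepsilon>0$ was arbitrary, $Z_n\to 0$ in probability, which is the assertion.

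There is essentially no obstacle here; the proof is routine. The only alternative worth noting, avoiding Fatou entirely, is to remark that $\min(Z_n,1)\to 0$ almost surely and is bounded by $1$, so dominated convergence gives $\E[\min(Z_n,1)]\to 0$, after which Markov's inequality yields $\Prob(Z_n>\varepsilon) \leq \varepsilon^{-1}\E[\min(Z_n,1)]\to 0$ for every $\varepsilon\in(0,1)$, and hence for every $\varepsilon>0$.
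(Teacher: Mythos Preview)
Your proof is correct and follows the same reduction as the paper: both unpack the definitions to reduce the claim to the real-valued statement that $Z_n \defeq d(Y_n,Y) \to 0$ almost surely implies $Z_n \to 0$ in probability. The paper simply cites this real-valued implication as known, whereas you supply a complete argument via reverse Fatou (and an alternative via dominated convergence plus Markov), so your write-up is strictly more self-contained but not methodologically different.
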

\begin{proof}
Let $(Y_n)_{n\in\N}$ converge to $Y$ almost surely. This means that the sequence of real-valued random variables $(d(Y_n,Y))_n$ converges to $0$ almost surely. But this implies that $(d(Y_n,Y))_n$ converges to $0$ in probability, which is precisely what it means for $(Y_n)_n$ to converge to $Y$ in probability.
\end{proof}

Now let us define and analyze what it means for random probability measures to converge in probability and almost surely. Since random probability measures are nothing but random variables into the metric space $\Mcal_{1}(\R)$, we know what to do:

\begin{definition}
\label{def:stochasticweakconvergence}
Let $(\Omega,\Acal,\Prob)$ be a probability space,  $\mu$ and $(\mu_n)_{n\in\N}$ be random probability measures on $(\R,\Bcal)$.
\begin{enumerate}[i)]
	\item We say that $(\mu_n)_n$ converges weakly to $\mu$ in probability, if $d_M(\mu_n,\mu)$ converges to $0$ in probability.
\item We say that $(\mu_n)_n$ converges weakly to $\mu$ almost surely, if $d_M(\mu_n,\mu)$ converges to $0$ almost surely.
\end{enumerate}
\end{definition}

Although stochastic types of weak convergence can be defined solidly as in Definition~\ref{def:stochasticweakconvergence}, this definition is not convenient to work with in practice. In addition, we would like to see that these convergence concepts do \emph{not} depend on the choice of the metric that metrizes weak convergence on $\Mcal_1(\R)$. 

\begin{theorem}
\label{thm:stochasticweakconvergence}
Let $(\Omega,\Acal,\Prob)$ be a probability space, $\mu$ and $(\mu_n)_{n\in\N}$ be random probability measures on $(\R,\Bcal)$. 
\begin{enumerate}[i)]
	\item The following statements are equivalent:
		\begin{enumerate}[a)]
		\item $(\mu_n)_n$ converges weakly to $\mu$ in probability, that is, $d_M(\mu_n,\mu)\to 0$ in probability. 
		\item If $d$ is any metric on $\Mcal_1(\R)$ that metrizes weak convergence, then $d(\mu_n,\mu)\to 0$ in probability.
		\item For all $f\in\Ccal_b(\R)$, the sequence of bounded real-valued random variables $(\integrala{\mu_n}{f})_n$ converges in probability to $\integrala{\mu}{f}$, so
			\[
			\forall\, f\in \Ccal_b(\R):\,\forall\,\varepsilon>0: \Prob(\abs{\integrala{\mu_n}{f}-\integrala{\mu}{f}}>\varepsilon)\xrightarrow[n\to\infty]{} 0.
			\]
		\end{enumerate}	

	\item The following statements are equivalent:
	\begin{enumerate}[a)]
		\item $(\mu_n)_n$ converges weakly to $\mu$ almost surely, that is, $d_M(\mu_n,\mu)\to 0$ almost surely. 
		\item For $\Prob$-almost all $\omega\in\Omega$, $\mu_n(\omega)$ converges weakly to $\mu(\omega)$.
		\item If $d$ is any metric on $\Mcal_1(\R)$ that metrizes weak convergence, then $d(\mu_n,\mu)\to 0$  almost surely.
		\item For all $f\in\Ccal_b(\R)$, $\integrala{\mu_n}{f}$ converges almost surely to $\integrala{\mu}{f}$, that is,
			\[
			\forall\, f\in \Ccal_b(\R): \left[\integrala{\mu_n}{f}\xrightarrow[n\to\infty]{}\integrala{\mu}{f} ~ \textrm{almost surely}\right].
			\]
		\item Almost surely we find that for all $f\in\Ccal_b(\R)$, $\integrala{\mu_n}{f}$ converges  to $\integrala{\mu}{f}$, that is,
			\[
			\left[\forall\, f\in \Ccal_b(\R): \integrala{\mu_n}{f}\xrightarrow[n\to\infty]{}\integrala{\mu}{f}\right] \quad \textrm{almost surely}.
			\]
	\end{enumerate}
\end{enumerate}
\end{theorem}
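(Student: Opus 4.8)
The plan is to show that within each part the listed conditions collapse to a single ``master'' property and then to bootstrap between the $\omega$-wise, almost-sure, and in-probability versions of weak convergence. Two earlier facts will be used throughout: that $d_M$ metrizes weak convergence on the \emph{separable} space $\Mcal_1(\R)$ (Theorem~\ref{thm:weakconvergenceonR}), and that $\omega\mapsto\integrala{\mu_n(\omega)}{f}$ is a genuine bounded real random variable for every $f\in\Ccal_b(\R)$ (Lemma~\ref{lem:randommeasureiskernel}). I also note once and for all that for \emph{any} metric $d$ inducing the topology of weak convergence, $d(\mu_n,\mu)$ is again measurable: $d$ is continuous on the product space, the Borel $\sigma$-algebra depends only on the topology, and separability makes the product of the Borel $\sigma$-algebras coincide with the Borel $\sigma$-algebra of the product, so $(\mu_n,\mu)$ is measurable and $d\circ(\mu_n,\mu)$ is a random variable. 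Hence every displayed convergence statement in the theorem is meaningful.

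For part ii) I would first observe that (a), (b), (c) are each literally the assertion ``$\Prob$-almost every $\omega$ satisfies $\mu_n(\omega)\to\mu(\omega)$ weakly'': pointwise in $\omega$, $d_M(\mu_n(\omega),\mu(\omega))\to0$ iff $\mu_n(\omega)\to\mu(\omega)$ weakly iff $d(\mu_n(\omega),\mu(\omega))\to0$ for any metrizing $d$, so the exceptional null sets coincide, and (c) follows from (a) by specializing to $d=d_M$. Likewise (b) $\Leftrightarrow$ (e) is nothing but unravelling the definition of weak convergence inside the quantifier ``for a.a.\ $\omega$''. The implication (e) $\Rightarrow$ (d) is trivial (fix $f$ first). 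The one substantive step is (d) $\Rightarrow$ (e): apply (d) to each member $g_k$ of the fixed countable sequence dense in $\Ccal_c(\R)$ from Theorem~\ref{thm:weakconvergenceonR}, get null sets $N_k$, and set $N\defeq\bigcup_k N_k$; off $N$ we have $\integrala{\mu_n(\omega)}{g_k}\to\integrala{\mu(\omega)}{g_k}$ for every $k$, which by Theorem~\ref{thm:weakconvergenceonR}~i) forces $\mu_n(\omega)\to\mu(\omega)$ weakly, i.e.\ $\integrala{\mu_n(\omega)}{f}\to\integrala{\mu(\omega)}{f}$ for all $f\in\Ccal_b(\R)$, which is (e). This closes the cycle (a)--(e).

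For part i) I would prove (a) $\Leftrightarrow$ (c) and then deduce (b). For (c) $\Rightarrow$ (a): given $\varepsilon>0$, choose $K$ with $\sum_{k>K}2^{-k}<\varepsilon/2$; bounding each summand of $d_M$ by both $2^{-k}$ and $\abs{\integrala{\mu_n}{g_k}-\integrala{\mu}{g_k}}$ gives $d_M(\mu_n,\mu)\leq\varepsilon/2+\sum_{k\leq K}\abs{\integrala{\mu_n}{g_k}-\integrala{\mu}{g_k}}$, whence $\Prob(d_M(\mu_n,\mu)>\varepsilon)\leq\sum_{k\leq K}\Prob(\abs{\integrala{\mu_n}{g_k}-\integrala{\mu}{g_k}}>\varepsilon/(2K))\to0$ by (c) (each $g_k\in\Ccal_c(\R)\subseteq\Ccal_b(\R)$). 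For (a) $\Rightarrow$ (c) I invoke the standard subsequence criterion for convergence in probability: fixing $f\in\Ccal_b(\R)$ and an arbitrary subsequence, the hypothesis $d_M(\mu_n,\mu)\to0$ in probability yields a further subsequence with $d_M(\mu_n,\mu)\to0$ a.s., hence $\mu_n(\omega)\to\mu(\omega)$ weakly a.s.\ along it, hence $\integrala{\mu_n}{f}\to\integrala{\mu}{f}$ a.s.\ along it; as every subsequence admits such a sub-subsequence, $\integrala{\mu_n}{f}\to\integrala{\mu}{f}$ in probability. Finally (b) $\Rightarrow$ (a) is immediate since $d_M$ is one such $d$, and (a) $\Rightarrow$ (b) runs the same subsequence argument: from any subsequence extract a further one with $d_M(\mu_n,\mu)\to0$ a.s., i.e.\ $\mu_n(\omega)\to\mu(\omega)$ weakly a.s., i.e.\ $d(\mu_n,\mu)\to0$ a.s.\ (since $d$ metrizes weak convergence), and conclude $d(\mu_n,\mu)\to0$ in probability.

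The main obstacle is the implication (d) $\Rightarrow$ (e) of part ii)---upgrading ``for each test function, a.s.\ convergence'' to ``a.s., for all test functions, convergence''---together with the parallel need to see that none of these stochastic convergences depends on the choice of metrizing metric. Both rest on the same pillar: the separability of $\Mcal_1(\R)$ in the topology of weak convergence and the reduction of weak convergence to the countable family $(g_k)_k$ provided by Theorem~\ref{thm:weakconvergenceonR}. Everything else is routine manipulation of the subsequence characterization of convergence in probability and of the explicit series defining $d_M$.
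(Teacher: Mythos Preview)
Your proof is correct and largely parallels the paper's, but there is one genuine difference worth noting: for the implication (c) $\Rightarrow$ (a) in part i), the paper does \emph{not} use the explicit series form of $d_M$. Instead it invokes Lemma~\ref{lem:uniformsubsequence} (a diagonal argument producing a single subsequence along which $\integrala{\mu_n}{g_k}\to\integrala{\mu}{g_k}$ almost surely \emph{simultaneously} for all $k$), then applies Theorem~\ref{thm:weakconvergenceonR} $\omega$-wise on that subsequence, and closes with the subsequence criterion for convergence in probability. Your route is more elementary: you exploit directly that each term of the $d_M$-series is dominated by both $2^{-k}$ and $\abs{\integrala{\mu_n}{g_k}-\integrala{\mu}{g_k}}$, cut the tail, and apply a finite union bound. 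This avoids Lemma~\ref{lem:uniformsubsequence} entirely for this implication. The paper's approach has the virtue of being method-uniform (the same subsequence machinery drives (a)$\Leftrightarrow$(b), (a)$\Rightarrow$(c), and (c)$\Rightarrow$(a)), while yours trades that uniformity for a shorter, self-contained estimate that does not rely on the diagonal lemma.

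A second, smaller difference: you explicitly justify measurability of $d(\mu_n,\mu)$ for an arbitrary metrizing metric $d$ via separability and the coincidence of the product Borel $\sigma$-algebra with the Borel $\sigma$-algebra of the product. The paper's proof is silent on this point. Your remark is a worthwhile addition, since without it statement (b) in part~i) would not even be well-posed.
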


\begin{remark}
\label{rem:stochasticweakconvergence}
\begin{enumerate}
\item Note that in Theorem~\ref{thm:stochasticweakconvergence} $ii)$ $d)$ and $e)$ we used careful bracketing $[\ldots]$ when it comes to almost sure convergence of multiple objects. This is done to avoid ambiguity. For example, questions could arise whether we find a set of measure $1$ on which all objects converge (as in $e)$), or if for each object, we find a set of measure $1$, \emph{possibly depending on that object}, on which the considered object converges (as in $d)$).  
\item	We consider Theorem~\ref{thm:stochasticweakconvergence} $i)$ as equivalent definitions for the concept "weak convergence in probability", and $ii)$ as equivalent definitions for "weak convergence almost surely." After the proof of the theorem, we will keep on working with this characterization without always referring to Theorem~\ref{thm:stochasticweakconvergence}. 
\end{enumerate}
\end{remark}

Before we begin with the proof of Theorem~\ref{thm:stochasticweakconvergence}, we will introduce two tools which we will make use of. For later use, we will formulate the lemmas in greater generality, that is, for complex-valued random variables.

\begin{lemma}
\label{lem:subsequence}
Let $(X_n)_n$ and $X$ be complex-valued random variables defined on a probability space $(\Omega,\Acal,\Prob)$. Then $(X_n)_{n\in\N}$ converges to $X$ in probability iff any subsequence  $J\subseteq\N$ has another subsequence $I\subseteq J$ so that $(X_n)_{n\in I}$ converges to $X$ almost surely.
\end{lemma}
\begin{proof}
The proof can be found in \autocite[134]{Klenke}	.
\end{proof}

The next extremely useful lemma generalizes the previous one by finding a \emph{simultaneous almost surely convergent subsequence} for a countable number of sequences of random variables.

\begin{lemma}
\label{lem:uniformsubsequence}
Let $(\Omega,\Acal,\Prob)$ be a probability space and for all $k\in\N$ let $X^{(k)}$ and $(X^{(k)}_n)_{n\in\N}$ be complex-valued random variables. Then the following statements are equivalent:
\begin{enumerate}[i)]
\item For all $k\in\N$, $(X_n^{(k)})_n$ converges to $X^{(k)}$ in probability.
\item For any subsequence $J\subseteq \N$, we find a subsequence $I\subseteq J$ and a set $N\in \Acal$ with $\Prob(N)=0$ such that 
\[
\forall\, \omega \in \Omega\backslash N: \forall\, k\in\N: X^{(k)}_n(\omega)\xrightarrow[n\in I]{} X^{(k)}(\omega).
\]	
\end{enumerate}
\end{lemma}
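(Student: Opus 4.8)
The plan is to reduce both implications to the single-sequence statement of Lemma~\ref{lem:subsequence}, the forward direction using in addition a diagonal extraction of subsequences and the fact that a countable union of $\Prob$-null sets is $\Prob$-null.

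For $ii)\Rightarrow i)$, I would fix $k\in\N$ and an arbitrary subsequence $J\subseteq\N$. Applying hypothesis $ii)$ to $J$ yields a subsequence $I\subseteq J$ and a null set $N\in\Acal$ with $X_n^{(k)}(\omega)\to X^{(k)}(\omega)$ as $n\in I$ for every $\omega\in\Omega\backslash N$; in particular $(X_n^{(k)})_{n\in I}$ converges to $X^{(k)}$ almost surely. Since $J$ was arbitrary, Lemma~\ref{lem:subsequence} gives that $(X_n^{(k)})_n$ converges to $X^{(k)}$ in probability, and since $k$ was arbitrary, $i)$ follows.

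For $i)\Rightarrow ii)$, let $J\subseteq\N$ be a subsequence and set $J_0\defeq J$. I would construct nested infinite subsequences $J_0\supseteq J_1\supseteq J_2\supseteq\cdots$ inductively: given $J_{k-1}$, apply Lemma~\ref{lem:subsequence} to the sequence $(X_n^{(k)})_n$, which converges to $X^{(k)}$ in probability by $i)$, relative to the subsequence $J_{k-1}$, obtaining a further subsequence $J_k\subseteq J_{k-1}$ along which $X_n^{(k)}\to X^{(k)}$ almost surely, say off a null set $N_k\in\Acal$. Then choose integers $n_1<n_2<\cdots$ with $n_j\in J_j$ for every $j$ (possible since each $J_j$ is infinite) and put $I\defeq\{n_1,n_2,\ldots\}$. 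Then $I\subseteq J_0=J$, and for each fixed $k$ the set $\{n_j : j\geq k\}$ is a subsequence of $J_k$, so $X_n^{(k)}(\omega)\to X^{(k)}(\omega)$ as $n\in I$ for every $\omega\notin N_k$, the finitely many terms with $j<k$ being irrelevant to the limit. Finally I would set $N\defeq\bigcup_{k\in\N}N_k$, which is a null set as a countable union of null sets, and conclude that for every $\omega\in\Omega\backslash N$ and every $k\in\N$ one has $X_n^{(k)}(\omega)\to X^{(k)}(\omega)$ as $n\in I$, which is exactly $ii)$.

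The only genuinely delicate point — the one I would be most careful with — is the diagonalization: one must build the single index set $I$ so that, for each $k$, all but finitely many of its elements lie in $J_k$, which is precisely what keeps the almost sure convergence of the $k$-th sequence alive along $I$. Everything else is routine bookkeeping together with the standard stability of almost sure convergence under countable unions of null sets.
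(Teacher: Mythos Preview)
Your proof is correct and follows essentially the same approach as the paper: the implication $ii)\Rightarrow i)$ is reduced to Lemma~\ref{lem:subsequence}, and $i)\Rightarrow ii)$ is handled by the standard diagonal extraction of nested subsequences together with the countable union of the resulting null sets. The only cosmetic difference is that the paper picks the diagonal elements as $i_k\defeq\min(I_k)$ after arranging $\min(I_{k+1})>\min(I_k)$, whereas you simply choose $n_1<n_2<\cdots$ with $n_j\in J_j$; these are the same idea.
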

\begin{proof} 
The part $ii)\Rightarrow i)$ follows immediately with Lemma~\ref{lem:subsequence}. So we only need to show $i)\Rightarrow ii)$:
For $k=1$ we find that $(X^{(1)}_n)_{n\in J}$ converges in probability to $X^{(1)}$. Therefore, we find a subsequence $I_1\subseteq J$ such that
\[
X^{(1)}_n \xrightarrow[n\in I_1]{} X^{(1)} \quad\text{$\Prob$-a.s. witnessed by a set of measure zero $N_1$}.
\] 
Since $(X^{(2)}_n)_{n\in I_1}$ converges to $X^{(2)}$ in probability, we find a subsequence $I_2\subseteq I_1$ with $\min(I_2)>\min(I_1)$ such that 
\[
X^{(2)}_n \xrightarrow[n\in I_2]{} X^{(2)} \quad\text{$\Prob$-a.s. witnessed by a set of measure zero $N_2$}.
\]
We continue this approach for all $k\in\N$ and obtain subsequences
\[
\N \supseteq J \supseteq I_1 \supseteq I_2 \supseteq \ldots \supseteq I_k \supseteq \ldots
\]

such that for all $k\in\N$ we have $\min(I_{k+1}) > \min(I_k)$ and 
\[
X^{(k)}_n \xrightarrow[n\in I_k]{} X^{(k)} \quad\text{$\Prob$-a.s. witnessed by a set of measure zero $N_k$}.
\]
We set $N\defeq \cup_{k\in \N}N_k$ and for all $k\in\N: i_k \defeq \min(I_k)$, then we obtain that $(i_k)_{k\in\N}$ is strictly increasing in $\N$ and 
\[
\forall\, \omega\in\Omega\backslash N: \,\forall\,l\in\N:  X^{(l)}_{i_k}(\omega) \xrightarrow[k\in\N]{} X^{(l)}(\omega).
\]
To see this, let $\omega\in\Omega\backslash N$ and $l\in\N$ be arbitrary. Then we have that $\omega\in\Omega\backslash N_l$ and $i_k=\min(I_k)\in I_l$ for all $k\geq l$, so that indeed
\[
X^{(l)}_{i_k}(\omega)\xrightarrow[k\in\N]{} X^{(l)}(\omega).
\]
The proof is completed by setting $I\defeq \{i_k\,|\,k\in\N\}$.
\end{proof}

Now we are ready to prove Theorem~\ref{thm:stochasticweakconvergence}:
\begin{proof}[Proof of Theorem~\ref{thm:stochasticweakconvergence}]

We show $ii)$ first. \newline
Clearly, $a)$, $b)$ and $c)$ are equivalent, since the metrics metrize weak convergence. Also, $e)$ is just a reformulation of $b)$, thus equivalent. In addition, $d)$ follows immediately from $e)$, so we have
\[
a)~\Leftrightarrow ~b)~\Leftrightarrow ~c)~\Leftrightarrow ~e)~ \Rightarrow~ d)
\]
We now show $\underline{d)\Rightarrow b)}:$ For each $k\in\N$ we have that $\integrala{\mu_n}{g_k}$ converges to $\integrala{\mu}{g_k}$ almost surely on a set $A_k$ of measure $1$ (the functions $(g_k)_k$ are as in Theorem~\ref{thm:weakconvergenceonR}). Then the set $\Omega_1\defeq \cap_k A_k$ has measure $1$ and for all $\omega\in\Omega_1$ we find that
\[
\forall\, k\in \N: \integrala{\mu_n(\omega)}{g_k}\xrightarrow[n\to\infty]{}\integrala{\mu(\omega)}{g_k}.
\]
Therefore, with Theorem~\ref{thm:weakconvergenceonR}, we have for all $\omega\in\Omega_1$  that $\mu_n(\omega) \to \mu(\omega)$ weakly as $n\to\infty$ and hence $b)$.\newline
We now show $i)$:\newline
\underline{$a)\Leftrightarrow b)$} By exact symmetry in the argument, we will only argue $a)\Rightarrow b)$: Let $\mu_n\to\mu$ weakly in probability, that is, $(d_M(\mu_n,\mu))_{n\in\N}$ converges to $0$ in probability. We want to show that also $(d(\mu_n,\mu))_{n\in\N}$ converges to $0$ in probability. To use Lemma~\ref{lem:subsequence}, let $J\subseteq\N$ be an arbitrary subsequence. Then we find a subsequence $I\subseteq J$ such that $(d_M(\mu_n,\mu))_{n\in I}$ converges to $0$ almost surely. With part $ii)$ this means that also $(d(\mu_n,\mu))_{n\in I}$ converges to $0$ almost surely. But then $(d(\mu_n,\mu))_{n\in\N}$ converges to $0$ in probability. \newline
\underline{$a)\Rightarrow c)$} If $(\mu_n)_n$ converges weakly to $\mu$ in probability, then this means that $d_M(\mu_n,\mu)$ converges to $0$ in probability. Let $f\in\Ccal_b(\R)$ be arbitrary. We must show that $\integrala{\mu_n}{f}$ converges to $\integrala{\mu}{f}$ in probability. To this end, let $J\subseteq \N$ be an arbitrary subsequence. Then there is a subsequence $I\subseteq J$ such that $(d_M(\mu_n,\mu))_{n\in I}$ converges to $0$ almost surely on a measurable subset $\Omega_1\subseteq \Omega$ with measure $1$. Then it holds in particular for any $\omega\in\Omega_1$ that $(\integrala{\mu_n(\omega)}{f})_{n\in I}$ converges to $\integrala{\mu(\omega)}{f}$, so $(\integrala{\mu_n}{f})_{n\in I}$ converges to $\integrala{\mu}{f}$ almost surely. The statement follows with Lemma~\ref{lem:subsequence}.\newline
\underline{$c)\Rightarrow a)$} We find that for all $k\in\N$, $(\integrala{\mu_n}{g_k})_{n\in\N}$ converges to $\integrala{\mu}{g_k}$ in probability. We must show that $d_M(\mu_n,\mu)$ converges to zero in probability. Let $J\subseteq \N$ be any subsequence. With Lemma~\ref{lem:uniformsubsequence}, we find a subsequence $I\subseteq J$ and a measurable set $\Omega_1\subseteq\Omega$ of measure $1$, such that 
\[
\forall\, \omega \in \Omega_1: \forall\, k\in\N: \integrala{\mu_n(\omega)}{g_k}\xrightarrow[n\in I]{} \integrala{\mu(\omega)}{g_k}
\]
With Theorem~\ref{thm:weakconvergenceonR}, this entails that for all $\omega\in\Omega_1$, $(d_M(\mu_n(\omega),\mu(\omega)))_{n\in I}$ converges to $0$. With Lemma~\ref{lem:subsequence}, this means that $(d_M(\mu_n,\mu))_{n\in\N}$ converges to zero in probability.
\end{proof}

So, what we have seen so far is that random probability measures can converge in three different ways, namely weakly in expectation, weakly in probability and weakly almost surely. We have solidly defined and then characterized these convergence concepts. At last, we point out a hierarchy among them:

\begin{theorem}
\label{thm:randomconvergenceimplications}
Let $(\Omega,\Acal,\Prob)$ be a probability space, $(\mu_n)_{n\in\N}$ and $\mu$ be random probability measures on  $(\R,\Bcal)$.
\begin{enumerate}[i)]
	\item If $\mu_n\to\mu$ weakly almost surely, then also weakly in probability.
	\item If $\mu_n\to\mu$ weakly in probability, then also weakly in expectation.
\end{enumerate}
\end{theorem}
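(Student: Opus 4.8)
The plan is to treat the two statements separately, each reducing to a tool already at hand. For statement i), recall that by Definition~\ref{def:stochasticweakconvergence}, weak convergence almost surely of $(\mu_n)_n$ to $\mu$ means that the real-valued random variables $d_M(\mu_n,\mu)$ converge to $0$ almost surely. Since $\Mcal_1(\R)$ together with $d_M$ is a metric space, Lemma~\ref{lem:metricsubsequence} applies verbatim (with $\Xcal=\Mcal_1(\R)$, $d=d_M$) and gives that $d_M(\mu_n,\mu)\to 0$ in probability, which by Definition~\ref{def:stochasticweakconvergence} is exactly weak convergence in probability. So statement i) is essentially immediate.

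For statement ii), I would use the characterization from Theorem~\ref{thm:stochasticweakconvergence}~i)~c): the hypothesis $\mu_n\to\mu$ weakly in probability says precisely that for every $f\in\Ccal_b(\R)$ the real-valued random variables $\integrala{\mu_n}{f}$ converge to $\integrala{\mu}{f}$ in probability. By Lemma~\ref{lem:randommeasureiskernel}~ii) each such random variable is bounded in absolute value by $\supnorm{f}$. The key step is then a bounded-convergence statement for convergence in probability: if $X_n\to X$ in probability and $\sup_n\supnorm{X_n}\le C<\infty$, then $\E X_n\to\E X$. I would prove this by first noting that $\abs{X}\le C$ almost surely, and then using, for every $\varepsilon>0$, the elementary estimate $\E\abs{X_n-X}\le \varepsilon + 2C\,\Prob(\abs{X_n-X}>\varepsilon)$, so that $\limsup_n\E\abs{X_n-X}\le\varepsilon$ and hence $\E X_n\to\E X$; alternatively one can reduce to the ordinary dominated convergence theorem via the subsequence principle of Lemma~\ref{lem:subsequence}. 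Applying this with $X_n=\integrala{\mu_n}{f}$, $X=\integrala{\mu}{f}$, $C=\supnorm{f}$ yields $\E\integrala{\mu_n}{f}\to\E\integrala{\mu}{f}$ for every $f\in\Ccal_b(\R)$, which is exactly weak convergence in expectation by Definition~\ref{def:convergenceinexpectation}.

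I do not expect a serious obstacle here. The only point requiring a genuine (if short) argument is the bounded convergence theorem for convergence in probability, and even that is a two-line estimate once one records that the almost-sure bound $\abs{X}\le C$ is inherited by the limit. Everything else is bookkeeping with the equivalent characterizations of stochastic weak convergence collected in Theorem~\ref{thm:stochasticweakconvergence} and the measurability and boundedness facts in Lemma~\ref{lem:randommeasureiskernel}.
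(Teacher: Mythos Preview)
Your proposal is correct and matches the paper's proof essentially line for line: part i) is handled by Lemma~\ref{lem:metricsubsequence}, and part ii) goes through Theorem~\ref{thm:stochasticweakconvergence}~i)~c) together with the bounded-convergence estimate $\E\abs{X_n-X}\le \varepsilon + 2C\,\Prob(\abs{X_n-X}>\varepsilon)$, which the paper records separately as Lemma~\ref{lem:stochastictoexpectation}.
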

\begin{proof}
\underline{$i)$}	This follows directly with Lemma~\ref{lem:metricsubsequence}.\newline
\underline{$ii)$} If $\mu_n\to\mu$ weakly in probability, per Theorem~\ref{thm:stochasticweakconvergence} this means that for all $f\in\Ccal_b(\R)$ we find $\integrala{\mu_n}{f}\to\integrala{\mu}{f}$ in probability, thus $\E\integrala{\mu_n}{f}\to\E\integrala{\mu}{f}$ by the following Lemma~\ref{lem:stochastictoexpectation}, since $\abs{\integrala{\mu_n}{f}}\leq \supnorm{f}$ and $\abs{\integrala{\mu}{f}}\leq \supnorm{f}$.
\end{proof}

\begin{lemma}
\label{lem:stochastictoexpectation}
Let $(X_n)_{n\in\N}$ and $X$ be complex-valued random variables on a probability space $(\Omega,\Acal,\Prob)$ and $C\in\R$ such that $\abs{X_n}\leq C$ for all $n\in\N$ and $\abs{X}\leq C$. Then $X_n\to X$ in probability implies $\E\abs{X_n-X}\to 0$, in particular $\E X_n \to \E X$. 
\end{lemma}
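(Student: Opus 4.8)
The plan is to establish the $L^1$ convergence $\E\abs{X_n-X}\to 0$ directly, by truncating the expectation at a small threshold $\varepsilon$ and using the uniform bound on the second piece; the stated consequence $\E X_n\to\E X$ will then follow at once from $\abs{\E X_n-\E X}=\abs{\E(X_n-X)}\leq\E\abs{X_n-X}$.

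First I would note that pointwise $\abs{X_n-X}\leq\abs{X_n}+\abs{X}\leq 2C$ for every $n$, so all random variables in sight are bounded, hence integrable, and the expectations below are finite. Fix $\varepsilon>0$. For each $n$ I would split the expectation according to whether $\abs{X_n-X}$ exceeds $\varepsilon$:
\[
\E\abs{X_n-X} = \E\!\left[\abs{X_n-X}\,\one_{\{\abs{X_n-X}\leq\varepsilon\}}\right] + \E\!\left[\abs{X_n-X}\,\one_{\{\abs{X_n-X}>\varepsilon\}}\right] \leq \varepsilon + 2C\cdot\Prob(\abs{X_n-X}>\varepsilon),
\]
bounding the integrand by $\varepsilon$ on the first event and by $2C$ on the second. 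Since $X_n\to X$ in probability means exactly $\Prob(\abs{X_n-X}>\varepsilon)\to 0$, taking $\limsup_n$ gives $\limsup_n\E\abs{X_n-X}\leq\varepsilon$; as $\varepsilon>0$ was arbitrary, $\E\abs{X_n-X}\to 0$, and then $\abs{\E X_n-\E X}\leq\E\abs{X_n-X}\to 0$.

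I do not anticipate any real obstacle here: this is the bounded convergence theorem recast for convergence in probability. The only point to handle with a little care is that the $X_n$ are complex-valued, but since $\abs{\cdot}$ denotes the modulus throughout, both $\abs{X_n-X}\leq 2C$ and $\abs{\E(X_n-X)}\leq\E\abs{X_n-X}$ hold verbatim. As an alternative one could invoke Lemma~\ref{lem:subsequence} to pass, along an arbitrary subsequence, to a further subsequence on which $X_n\to X$ almost surely and then apply the ordinary bounded convergence theorem; but the direct truncation argument above is shorter and self-contained.
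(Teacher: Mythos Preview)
Your proof is correct and essentially identical to the paper's: both split $\E\abs{X_n-X}$ via the indicator $\one_{\{\abs{X_n-X}\leq\varepsilon\}}$, bound the two pieces by $\varepsilon$ and $2C\cdot\Prob(\abs{X_n-X}>\varepsilon)$, and conclude by taking $\limsup$ and letting $\varepsilon\downarrow 0$. Your additional remarks on the complex-valued case and the alternative subsequence route are fine supplementary comments but not needed for the argument.
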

\begin{proof}
Let $\varepsilon > 0$ be arbitrary, then we calculate:
\begin{align*}
\E \abs{X_n - X}&= \E \abs{X_n - X}\one_{\{\abs{X_n-X}\leq\varepsilon\}} + \E \abs{X_n - X}\one_{\{\abs{X_n-X}>\varepsilon\}}\\
&\leq \varepsilon + \Prob(\abs{X_n - X}>\varepsilon)\cdot 2 C.
\end{align*}
Therefore, we conclude
\[
\limsup_{n\to\infty}\E\abs{X_n - X}  \leq \varepsilon.
\]
\end{proof}

\section{Limit Laws in Random Matrix Theory}

We will now  introduce the types of random probability measures which we would like to investigate, namely the empirical spectral distribution of random matrices. To this end, let $\K\in\{\R,\C\}$\label{sym:realorcomplex} and denote by $(\Mat{n}{\K},\opnorm{\cdot})$ \label{sym:matrices} the normed $\K$-vector space of $n\times n$-matrices with $\K$-valued entries, where $\opnorm{\cdot}$ denotes the operator norm with respect to the euclidian norm $\norm{\cdot}$ \label{sym:norm} on $\K^n$, that is,
\[
\,\forall\, X\in\Mat{n}{\K}: \opnorm{X} = \sup\left\{\norm{Xv}: v\in\K^n, \norm{v}=1 \right\}.
\]
It is immediate that $(\Mat{n}{\K},\opnorm{\cdot})$ is a Banach-space and a sequence of matrices $(X_m)_m$ converges to a matrix $X$ in $\Mat{n}{\K}$, iff all entries $X_m(i,j)$ converge to $X(i,j)$ in $\K$ as $m\to\infty$. If $X\in\Mat{n}{\K}$ we denote its \emph{adjoint} by $X^*$, which is just the transpose of $X$ if $\K=\R$ and the conjugate transpose of $X$ if $\K=\C$. A matrix $X\in\Mat{n}{\K}$ is called \emph{self-adjoint} if $X^*=X$ (then $X$ is also called \emph{symmetric} if $\K=\R$ and \emph{Hermitian} if $\K=\C$) and we denote the subset of all self-adjoint matrices of $\Mat{n}{\K}$ by $\SMat{n}{\K}$\label{sym:selfadmatrices}. Then $\SMat{n}{\K}\subseteq \Mat{n}{\K}$ is a closed subset, since $X\mapsto X^*$ is continuous. Further, $\SMat{n}{\K}$ is closed under $\R$-linear combinations.
To introduce more notation, if $\lambda_1,\ldots,\lambda_n\in\R$ are arbitrary, we denote by $\diag(\lambda_1,\ldots,\lambda_n)$\label{sym:diagmat} the diagonal matrix $D\in\SMat{n}{\K}$ with entries $D(i,i)=\lambda_i$ for all $i\in\{1,\ldots,n\}$. Further, we denote by $\tr$\label{sym:trace} the trace functional $\Mat{n}{\K}\longrightarrow \K$, that is,
\[
\,\forall\, X\in\Mat{n}{\K}: \tr X = \sum_{t=1}^{n}X(t,
t) .
\]
The trace has some interesting properties, which are summarized in the following lemma:
\begin{lemma}
\label{lem:trace}
The trace $\tr$ is a continuous linear functional on $(\Mat{n}{\K},\opnorm{\cdot})$. Further, if $X,S\in \Mat{n}{\K}$ are arbitrary, where $S$ is invertible, then $\tr(X)=\tr(S^{-1}XS)$.
\end{lemma}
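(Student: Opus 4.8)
The plan is to verify the three claimed properties of $\tr$ separately, all by elementary manipulations. For \emph{linearity}, I would simply note that for $X,Y\in\Mat{n}{\K}$ and $\alpha,\beta\in\K$ we have $(\alpha X + \beta Y)(t,t) = \alpha X(t,t) + \beta Y(t,t)$, so summing over $t$ from $1$ to $n$ gives $\tr(\alpha X + \beta Y) = \alpha\tr X + \beta\tr Y$ by linearity of finite sums in $\K$. For \emph{continuity}, since we are on a finite-dimensional normed space it suffices to bound $\abs{\tr X}$ by a constant times $\opnorm{X}$. I would use $\abs{X(t,t)} = \abs{\innp{e_t}{X e_t}} \le \norm{e_t}\,\norm{Xe_t} \le \opnorm{X}$ (Cauchy–Schwarz, with $e_t$ the $t$-th standard basis vector of $\K^n$), whence $\abs{\tr X} \le \sum_{t=1}^n \abs{X(t,t)} \le n\,\opnorm{X}$; combined with linearity this yields that $\tr$ is a bounded, hence continuous, linear functional.

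The third claim, $\tr(S^{-1}XS) = \tr(X)$ for invertible $S$, reduces to the cyclic property $\tr(AB) = \tr(BA)$ for arbitrary $A,B\in\Mat{n}{\K}$. I would prove this by a direct index computation: $\tr(AB) = \sum_{t=1}^n (AB)(t,t) = \sum_{t=1}^n \sum_{s=1}^n A(t,s)B(s,t)$, and interchanging the two finite sums and relabelling gives $\sum_{s=1}^n \sum_{t=1}^n B(s,t)A(t,s) = \sum_{s=1}^n (BA)(s,s) = \tr(BA)$. Applying this with $A = S^{-1}X$ and $B = S$ yields $\tr(S^{-1}XS) = \tr(S S^{-1} X) = \tr(X)$, using associativity of matrix multiplication and $SS^{-1} = I_n$.

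There is no real obstacle here; the only points requiring a word of care are that the finite-dimensionality of $\Mat{n}{\K}$ is what lets us conclude continuity from boundedness on basis directions (alternatively one argues directly that matrix-entry convergence, noted in the excerpt to be equivalent to $\opnorm{\cdot}$-convergence, implies convergence of the trace), and that the interchange of summation in the cyclic-property argument is legitimate because all sums are finite. I expect the write-up to be short, with the index computation for $\tr(AB) = \tr(BA)$ being the most substantive step.
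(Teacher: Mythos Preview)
Your argument is correct. For the similarity invariance $\tr(S^{-1}XS)=\tr(X)$, however, the paper takes a different route: it invokes that $X$ and $S^{-1}XS$ share the same characteristic polynomial, and that the trace is (up to sign) the $(n-1)$th coefficient of that polynomial. Your approach via the cyclic identity $\tr(AB)=\tr(BA)$, proved by a direct double-sum interchange, is more elementary and self-contained---it needs no detour through determinants or polynomial coefficients. The paper's argument, on the other hand, yields more in one stroke (all coefficients of the characteristic polynomial, hence all elementary symmetric functions of the eigenvalues, are similarity invariants), at the cost of citing outside material. For the continuity and linearity parts the paper simply declares them immediate, which your explicit bound $\abs{\tr X}\le n\,\opnorm{X}$ makes precise.
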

\begin{proof}
It is immediate that the trace is a continuous linear functional. The equality $\tr(X)=\tr(S^{-1}XS)$ is due to the fact that $X$ and $S^{-1} X S$ have the same characteristic polynomial. The trace is the $(n-1)$th coefficient of the characteristic polynomial (multiplied by $(-1)^{n-1}$). 
 For details we refer the reader to \autocite{Fischer}.
\end{proof}

The next lemma clarifies the eigenvalue structure of self-adjoint matrices: 
\begin{lemma}
\label{lem:symmetricspectrum}
For any matrix $X\in\SMat{n}{\K}$ we find an invertible matrix $S\in\Mat{n}{\K}$ and real numbers $\lambda^X_1\leq\ldots\leq\lambda^X_n$, such that  $S^{-1}XS = \diag(\lambda^X_1,\ldots,\lambda^X_n)$. In particular, $X$ has exactly $n$ real eigenvalues (counting multiplicities), and all eigenvalues are real.
\end{lemma}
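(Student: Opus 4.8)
The plan is to prove this as the spectral theorem for self-adjoint matrices, by induction on $n$, using the single structural fact that a self-adjoint matrix maps the orthogonal complement of any eigenvector into itself. I would organize the argument as: (1) produce one real eigenvalue with a real eigenvector; (2) run the induction to get an orthonormal eigenbasis; (3) assemble $S$ and read off the ``in particular'' statement.

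For step (1), view $X$ as an element of $\Mat{n}{\C}$ (a real symmetric matrix is Hermitian, so this costs nothing). By the fundamental theorem of algebra the characteristic polynomial $\det(t I_n - X)$ has a root $\lambda\in\C$, hence there is $v\in\C^n$, $v\neq 0$, with $Xv=\lambda v$. From $X^*=X$ we get $\lambda\langle v,v\rangle=\langle Xv,v\rangle=\langle v,Xv\rangle=\overline{\lambda}\langle v,v\rangle$, and since $\langle v,v\rangle>0$ this forces $\lambda\in\R$. When $\K=\R$ this additionally tells us that $X-\lambda I_n$ is a \emph{real} matrix with $\det(X-\lambda I_n)=0$, so it has a nonzero kernel vector in $\R^n$; thus in both cases we may take $v\in\K^n$, and after normalizing, $\norm{v}=1$.

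For step (2), the case $n=1$ is trivial. For $n>1$, let $v_1\in\K^n$ be a normalized eigenvector with eigenvalue $\lambda$ as above, and set $W\defeq\{w\in\K^n : \langle w,v_1\rangle=0\}$, a subspace of dimension $n-1$. For $w\in W$ we have $\langle Xw,v_1\rangle=\langle w,Xv_1\rangle=\lambda\langle w,v_1\rangle=0$, so $X(W)\subseteq W$, and the restriction $X|_W$ is self-adjoint for the inherited inner product. By the induction hypothesis $W$ has an orthonormal basis $v_2,\dots,v_n$ consisting of eigenvectors of $X|_W$, hence of $X$ with real eigenvalues. Then $v_1,\dots,v_n$ is an orthonormal basis of $\K^n$ of eigenvectors of $X$. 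For step (3), reorder the $v_i$ so their eigenvalues are nondecreasing, call them $\lambda^X_1\leq\dots\leq\lambda^X_n$, and let $S\in\Mat{n}{\K}$ have $i$-th column $v_i$; orthonormality of the columns gives $S$ invertible (with $S^{-1}=S^*$), and $S^{-1}XS=\diag(\lambda^X_1,\dots,\lambda^X_n)$ by construction. Finally, by Lemma~\ref{lem:trace} similar matrices share a characteristic polynomial, so that of $X$ is $\prod_{i=1}^n(t-\lambda^X_i)$, which has exactly $n$ real roots counted with multiplicity; these are precisely the eigenvalues of $X$.

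The only genuinely delicate point is the real case of step (1): a real matrix need not possess any eigenvalue, so one must pass to the complexification to obtain a root of the characteristic polynomial, invoke self-adjointness to see this root is real, and only then descend back to a real eigenvector via the vanishing of $\det(X-\lambda I_n)$. Once an eigenpair is in hand, the rest of the proof is routine bookkeeping.
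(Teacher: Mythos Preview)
Your proof is correct and complete: it is the standard inductive proof of the spectral theorem for self-adjoint matrices, and you handle the one subtle point (descending from a complex eigenvalue to a real eigenvector when $\K=\R$) properly. The paper itself does not prove this lemma at all---it simply refers the reader to a linear algebra textbook---so your argument supplies strictly more than the paper does. There is nothing to correct.
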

\begin{proof}
We refer the reader to \autocite{Fischer}.
\end{proof}

In general, if $Y$ is a self-adjoint $n\times n$ matrix, we will denote its $n$ real eigenvalues by $\lambda^Y_1\leq\ldots\leq \lambda^Y_n$. \label{sym:eigenvalue}
The next theorem is a very versatile tool in random matrix theory. For example, it can be used to derive that eigenvalues are continuous functions of the entries of the matrix (Corollary~\ref{cor:eigencont}), or it can be used to analyze asymptotic equivalence of empirical spectral distributions via the bounded Lipschitz metric.
\begin{theorem}[Hoffman-Wielandt]
\label{thm:hoffman-wielandt}
For all $n\in\N$ and $X,Y\in\SMat{n}{\K}$ it holds:
\[
\sum_{i=1}^{n}\abs{\lambda_{i}^{X}-\lambda_{i}^{Y}}^2\leq \tr(X-Y)^*(X-Y)=\tr(X-Y)^2.
\]
\end{theorem}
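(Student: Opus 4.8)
The plan is to reduce the inequality to a linear optimization problem over doubly stochastic matrices. First I would note that $X-Y$ is self-adjoint, so $(X-Y)^*(X-Y)=(X-Y)^2$, and expand
\[
\tr(X-Y)^2 = \tr(X^2) - 2\tr(XY) + \tr(Y^2) = \sum_{i=1}^n (\lambda_i^X)^2 + \sum_{i=1}^n(\lambda_i^Y)^2 - 2\tr(XY),
\]
using Lemma~\ref{lem:symmetricspectrum} together with cyclicity of the trace (Lemma~\ref{lem:trace}). Here one should use the slightly sharper spectral theorem, i.e.\ take the diagonalizing matrix $S$ in Lemma~\ref{lem:symmetricspectrum} to be unitary (orthogonal when $\K=\R$); this is standard. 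Writing $X = U D_X U^*$ and $Y = V D_Y V^*$ with $D_X = \diag(\lambda_1^X,\ldots,\lambda_n^X)$, $D_Y = \diag(\lambda_1^Y,\ldots,\lambda_n^Y)$, and setting $W \defeq U^*V$, cyclicity yields $\tr(XY) = \tr(D_X W D_Y W^*) = \sum_{i,j} \lambda_i^X \lambda_j^Y \abs{W(i,j)}^2$.

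Next I would set $p_{ij} \defeq \abs{W(i,j)}^2$ and observe that, since $W$ is unitary, the matrix $P = (p_{ij})_{i,j}$ is doubly stochastic, its rows and columns being the squared euclidean norms of the rows and columns of $W$. Comparing the expansion above with $\sum_i(\lambda_i^X - \lambda_i^Y)^2 = \sum_i(\lambda_i^X)^2 + \sum_i(\lambda_i^Y)^2 - 2\sum_i\lambda_i^X\lambda_i^Y$, the claimed inequality is seen to be equivalent to
\[
\sum_{i,j} p_{ij}\,\lambda_i^X\lambda_j^Y \leq \sum_{i} \lambda_i^X\lambda_i^Y,
\]
that is, to the assertion that the linear functional $P \mapsto \sum_{i,j} p_{ij}\lambda_i^X\lambda_j^Y$ on the compact convex polytope of doubly stochastic $n\times n$ matrices attains its maximum at $P = \mathrm{Id}$.

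To close the argument I would invoke two classical facts: a linear functional on a compact convex polytope attains its maximum at an extreme point, and the Birkhoff--von Neumann theorem, which identifies the extreme points of the set of doubly stochastic matrices with the permutation matrices. It therefore suffices to prove $\sum_i \lambda_i^X\lambda_{\sigma(i)}^Y \leq \sum_i \lambda_i^X\lambda_i^Y$ for every permutation $\sigma$. This is a rearrangement inequality, proved by an exchange argument: if $\sigma \neq \mathrm{id}$, pick $i<j$ with $\sigma(i) > \sigma(j)$; swapping the values of $\sigma$ at $i$ and $j$ changes the sum by $(\lambda_j^X - \lambda_i^X)(\lambda_{\sigma(i)}^Y - \lambda_{\sigma(j)}^Y) \geq 0$, since both eigenvalue families are sorted increasingly, so finitely many such transpositions transform $\sigma$ into the identity without ever decreasing the sum.

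I expect the main obstacle to be the middle step: recognizing the residual cross-term expression as a linear program over the Birkhoff polytope and justifying that its optimum is attained at a permutation matrix. Once this reformulation is in place, the exchange argument is routine. A secondary point requiring a little care is the upgrade from Lemma~\ref{lem:symmetricspectrum} to orthogonal/unitary diagonalization, which is precisely what makes $P$ doubly stochastic; alternatively one may simply cite the spectral theorem.
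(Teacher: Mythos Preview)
Your proof is correct and is precisely the classical argument for Hoffman--Wielandt: unitary diagonalization turns the cross term $\tr(XY)$ into a bilinear form in the eigenvalues with doubly stochastic coefficient matrix, Birkhoff--von~Neumann reduces the resulting linear program to permutations, and the rearrangement inequality finishes. All steps are sound, including the observation that $\tr(XY)$ is real for self-adjoint $X,Y$ (implicit in your expansion) and the exchange argument for the rearrangement step.

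There is nothing to compare against: the paper does not prove this theorem but simply refers the reader to \cite[217]{Hanke}. Your write-up supplies what the paper omits, and does so along the standard route. The only cosmetic point is that Lemma~\ref{lem:symmetricspectrum} as stated in the paper gives an invertible $S$ rather than a unitary one; you flag this yourself, and the upgrade to unitary diagonalization is indeed the spectral theorem for self-adjoint matrices, so this is not a gap.
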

\begin{proof}
See \autocite[217]{Hanke}.
\end{proof}

We can immediately conclude that eigenvalues are continuous functions of the matrices.

\begin{corollary}
\label{cor:eigencont}
Let $n\in\N$ and $l\in\{1,\ldots,n\}$ be arbitrary, then 
\map{\mathrm{Eig}_l}{\SMat{n}{\K}}{\R}{X}{\lambda_l^X}
is continuous.
\end{corollary}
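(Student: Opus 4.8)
The plan is to extract a global Lipschitz bound for $\mathrm{Eig}_l$ straight out of the Hoffman--Wielandt inequality, which trivially gives continuity. Fix $l\in\{1,\dots,n\}$ and let $X,Y\in\SMat{n}{\K}$ be arbitrary. Since $\SMat{n}{\K}$ is closed under $\R$-linear combinations (as recorded in the text), $Z\defeq X-Y$ is again self-adjoint. Theorem~\ref{thm:hoffman-wielandt} then yields
\[
\abs{\lambda_l^X-\lambda_l^Y}^2\leq \sum_{i=1}^n\abs{\lambda_i^X-\lambda_i^Y}^2\leq \tr\bigl((X-Y)^2\bigr),
\]
where in the first step we simply discarded all summands except the $l$-th and used that these terms are non-negative.

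The only real work is to compare the right-hand side with the operator norm, which is the norm on $\Mat{n}{\K}$ in this text. By Lemma~\ref{lem:symmetricspectrum} there is an invertible $S$ with $S^{-1}ZS=\diag(\lambda_1^Z,\dots,\lambda_n^Z)$, hence $S^{-1}Z^2S=\diag((\lambda_1^Z)^2,\dots,(\lambda_n^Z)^2)$, and Lemma~\ref{lem:trace} gives $\tr(Z^2)=\tr(S^{-1}Z^2S)=\sum_{i=1}^n(\lambda_i^Z)^2$. Each eigenvalue of $Z$ is bounded in modulus by $\opnorm{Z}$: if $Zv=\lambda v$ with $\norm{v}=1$ then $\abs{\lambda}=\norm{Zv}\leq\opnorm{Z}$. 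Therefore $\tr((X-Y)^2)\leq n\,\opnorm{X-Y}^2$, and combining with the display above,
\[
\abs{\lambda_l^X-\lambda_l^Y}\leq \sqrt{n}\,\opnorm{X-Y}.
\]

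Thus $\mathrm{Eig}_l$ is Lipschitz continuous with constant $\sqrt{n}$ on $(\SMat{n}{\K},\opnorm{\cdot})$, and in particular continuous, which is the claim. There is essentially no obstacle here; the only point requiring a line of care is the passage from the Frobenius-type quantity $\tr((X-Y)^2)$ to the operator norm, which one could alternatively justify by equivalence of norms on the finite-dimensional space $\Mat{n}{\K}$, but the eigenvalue bound above keeps the argument self-contained and even produces an explicit Lipschitz constant.
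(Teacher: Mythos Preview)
Your proof is correct and follows essentially the same route as the paper: both drop all but the $l$-th summand in the Hoffman--Wielandt bound to get $\abs{\lambda_l^X-\lambda_l^Y}^2\leq\tr((X-Y)^2)$. The only difference is the final step---the paper simply invokes continuity of the trace (Lemma~\ref{lem:trace}) to conclude $\tr((X_m-X)^2)\to 0$ for $X_m\to X$, whereas you bound $\tr((X-Y)^2)\leq n\,\opnorm{X-Y}^2$ explicitly and thereby obtain the slightly stronger Lipschitz statement.
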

\begin{proof}
Let $(X_m)_{m\in\N}$ and $X$ in $\SMat{n}{\K}$ so that $X_m\to X$ for $m\to\infty$ (which means convergence in operator norm, or equivalently, entry-wise convergence). Then we find with Theorem~\ref{thm:hoffman-wielandt} and Lemma~\ref{lem:trace} that
\[
\abs{\lambda_{l}^{X_m}-\lambda_{l}^{X}}^2\leq\sum_{i=1}^N\abs{\lambda_{i}^{X_m}-\lambda_{i}^{X}}^2\leq \tr(X_m-X)^2 \xrightarrow[m\to\infty]{}0.
\]
\end{proof}

Having studied eigenvalues of self-adjoint matrices, let us turn our attention to random matrices.

\begin{definition}
\label{def:randommatrix}
Let $(\Omega,\Acal,\Prob)$ be a probability space and $n\in\N$ be arbitrary then a ($n\times n$ self-adjoint) random matrix is a measurable map $X:(\Omega,\Acal)\to (\SMat{n}{\K},\Bcal^{(n^2)}_s$), where $\Bcal^{(n^2)}_s$\label{sym:matrixborel} denotes Borel $\sigma$-algebra on $\SMat{n}{\K}$.
\end{definition}

It is clear that a map $X:(\Omega,\Acal)\to (\SMat{n}{\K},\Bcal_{s}^{(n^2)})$ is measurable iff all entries $X(i,j):(\Omega,\Acal)\to(\K,\Bcal_{\K})$\label{sym:realcomplexborel} are measurable, where $\Bcal_{\K}$ denotes the Borel $\sigma$-algebra on $\K$. If $X$ is an $n\times n$ random matrix on $(\Omega,\Acal,\Prob)$, then for all $\omega\in\Omega$, $X(\omega)\in\SMat{n}{\K}$, such that $X(\omega)$ possesses eigenvalues $\lambda_{1}^{X(\omega)}\leq\ldots\leq\lambda_{n}^{X(\omega)}$. We wish to see that the maps $\omega\mapsto\lambda_{l}^{X(\omega)}$ for $l=1,\ldots,n$ are measurable.

\begin{lemma}
\label{lem:evmeasurable}
Let $X$ be an $n\times n$-random matrix on $(\Omega,\Acal,\Prob)$ and $l\in\{1,\ldots,n\}$ be arbitrary, then
\map{\lambda_{l}^{X}}{(\Omega,\Acal)}{(\R,\Bcal)}{\omega}{\lambda_{l}^{X(\omega)}}
is measurable, thus a real-valued random variable.
\end{lemma}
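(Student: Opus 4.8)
The plan is to observe that $\lambda_l^X$ is a composition of two maps whose measurability (resp.\ continuity) we have already established, and then invoke the standard fact that continuous maps are Borel measurable and that compositions of measurable maps are measurable.

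Concretely, first I would recall that by Definition~\ref{def:randommatrix} the random matrix $X$ is a measurable map $(\Omega,\Acal)\to(\SMat{n}{\K},\Bcal_s^{(n^2)})$, where $\Bcal_s^{(n^2)}$ is the Borel $\sigma$-algebra associated to the norm topology on $\SMat{n}{\K}$ (equivalently, the topology of entrywise convergence, as noted just before Lemma~\ref{lem:trace}). Next I would recall Corollary~\ref{cor:eigencont}, which states that the map $\mathrm{Eig}_l:\SMat{n}{\K}\to\R$, $X\mapsto\lambda_l^X$, is continuous. Since $\SMat{n}{\K}$ carries the Borel $\sigma$-algebra generated by its topology, continuity of $\mathrm{Eig}_l$ implies that $\mathrm{Eig}_l$ is $\Bcal_s^{(n^2)}$-$\Bcal$-measurable.

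Finally, I would write $\lambda_l^X=\mathrm{Eig}_l\circ X$ as a composition of the measurable map $X:(\Omega,\Acal)\to(\SMat{n}{\K},\Bcal_s^{(n^2)})$ with the measurable map $\mathrm{Eig}_l:(\SMat{n}{\K},\Bcal_s^{(n^2)})\to(\R,\Bcal)$, and conclude measurability of $\lambda_l^X$ from the fact that compositions of measurable maps are measurable. Hence $\lambda_l^X$ is a real-valued random variable. There is no real obstacle here; the only point worth spelling out is why continuity of $\mathrm{Eig}_l$ yields Borel measurability, which is immediate once one recalls that $\Bcal_s^{(n^2)}$ and $\Bcal$ are the Borel $\sigma$-algebras of the respective topologies.
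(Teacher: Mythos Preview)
Your proposal is correct and matches the paper's proof essentially verbatim: the paper also writes $\lambda_l^X=\mathrm{Eig}_l\circ X$, invokes Corollary~\ref{cor:eigencont} for continuity (hence measurability) of $\mathrm{Eig}_l$, and measurability of $X$ from Definition~\ref{def:randommatrix}, then concludes by composition.
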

\begin{proof}
We know by Corollary~\ref{cor:eigencont} that
\map{\mathrm{Eig}_l}{\SMat{n}{\K}}{\R}{X}{\lambda_l^X}
is continuous, in particular measurable. Further, $X:\Omega\longrightarrow \SMat{n}{\K}$
is  measurable per definition, hence the composition $\lambda_{l}^{X}\defeq \mathrm{Eig}_l\circ X$ is measurable as well. 
\end{proof}

Lemma~\ref{lem:evmeasurable} allows us to study eigenvalues of random matrices in the context of probability theory. One aspect which gains a lot of attention is the behavior of the empirical distribution of the eigenvalues (see also Example~\ref{ex:empiricaldist}).

\begin{definition}
\label{def:esd}
Let $X$ be an $n\times n$ random matrix on $(\Omega,\Acal,\Prob)$, then the \emph{empirical spectral distribution (ESD)} $\sigma_{n}$ of $X$ is the random probability measure on $(\R,\Bcal)$ given by\label{sym:ESD}
\map{\sigma_{n}}{\Omega\times\Bcal}{\left[0,1\right]}{(\omega,B)}{\sigma_{n}(\omega,B)\defeq \frac{1}{n} \sum_{l=1}^{n}{\delta_{\lambda_{l}^{X(\omega)}}(B)}}
\end{definition}
It follows from our discussion in Example~\ref{ex:empiricaldist} that $\sigma_n$ really is a random probability measure.
How is $\sigma_n$ to be interpreted? For any interval $I\subseteq\R$, the random variable $\sigma_n(I)$ tells us the proportion of the $n$ eigenvalues that fall into the interval $I$. Thus, $\sigma_n$ carries information on the location of the eigenvalues, and it is of particular interest where the eigenvalues are located in the limit, that is, for $n\to\infty$. 

\subsubsection*{Wigner's semicircle law}

It is a famous theorem by Wigner that allows us to conclude under fairly weak assumptions  (mainly independence of matrix entries and uniformly bounded moments) that in the limit, eigenvalues will be spread according to the semicircle distribution:

\begin{definition}
\label{def:semicircle}
The semicircle distribution $\sigma$\label{sym:semicircdist} is the probability measure on $(\R,\Bcal)$ with Lebesgue-density $f_{\sigma}$ where
\map{f_{\sigma}}{\R}{\R}{x}{f_{\sigma}(x)\defeq \frac{1}{2\pi}\sqrt{4-x^2}\one_{\left[-2,2\right]}(x).}\label{sym:semicircdensity}
\end{definition}

Here and throughout this text, we will denote the Lebesgue measure on $(\R,\Bcal)$ by $\lebesgue$\label{sym:Lebesgue}. With respect to Definition~\ref{def:semicircle}, we have to prove that $f_{\sigma}\lebesgue$\label{sym:measurewithdensity} is actually a probability measure. We see immediately that the measure is finite, since $f_{\sigma}$ is bounded and has compact support. We will postpone the proof that the Lebesgue integral over $f_{\sigma}$ is $1$ to Lemma~\ref{lem:momentsSCD}. Since convergence to the semicircle distribution is an important and ubiquitous concept, we make the following definition.

\begin{definition}
If $(\sigma_n)_n$ are the ESDs of random matrices $(X_n)_n$ and $\sigma_n\to\sigma$ weakly in expectation resp.\ in probability resp.\ almost surely, then we say that \emph{the semicircle law holds for $(X_n)_n$} in expectation resp.\ in probability resp.\ almost surely.
\end{definition}

We now turn to Wigner's semicircle law. Notationally, for all $n\in\N$ we define the index set $\oneto{n}^2\defeq \oneto{n}\times\oneto{n}=\{1,2,\ldots,n\}\times\{1,2,\ldots,n\}$.\label{sym:numbersquare}

\begin{definition}
\label{def:Wignerscheme}
Let for all $n\in\N$, $X_n=(X_n(i,j))_{(i,j)\in\oneto{n}^2}$ be a family of real-valued random variables, then the sequence $(X_n)_n$ is called \emph{Wigner scheme}, if the following holds:
\begin{enumerate}[i)]
\item All random variables have uniformly bounded absolute moments, that is: For all $q\in\N$ there exists a constant $L_q\in(0,\infty)$ such that for all $n\in\N$ and all $(i,j)\in\oneto{n}^2$: $\E\abs{X_n(i,j)}^q\leq L_q$.
\item All random variables are standardized, that is: For all $n\in\N$ and all $(i,j)\in\oneto{n}^2$: $\E X_n(i,j) = 0$ and $\V X_n(i,j)=1$.
\item The families $X_n$ are symmetric, that is: For all $n\in\N$ and $(i,j)\in\oneto{n}^2$ we have $X_n(i,j)=X_n(j,i)$.
\item For all $n\in\N$ the family $(X_n(i,j))_{1\leq i \leq j \leq n}$ is independent.
\end{enumerate}
\end{definition}

Note in particular that in Definition~\ref{def:Wignerscheme} we \emph{do not} require that the whole family $((X_n(i,j))_{1\leq i \leq j \leq n})_{n\in\N}$ be independent. A very simple Wigner scheme is given in the following example:

\begin{example}
\label{ex:Wignerscheme}
Let $(X(i,j))_{1\leq i\leq j}$ be an i.i.d.\ family of real-valued random variables such that $\E\abs{X(1,1)}^q<\infty$ for all $q\in\N$, $\E X(1,1) = 0$ and $\V X(1,1) = 1$. Further, set $X(i,j)\defeq X(j,i)$ for all $1\leq j < i$. Now set for all $n\in\N$ and all $(i,j)\in\oneto{n}^2$: $X_n(i,j)\defeq X(i,j)$. Roughly speaking, $X_n$ is the $n\times n$ submatrix of the infinite matrix $X$. Then clearly, $(X_n)_n$ is a Wigner scheme as in Definition~\ref{def:Wignerscheme}. 
\end{example}

The following Theorem is called "Wigner's semicircle law."

\begin{theorem}
\label{thm:wigner}
Let $(X_n)_n$ be a Wigner scheme defined on a probability space $(\Omega,\Acal,\Prob)$. Define for all $n\in\N$ the Wigner matrix $W_n$\label{sym:randommatrix} by
\[
\forall\, (i,j)\in\oneto{n}^2:~ W_n(i,j)\defeq \frac{1}{\sqrt{n}}X_n(i,j).
\]
Then the semicircle law holds for $(W_n)_n$ almost surely.	
\end{theorem}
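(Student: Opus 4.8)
The plan is to prove the theorem by the \emph{method of moments}. The starting point is the identity
\[
\integrala{\sigma_n}{x^k}\;=\;\frac{1}{n}\sum_{l=1}^{n}\bigl(\lambda_l^{W_n}\bigr)^k\;=\;\frac{1}{n}\tr\bigl(W_n^{\,k}\bigr),
\]
valid for every $k\in\N$, which follows from Lemma~\ref{lem:symmetricspectrum} (diagonalize the self-adjoint matrix $W_n$) together with the conjugation invariance of the trace from Lemma~\ref{lem:trace}. Since the semicircle distribution $\sigma$ has compact support, it is determined by its moments, which are $\integrala{\sigma}{x^{2k+1}}=0$ and $\integrala{\sigma}{x^{2k}}=C_k$, the $k$-th Catalan number (see Lemma~\ref{lem:momentsSCD}). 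I would therefore reduce the theorem to the claim that for every fixed $k\in\N$ one has $\tfrac1n\tr(W_n^{\,k})\to\integrala{\sigma}{x^k}$ almost surely. Granting this, one intersects the countably many full-measure events to obtain a single event $\Omega_0$ with $\Prob(\Omega_0)=1$ on which \emph{all} moments of $\sigma_n(\omega)$ converge to those of $\sigma$. For $\omega\in\Omega_0$ the sequence $(\sigma_n(\omega))_n$ has bounded second moments, hence is tight by Lemma~\ref{lem:tight}; by Lemma~\ref{lem:convergentsubsequence} and Lemma~\ref{lem:subsequential}, every subsequence has a weakly convergent sub-subsequence, whose limit has the semicircle moments and therefore, by moment-determinacy, equals $\sigma$; so $\sigma_n(\omega)\to\sigma$ weakly. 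Thus $\sigma_n\to\sigma$ weakly almost surely, which is the assertion by Theorem~\ref{thm:stochasticweakconvergence} $ii)$.

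For the reduced claim I would split into an expectation estimate and a concentration estimate. Writing out the matrix product,
\[
\E\Bigl[\tfrac{1}{n}\tr\bigl(W_n^{\,k}\bigr)\Bigr]=\frac{1}{n^{1+k/2}}\sum_{i_1,\dots,i_k\in\oneto{n}}\E\bigl[X_n(i_1,i_2)X_n(i_2,i_3)\cdots X_n(i_k,i_1)\bigr],
\]
I would first show this converges to $\integrala{\sigma}{x^k}$, and then show $\V\bigl(\tfrac1n\tr(W_n^{\,k})\bigr)=O(n^{-2})$. By Chebyshev's inequality and the Borel--Cantelli lemma the centered quantity then tends to $0$ almost surely, which combined with the expectation limit gives the claim.

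The combinatorial core is the analysis of the sum above. Each tuple $(i_1,\dots,i_k)$ is viewed as a closed walk of length $k$, and the walks are grouped according to the partition of $\{1,\dots,k\}$ recording which indices coincide, equivalently according to the isomorphism type of the multigraph of visited vertices and traversed edges. Using independence (Definition~\ref{def:Wignerscheme} iv)) and centering (ii)), a walk contributes $0$ unless every distinct edge is traversed at least twice; using the uniform moment bound (i)), each surviving walk contributes $O(1)$, and a walk on $v$ distinct vertices can be realized in $O(n^{v})$ ways. If the walk uses $e$ distinct edges, connectedness forces $e\ge v-1$ and the ``each edge at least twice'' condition forces $2e\le k$, so $v\le\lfloor k/2\rfloor+1$; the normalization $n^{-1-k/2}$ annihilates in the limit all walks with $v<k/2+1$. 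Equality $v=k/2+1$ is possible only for $k$ even, and forces the multigraph to be a tree with $v-1$ edges each traversed exactly twice; such walks contribute $1$ in the limit by the unit-variance condition (ii)), and the number of their isomorphism types is exactly the Catalan number $C_{k/2}$ --- reproducing $\integrala{\sigma}{x^k}$. The variance bound is obtained by the same philosophy applied to pairs of closed walks $(\mathbf i,\mathbf j)$ with normalization $n^{-2-k}$: the terms in which the two walks share no edge cancel against $\bigl(\E\tfrac1n\tr(W_n^{\,k})\bigr)^2$, and a vertex/edge count shows the remaining terms number $O(n^k)$, giving $\V=O(n^{-2})$.

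I expect the main obstacle to be precisely this graph enumeration: formalizing the walk--partition--multigraph correspondence, proving the vertex bound $v\le\lfloor k/2\rfloor+1$ with the doubled-tree characterization of the extremal case, identifying the extremal count with the Catalan number, and carrying out the heavier two-walk bookkeeping needed for the $O(n^{-2})$ variance estimate. All four conditions of Definition~\ref{def:Wignerscheme} enter exactly here --- independence and centering to discard walks with a once-traversed edge, the uniform moment bound to keep each surviving walk's contribution bounded, and unit variance to pin the doubled-tree contribution to $1$ --- which is why the hypotheses are stated as they are.
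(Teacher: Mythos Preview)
Your proposal is correct and follows essentially the same approach as the paper: the paper likewise applies the method of moments via the trace identity of Corollary~\ref{cor:momenttosum}, proves convergence of expected moments by classifying closed walks according to their edge-multiplicity structure (organized there via ``profiles'' $\rho\in\Pi(k)$ and ``colorings,'' with the extremal doubled-tree walks enumerated by a bijection with Dyck paths, Lemma~\ref{lem:coloringsWIG} and Lemma~\ref{lem:TnWIG}), and then establishes an $O(n^{-2})$ variance bound by the same two-walk argument you describe (Lemma~\ref{lem:doubleboundWIG}), invoking Borel--Cantelli; the reduction from almost sure moment convergence to almost sure weak convergence is packaged there as Theorem~\ref{thm:randommomentconvergence}/Theorem~\ref{thm:expectationandvariance}, which is exactly your tightness-plus-subsequence argument.
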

We will prove Theorem~\ref{thm:wigner} in various ways: In Section~\ref{sec:SemicircleByMoments} we will employ the method of moments to prove this theorem, whereas in Section~\ref{sec:SemicircleByStieltjes} we use the Stieltjes transform method.

\subsubsection*{The Marchenko-Pastur Law}

Another class of random matrix models besides the Wigner schemes fall into the category of covariance matrices.
Assume we have $n$ observations $x_1,\ldots,x_n$, each with $p$ real-valued covariates, where $n, p\in\N$, so that $x_i=(x_i(1),\ldots,x_i(p))^T$ for all $i\in\{1,\ldots,n\}$. Define the $p\times n$ data matrix $X_n\defeq (x_1|x_2|\ldots|x_n)$. The sample covariance matrix is then defined by
\[
\tilde{S}_n \defeq \frac{1}{n-1} \sum_{k=1}^n (x_k-\bar{x})(x_k-\bar{x})^T \quad\left(=\frac{n}{n-1}\cdot\left(\frac{1}{n}\sum_{k=1}^nx_kx_k^T -\bar{x}\bar{x}^T\right)\right),
\]
which is of dimension $p\times p$. Here, the vector $\bar{x}$ denotes the arithmetic mean of the vectors $x_k$. Assuming that the data stems from $n$ i.i.d. realizations of an $\R^p$-valued random vector $X$ with $\Lcal_2$-entries, $\tilde{S}_n$ is an unbiased estimator for its covariance matrix
\[
\E(X-\E X)(X-\E X)^T =
\begin{pmatrix}
\V X(1) & \cdots & \Cov(X(1),X(p))\\
\vdots &\ddots &\vdots\\
 \Cov(X(p),X(1)) & \cdots & \V X(p)
\end{pmatrix}.
\]
Many test statistics are based on the eigenvalues of the sample covariance matrix. When analyzing these eigenvalues in the limit, it suffices to consider 
\begin{equation}
\label{eq:Sn}	
S_n \defeq \frac{1}{n}\sum_{k=1}^n x_k x_k^T = \frac{1}{n}X_nX_n^T,
\end{equation}
since $\bar{x}\bar{x}^T$ is of rank $1$. Also, we will assume that the number of covariates $p$ grows with the number of observations $n$, so $p=p_n$. We further assume that $p/n\longrightarrow y\in(0,\infty)$, that is, $p$ grows proportionally with $n$. This leads to the definition of a Marchenko-Pastur scheme.

\begin{definition}
\label{def:MPscheme}
Let for all $n\in\N$, $p=p_n\in\N$ and $(X_n(i,j))_{i\in\oneto{p},j\in\oneto{n}}$ be a family of real-valued random variables. Then the sequence  $(X_n)_n$ is called \emph{Marchenko-Pastur scheme}, if the following holds:
\begin{enumerate}[i)]
\item All random variables have uniformly bounded absolute moments, that is: For all $q\in\N$ there exists a constant $L_q\in(0,\infty)$ such that for all $n\in\N$ and all $(i,j)\in\oneto{p}\times\oneto{n}$: $\E\abs{X_n(i,j)}^q\leq L_q$.
\item All random variables are standardized, that is: For all $n\in\N$ and all $(i,j)\in\oneto{p}\times\oneto{n}$: $\E X_n(i,j) = 0$ and $\V X_n(i,j)=1$.
\item For all $n\in\N$ the family $(X_n(i,j))_{i\in\oneto{p},j\in\oneto{n}}$ is independent.
\item There exists a constant $y\in(0,\infty)$ such that $p/n\to y$ as $n\to\infty$. 
\end{enumerate}
\end{definition}

We will see that eigenvalues of covariance matrices which are based on MP-schemes will spread according to the Marchenko-Pastur distribution:

\begin{definition}
The (standard) MP distribution with ratio index $y\in(0,\infty)$ is the probability measure $\mu^{y}$ on $(\R,\Bcal)$ given by
\[
\mu^{y} = \frac{1}{2\pi xy}\sqrt{((1+\sqrt{y})^2-x)(x-(1-\sqrt{y})^2)} \one_{(0,\infty)}(x)\lebesgue(\de x) + \left(1-\frac{1}{y}\right)\delta_0 \one_{y>1},
\]
where $\lebesgue$ denotes the Lebesgue measure on $(\R,\Bcal)$ and $\delta_0$ denotes the Dirac measure in $0$.
\end{definition}

\
\begin{definition}
Let $y\in(0,\infty)$. If $(\mu_n)_n$ are the ESDs of random matrices $(V_n)_n$ and $\mu_n\to\mu^y$ weakly in expectation resp.\ in probability resp.\ almost surely, then we say that \emph{the Marchenko-Pastur law holds for $(V_n)_n$} in expectation resp.\ in probability resp.\ almost surely.
\end{definition}

The following Theorem is called "Marchenko-Pastur law."

\begin{theorem}
\label{thm:MP}
Let $(X_n)_n$ be an MP-scheme defined on a probability space $(\Omega,\Acal,\Prob)$. Define for all $n\in\N$ the MP-matrix $V_n$ by
\[
V_n \defeq \frac{1}{n}X_n X_n^T.
\]
Then the MP-law holds for $(V_n)_n$ almost surely.	
\end{theorem}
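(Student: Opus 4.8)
The plan is to follow the method of moments, in close analogy with the proof of Wigner's semicircle law (Theorem~\ref{thm:wigner}, Section~\ref{sec:SemicircleByMoments}). Since $\mu^y$ is compactly supported --- its absolutely continuous part is carried by $[(1-\sqrt y)^2,(1+\sqrt y)^2]$ and it has at most the single atom $(1-\tfrac1y)\delta_0$ --- it is determined by its moments (Carleman's condition). Hence, by the moment criterion for almost sure weak convergence developed in Chapter~\ref{chp:methodofmoments} (cf.\ Theorem~\ref{thm:randommomentconvergence}), it suffices to prove, for every fixed $k\in\N$: (a) $\E\integrala{\mu_n}{x^k}\to m_k\defeq\integrala{\mu^y}{x^k}$; and (b) $\sum_{n}\V\integrala{\mu_n}{x^k}<\infty$, the latter giving, by Chebyshev's inequality and the Borel--Cantelli lemma, $\integrala{\mu_n}{x^k}\to m_k$ almost surely, so that intersecting the resulting full-measure events over all $k\in\N$ yields simultaneous almost sure convergence of all moments and hence, with Theorem~\ref{thm:stochasticweakconvergence}, the asserted almost sure weak convergence. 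Here $\integrala{\mu_n}{x^k}=\tfrac1p\tr V_n^k$; note that when $p>n$ the $p-n$ forced zero eigenvalues of $V_n$ are automatically absorbed by the normalisation $\tfrac1p$ and produce the atom at $0$ in the limit. Because all entries have uniformly bounded moments of every order, no truncation of the entries (which one could otherwise arrange via Theorem~\ref{thm:hoffman-wielandt}) is needed.

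For (a), I would expand
\[
\E\,\tfrac1p\tr V_n^{k}=\frac{1}{p\,n^{k}}\sum_{\substack{i_1,\dots,i_k\in\oneto{p}\\ j_1,\dots,j_k\in\oneto{n}}}\E\!\left[\,\prod_{t=1}^{k}X_n(i_{t-1},j_t)\,X_n(i_t,j_t)\right],
\]
with the convention $i_0\defeq i_k$. Each summand is encoded by a closed walk alternating between the ``row'' vertex set $\oneto{p}$ and the ``column'' vertex set $\oneto{n}$, i.e.\ by a bipartite multigraph with $2k$ (multi)edges. Here the whole family $(X_n(i,j))_{i,j}$ is independent (no pairing constraint, unlike the Wigner case), so since the entries are centred and have unit variance, a summand is nonzero only if every edge of its graph is traversed at least twice; the dominant contribution then comes precisely from walks whose graph is a tree with each of its $k$ edges traversed exactly twice (such graphs have $k+1$ distinct vertices, and every other configuration contributes $O(1/n)$). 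Grouping these trees by their number $a\in\{1,\dots,k\}$ of distinct row-vertices, each contributes $\asymp(p/n)^{\,a-1}\to y^{\,a-1}$, and the number of them is the Narayana number $N(k,a)=\tfrac1k\binom ka\binom k{a-1}$; summing gives
\[
m_k=\sum_{a=1}^{k}N(k,a)\,y^{\,a-1}=\sum_{s=0}^{k-1}\frac{1}{s+1}\binom ks\binom{k-1}{s}y^{s},
\]
and one checks separately (e.g.\ from the quadratic equation satisfied by the Stieltjes transform of $\mu^y$, or directly from its density) that this is indeed $\integrala{\mu^y}{x^k}$.

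For (b), I would write $\V(\tfrac1p\tr V_n^k)=\tfrac{1}{p^2n^{2k}}\big(\E[(\tr(X_nX_n^T)^k)^2]-(\E[\tr(X_nX_n^T)^k])^2\big)$ and expand over \emph{pairs} of closed walks; the subtracted mean-square exactly cancels the contribution of pairs whose two graphs are edge-disjoint (for which the expectation factorises), so only pairs sharing at least one edge --- hence with connected union --- survive, and a vertex count shows these have at least two fewer distinct vertices than a generic disjoint pair, forcing $\V\integrala{\mu_n}{x^k}=O(n^{-2})$, which is summable. This delivers (b).

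I expect the main obstacle to be the combinatorics: cleanly isolating the dominant index configurations, evaluating their count as the Narayana polynomial in $y$ (and confirming this reproduces the Marchenko--Pastur moments), and --- more delicate --- the vertex-counting for pairs of walks in the variance estimate. An alternative route, pursued in Chapter~\ref{chp:StieltjesSCLMPL}, is the Stieltjes transform method: applying the Schur complement formula to one column of $X_n$ yields an approximate self-consistent quadratic equation for $s_n(z)=\tfrac1p\tr(V_n-z)^{-1}$ on $\C^+$; a martingale-difference concentration argument shows $s_n(z)$ stays close to its expectation; stability of the Marchenko--Pastur equation then pins down $s_n(z)\to s_{\mu^y}(z)$; and the Stieltjes continuity theorem, together with tightness from $\sup_n\E\integrala{\mu_n}{x^2}<\infty$, upgrades this to almost sure weak convergence.
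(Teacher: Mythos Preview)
Your proposal is correct and follows essentially the same approach as the paper's Section~\ref{sec:MPByMoments}: the same bipartite closed-walk expansion, the same isolation of double-edged trees with $k+1$ vertices grouped by the number of row-vertices (the paper counts these via ``MP path difference sequences'' and a reflection principle, arriving at $\tfrac{1}{r+1}\binom{k-1}{r}\binom{k}{r}$, which is exactly your Narayana number $N(k,r+1)$), and the same $O(n^{-2})$ variance bound from pairs of walks sharing at least one edge. The only cosmetic difference is that the paper first decomposes the $k$-th moment into finitely many profile-indexed pieces and bounds the variance of each separately, whereas you bound the full variance directly; both work.
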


We will prove Theorem~\ref{thm:MP} in various ways: In Section~\ref{sec:MPByMoments} we will employ the method of moments to prove this theorem, whereas in Section~\ref{sec:MPByStieltjes} we use the Stieltjes transform method.

\subsubsection*{Outlook}

Of course, a valid question is how to prove Theorem~\ref{thm:wigner} and Theorem~\ref{thm:MP}. We see that certain conditions are formulated for entries of these matrix models. In order to use these conditions in our analysis, how can we relate the ESDs $\sigma_n$ and $\mu_n$ back to the entries of their respective random matrices? And lastly, how can we conclude (stochastic) weak convergence of these ESDs? There are (at least) two standard ways to achieve this, namely the method of moments and the Stieltjes transform method. These methods will be discussed in depth in the following sections. We will also use these methods to prove the almost sure semicircle law and the Marchenko-Pastur law.

\chapter{The Method of Moments}
\label{chp:methodofmoments}

In Chapter~\ref{chp:weakconvergence} we have studied in depth the concepts of weak convergence of probability measures and random probability measures. In this chapter we want to discuss a tool which helps us to infer weak convergence: The method of moments. We will carefully develop this method for both deterministic and random probability measures. To be able to use this method correctly, we also need to delve into the moment problem.
But let us first define what the moments of a measure are:
\begin{definition}
Let $\mu$ be a measure on $(\R,\Bcal)$ and $k\in\N_0$\label{sym:naturalnumberszero}. If $\integrala{\mu}{\abs{x^k}}<\infty$ (where $x^0=1 \,\forall\, x\in\R$) we call the real number 
\[
m_k \defeq \integrala{\mu}{x^k}
\]
\emph{the $k$-th moment of $\mu$}. In this case, we say that $\mu$ has a finite $k$-th moment. On the other hand, if $\integrala{\mu}{\abs{x^k}}=\infty$, we say the $k$-th moment of $\mu$ does not exist.
\end{definition}
 
\section{The Moment Problem}

In numerous applications it is important to know the moments of a probability measure or at least some properties of the moments. In the Hamburger moment problem (see \autocite[145]{Reed}  and \autocite{Shohat}, for example), the question is reversed. Given a sequence of real numbers $(m_k)_{k\in\N_0}$, what can be said about the existence and uniqueness of a measure $\mu$ on $(\R,\Bcal)$ with moments $(m_k)_{k\in\N_0}$? To be more precise, does there exist a measure $\mu$ on $(\R,\Bcal)$ with moments $(m_k)_{k\in\N_0}$, and if so, is it the only measure with those moments? Of course, if such a measure exists, it is a probability measure iff $m_0=1$. It is rather surprising that the existence of such a measure can be nicely characterized:

\begin{theorem}
\label{thm:hamburg}
A sequence of real numbers $(m_k)_{k\in\N_0}$ constitutes the moments of at least one measure on $(\R,\Bcal)$, if and only if for all $N\in\N$ the corresponding Hankel matrix
\[
\begin{pmatrix}
m_0 & m_1 & m_2 & \hdots & m_N\\
m_1 & m_2 & m_3 & \hdots & m_{N+1}\\
m_2 & m_3 & m_4 & \hdots & m_{N+2}\\
\vdots & \vdots & \vdots & \ddots & \vdots\\
m_N & m_{N+1} & m_{N+2} & \hdots & m_{2N}
\end{pmatrix}
\]
is positive semi-definite, that is, if for all $N\in\N_0$ and all $\beta_0,\ldots,\beta_N\in\R$ it holds:
\[
\sum_{r,s = 0}^{N}{\beta_r \beta_s m_{r+s}} \geq 0.
\]
\end{theorem}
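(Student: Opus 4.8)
I would establish the equivalence in two separate implications, with necessity routine and sufficiency carrying all the weight. \textbf{Necessity.} Suppose $(m_k)_{k\in\N_0}$ are the moments of a measure $\mu$ on $(\R,\Bcal)$. Fix $N\in\N_0$ and $\beta_0,\dots,\beta_N\in\R$, and put $p(x)\defeq\sum_{r=0}^{N}\beta_r x^r$. Since $\mu$ has finite moments up to order $2N$ and $\abs{x}^j\le 1+\abs{x}^{2N}$ for $j\le 2N$, the polynomial $p^2$ is $\mu$-integrable, so $0\le\integrala{\mu}{p^2}=\sum_{r,s=0}^{N}\beta_r\beta_s\, m_{r+s}$, which is precisely positive semi-definiteness of the $N$-th Hankel matrix.

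\textbf{Sufficiency.} Assume all Hankel matrices are positive semi-definite. Define the linear functional $L$ on the polynomial ring $\R[x]$ by $L(x^k)\defeq m_k$, extended linearly; the hypothesis says exactly that $L(p^2)\ge 0$ for all $p\in\R[x]$. The aim is an element $\mu\in\Mcal_f(\R)$ with $L(p)=\integrala{\mu}{p}$ for all polynomials $p$, since then $m_k=L(x^k)=\integrala{\mu}{x^k}$. \emph{Step 1 (squares to all non-negative polynomials).} Every $q\in\R[x]$ with $q\ge 0$ on $\R$ is a sum of two squares of real polynomials: factoring $q$ over $\C$ and pairing conjugate roots, the real roots of $q$ must have even multiplicity, so $q=\abs{h}^2$ on $\R$ for some $h\in\C[x]$, i.e.\ $q=(\Re h)^2+(\Im h)^2$. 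Hence $L(q)\ge 0$ whenever $q\ge 0$ on $\R$; that is, $L$ is positive for the pointwise order on $\R[x]\subseteq\Ccal(\R)$. \emph{Step 2 (positive extension to $\Ccal_c(\R)$).} On the ordered vector space $G\defeq\R[x]+\Ccal_c(\R)\subseteq\Ccal(\R)$, the subspace $\R[x]$ is cofinal: any $g=q+h\in G$ with $q\in\R[x]$, $h\in\Ccal_c(\R)$ satisfies $g\ge q-\supnorm{h}$, so $g$ differs from a polynomial by a non-negative element of $G$. By the Riesz extension theorem for positive functionals on ordered vector spaces, $L$ extends to a positive linear functional $\tilde L$ on $G$.

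\emph{Step 3 (representation).} For $f\in\Ccal_c(\R)$ we have $-\supnorm{f}\le f\le\supnorm{f}$ in $G$, so positivity of $\tilde L$ gives $\abs{\tilde L(f)}\le\supnorm{f}\, L(1)=m_0\supnorm{f}$; thus $\tilde L|_{\Ccal_c(\R)}$ is positive and bounded, and Theorem~\ref{thm:Rieszrep} produces a unique $\mu\in\Mcal_f(\R)$ with $\tilde L(f)=\integrala{\mu}{f}$ for all $f\in\Ccal_c(\R)$ and $\mu(\R)\le m_0$. \emph{Step 4 (moment recovery).} Using the cutoffs $\phi_\ell^{\ell+1}$ of Definition~\ref{def:contcutoff} (for $\ell\in\N$), we have $x^k\phi_\ell^{\ell+1}\in\Ccal_c(\R)$ and $\phi_\ell^{\ell+1}\nearrow\one_{\R}$ pointwise. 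From $0\le x^{2k}(1-\phi_\ell^{\ell+1})$ and positivity of $\tilde L$, $\integrala{\mu}{x^{2k}\phi_\ell^{\ell+1}}=\tilde L(x^{2k}\phi_\ell^{\ell+1})\le m_{2k}$, so monotone convergence yields $\integrala{\mu}{x^{2k}}\le m_{2k}<\infty$; in particular $\abs{x}^k$ is $\mu$-integrable, whence dominated convergence gives $\integrala{\mu}{x^k\phi_\ell^{\ell+1}}\to\integrala{\mu}{x^k}$. On the other hand $\abs{x^k(1-\phi_\ell^{\ell+1})}\le x^{2k}/\ell^k$ pointwise for $k\ge 1$ (and $1-\phi_\ell^{\ell+1}\le x^2/\ell^2$ handles $k=0$), so positivity of $\tilde L$ forces $\abs{\tilde L(x^k\phi_\ell^{\ell+1})-m_k}=\abs{\tilde L(x^k(1-\phi_\ell^{\ell+1}))}\to 0$. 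Combining, $\integrala{\mu}{x^k}=\lim_{\ell}\tilde L(x^k\phi_\ell^{\ell+1})=m_k$ for every $k$, and in particular $\mu(\R)=m_0$.

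The \textbf{main obstacle} lies entirely in sufficiency, and there in Steps 1--2: turning the ``squares only'' positivity supplied by the Hankel condition into true positivity against \emph{all} non-negative polynomials (the sum-of-two-squares lemma, resting on the factorization of real polynomials), and then obtaining a positive \emph{extension} of $L$ to a function space where Theorem~\ref{thm:Rieszrep} applies (the Riesz extension theorem, a Zorn/Hahn--Banach argument). A standard alternative bypasses Steps 1--2: equip $\C[x]$ with the semi-inner product $\langle p,q\rangle\defeq L(\bar p\, q)$, quotient by its null space and complete to a Hilbert space $\Ha$; multiplication by $x$ then descends to a densely defined symmetric operator on $\Ha$ commuting with complex conjugation, hence admitting a self-adjoint extension $A$ on $\Ha$; and with $\xi\defeq[1]$ and $E$ the spectral measure of $A$, the measure $\mu(B)\defeq\langle\xi,E(B)\xi\rangle$ satisfies $m_k=L(x^k)=\langle\xi,A^k\xi\rangle=\integrala{\mu}{x^k}$. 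In that route the difficulty moves to the existence of self-adjoint extensions, the spectral theorem for unbounded operators, and the domain bookkeeping justifying $A^k\xi=[x^k]$.
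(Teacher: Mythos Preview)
Your argument is correct. The paper, however, does not actually prove this theorem: its ``proof'' consists entirely of a citation to \autocite[145]{Reed}, together with the remark that real and complex positive semi-definiteness coincide for real symmetric matrices. So you have supplied considerably more than the paper does.

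It is worth noting that the source the paper cites (Reed--Simon) establishes sufficiency by the route you sketch only at the very end as an alternative: one builds the GNS Hilbert space from the sesquilinear form $\langle p,q\rangle = L(\bar p q)$, shows that multiplication by $x$ is symmetric with equal deficiency indices (via the natural conjugation), takes a self-adjoint extension, and reads off $\mu$ from the spectral measure. Your primary route---sum-of-two-squares to upgrade square-positivity to genuine positivity on $\R[x]$, then M.~Riesz extension to $\R[x]+\Ccal_c(\R)$, then the paper's own Theorem~\ref{thm:Rieszrep}---is a genuinely different and arguably more self-contained path within the framework this paper has already set up, since it avoids unbounded operator theory entirely. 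One cosmetic point: in Step~2 you state cofinality as $g\ge q-\supnorm{h}$, which is domination from below; what the M.~Riesz extension theorem needs is domination from \emph{above}, i.e.\ $g\le q+\supnorm{h}$, but of course both inequalities hold for the same reason, so the argument goes through unchanged.
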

\begin{proof}
See \autocite[145]{Reed} in combination with the fact that a real symmetric matrix is positive definite in the real sense iff it is positive definite in the complex sense. 	
\end{proof}

Oftentimes it will not be of interest if a sequence of numbers $(m_k)_{k\in\N_0}$ really belongs to a probability measure, since we automatically obtain this result when employing the method of moments, see Theorem~\ref{thm:methodofmoments}. Theorem~\ref{thm:hamburg} still has two important applications: On the one hand, if the researcher is a priori assuming the target distribution to have specific moments,  Theorem~\ref{thm:hamburg} can be used to check whether this is a plausible assumption and can spare the researcher from trying to prove convergence to a non-existing probability measure. On the other hand, if one has already employed the method of moments and the moments of the target distribution have been calculated, one can a posteriori evaluate the plausibility of the calculations via Theorem~\ref{thm:hamburg}. Indeed, this is not uncommon practice, see  \autocite[15]{Dembo}, for example.
In any case, what will be essential for the method of moments is the knowledge about the uniqueness of a distribution with given moments,
that is, the answer to the question whether there is \emph{at most} one distribution with a given sequence of moments.

\begin{theorem}
\label{thm:uniquemeasure}
Let $(m_k)_{k\in\N}$ be a sequence of real numbers. If one of the following three conditions holds, there is at most one probability measure on $(\R,\Bcal)$\ with moments $(m_k)_{k\in\N}$:
\begin{enumerate}[i)]
	\item $\sum_{k=1}^{\infty}{\frac{1}{\sqrt[2k]{m_{2k}}}} = \infty \quad$ \hfill (Carleman condition),
	\item $\limsup_{k\to\infty}{\frac{\sqrt[2k]{m_{2k}}}{2k}} < \infty$,
	\item $\,\exists\, C,D\geq 1:\,\forall\, k\in\N: \abs{m_k}\leq C\cdot D^k\cdot k!$.
\end{enumerate}
Further, it holds that $iii)\Rightarrow ii)\Rightarrow i)$, that is, the Carleman condition is the weakest of the three.
\end{theorem}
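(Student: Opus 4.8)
The plan is to reduce the whole statement to the single assertion that Carleman's condition~$i)$ implies uniqueness, having first disposed of the elementary implications $iii)\Rightarrow ii)\Rightarrow i)$. For $iii)\Rightarrow ii)$, combine $|m_{2k}|\le C D^{2k}(2k)!$ with the crude bound $((2k)!)^{1/(2k)}\le 2k$ to get $(m_{2k})^{1/(2k)}/(2k)\le C^{1/(2k)}D^2\le CD^2$ for every $k$ (using $C,D\ge 1$), so the $\limsup$ in~$ii)$ is finite. For $ii)\Rightarrow i)$: if $\limsup_k (m_{2k})^{1/(2k)}/(2k)=M<\infty$, then $(m_{2k})^{-1/(2k)}\ge (2(M+1)k)^{-1}$ for all large $k$, and divergence of the harmonic series forces the Carleman series to diverge (if some even moment vanishes the only candidate measure is $\delta_0$ and uniqueness is trivial, and with the convention $1/0=\infty$ this case is absorbed automatically). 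Since~$i)$ is therefore the weakest hypothesis, it remains to prove uniqueness under~$i)$.

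So let $\mu,\nu\in\Mcal_1(\R)$ share the moment sequence $(m_k)$ with $(m_k)$ satisfying~$i)$; Carleman's condition makes every $m_{2k}$ finite, hence (since $|m_k|\le (m_{2k})^{1/2}$ by Cauchy--Schwarz) all moments of $\mu$ and $\nu$ are finite. I would pass to the characteristic functions $\phi_\mu(t)=\int e^{itx}\,\mu(\diff x)$ and $\phi_\nu(t)=\int e^{itx}\,\nu(\diff x)$. Finiteness of $\int|x|^n\,\mu(\diff x)$ for every $n$ permits repeated differentiation under the integral (dominated convergence with dominating function $|x|^n$), so $\phi_\mu\in C^\infty(\R)$, $\phi_\mu^{(n)}(t)=\int (ix)^n e^{itx}\,\mu(\diff x)$, and in particular $\phi_\mu^{(n)}(0)=i^n m_n=\phi_\nu^{(n)}(0)$ for all $n$, while $\sup_{t\in\R}|\phi_\mu^{(n)}(t)|\le\gamma_n\defeq\int|x|^n\,\mu(\diff x)$ and likewise $\sup_t|\phi_\nu^{(n)}(t)|\le\delta_n\defeq\int|x|^n\,\nu(\diff x)$. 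Since a finite Borel measure on $\R$ is determined by its characteristic function, it suffices to show $g\defeq\phi_\mu-\phi_\nu\equiv 0$.

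This is the heart of the proof. The function $g$ is smooth, has $g^{(n)}(0)=0$ for all $n$, and satisfies $\sup_t|g^{(n)}(t)|\le\Gamma_n\defeq\gamma_n+\delta_n$. Cauchy--Schwarz makes $(\gamma_n)$ and $(\delta_n)$ log-convex ($\gamma_n^2\le\gamma_{n-1}\gamma_{n+1}$, and likewise for $\delta$), hence so is their sum $(\Gamma_n)$; moreover $\Gamma_{2k}=2m_{2k}$, so $\Gamma_{2k}^{-1/(2k)}\ge\tfrac12 (m_{2k})^{-1/(2k)}$ and therefore $\sum_n \Gamma_n^{-1/n}\ge\sum_k\Gamma_{2k}^{-1/(2k)}=\infty$. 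Thus $g$ lies in the Carleman class $C\{\Gamma_n\}$ (the smooth functions $f$ with $\sup_t|f^{(n)}|\le A B^n\Gamma_n$ for some constants $A,B$) of a log-convex sequence with $\sum_n\Gamma_n^{-1/n}=\infty$, equivalently $\sum_n\Gamma_n/\Gamma_{n+1}=\infty$, which by the Denjoy--Carleman theorem is quasi-analytic; hence $g^{(n)}(0)=0$ for all $n$ already forces $g\equiv 0$, and $\mu=\nu$ follows.

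The main obstacle is precisely this quasi-analyticity step. Under the stronger hypothesis~$ii)$ one can avoid the Denjoy--Carleman machinery altogether: $ii)$ together with log-convexity gives $\Gamma_n\le (cn)^n$, so $(\Gamma_n/n!)^{1/n}$ is bounded, the Taylor remainder $\frac{|t|^n}{n!}\Gamma_n$ tends to $0$ for $|t|$ small, $g$ vanishes in a neighbourhood of $0$, and since~$ii)$ also makes $\phi_\mu,\phi_\nu$ holomorphic on a strip about $\R$, the identity theorem propagates this to all of $\R$. But to obtain the full force of~$i)$, where $\phi_\mu$ need not be analytic anywhere, one genuinely needs quasi-analyticity, so in a pedagogical account I would cite the Denjoy--Carleman theorem rather than reprove it (its proof runs through a Phragm\'en--Lindel\"of / logarithmic-integral estimate on an auxiliary analytic function). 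The only remaining point requiring care is the bookkeeping equivalence, for log-convex sequences, between the forms $\sum\Gamma_n^{-1/n}=\infty$ and $\sum\Gamma_n/\Gamma_{n+1}=\infty$ of Carleman's condition, itself a short classical lemma.
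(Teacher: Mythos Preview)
Your treatment of the implications $iii)\Rightarrow ii)\Rightarrow i)$ matches the paper's almost exactly (the paper obtains the bound $CD$ rather than your $CD^2$ in the first implication---check your exponent on $D$, since $(D^{2k})^{1/(2k)}=D$, not $D^2$---but this is cosmetic). Where you diverge is in the handling of $i)$ itself: the paper simply cites Akhiezer for Carleman $\Rightarrow$ uniqueness (and separately cites Durrett and Reed for $ii)$ and $iii)$, which becomes redundant once the chain of implications is in place), whereas you supply an actual argument via characteristic functions and the Denjoy--Carleman theorem. Your route is classical and correct: the log-convexity of $\Gamma_n=\gamma_n+\delta_n$ (a sum of log-convex sequences is log-convex, by AM--GM on the cross terms), the uniform derivative bounds $|g^{(n)}|\le\Gamma_n$, and the reduction to quasi-analyticity of $C\{\Gamma_n\}$ are all sound. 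What you gain is a largely self-contained proof modulo Denjoy--Carleman, which is itself a standard but nontrivial citation; what the paper gains is brevity, appropriate for a survey in which the moment problem is background rather than the subject. Your closing remark that the passage from $\sum_k\Gamma_{2k}^{-1/(2k)}=\infty$ to $\sum_n\Gamma_n^{-1/n}=\infty$ (and the equivalence with $\sum_n\Gamma_n/\Gamma_{n+1}=\infty$) requires the log-convexity is exactly the bookkeeping that would need to be spelled out in a complete write-up.
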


\begin{proof}
\underline{$i)$:} See \autocite[85]{Akhiezer}.\newline
\noindent\underline{$ii)$:} See \autocite[123]{Durrett}.\newline
\noindent\underline{$iii)$:} See \autocite[205]{Reed}.\newline
\noindent\underline{Additional statement:} The additional statement also proves that $ii)$ and $iii)$ are sufficient when knowing that $i)$ is sufficient.

We assume that $ii)$ holds. Let for all $k\in\N: \alpha_k \defeq \sqrt[2k]{m_{2k}}\geq 0$, then we have to show $\sum_{k=1}^{\infty}{\frac{1}{\alpha_{k}}}=\infty$ under the condition that $r\defeq \limsup_{k\to\infty}{\frac{\alpha_k}{2k}} < \infty$. But there exists a $K\in\N$ such that for all $k\geq K$ we find $\frac{\alpha_k}{2k}\leq r+1$, thus $\alpha_k \leq 2k\cdot (r+1)$. Due to divergence of the harmonic series we obtain:
\[
\sum_{k=1}^{\infty}{\frac{1}{\alpha_k}}\geq \sum_{k\geq K}{\frac{1}{2k\cdot (r+1)}}=\infty.
\]
Therefore, $i)$ follows from $ii)$.
Now if $iii)$ holds, we find for all $k\in\N$:
\[
\frac{\sqrt[2k]{m_{2k}}}{2k} \leq \frac{\sqrt[2k]{C\cdot D^{2k}\cdot (2k)!}}{2k} \leq C\cdot D \cdot \frac{\sqrt[2k]{(2k)!}}{2k} \leq C\cdot D,
\]
since $(2k)^{2k}\geq (2k)!$ yields ${2k}\geq\sqrt[2k]{(2k)!}$ for all $k\in\N$. Thus, $ii)$ holds.
\end{proof}

In the next corollary we will see that the moments of probability measures with compact support possess moments of all orders, and that they are uniquely determined by their moments.
\begin{corollary}
\label{cor:compactuniquemoments}
Let $\nu$ be a probability measure on $(\R,\Bcal)$ with compact support which lies in $[-a,a]$ for some $a\in\N$. Then
\begin{enumerate}[i)]
\item $\nu$ has moments of all orders.
\item For all $k\in\N$: $\abs{\integrala{\nu}{x^k}}\leq a^k$.
\item $\nu$ is uniquely determined by its moments.
\end{enumerate}
\end{corollary}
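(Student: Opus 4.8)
The plan is to derive all three statements from the single observation that $\nu$ is concentrated on the bounded interval $[-a,a]$, i.e.\ $\nu([-a,a]) = 1$ (this is what ``support contained in $[-a,a]$'' amounts to, the complement of the support of a measure on $\R$ being a null set). Equivalently, $\abs{x} \leq a$ holds $\nu$-almost surely.

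First I would treat i) and ii) in one stroke. For any $k \in \N$,
\[
\integrala{\nu}{\abs{x}^k} = \integrald{x}{\nu}{\abs{x}^k}{[-a,a]} \leq a^k\,\nu([-a,a]) = a^k < \infty,
\]
so the $k$-th moment $m_k \defeq \integrala{\nu}{x^k}$ exists, which is i), and since $\abs{m_k} = \abs{\integrala{\nu}{x^k}} \leq \integrala{\nu}{\abs{x}^k}$, we also obtain $\abs{m_k} \leq a^k$, which is ii).

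For iii) I would simply feed the bound from ii) into Theorem~\ref{thm:uniquemeasure}. Because $a \in \N$ we have $a \geq 1$, and $k! \geq 1$ for every $k \in \N$, hence $\abs{m_k} \leq a^k \leq 1 \cdot a^k \cdot k!$ for all $k$; this is precisely condition iii) of Theorem~\ref{thm:uniquemeasure} with $C = 1$ and $D = a$ (alternatively, $\sqrt[2k]{m_{2k}} \leq a$ makes condition ii) there trivially true). Therefore there is at most one probability measure on $(\R,\Bcal)$ with moment sequence $(m_k)_k$. Since $\nu$ is itself such a measure, it is the only one, i.e.\ $\nu$ is uniquely determined by its moments.

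There is really no hard step here: the corollary is an immediate consequence of the boundedness of the support together with the uniqueness criterion already established in Theorem~\ref{thm:uniquemeasure}. The only point worth a sentence of care is the reduction of the (informally phrased) hypothesis ``compact support in $[-a,a]$'' to the clean measure-theoretic statement $\nu([-a,a]) = 1$, on which the estimate above rests.
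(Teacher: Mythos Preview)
Your proof is correct and follows essentially the same route as the paper: bound $\integrala{\nu}{\abs{x}^k}$ by $a^k$ using that $\nu$ is concentrated on $[-a,a]$, which gives i) and ii) at once, and then invoke Theorem~\ref{thm:uniquemeasure}~iii) with $C=1$, $D=a$ for iii). Your version is in fact slightly more careful than the paper's, which writes $\bigabs{\integrala{\nu}{x^k}}=\integrala{\nu}{\abs{x}^k}$ where only $\leq$ is warranted.
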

\begin{proof}
We calculate for $k\in\N$ arbitrary:
\[
\bigabs{\integrala{\nu}{x^k}}=\integrala{\nu}{\abs{x}^k} = \integrala{\nu}{\one_{[-a,a]}\abs{x}^k}\leq a^k.
\]
This shows $i)$ and $ii)$, and $iii)$ follows immediately with Theorem~\ref{thm:uniquemeasure} $iii)$.
\end{proof}

\section{The Method of Moments for Probability Measures}

Now we are well-prepared to introduce the method of moments, which is a means to infer weak convergence of a sequence of distributions from the convergence of their moments.

\begin{theorem}
\label{thm:methodofmoments}
Let $(\mu_n)_{n\in\N}$ be a sequence in $\Mcal_1(\R)$, so that all moments of every $\mu_n$ exist. If there exists a sequence of real numbers $(m_k)_{k\in\N}$, so that 
\begin{equation}
\label{eq:moment}
\forall\, k\in \N: \lim\limits_{n \rightarrow \infty}{\integrala{\mu_n}{x^k}} = m_k,
\end{equation}
the following statements hold: 

There exists a $\mu \in \Mcal_1(\R)$ and a subsequence of $(\mu_n)_{n\in\N}$, which converges weakly to $\mu$. Then $\forall\, k\in\N: m_k = \integrala{\mu}{x^k}$. In particular, the $(m_k)_{k\in\N}$ are moments of a probability measure on $(\R,\Bcal)$. Further: If $\mu$ is uniquely determined by its moments, then the entire sequence $(\mu_n)_n$ converges weakly to $\mu$.
\end{theorem}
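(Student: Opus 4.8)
The plan is to obtain $\mu$ from a compactness argument, identify its moments, and then upgrade subsequential convergence to convergence of the full sequence via a standard subsequence principle; the one genuinely delicate point is that weak convergence does not automatically carry over moments, because $x^k \notin \Ccal_b(\R)$.

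First I would check that $(\mu_n)_n$ is tight. Since $\integrala{\mu_n}{x^2} \to m_2$, the numbers $\integrala{\mu_n}{x^2}$ form a bounded sequence, so $\sup_n \integrala{\mu_n}{x^2} < \infty$, and Lemma~\ref{lem:tight} gives tightness. Lemma~\ref{lem:convergentsubsequence}~ii) then provides a subsequence $(\mu_n)_{n\in I}$, $I \subseteq \N$, converging weakly to some $\mu \in \Mcal_1(\R)$; this already yields the existence claim.

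Next I would show $\integrala{\mu}{x^k} = m_k$ for every $k \in \N$. Fix $k$ and apply Lemma~\ref{lem:integrationtolimit} to the sequence $(\mu_n)_{n\in I}$ with $h(x) \defeq x^k$ and $g(x) \defeq 1 + x^{2k}$: the function $g$ is strictly positive and continuous, $\sup_{n\in I} \integrala{\mu_n}{g} = 1 + \sup_{n\in I} \integrala{\mu_n}{x^{2k}} < \infty$ because $\integrala{\mu_n}{x^{2k}} \to m_{2k}$, and $h/g$ vanishes at infinity (as $|x|^k/(1+x^{2k}) \to 0$ for $x \to \pm\infty$). Hence $\integrala{\mu_n}{x^k} \to \integrala{\mu}{x^k}$ as $n \to \infty$ in $I$. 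On the other hand $\integrala{\mu_n}{x^k} \to m_k$ along all of $\N$, hence also along $I$, so by uniqueness of limits in $\R$ we get $\integrala{\mu}{x^k} = m_k$. Since $k$ was arbitrary and $\mu \in \Mcal_1(\R)$, this in particular exhibits $(m_k)_k$ as the moment sequence of a probability measure.

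Finally, suppose $\mu$ is uniquely determined by its moments, and apply the subsequence criterion of Lemma~\ref{lem:subsequential}. Let $J \subseteq \N$ be an arbitrary subsequence. Then $(\mu_n)_{n\in J}$ is still tight (a subsequence of a tight sequence) and still satisfies $\integrala{\mu_n}{x^k} \to m_k$ for each $k$, so exactly the argument above produces a further subsequence $I' \subseteq J$ along which $\mu_n$ converges weakly to some $\nu \in \Mcal_1(\R)$ with $\integrala{\nu}{x^k} = m_k$ for all $k$. By the assumed uniqueness, $\nu = \mu$. Thus every subsequence of $(\mu_n)_n$ has a further subsequence converging weakly to $\mu$, and Lemma~\ref{lem:subsequential} yields that the full sequence $(\mu_n)_n$ converges weakly to $\mu$. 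I expect the moment-identification step to be the main obstacle: substituting $x^k$ directly into the definition of weak convergence is not permitted, and one must control the tails instead. This is precisely what Lemma~\ref{lem:integrationtolimit} packages, and the reason it applies here is that convergence of the $2k$-th moments furnishes the uniform bound $\sup_n \integrala{\mu_n}{x^{2k}} < \infty$ needed to dominate the mass far out.
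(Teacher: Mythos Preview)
Your proof is correct and follows essentially the same route as the paper: tightness via the second moments and Lemma~\ref{lem:tight}, a weakly convergent subsequence from Lemma~\ref{lem:convergentsubsequence}, identification of the moments through Lemma~\ref{lem:integrationtolimit} with $h(x)=x^k$ and $g(x)=1+x^{2k}$, and then the subsequence principle of Lemma~\ref{lem:subsequential} for the uniqueness part. Your emphasis on why Lemma~\ref{lem:integrationtolimit} is needed (since $x^k\notin\Ccal_b(\R)$) is exactly the point the paper is making implicitly.
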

\begin{proof}
With \eqref{eq:moment} it follows with $k=2$ and Lemma~\ref{lem:tight} that $(\mu_n)_{n\in\N}$ is tight. Therefore, with Lemma~\ref{lem:convergentsubsequence} there exists a $\mu\in\Mcal_1(\R)$ and a subsequence $J\subseteq\N$ such that $(\mu_n)_{n\in J}$ converges weakly to $\mu$. With Lemma~\ref{lem:integrationtolimit}, we then obtain for all $k\in\N $ that $(\integrala{\mu_n}{x^k})_{n\in J}$ converges to $\integrala{\mu}{x^k}$, since the sequence $(\integrala{\mu_n}{1+x^{2k}})_{n\in J}$ is bounded and the function $x\mapsto \frac{x^k}{1+x^{2k}}$ vanishes at infinity. We conclude with \eqref{eq:moment} that for all $k\in\N$ we have $\integrala{\mu}{x^{k}} = m_k$, so $(m_k)_k$ are indeed moments of a probability measure.

Now, if $\mu$ is uniquely determined by its moments, then the entire sequence $(\mu_n)_{n\in\N}$ -- and not just a subsequence -- converges weakly to $\mu$. To see this, let $(\mu_n)_{n\in I}$ be an arbitrary subsequence. By Lemma~\ref{lem:subsequential}, it suffices to show that this subsequence has another subsequence that converges weakly to $\mu$. But as above (with swapped roles of $I$ and $\N$) we find a probability measure $\nu$ on $(\R,\Bcal)$ and a subsequence $J'\subseteq I$, such that that $(\mu_n)_{n\in J'}$ converges weakly to $\nu$ and the numbers $(m_k)_{k\in\N}$ are the moments of $\nu$. Since $\mu$ is uniquely determined by these moments, we must have $\mu = \nu$.
\end{proof}
\begin{remark}
\label{rem:pitfall}
A converse statement of Theorem~\ref{thm:methodofmoments} is not true in general, that is, there are probability measures $(\mu_n)_n$ and $\mu$ with 
\begin{enumerate}
\item All moments of $\mu$ and of all $\mu_n$ exist.
\item $\mu_n$ converges weakly to $\mu$.
\item The moments of $\mu_n$ do not converge to the moments of $\mu$.
\end{enumerate}
The construction is rather simple: Pick $\mu\defeq \delta_0$ and 
\[
\forall\, n\in\N:\quad \mu_n\defeq \frac{n-1}{n}\delta_0 + \frac{1}{n}\delta_{e^n}
\]
Then surely, conditions 1.\ and 2.\ are satisfied,  but 3.\ as well, since for all $k\in\N$:
\[
\integrala{\mu_n}{x^k} = \frac{1}{n}e^{kn}\to\infty \neq 0 = \integrala{\mu}{x^k}.
\] 	
\end{remark}

\section{The Method of Moments for Random Probability Measures}

The next theorem will generalize the method of moments to the convergence types of random probability measures, namely to weak convergence in expectation, in probability and almost surely. Although this could be presented in greater generality, we will restrict our attention to convergence of random probability measures to a \emph{deterministic} probability measure. This is the type of convergence we will encounter in our analyses ahead.

\begin{theorem}
\label{thm:randommethodofmoments}
Let $(\mu_n)_{n\in\N}$ be random probability measures on $(\R,\Bcal)$ and $\mu$ be a deterministic probability measure on $(\R,\Bcal)$ which is uniquely determined by its moments. Then assuming that all following expressions (random moments, expected random moments) are well-defined and finite, we conclude:
\begin{enumerate}[i)]
	\item If $\,\forall\, k\in\N: \E\integrala{\mu_n}{x^k}\xrightarrow[n\to\infty]{}\integrala{\mu}{x^k}$, then $\mu_n \xrightarrow[n\to\infty]{} \mu$ weakly in expectation.
	\item If $\,\forall\, k\in\N: \integrala{\mu_n}{x^k}\xrightarrow[n\to\infty]{}\integrala{\mu}{x^k}$ in probability, then $\mu_n \xrightarrow[n\to\infty]{} \mu$ weakly in probability.
	\item If $\,\forall\, k\in\N: \left[\integrala{\mu_n}{x^k}\xrightarrow[n\to\infty]{}\integrala{\mu}{x^k}~\Prob\textrm{-a.s.}\right]$, then $\mu_n \xrightarrow[n\to\infty]{} \mu$ weakly almost surely.
\end{enumerate}
\end{theorem}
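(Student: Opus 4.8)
The plan is to reduce all three statements to the deterministic method of moments (Theorem~\ref{thm:methodofmoments}) via the characterizations of stochastic weak convergence from Theorem~\ref{thm:stochasticweakconvergence}, together with the subsequence tools from Lemma~\ref{lem:subsequence} and Lemma~\ref{lem:uniformsubsequence}. The key point that makes this work is that $\mu$ is deterministic and uniquely determined by its moments, so once we know (along a suitable subsequence, for a fixed $\omega$) that all moments of $\mu_n(\omega)$ converge to those of $\mu$, Theorem~\ref{thm:methodofmoments} forces $\mu_n(\omega)\to\mu$ weakly with no ambiguity about the limit.

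For statement $i)$, note that $\E\integrala{\mu_n}{x^k} = \integrala{\E\mu_n}{x^k}$ by Theorem~\ref{thm:expectedmeasure}, and similarly $\integrala{\mu}{x^k} = \integrala{\E\mu}{x^k}$ since $\mu$ is deterministic. Thus the hypothesis says exactly that the deterministic probability measures $(\E\mu_n)_n$ have moments converging to those of $\mu$, and since $\mu$ is uniquely determined by its moments, Theorem~\ref{thm:methodofmoments} gives $\E\mu_n\to\E\mu$ weakly, which by definition is weak convergence in expectation.

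For statement $iii)$, I would argue pointwise in $\omega$. For each $k\in\N$ let $A_k$ be the event of measure one on which $\integrala{\mu_n}{x^k}\to\integrala{\mu}{x^k}$, and set $\Omega_1\defeq\bigcap_{k\in\N}A_k$, which still has measure one. For every $\omega\in\Omega_1$, the deterministic sequence $(\mu_n(\omega))_n$ in $\Mcal_1(\R)$ has all moments converging to the numbers $\integrala{\mu}{x^k}$, so Theorem~\ref{thm:methodofmoments} (using uniqueness of $\mu$) yields $\mu_n(\omega)\to\mu$ weakly. Hence $\mu_n\to\mu$ weakly almost surely by Theorem~\ref{thm:stochasticweakconvergence}~$ii)$~$b)$. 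For statement $ii)$, I would use the subsequence principle: let $J\subseteq\N$ be arbitrary. By Lemma~\ref{lem:uniformsubsequence} applied to the countable family $(\integrala{\mu_n}{x^k})_n$, $k\in\N$, there is a subsequence $I\subseteq J$ and a set $\Omega_1$ of measure one such that for all $\omega\in\Omega_1$ and all $k\in\N$, $\integrala{\mu_n(\omega)}{x^k}\xrightarrow[n\in I]{}\integrala{\mu}{x^k}$. As in case $iii)$, Theorem~\ref{thm:methodofmoments} gives $\mu_n(\omega)\to\mu$ weakly for every $\omega\in\Omega_1$, i.e.\ $(\mu_n)_{n\in I}$ converges weakly to $\mu$ almost surely; by Theorem~\ref{thm:stochasticweakconvergence}~$i)$ this subsequence converges weakly to $\mu$ in probability, and we may extract a further subsequence along which $d_M(\mu_n,\mu)\to0$ almost surely (or simply invoke that weak convergence in probability already holds along $I$). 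Since $J$ was arbitrary, Lemma~\ref{lem:subsequential} (resp.\ the corresponding statement for $d_M(\mu_n,\mu)\to0$ in probability) yields weak convergence in probability of the full sequence.

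The main obstacle is purely bookkeeping in case $ii)$: one must be careful that Lemma~\ref{lem:uniformsubsequence} is exactly the right device, since the naive approach of intersecting countably many almost-sure events fails under mere convergence in probability — the event of measure one must be produced jointly along a common subsequence. Beyond that, everything is a routine application of already-established results; there is no new analytic estimate to make. One should also remark that, as in Theorem~\ref{thm:methodofmoments}, the hypothesis that second moments converge guarantees (via Lemma~\ref{lem:tight}) the tightness needed to extract weakly convergent subsequences, so the appeal to Theorem~\ref{thm:methodofmoments} is legitimate for each fixed $\omega$.
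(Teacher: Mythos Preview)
Your proposal is correct and follows essentially the same route as the paper: Theorem~\ref{thm:expectedmeasure} plus Theorem~\ref{thm:methodofmoments} for~$i)$, a countable intersection of almost-sure events for~$iii)$, and Lemma~\ref{lem:uniformsubsequence} together with the subsequence principle for~$ii)$. The one place where the paper is more careful than you is the interchange $\E\integrala{\mu_n}{x^k}=\integrala{\E\mu_n}{x^k}$ in part~$i)$: in view of Theorem~\ref{thm:expectedmeasure}~$iv)$ this identity is \emph{not} automatic from finiteness of $\E\integrala{\mu_n}{x^k}$ alone, and the paper first applies Theorem~\ref{thm:expectedmeasure}~$ii)$ to the nonnegative function $x^{2k}$ to deduce $\integrala{\E\mu_n}{x^{2k}}<\infty$, hence $\E\mu_n$-integrability of $x^k$, and only then invokes part~$iii)$.
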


\begin{proof}
\underline{i)} With Theorem~\ref{thm:methodofmoments} it suffices to show that for all $k\in\N$,  $\integrala{\E\mu_n}{x^k} \to \integrala{\mu}{x^k}$ as $n\to\infty$. Therefore, all we must argue is that for all $k\in\N$, $\integrala{\E\mu_n}{x^k}=\E\integrala{\mu_n}{x^k}$. But for $k\in\N$ arbitrary we find
\[
\integrala{\E\mu_n}{\abs{x^k}}^2\leq \integrala{\E\mu_n}{x^{2k}} = \E\integrala{\mu_n}{x^{2k}} < \infty,
\]
where we used Theorem~\ref{thm:expectedmeasure} ii), the fact that $x\mapsto x^{2k}$ is non-negative, and the assumption in the statement of the theorem that all expected moments exist. Therefore, $\E\mu_n$ has existing moments of all orders, so with Theorem~\ref{thm:expectedmeasure} iii) we obtain $\integrala{\E\mu_n}{x^k}=\E\integrala{\mu_n}{x^k}$.
\newline
\underline{ii)} We want to show that $\mu_n\to\mu$ weakly in probability, which means that for all $f\in\Ccal_b(\R)$, $\integrala{\mu_n}{f}$ converges to $\integrala{\mu}{f}$ in probability. To this end, let $f\in\Ccal_b(\R)$ be arbitrary. To show that $(\integrala{\mu_n}{f})_{n\in\N}$ converges to $\integrala{\mu}{f}$ in probability we will show that any subsequence has an almost surely convergent subsequence: Let $J\subseteq\N$ be a subsequence. Applying Lemma~\ref{lem:uniformsubsequence} we find a subsequence $I\subseteq J$ and a measurable set $\Omega_1\subseteq\Omega$ of measure $1$, such that
\[
\forall\,\omega\in\Omega_1:\,\forall\, k\in\N: \integrala{\mu_n(\omega)}{x^k}\xrightarrow[n\in I]{}\integrala{\mu}{x^k}.
\]
In particular, with Theorem~\ref{thm:methodofmoments} we find that for all $\omega\in\Omega_1$, $\mu_n(\omega)$ converges weakly to $\mu$ for $n\in I$, so that in particular, $\integrala{\mu_n(\omega)}{f}\to\integrala{\mu}{f}$ for $n\in I$. Therefore, $\integrala{\mu_n}{f}\to\integrala{\mu}{f}$ almost surely for $n\in I$. \newline
\underline{iii)} For all $k\in\N$ we find a measurable set $\Omega_k\subseteq\Omega$ with measure $1$ such that for all $\omega\in\Omega_k: \integrala{\mu_n(\omega)}{x^k}\to\integrala{\mu}{x^k}$ as $n\to\infty$. Then $\Omega'\defeq\cap_{k\in\N}\Omega_k$ has measure $1$ and for all $\omega\in\Omega'$ we find that $\integrala{\mu_n(\omega)}{x^k}\to\integrala{\mu}{x^k}$ for all $k\in\N$, so that with Theorem~\ref{thm:methodofmoments}, for all $\omega\in\Omega'$ we have that $\mu_n(\omega)$ converges weakly to $\mu$. Therefore, $\mu_n$ converges weakly to $\mu$ almost surely.
\end{proof}

We refer the reader to Remark~\ref{rem:stochasticweakconvergence} for an explanation on the use of brackets $[\ldots]$ in Theorem~\ref{thm:randommethodofmoments} $iii)$.

\begin{remark}
\label{bem:mighty}
The method of moments for random probability measures (Theorem~\ref{thm:randommethodofmoments}) works as follows: To show weak convergence of random probability measures in expectation, in probability or almost surely, it suffices to show that the random moments converge in expectation, in probability or almost surely. This is a very useful theorem, in particular considering we do not make any assumptions on the target measure $\mu$ except those mentioned in Theorem~\ref{thm:randommethodofmoments}. In particular, we do not require the target probability measure to have compact support. In the literature on random matrices, this condition is often used to justify the method of moments, see \autocite[11]{Anderson}, for example.
\end{remark}

The next theorem will help us determine when the conditions for Theorem~\ref{thm:randommethodofmoments} are met, to be more precise, when we are able to confirm convergence of the moments in probability or almost surely. Further, it does not assume a priori the knowledge of the target measure $\mu\in\Mcal_1(\R)$. In summary, this is the theorem that is used when applying the method of moments to random matrix theory, see also Theorems~\ref{thm:explorative} and~\ref{thm:expectationandvariance}.

\begin{theorem}
\label{thm:randommomentconvergence}
Let $(\mu_n)_{n\in\N}$ be random probability measures on $(\R,\Bcal)$ and $(m_k)_{k\in\N}$ be a sequence of real numbers, so that there is at most one probability measure on $(\R,\Bcal)$ with moments $(m_k)_{k\in\N}$. We formulate the following conditions, where we assume that all expressions (random moments, expectations and variances) are finite:
\emph{
\begin{enumerate}[(M1)]
	\item For all $k\in\N$,
	\[
	\E\integrala{\mu_n}{x^k} \xrightarrow[n\to\infty]{} m_k.
	\]
\end{enumerate}
}
For the following assumptions we assume that for all $k\in\N$ we can find a finite decomposition
\[
\integrala{\mu_n}{x^k} = D^{(k,1)}_n + \ldots + D^{(k,\ell_k)}_n,
\]
such that for all $k\in\N$ and all $i\in\oneto{\ell_k}$, $\E D^{(k,i)}_n$ converges to a constant as $n\to\infty$.
\emph{
\begin{enumerate}
	\item[(M2)] For all $k\in\N$ and $i\in\oneto{\ell_k}$, 
	\[
	\exists\, z\in\N:\ \E\bigabs{D^{(k,i)}_n-\E D^{(k,i)}_n}^z\xrightarrow[n\to\infty]{} 0,
	\]
	\item[(M3)] For all $k\in\N$ and $i\in\oneto{\ell_k}$,
	\[
	\exists\, z\in\N:\ \E\bigabs{D^{(k,i)}_n -\E D^{(k,i)}_n}^z\xrightarrow[n\to\infty]{} 0 \quad \text{summably fast.}
	\]
\end{enumerate}
}

Then we conclude:
\begin{enumerate}[i)]
	\item[i)] If \emph{(M1)} holds, then there is a $\mu\in\Mcal_1(\R)$ with moments $(m_k)_{k\in\N}$, so that $\E \mu_n\rightarrow \mu$ weakly (that is, $\mu_n\to\mu$ weakly in expectation). In particular, the numbers $(m_k)_{k\in\N}$ are the moments of a probability measure.
	\item[ii)] If \emph{(M1)} and \emph{(M2)} hold, we conclude
	\[
	\,\forall\, k\in\N: \integrala{\mu_n}{x^k}\xrightarrow[n\to\infty]{} \integrala{\mu}{x^k}\textrm{ in probability}
	\]
	and thus $\mu_n\rightarrow\mu$ weakly in probability via Theorem~\ref{thm:randommethodofmoments}.
	\item[iii)] If \emph{(M1)} and \emph{(M3)} hold, we conclude
	\[
	\forall\, k\in\N: \left[\integrala{\mu_n}{x^k} \xrightarrow[n\to\infty]{} \integrala{\mu}{x^k}~\Prob\textrm{-a.s.}\right]
	\]
	and thus $\mu_n\rightarrow\mu$ weakly almost surely via Theorem~\ref{thm:randommethodofmoments}.
\end{enumerate}
\end{theorem}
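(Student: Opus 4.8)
The plan is to reduce each of the three parts to a result already established in the text: part i) to the deterministic method of moments (Theorem~\ref{thm:methodofmoments}) applied to the \emph{expected} measures $\E\mu_n$, and parts ii) and iii) to Theorem~\ref{thm:randommethodofmoments} once the random moments are shown to converge in probability resp.\ almost surely. For part i) I would first note, exactly as in the proof of Theorem~\ref{thm:randommethodofmoments} i), that each $\E\mu_n$ has moments of all orders, since $\integrala{\E\mu_n}{\abs{x^k}}^2\leq\integrala{\E\mu_n}{x^{2k}}=\E\integrala{\mu_n}{x^{2k}}<\infty$ by Theorem~\ref{thm:expectedmeasure} ii) and the standing finiteness hypothesis. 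Hence Theorem~\ref{thm:expectedmeasure} iii) applies and gives $\integrala{\E\mu_n}{x^k}=\E\integrala{\mu_n}{x^k}$ for every $k$, so condition (M1) says precisely that the $k$-th moment of $\E\mu_n$ tends to $m_k$. Applying Theorem~\ref{thm:methodofmoments} to $(\E\mu_n)_n$ produces a probability measure $\mu\in\Mcal_1(\R)$ with moments $(m_k)_{k\in\N}$; since by assumption at most one probability measure has these moments, $\mu$ is uniquely determined by its moments, and Theorem~\ref{thm:methodofmoments} then forces the whole sequence $\E\mu_n$ to converge weakly to $\mu$, i.e.\ $\mu_n\to\mu$ weakly in expectation, and in particular $(m_k)_k$ are the moments of a probability measure.

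For parts ii) and iii), fix $k\in\N$. Writing $\E\integrala{\mu_n}{x^k}=\sum_{i=1}^{\ell_k}\E D^{(k,i)}_n$ and using that each $\E D^{(k,i)}_n$ converges to some constant $c_{k,i}$, condition (M1) forces $\sum_{i=1}^{\ell_k}c_{k,i}=m_k=\integrala{\mu}{x^k}$. It therefore suffices to prove, for each $i\in\oneto{\ell_k}$, that $D^{(k,i)}_n-\E D^{(k,i)}_n\to 0$ in probability (resp.\ almost surely): then $D^{(k,i)}_n\to c_{k,i}$ in the respective mode, and summing over the finitely many $i$ yields $\integrala{\mu_n}{x^k}\to m_k$ in probability (resp.\ almost surely). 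Under (M2), choose $z$ as guaranteed and apply Markov's inequality: $\Prob(\bigabs{D^{(k,i)}_n-\E D^{(k,i)}_n}>\varepsilon)\leq\varepsilon^{-z}\,\E\bigabs{D^{(k,i)}_n-\E D^{(k,i)}_n}^z\to 0$, which is convergence in probability. Under (M3) the same estimate gives $\sum_n\Prob(\bigabs{D^{(k,i)}_n-\E D^{(k,i)}_n}>\varepsilon)<\infty$ for every $\varepsilon>0$, so the Borel--Cantelli lemma (invoked along $\varepsilon=1/m$, $m\in\N$) yields $D^{(k,i)}_n-\E D^{(k,i)}_n\to 0$ almost surely. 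Having thereby established $\integrala{\mu_n}{x^k}\to\integrala{\mu}{x^k}$ in probability (resp.\ almost surely) for every $k$, and $\mu$ being uniquely determined by its moments as noted above, parts ii) and iii) of Theorem~\ref{thm:randommethodofmoments} finish the proof.

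I do not expect a serious obstacle here; the argument is mostly bookkeeping built on earlier results. The one point genuinely requiring care is the logical order in part i): the limit measure $\mu$ must be manufactured from the deterministic Theorem~\ref{thm:methodofmoments} \emph{before} Theorem~\ref{thm:randommethodofmoments} can be used (the latter presupposes a known $\mu$), and the identity $\integrala{\E\mu_n}{x^k}=\E\integrala{\mu_n}{x^k}$ must be justified by first checking that $\E\mu_n$ has finite moments — the failure of this interchange in general is precisely the phenomenon flagged in Theorem~\ref{thm:expectedmeasure} iv). The secondary subtlety is remembering that the component limits $c_{k,i}$ automatically sum to $m_k$ by (M1), which is what lets the finite decomposition be recombined after passing to the limit.
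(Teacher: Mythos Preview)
Your proof is correct and follows essentially the same route as the paper. The only cosmetic difference is that the paper packages the Markov/Borel--Cantelli step into a separate Lemma~\ref{lem:convergencemodes} (stated immediately after the theorem), whereas you spell out that argument inline; your additional remarks on the logical order in part~i) and the interchange $\integrala{\E\mu_n}{x^k}=\E\integrala{\mu_n}{x^k}$ are exactly the points the paper handles by reference to the proof of Theorem~\ref{thm:randommethodofmoments}.
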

\begin{proof}
\underline{i)}
As we saw in the proof of Theorem~\ref{thm:randommethodofmoments}, we find that for all $n\in\N$, the expected measure $\E\mu_n$ has moments of all orders and that for all $k\in\N: \integrala{\E\mu_n}{x^k}=\E\integrala{\mu_n}{x^k}$. Now given (M1), statement $i)$ follows directly with Theorem~\ref{thm:methodofmoments}.\newline
\underline{ii)/iii)} If (M1) holds, then (M2) (resp.\ (M3)) together with Lemma~\ref{lem:convergencemodes} shows that for all $k\in\N$ and $i\in\oneto{\ell_k}$, $D^{(k,i)}_n$ converges to a constant in probability (resp.\ almost surely) as $n\to\infty$, so that by (M1),
\[
\integrala{\mu_n}{x^k} = D^{(k,1)}_n +\ldots + D^{(k,\ell_k)}_{n} \xrightarrow[n\to\infty]{} m_k
\]
in probability (resp. almost surely).
\end{proof}

\begin{lemma}
\label{lem:convergencemodes}
Let $z\in\N$ and $(Y_n)_n$ be random variables with $\E\abs{Y_n}^z < \infty$ for all $n\in\N$. If $\E Y_n \to y$ and $\E\abs{Y_n-\E Y_n}^z \to 0$, then $Y_n\to y$ in probability. If in addition, $\E\abs{Y_n-\E Y_n}^z$ is summable, then $Y_n\to y$ almost surely.
\end{lemma}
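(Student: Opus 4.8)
The plan is to reduce everything to Markov's inequality applied to the $z$-th absolute moment, together with the deterministic convergence $\E Y_n\to y$. First I would observe the triangle-type bound $\abs{Y_n-y}\leq\abs{Y_n-\E Y_n}+\abs{\E Y_n-y}$, where the second term is a deterministic sequence tending to $0$. So fixing $\varepsilon>0$ and choosing $N$ with $\abs{\E Y_n-y}<\varepsilon/2$ for all $n\geq N$, the event $\{\abs{Y_n-y}>\varepsilon\}$ is contained in $\{\abs{Y_n-\E Y_n}>\varepsilon/2\}$ for $n\geq N$.

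For the convergence in probability I would then apply Markov's inequality to the non-negative random variable $\abs{Y_n-\E Y_n}^z$: for $n\geq N$,
\[
\Prob(\abs{Y_n-y}>\varepsilon)\leq \Prob(\abs{Y_n-\E Y_n}^z>(\varepsilon/2)^z)\leq \frac{2^z}{\varepsilon^z}\,\E\abs{Y_n-\E Y_n}^z \xrightarrow[n\to\infty]{}0,
\]
using the hypothesis $\E\abs{Y_n-\E Y_n}^z\to 0$. Since $\varepsilon>0$ was arbitrary, $Y_n\to y$ in probability.

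For the almost sure statement I would use the Borel--Cantelli lemma. If $\sum_n\E\abs{Y_n-\E Y_n}^z<\infty$, then for every fixed $\varepsilon>0$ the same Markov bound gives $\sum_n\Prob(\abs{Y_n-\E Y_n}>\varepsilon)\leq \varepsilon^{-z}\sum_n\E\abs{Y_n-\E Y_n}^z<\infty$, so $\Prob(\abs{Y_n-\E Y_n}>\varepsilon\text{ infinitely often})=0$. Intersecting these null-probability events over the countable family $\varepsilon=1/m$, $m\in\N$, yields $Y_n-\E Y_n\to 0$ almost surely, and combining with $\E Y_n\to y$ gives $Y_n\to y$ almost surely.

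I do not anticipate a genuine obstacle here; this is a routine application of Markov's inequality and the (first) Borel--Cantelli lemma. The only point requiring a little care is the initial splitting off of the deterministic term $\E Y_n-y$, so that one really controls $\abs{Y_n-y}$ rather than $\abs{Y_n-\E Y_n}$, and the use of a countable set of thresholds $1/m$ to pass from "for each $\varepsilon$" to a single almost-sure event.
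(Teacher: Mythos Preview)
Your proposal is correct and follows essentially the same approach as the paper: split off the deterministic term $\E Y_n-y$, apply Markov's inequality to $\abs{Y_n-\E Y_n}^z$, and invoke Borel--Cantelli for the almost sure statement. The only cosmetic difference is that the paper writes the splitting as a union bound $\Prob(\abs{Y_n-y}>\varepsilon)\leq\Prob(\abs{Y_n-\E Y_n}>\varepsilon/2)+\Prob(\abs{\E Y_n-y}>\varepsilon/2)$ (the second term eventually being zero) rather than your set-containment for $n\geq N$, and is terser about the Borel--Cantelli step.
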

\begin{proof}
Using Markov's inequality, we calculate for $\varepsilon>0$ arbitrary:
\begin{align*}
\Prob(\abs{Y_n-y}>\varepsilon)&\ \leq\ \Prob\left(\abs{Y_n -\E Y_n}>\frac{\varepsilon}{2}\right)\ +\ \Prob\left(\abs{\E Y_n - y}>\frac{\varepsilon}{2}\right)\\ 
&\leq\ \frac{2^z}{\varepsilon^z}\E\abs{Y_n-\E Y_n}^z\ +\ \Prob\left(\abs{\E Y_n - y}>\frac{\varepsilon}{2}\right).
\end{align*}
The statement follows (also using Borel-Cantelli), since the very last summand vanishes for all $n$ large enough.
\end{proof}

\section{The Moments of the Semicircle Distribution}

In random matrix theory, the probability measure that appears as the limit of the empirical spectral distribution is typically the semicircle distribution as defined in Definition~\ref{def:semicircle}. What we mean by \emph{typically} is that it appears in Wigner's semicircle law, Theorem~\ref{thm:wigner}, which is the simplest non-trivial random matrix ensemble, for it has standardized entries which are independent up to the symmetry constraint. It is safe to say that the role of the semicircle distribution in random matrix theory resembles the role of the standard normal distribution in probability theory. To remind the reader, the semicircle distribution $\sigma$ is the probability measure on $(\R,\Bcal)$ with Lebesgue-density $f_{\sigma}$ where
\map{f_{\sigma}}{\R}{\R}{x}{f_{\sigma}(x)\defeq \frac{1}{2\pi}\sqrt{4-x^2}\one_{\left[-2,2\right]}(x).}

Since we would like to apply the method of moments to random matrix theory, we will proceed to derive the moments of the semicircle distribution. As it turns out, we will obtain that $\integrala{\sigma}{x^0}=1$, so that $\sigma$ is identified as a probability measure, which we still owed to the reader.

\begin{lemma}
\label{lem:momentsSCD}
The moments of the semicircle distribution $\sigma$ are given by\label{sym:semicircmoment}
\begin{equation}
\label{eq:semicirclemoments}
\text{For all } k\in\N_0:\ m^{\sigma}_{2k}= 
\frac{(2k)!}{k!(k+1)!} \quad \text{and}\quad  m^{\sigma}_{2k+1}= 0
\end{equation}
\end{lemma}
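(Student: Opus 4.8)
The plan is to handle the odd and even moments separately, in both cases by direct computation; note first that since $\sigma$ is supported in the compact interval $[-2,2]$, Corollary~\ref{cor:compactuniquemoments} already guarantees that every moment exists and is finite, so only their values are at issue. For the odd moments the density $f_{\sigma}$ is an even function, so $x\mapsto x^{2k+1}f_{\sigma}(x)$ is odd and integrable over the bounded set $[-2,2]$; hence $m^{\sigma}_{2k+1}=\int_{-2}^{2}x^{2k+1}f_{\sigma}(x)\,\lebesgue(\diff x)=0$ for every $k\in\N_0$.

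For the even moments I would compute $m^{\sigma}_{2k}=\frac{1}{2\pi}\int_{-2}^{2}x^{2k}\sqrt{4-x^2}\,\lebesgue(\diff x)$ explicitly by the substitution $x=2\sin\theta$, $\theta\in[-\pi/2,\pi/2]$, under which $\sqrt{4-x^2}=2\cos\theta$ and $\diff x=2\cos\theta\,\diff\theta$. Exploiting the symmetry of the resulting integrand this becomes $m^{\sigma}_{2k}=\frac{2^{2k+2}}{\pi}\int_{0}^{\pi/2}\sin^{2k}\theta\cos^{2}\theta\,\diff\theta$, and the remaining integral is a standard Wallis/Beta integral: $\int_{0}^{\pi/2}\sin^{2k}\theta\cos^{2}\theta\,\diff\theta=\tfrac12\Beta(k+\tfrac12,\tfrac32)=\tfrac12\,\Gamma(k+\tfrac12)\Gamma(\tfrac32)/\Gamma(k+2)$. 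Inserting $\Gamma(\tfrac32)=\sqrt{\pi}/2$, $\Gamma(k+\tfrac12)=\sqrt{\pi}\,(2k)!/(4^{k}k!)$ and $\Gamma(k+2)=(k+1)!$ makes all powers of $4$ and factors of $\pi$ cancel, leaving $m^{\sigma}_{2k}=\frac{(2k)!}{k!(k+1)!}$. Specializing to $k=0$ gives $m^{\sigma}_{0}=1$, which is exactly the still-outstanding claim that $f_{\sigma}\lebesgue$ is a probability measure.

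An alternative for the even moments, which avoids quoting values of the Gamma function, is a recursion. Put $I_{k}:=\int_{-2}^{2}x^{2k}\sqrt{4-x^2}\,\lebesgue(\diff x)$ and $J_{k}:=\int_{-2}^{2}x^{2k}/\sqrt{4-x^2}\,\lebesgue(\diff x)$. Writing $\sqrt{4-x^2}=(4-x^2)/\sqrt{4-x^2}$ gives $I_{k}=4J_{k}-J_{k+1}$, while integrating the identity $\frac{\diff}{\diff x}\bigl(x^{2k+1}\sqrt{4-x^2}\bigr)=(2k+1)x^{2k}\sqrt{4-x^2}-x^{2k+2}/\sqrt{4-x^2}$ over $[-2,2]$ (the boundary terms vanish) gives $J_{k+1}=(2k+1)I_{k}$. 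Eliminating the $J$'s yields $I_{k}=\frac{2(2k-1)}{k+1}\,I_{k-1}$; since $I_{0}=2\pi$ (the area of a half-disc of radius $2$), induction gives $m^{\sigma}_{2k}=I_{k}/(2\pi)=\frac{(2k)!}{k!(k+1)!}$, because the Catalan numbers $C_{k}=\frac{(2k)!}{k!(k+1)!}$ satisfy the same recursion $C_{k}=\frac{2(2k-1)}{k+1}C_{k-1}$ with $C_{0}=1$.

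The argument is elementary and contains no conceptual obstacle; the only point requiring care is the bookkeeping — tracking the constants through the substitution and correctly identifying the Wallis integral in the first approach, or, in the alternative, verifying that the Catalan numbers obey the claimed two-term recursion and that the boundary terms in the integration by parts indeed vanish.
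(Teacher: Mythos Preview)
Your proposal is correct and your primary approach is essentially the same as the paper's: both dispose of the odd moments by parity of $f_{\sigma}$, and for the even moments both reduce the integral to $\frac{2^{2k+1}}{\pi}\Beta(k+\tfrac12,\tfrac32)$ and then evaluate via the standard Gamma values. The only cosmetic difference is the substitution used to reach the Beta integral --- you take $x=2\sin\theta$, the paper takes $x=2\sqrt{y}$ --- but the resulting expression and the final simplification are identical. Your alternative recursion $I_k=\frac{2(2k-1)}{k+1}I_{k-1}$, matched against the Catalan recursion $\Cat_k=\frac{2(2k-1)}{k+1}\Cat_{k-1}$, is a pleasant extra that the paper does not include; it trades the quoted Gamma identities for an integration-by-parts computation and is a self-contained way to get the same result.
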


\begin{proof}
We follow the short proof in \autocite[16]{BaiSi}.
To this end, note that the integrand is compactly supported and bounded. Further, for odd moments the integrand is odd, so the statement follows for odd moments. For even moments, we obtain the statement by the following calculation:

\begin{align*}
m^{\sigma}_{2k}\  
&=\ \frac{1}{2\pi} \int_{-2}^{2} x^{2k} \sqrt{4-x^2}\de x
\ =\ \frac{1}{\pi} \int_{0}^{2} x^{2k} \sqrt{4-x^2}\de x\\
&=\ \frac{2^{2k+1}}{\pi}\int_0^1 y^{k-1/2}(1-y)^{1/2} \de y\ =\ \frac{2^{2k+1}}{\pi} B(k+1/2,\,3/2)\\
&=\ \frac{2^{2k+1}}{\pi}\frac{\Gamma(k+1/2)\Gamma(3/2)}{\Gamma(k+2)} = \frac{1}{k+1}\binom{2k}{k},
\end{align*}
where in the second step, we used that the integrand is even, in the third step we substituted $x$ by $2\sqrt{y}$, in the fourth step we used the definition of the beta function $B$, in the fifth step, we used that for all $x,y>0$: $B(x,y)=\Gamma(x)\Gamma(y)/\Gamma(x+y)$, where $\Gamma$ is the gamma function, and in the last step we used that for all $n\in\N$: $\Gamma(n)=(n-1)!$, and for all $n\in\N_0$: $\Gamma(n+1/2)= (2n)!\sqrt{\pi}/(n!4^n)$.
\end{proof}

To use the method of moments to prove weak convergence to the semicircle distribution, we need the following corollary:
\begin{corollary}
\label{cor:semicirlceuniquemoments}
The semicircle distribution $\sigma$ is uniquely determined by its moments.	
\end{corollary}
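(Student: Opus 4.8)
The plan is to reduce the statement to a criterion we already have in hand. The quickest route is to observe that $\sigma$ has compact support: its Lebesgue density $f_{\sigma}$ vanishes outside $[-2,2]$, so $\supp(\sigma)\subseteq[-2,2]$. Hence Corollary~\ref{cor:compactuniquemoments}~$iii)$ applies verbatim and yields that $\sigma$ is uniquely determined by its moments. This is essentially a one-line argument and uses nothing beyond Definition~\ref{def:semicircle}; note also that Lemma~\ref{lem:momentsSCD} has already confirmed $m^{\sigma}_0=1$, so that $\sigma$ is genuinely a probability measure and the probabilistic uniqueness statements are applicable.

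It is nonetheless instructive to record the alternative argument that passes directly through the moment growth conditions of Theorem~\ref{thm:uniquemeasure}, since that is the mechanism that survives when one later treats limit laws of unbounded support. By Lemma~\ref{lem:momentsSCD} the odd moments vanish (and thus impose no constraint), while the even moments are the Catalan numbers $m^{\sigma}_{2k}=\frac{1}{k+1}\binom{2k}{k}$. Using the elementary bound $\binom{2k}{k}\leq\sum_{j=0}^{2k}\binom{2k}{j}=2^{2k}$ we get $m^{\sigma}_{2k}\leq 4^{k}$, hence $\sqrt[2k]{m^{\sigma}_{2k}}\leq 2$ for every $k\in\N$. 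Therefore
\[
\limsup_{k\to\infty}\frac{\sqrt[2k]{m^{\sigma}_{2k}}}{2k}\ \leq\ \limsup_{k\to\infty}\frac{1}{k}\ =\ 0\ <\ \infty,
\]
which is precisely condition $ii)$ of Theorem~\ref{thm:uniquemeasure}; equivalently one could invoke condition $iii)$ with $C=D=4$, since $\abs{m^{\sigma}_k}\leq 4^{k}\leq 4\cdot 4^{k}\cdot k!$. Either way the uniqueness follows.

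I do not anticipate any genuine obstacle here. The sole computational ingredient is the crude estimate $\binom{2k}{k}\leq 4^{k}$, and the compact-support route sidesteps even that. The only point requiring a little care is to make sure one cites the uniqueness results in their probability-measure form, which is exactly how Theorem~\ref{thm:uniquemeasure} and Corollary~\ref{cor:compactuniquemoments} are stated.
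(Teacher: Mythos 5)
Your primary argument -- compact support of $\sigma$ contained in $[-2,2]$ plus Corollary~\ref{cor:compactuniquemoments}~$iii)$ -- is exactly the paper's proof, and it is correct. The alternative route via the bound $m^{\sigma}_{2k}\leq 4^k$ and condition $ii)$ (or $iii)$) of Theorem~\ref{thm:uniquemeasure} is also valid, but it is supplementary rather than a different approach to the result.
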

\begin{proof}
Since the support of $\sigma$ is compact, the statement follows with Lemma~\ref{cor:compactuniquemoments}.	
\end{proof}

The values of the even moments of the semicircle distribution bear a special name:

\begin{definition}
The \emph{Catalan numbers} are elements of the sequence of natural numbers $(\Cat_k)_{k\in\N_0}$ \label{sym:catalan}, where
\[
\forall\, k\in\N_0:\, \Cat_k \defeq \frac{(2k)!}{k!(k+1)!}.
\]
\end{definition}

Combining the results of Lemma~\ref{lem:momentsSCD} with the definition of the Catalan numbers, we obtain for the sequence $(m^{\sigma}_k)_{k\in\N_0}$ of the moments of the semicircle distribution:
\begin{equation}
\label{eq:semicirclecatalan}
m^{\sigma}_k=
\begin{cases}
\Cat_{k/2} & \text{for $k$ even},\\
0 			& \text{for $k$ odd}.
\end{cases}
\end{equation}
But the Catalan numbers are not only the (even) moments of the semicircle distribution. They also appear as the solution to various combinatorial problems, see  \autocite{Koshy} or \autocite{Stanley}, for example. 

\section{The Moments of the Marchenko-Pastur distribution}

For sample covariance matrices, the canonical limit is not Wigner's semicircle distribution, but the Marchenko-Pastur distribution $\mu^y$ with ratio index $y\in(0,\infty)$. As a reminder to the reader, $\mu^{y}$ is the sum of the point mass $(1-y^{-1})\one_{y>1}$ in zero and a Lebesgue-continuous part given by the density $f_{\mu}$ (where the parameter $y$ is suppressed) as \map{f_{\mu}}{\R}{\R}{x}{f_{\mu}(x)\defeq\frac{1}{2\pi xy}\sqrt{(y_+-x)(x-y_-)} \one_{(y_-,y_+)}(x),}
where $y_+\defeq (1+\sqrt{y})^2$ and $y_-\defeq(1-\sqrt{y})^2$.
In order to apply the method of moments to prove the Marchenko-Pastur law, we need to know the moments of $\mu^y$, which is the content of the following lemma:
\begin{lemma}
\label{lem:momentsMPD}
For all $y\in(0,\infty)$ and $k\in\N$, it holds
\[
\integrala{\mu^y}{x^k} = \sum_{r=0}^{k-1}\frac{y^r}{r+1}\binom{k}{r}\binom{k-1}{r}.
\]
\end{lemma}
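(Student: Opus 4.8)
The plan is to reduce $\integrala{\mu^y}{x^k}$ to a single real integral and evaluate it by a trigonometric substitution together with an elementary binomial identity. Since $k\geq 1$, the point mass $(1-y^{-1})\one_{y>1}\delta_0$ contributes nothing, and the factor $x^k$ cancels the $x^{-1}$ in the density $f_\mu$; writing $y_\pm=(1\pm\sqrt y)^2$ this leaves
\[
\integrala{\mu^y}{x^k}=\frac{1}{2\pi y}\int_{y_-}^{y_+}x^{k-1}\sqrt{(y_+-x)(x-y_-)}\,\de x.
\]
First I would substitute $x=1+y+2\sqrt y\cos\theta$ with $\theta\in[0,\pi]$. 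A direct computation gives $y_+-x=2\sqrt y(1-\cos\theta)$, $x-y_-=2\sqrt y(1+\cos\theta)$, hence $(y_+-x)(x-y_-)=4y\sin^2\theta$ and $\de x=-2\sqrt y\sin\theta\,\de\theta$, so that
\[
\integrala{\mu^y}{x^k}=\frac{2}{\pi}\int_0^{\pi}(1+y+2\sqrt y\cos\theta)^{k-1}\sin^2\theta\,\de\theta.
\]

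The crucial observation is that $1+y+2\sqrt y\cos\theta=\abs{1+\sqrt y\,e^{i\theta}}^2$. Since the integrand is even in $\theta$, I would pass to an integral over $[-\pi,\pi]$, expand
\[
\abs{1+\sqrt y\,e^{i\theta}}^{2(k-1)}=\sum_{a,b=0}^{k-1}\binom{k-1}{a}\binom{k-1}{b}y^{(a+b)/2}e^{i(a-b)\theta},
\]
write $\sin^2\theta=\tfrac12-\tfrac14 e^{2i\theta}-\tfrac14 e^{-2i\theta}$, and integrate term by term using $\tfrac1{2\pi}\int_{-\pi}^{\pi}e^{in\theta}\,\de\theta=\one_{\{n=0\}}$. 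Only the diagonal $a=b$ and the near-diagonals $\abs{a-b}=2$ survive, and after reindexing the double sum collapses to
\[
\integrala{\mu^y}{x^k}=\sum_{r=0}^{k-1}\left(\binom{k-1}{r}^{2}-\binom{k-1}{r-1}\binom{k-1}{r+1}\right)y^{r},
\]
with the usual convention $\binom{m}{-1}=\binom{m}{m+1}=0$ taking care of the endpoints $r=0$ and $r=k-1$.

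It then remains to identify the bracketed coefficient. Using $\binom{k-1}{r-1}=\tfrac{r}{k-r}\binom{k-1}{r}$ and $\binom{k-1}{r+1}=\tfrac{k-1-r}{r+1}\binom{k-1}{r}$, the bracket equals $\binom{k-1}{r}^{2}\cdot\frac{(k-r)(r+1)-r(k-1-r)}{(k-r)(r+1)}$; the numerator simplifies to $k$, and since $\binom{k-1}{r}/(k-r)=\tfrac1k\binom{k}{r}$, the bracket becomes $\frac{1}{r+1}\binom{k}{r}\binom{k-1}{r}$, which is exactly the claimed summand. I do not anticipate a genuine obstacle here: the analytic input is a single elementary substitution and the combinatorics is one short manipulation of binomial-coefficient ratios; the only things to watch are that $k\geq1$ makes both the atom at $0$ and the $x^{-1}$ singularity harmless, and that the bookkeeping in the term-by-term integration (in particular the factor obtained from symmetrizing to $[-\pi,\pi]$) is done carefully — which is precisely what the complex-exponential form of $1+y+2\sqrt y\cos\theta$ makes painless.
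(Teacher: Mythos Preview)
Your proof is correct and self-contained. The paper itself does not actually prove this lemma; it simply cites Bai--Silverstein, noting that ``the proof is rather lengthy.'' Your argument is therefore strictly more than what the paper provides.

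As for the approach: the trigonometric substitution $x=1+y+2\sqrt{y}\cos\theta$ together with the factorization $1+y+2\sqrt{y}\cos\theta=\abs{1+\sqrt{y}\,e^{i\theta}}^2$ is a clean way to reduce the problem to Fourier orthogonality on the circle, and the resulting Narayana-type identity $\binom{k-1}{r}^2-\binom{k-1}{r-1}\binom{k-1}{r+1}=\tfrac{1}{r+1}\binom{k}{r}\binom{k-1}{r}$ is handled efficiently by your ratio manipulation. I checked the bookkeeping (the sign and limits in the substitution, the factor of $2$ from symmetrizing to $[-\pi,\pi]$, the reindexing of the off-diagonal terms with $r=a+1$) and it is all consistent. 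The only edge case worth mentioning explicitly is that for $r=k-1$ the identity $\binom{k-1}{r-1}=\tfrac{r}{k-r}\binom{k-1}{r}$ involves division by $k-r=1$, which is fine, and $\binom{k-1}{r+1}=0$ anyway; similarly at $r=0$. Your convention $\binom{m}{-1}=\binom{m}{m+1}=0$ already covers this, so there is no issue.
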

\begin{proof}
The proof is rather lengthy and can be found in \autocite[40]{BaiSi}.
\end{proof}

\begin{corollary}
\label{cor:MPuniquemoments}
For every $y>0$, the Marchenko-Pastur distribution $\mu^y$ is uniquely determined by its moments.	
\end{corollary}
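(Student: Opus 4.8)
The plan is to mimic the proof of Corollary~\ref{cor:semicirlceuniquemoments}: exhibit a compact set containing $\supp(\mu^y)$ and then invoke Corollary~\ref{cor:compactuniquemoments}. Recall that $\mu^y$ is the sum of the (possibly present) atom $(1-y^{-1})\one_{y>1}\,\delta_0$ and the Lebesgue-continuous part with density $f_{\mu}$, which is supported on the bounded interval $(y_-,y_+)$, where $y_\pm=(1\pm\sqrt y)^2$. Hence $\supp(\mu^y)\subseteq\{0\}\cup[y_-,y_+]\subseteq[0,y_+]$, which is compact. Choosing any $a\in\N$ with $a\geq y_+=(1+\sqrt y)^2$ we obtain $\supp(\mu^y)\subseteq[-a,a]$, so Corollary~\ref{cor:compactuniquemoments} applies and shows that $\mu^y$ is uniquely determined by its moments.

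The only point requiring a line of care — and what I would regard as the (minor) obstacle — is that Corollary~\ref{cor:compactuniquemoments} is stated for probability measures, so one must first confirm that $\mu^y(\R)=1$. This is checked exactly as announced for the semicircle density: the Lebesgue-continuous part has total mass $\min(1,1/y)$, and adding $(1-1/y)\one_{y>1}\,\delta_0$ brings the total mass to $1$; alternatively this follows a posteriori by specializing the moment computation behind Lemma~\ref{lem:momentsMPD} to $k=0$. Beyond this there is essentially nothing to do.

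Alternatively, and without any reference to the shape of the support, uniqueness can be read off directly from the moment formula in Lemma~\ref{lem:momentsMPD}. Set $D\defeq 4\max(1,y)\geq 1$. Using $\binom{k}{r}\leq 2^k$, $\binom{k-1}{r}\leq 2^k$, $y^r/(r+1)\leq\max(1,y)^k$, and the fact that the sum runs over $k$ indices, one gets
\[
0\leq\integrala{\mu^y}{x^k}=\sum_{r=0}^{k-1}\frac{y^r}{r+1}\binom{k}{r}\binom{k-1}{r}\leq k\,D^k\leq D^k\,k!
\]
for all $k\in\N$. Thus condition $iii)$ of Theorem~\ref{thm:uniquemeasure} holds with $C=1$ and $D$ as above, which again yields uniqueness. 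I expect the compact-support argument to be the cleanest route, with this moment bound as a self-contained backup.
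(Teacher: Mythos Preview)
Your proposal is correct and your primary argument is exactly the paper's: observe that $\mu^y$ has compact support and invoke Corollary~\ref{cor:compactuniquemoments}. The extra checks (total mass, explicit interval) and the alternative moment-bound route via Theorem~\ref{thm:uniquemeasure}~$iii)$ are sound but go beyond what the paper records.
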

\begin{proof}
Since the support of $\mu^y$ is compact, the statement follows with Lemma~\ref{cor:compactuniquemoments}.	
\end{proof}

\section{Application of the Method of Moments to RMT}
\label{sec:momentmethinrmt}

So far, we have pointed out what the method of moments is and how it works in deterministic and stochastic settings. Now we want to build the bridge to random matrix theory. To this end, we need the following observation, where as before, $\K\in\{\R,\C\}$:

\begin{lemma}
\label{lem:eigentrace}
Let $n\in\N$ and $X\in\SMat{n}{\K}$, then we obtain for all $k\in\N$:
\[
\sum_{i=1}^{n}(\lambda_{i}^{X})^k=\tr X^k=
\sum_{t_1,\ldots,t_k=1}^n X(t_1,t_2)X(t_2,t_3)\cdots X(t_k,t_1).
\]
\end{lemma}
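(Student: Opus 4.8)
The plan is to prove the two equalities in the statement separately, chaining them together. The first equality, $\sum_{i=1}^n (\lambda_i^X)^k = \tr X^k$, follows from the spectral structure of self-adjoint matrices established in Lemma~\ref{lem:symmetricspectrum}: there is an invertible $S \in \Mat{n}{\K}$ with $S^{-1}XS = D$, where $D = \diag(\lambda_1^X,\ldots,\lambda_n^X)$. First I would observe that $S^{-1}X^k S = (S^{-1}XS)^k = D^k = \diag((\lambda_1^X)^k,\ldots,(\lambda_n^X)^k)$, simply because the conjugation map $A \mapsto S^{-1}AS$ is multiplicative. Then, invoking the similarity-invariance of the trace from Lemma~\ref{lem:trace}, namely $\tr(A) = \tr(S^{-1}AS)$, we get
\[
\tr X^k = \tr(S^{-1}X^k S) = \tr D^k = \sum_{i=1}^n (\lambda_i^X)^k,
\]
which is the first equality.

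The second equality, $\tr X^k = \sum_{t_1,\ldots,t_k=1}^n X(t_1,t_2)X(t_2,t_3)\cdots X(t_k,t_1)$, is a direct consequence of the definition of matrix multiplication together with the definition of the trace as the sum of diagonal entries. The plan is to argue by induction on $k$ (or simply to expand the product directly): for any matrices $A, B \in \Mat{n}{\K}$ one has $(AB)(i,j) = \sum_{t=1}^n A(i,t)B(t,j)$, so iterating gives
\[
X^k(i,j) = \sum_{t_2,\ldots,t_k = 1}^n X(i,t_2)X(t_2,t_3)\cdots X(t_{k-1},t_k)X(t_k,j).
\]
Setting $j = i$, relabelling $i$ as $t_1$, and summing over $t_1 = 1,\ldots,n$ yields
\[
\tr X^k = \sum_{t_1=1}^n X^k(t_1,t_1) = \sum_{t_1,\ldots,t_k=1}^n X(t_1,t_2)X(t_2,t_3)\cdots X(t_k,t_1),
\]
as claimed.

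This proof is almost entirely bookkeeping; there is no real obstacle. The only point requiring a modicum of care is keeping the index conventions straight when unwinding the $k$-fold product into the sum over $(t_1,\ldots,t_k)$, in particular making sure the cyclic "wrap-around" factor $X(t_k,t_1)$ comes out correctly from setting the row index equal to the column index before taking the trace. Everything else is supplied by results already proved in the text (Lemmas~\ref{lem:trace} and~\ref{lem:symmetricspectrum}).
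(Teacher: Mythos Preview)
Your proposal is correct and follows essentially the same approach as the paper: diagonalize $X$ via Lemma~\ref{lem:symmetricspectrum}, observe that $X^k$ is similar to $D^k$, and apply the similarity-invariance of the trace from Lemma~\ref{lem:trace} for the first equality; the paper simply declares the second equality ``clear,'' whereas you spell out the expansion of the matrix product, which is fine.
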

\begin{proof}
The second equality is clear. For the first equality, note that since $X\in\SMat{n}{\K}$, by Lemma~\ref{lem:symmetricspectrum}, there exists an invertible matrix $S\in\Mat{n}{\K}$ so that $X = S^{-1}DS$, where $D = \diag(\lambda^{X}_1,\ldots,\lambda^{X}_n)$. Then 
\[
X^k = \underbrace{S^{-1}DS\cdot S^{-1}DS \cdot \ldots \cdot S^{-1}DS}_{\text{$k$ factors}} = S^{-1} D^k S = S^{-1} \diag\left((\lambda^{X}_1)^k,\ldots,(\lambda^{X}_n)^k\right) S.
\]
With Lemma~\ref{lem:trace}, we obtain
\[
\tr(X^k) = \tr\diag\left((\lambda^{X}_1)^k,\ldots,(\lambda^{X}_n)^k\right) = \sum_{i=1}^{n}(\lambda_{i}^{X})^k.
\]
\end{proof}

\begin{corollary}
\label{cor:momenttosum}
Let $(X_n)_n$ be a sequence of random matrices with corresponding ESDs $(\sigma_n)_n$. Then for all $k\in\N$ we find
\begin{equation}
\label{eq:momenttosum}
\integrala{\sigma_n}{x^k}= \frac{1}{n}\tr X^k_n=\frac{1}{n}\sum_{t_1,\ldots,t_k=1}^{n}{X_n(t_1,t_2)X_n(t_2,t_3)\cdots X_n(t_k,t_1)}.
\end{equation}
\end{corollary}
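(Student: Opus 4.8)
The plan is simply to unwind the definition of the empirical spectral distribution and apply Lemma~\ref{lem:eigentrace} pointwise in $\omega$. Fix $k\in\N$. For each $\omega\in\Omega$ we have $X_n(\omega)\in\SMat{n}{\K}$, so by Definition~\ref{def:esd} the measure $\sigma_n(\omega,\cdot)=\frac1n\sum_{l=1}^n\delta_{\lambda_l^{X_n(\omega)}}$ is a probability measure supported on the finite set of eigenvalues of $X_n(\omega)$. In particular $x\mapsto x^k$ is $\sigma_n(\omega)$-integrable (this is the only point where a word of care is warranted, since $x^k$ is unbounded: here finiteness is automatic because the ESD has finite support for every $\omega$), and
\[
\integrala{\sigma_n(\omega)}{x^k} \;=\; \frac1n\sum_{l=1}^n\bigl(\lambda_l^{X_n(\omega)}\bigr)^k .
\]

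Next I would invoke Lemma~\ref{lem:eigentrace} with the self-adjoint matrix $X=X_n(\omega)$, which gives
\[
\sum_{l=1}^n\bigl(\lambda_l^{X_n(\omega)}\bigr)^k \;=\; \tr X_n(\omega)^k \;=\; \sum_{t_1,\ldots,t_k=1}^n X_n(\omega)(t_1,t_2)X_n(\omega)(t_2,t_3)\cdots X_n(\omega)(t_k,t_1).
\]
Dividing by $n$ and observing that $\omega\in\Omega$ was arbitrary yields the asserted identity~\eqref{eq:momenttosum} as an equality of random variables.

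Finally I would note, for completeness, that both sides are genuine real-valued random variables: the right-hand side is a finite sum of products of the entries $X_n(i,j)$, which are measurable by assumption, and the left-hand side $\omega\mapsto\integrala{\sigma_n(\omega)}{x^k}=\frac1n\sum_{l=1}^n(\lambda_l^{X_n(\omega)})^k$ is measurable because each $\omega\mapsto\lambda_l^{X_n(\omega)}$ is measurable by Lemma~\ref{lem:evmeasurable}. There is no real obstacle in this argument — the statement is a direct corollary of Lemma~\ref{lem:eigentrace} together with the definition of the ESD; the only mild subtlety, already handled above, is the unboundedness of the test monomial, which is harmless thanks to the compact (indeed finite) support of $\sigma_n(\omega)$.
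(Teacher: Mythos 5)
Your proof is correct and follows exactly the paper's argument: integrate $x^k$ against the finitely supported ESD to get $\frac{1}{n}\sum_{l}(\lambda_l^{X_n})^k$ and then apply Lemma~\ref{lem:eigentrace}. The additional remarks on measurability and on the harmlessness of the unbounded test function $x^k$ are sound but not needed beyond what the paper does.
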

\begin{proof}
Using Lemma~\ref{lem:eigentrace}, we calculate:
\[
\integrala{\sigma_n}{x^k} = \frac{1}{n}\sum_{i=1}^{n}(\lambda_{i}^{X_n})^k =  \frac{1}{n} \tr X_n^k = \frac{1}{n} \sum_{t_1,\ldots,t_k=1}^n X_n(t_1,t_2)X_n(t_2,t_3)\cdots X_n(t_k,t_1).
\]
\end{proof}

The next theorem will be of use in explorative settings where the target distribution is not known or assumed yet. This is the very first step in showing that the ESDs of random matrices converge to a probability measure. To clarify terminology that we use, if $Y$ is a $\K$-valued random variable, where $\K\in\{\R,\C\}$, and if $p\in\N_0$, then we call $\E\abs{Y}^p$ the $p$-th absolute moment of $Y$. Further, we say that $Y$ has absolute moments of all orders, if $\E\abs{Y}^p<\infty$ for all $p\in\N_0$. Note that $Y$ is integrable iff its first absolute moment exists.

\begin{theorem}
\label{thm:explorative}
Let $(\sigma_n)_n$ be the empirical spectral distributions of random matrices $(X_n)_n$, whose ($\K$-valued) entries have absolute moments of all orders. Then if 
\[
\forall\,k\in\N: \E\integrala{\sigma_n}{x^k}\xrightarrow[n\to\infty]{} m_k,
\]
where $(m_k)_k$ is a sequence of real numbers that satisfy the Carleman condition (cf. Theorem~\ref{thm:uniquemeasure}), then $(\sigma_n)_n$ converges weakly in expectation to a probability measure $\mu$ on $(\R,\Bcal)$ with moments $(m_k)_k$.
\end{theorem}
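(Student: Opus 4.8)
The plan is to reduce the statement to the method of moments for random probability measures, Theorem~\ref{thm:randommomentconvergence}~i). Recall first that, by Definition~\ref{def:esd} and the discussion following it, each $\sigma_n$ is a genuine random probability measure on $(\R,\Bcal)$, so that framework applies once its finiteness hypotheses are checked and once uniqueness of the target measure is established.

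The first task is to verify that all random moments $\integrala{\sigma_n}{x^k}$ and all expected moments $\E\integrala{\sigma_n}{x^k}$ are finite. By Corollary~\ref{cor:momenttosum},
\[
\integrala{\sigma_n}{x^k}=\frac{1}{n}\sum_{t_1,\ldots,t_k=1}^{n}X_n(t_1,t_2)X_n(t_2,t_3)\cdots X_n(t_k,t_1),
\]
a finite sum of products of $k$ entries of $X_n$. Since every entry has absolute moments of all orders, the generalized Hölder inequality yields, for each summand (with indices read cyclically, $t_{k+1}=t_1$),
\[
\E\bigabs{X_n(t_1,t_2)\cdots X_n(t_k,t_1)}\ \leq\ \prod_{i=1}^{k}\left(\E\abs{X_n(t_i,t_{i+1})}^{k}\right)^{1/k}\ <\ \infty .
\]
Hence $\E\integrala{\sigma_n}{\abs{x}^k}<\infty$ for all $n,k$; in particular $\integrala{\sigma_n}{x^k}$ is $\Prob$-almost surely finite and $\Prob$-integrable, and by Theorem~\ref{thm:expectedmeasure}~ii) (applied to the non-negative function $x\mapsto\abs{x}^k$) the expected measure $\E\sigma_n$ has a finite $k$-th absolute moment for every $k$.

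With these finiteness conditions in place, the Carleman hypothesis on $(m_k)_k$ guarantees via Theorem~\ref{thm:uniquemeasure}~i) that at most one probability measure on $(\R,\Bcal)$ has moments $(m_k)_k$. The assumption $\E\integrala{\sigma_n}{x^k}\to m_k$ is precisely condition \emph{(M1)} of Theorem~\ref{thm:randommomentconvergence}, so that theorem yields a probability measure $\mu\in\Mcal_1(\R)$ with moments $(m_k)_k$ such that $\E\sigma_n\to\mu$ weakly, i.e.\ $(\sigma_n)_n$ converges weakly in expectation to $\mu$; in particular the $(m_k)_k$ are the moments of a probability measure. Equivalently, one can apply Theorem~\ref{thm:methodofmoments} directly to the deterministic sequence $(\E\sigma_n)_n$, using Theorem~\ref{thm:expectedmeasure}~iii) to identify $\integrala{\E\sigma_n}{x^k}=\E\integrala{\sigma_n}{x^k}$. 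The only point requiring any care is this finiteness and integrability bookkeeping — the cyclic Hölder estimate on products of $k$ entries and the passage between $\E\integrala{\sigma_n}{\cdot}$ and $\integrala{\E\sigma_n}{\cdot}$; the convergence conclusion itself is then immediate from the cited results.
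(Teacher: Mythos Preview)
Your proof is correct and follows essentially the same approach as the paper: both reduce the statement to Theorem~\ref{thm:randommomentconvergence}~i), using Corollary~\ref{cor:momenttosum} to express the random moments as sums over products of entries and the generalized H\"older inequality (Lemma~\ref{lem:holder} in the paper) to verify the required finiteness of the expected moments. Your write-up simply makes the bookkeeping (finiteness, the passage $\E\integrala{\sigma_n}{\cdot}=\integrala{\E\sigma_n}{\cdot}$, and the role of the Carleman condition) more explicit than the paper's terse one-line proof.
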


\begin{proof}
This follows with Theorem~\ref{thm:randommomentconvergence}, since by Corollary~\ref{cor:momenttosum}, for each $k\in\N_0$, the $k$-th random moment is given by
\[
\integrala{\sigma_n}{x^k}= \frac{1}{n}\sum_{t_1,\ldots,t_k=1}^{n}{X_n(t_1,t_2)X_n(t_2,t_3)\cdots X_n(t_k,t_1)},
\]
which is a real-valued random variable whose expectation is finite, see the following Lemma~\ref{lem:holder}.
\end{proof}

\begin{lemma}
\label{lem:holder}
Let $Y_1,\ldots,Y_k$ be $\K$-valued random variables such that  $\E\abs{Y_i}^k < \infty$ for all $i\in\{1,\ldots,k\}$, then
\[
\E\abs{Y_1 Y_2\cdots Y_k} \ \leq\ \left(\E\abs{Y_1}^k\right)^{\frac{1}{k}}\cdots \left(\E\abs{Y_k}^k\right)^{\frac{1}{k}}\ \leq\ \max_{i=1,\ldots,k} \E\abs{Y_i}^k
\]  
\end{lemma}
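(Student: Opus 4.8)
The claim is precisely the generalized Hölder inequality applied with $k$ factors and all exponents equal to $k$ (so that $\sum_{i=1}^k \tfrac1k = 1$), followed by the elementary estimate of a geometric mean by the maximum. There are two natural routes: induct on $k$ using the ordinary two-factor Hölder inequality with conjugate exponents $k$ and $k/(k-1)$ applied to $Y_1$ against $Y_2\cdots Y_k$; or give a short self-contained argument via a normalization trick and the arithmetic--geometric mean inequality. The plan is to present the second, since it avoids any appeal to Hölder and keeps the bookkeeping minimal.

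First I would dispose of the degenerate case: if $\E\abs{Y_i}^k=0$ for some $i$, then $Y_i=0$ almost surely, hence $Y_1\cdots Y_k=0$ almost surely and both inequalities are trivial. So assume $c_i\defeq(\E\abs{Y_i}^k)^{1/k}\in(0,\infty)$ for every $i$. Since both sides of the first inequality are homogeneous of degree $1$ in each $Y_i$, replacing $Y_i$ by $Y_i/c_i$ reduces the problem to the normalized case $\E\abs{Y_i}^k=1$ for all $i$, where the claim reads $\E\abs{Y_1\cdots Y_k}\le 1$.

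Next I would apply, for each fixed $\omega$, the arithmetic--geometric mean inequality to the nonnegative numbers $\abs{Y_1(\omega)}^k,\ldots,\abs{Y_k(\omega)}^k$:
\[
\abs{Y_1(\omega)\cdots Y_k(\omega)} = \left(\prod_{i=1}^k \abs{Y_i(\omega)}^k\right)^{1/k}\ \le\ \frac1k\sum_{i=1}^k \abs{Y_i(\omega)}^k .
\]
The right-hand side is integrable by hypothesis, so $Y_1\cdots Y_k$ is integrable; integrating and using the normalization gives $\E\abs{Y_1\cdots Y_k}\le \tfrac1k\sum_{i=1}^k\E\abs{Y_i}^k = 1$, which is the first inequality. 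The second inequality then follows from $\prod_{i=1}^k(\E\abs{Y_i}^k)^{1/k}\le \prod_{i=1}^k\big(\max_{j}\E\abs{Y_j}^k\big)^{1/k}=\max_{j=1,\ldots,k}\E\abs{Y_j}^k$.

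I do not expect a genuine obstacle here. The only points requiring a little attention are the handling of the degenerate moment-zero case and ensuring integrability of the product $Y_1\cdots Y_k$ before taking expectations, both of which are addressed above. If one preferred to bypass the normalization step, the induction on $k$ via two-factor Hölder works equally well but is slightly more cumbersome, since it forces one to carry the intermediate exponent $k/(k-1)$ and re-apply the inductive hypothesis to the variables $\abs{Y_i}^{k/(k-1)}$.
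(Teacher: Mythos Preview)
Your proof is correct and takes a genuinely different route from the paper. The paper proceeds by induction on $k$: it applies the two-factor H\"older inequality with conjugate exponents $k$ and $k/(k-1)$ to split off $Y_k$, and then invokes the inductive hypothesis on the $(k-1)$ variables $\abs{Y_1}^{k/(k-1)},\ldots,\abs{Y_{k-1}}^{k/(k-1)}$ (each of which has finite $(k-1)$-th moment equal to $\E\abs{Y_i}^k$). This is precisely the ``slightly more cumbersome'' route you anticipated at the end of your proposal. By contrast, you normalize to $\E\abs{Y_i}^k=1$ and apply the arithmetic--geometric mean inequality pointwise, which yields a self-contained argument that does not even appeal to H\"older as a black box and handles integrability of the product in the same stroke. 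Your approach is more elementary and avoids the bookkeeping with the intermediate exponent $k/(k-1)$; the paper's inductive approach, on the other hand, is the one that generalizes most transparently to arbitrary conjugate exponents $p_1,\ldots,p_k$ with $\sum 1/p_i=1$.
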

\begin{proof}
The second inequality is clear, so we only need to show the first one, which can be regarded as a generalization of the Cauchy-Schwarz inequality. We proceed by induction. The cases $k=1$ and $k=2$ are already known. By Hölder's inequality,
\[
E\abs{Y_1\cdots Y_k} \leq \left(\E\abs{Y_1\cdots Y_{k-1}}^{\frac{k}{k-1}}\right)^{\frac{k-1}{k}} \left(\E\abs{Y_k}^k\right)^{\frac{1}{k}}.
\]
Using the induction hypothesis, we calculate
\[
\E \abs{Y_1}^{\frac{k}{k-1}}\cdots \abs{Y_{k-1}}^{\frac{k}{k-1}} \leq \left(\E\abs{Y_1}^k\right)^{\frac{1}{k-1}}\cdots\left(\E\abs{Y_{k-1}}^k\right)^{\frac{1}{k-1}},
\]
from which the statement follows.
\end{proof}

We remind the reader that convergence in expectation is a necessity for stronger convergence types, see Theorem~\ref{thm:randomconvergenceimplications}. Therefore, Theorem~\ref{thm:explorative} is really the basis for any explorative analysis. The next theorem will be of use either after Theorem~\ref{thm:explorative} has been applied or if a priori, one has the target distribution of the ESDs in mind, for example if one wants to show a semicircle law.

\begin{theorem}\label{thm:expectationandvariance}
Let $(\sigma_n)_n$ be the empirical spectral distributions of Hermitian random matrices $(X_n)_n$, whose entries have absolute moments of all orders. Denote by $\mu$ a probability measure which is uniquely determined by its moments (cf. Theorem~\ref{thm:uniquemeasure}). Then \emph{
\begin{enumerate}[i)]
	\item $\sigma_n$ converges to $\mu$ weakly in expectation, if for all $k\in\N$,
	\[
	\E\integrala{\sigma_n}{x^k} \xrightarrow[n\to\infty]{} m_k.
	\]
\end{enumerate}
}
We assume that for all $k\in\N$ we find a finite decomposition
\[
\integrala{\sigma_n}{x^k} = D^{(k,1)}_n +\ldots + D^{(k,\ell_k)}_n
\]
such that for all $k\in\N$ and all $i\in\oneto{\ell_k}$, $\E D^{(k,i)}_n$ converges to a constant as $n\to\infty$. (This decomposition will become clear from the analysis, for example when showing that i) holds.) Then
\emph{
\begin{enumerate}
	\item[ii)] $\sigma_n$ converges to $\mu$ weakly in probability, if $i)$ holds and for all $k\in\N$ and $i\in\oneto{\ell_k}$, 
	\[
	\exists\, z\in\N:\ \E\bigabs{D^{(k,i)}_n-\E D^{(k,i)}_n}^z\xrightarrow[n\to\infty]{} 0,
	\]
	\item[iii)] $\sigma_n$ converges to $\mu$ weakly almost surely, if $i)$ holds and for all $k\in\N$ and $i\in\oneto{\ell_k}$,
	\[
	\exists\, z\in\N:\ \E\bigabs{D^{(k,i)}_n-\E D^{(k,i)}_n}^z\xrightarrow[n\to\infty]{} 0 \quad \text{summably fast.}
	\]
\end{enumerate}
}

\end{theorem}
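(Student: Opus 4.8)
The plan is to recognize this theorem as the concrete random-matrix instance of the abstract moment-convergence result Theorem~\ref{thm:randommomentconvergence}; accordingly, the proof consists of (a) translating the random moments of $\sigma_n$ into entrywise sums, (b) checking that all the finiteness requirements of Theorem~\ref{thm:randommomentconvergence} are met, and (c) observing that conditions i), ii), iii) of the present statement are precisely (M1), (M1)+(M2), (M1)+(M3).

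First I would set $m_k\defeq\integrala{\mu}{x^k}$ for every $k\in\N$. Since $\mu$ is uniquely determined by its moments, there is at most one probability measure on $(\R,\Bcal)$ with moment sequence $(m_k)_{k\in\N}$, which is the standing hypothesis of Theorem~\ref{thm:randommomentconvergence}. By Corollary~\ref{cor:momenttosum}, for every $k\in\N$ the $k$-th random moment of $\sigma_n$ is
\[
\integrala{\sigma_n}{x^k}=\frac{1}{n}\tr X_n^k=\frac{1}{n}\sum_{t_1,\ldots,t_k=1}^{n}X_n(t_1,t_2)X_n(t_2,t_3)\cdots X_n(t_k,t_1),
\]
which is a real-valued random variable because $X_n$ is Hermitian and hence has real eigenvalues. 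Applying Lemma~\ref{lem:holder} to the $k$ factors $X_n(t_1,t_2),\ldots,X_n(t_k,t_1)$, each of which has a finite $k$-th absolute moment by assumption, gives $\E\bigabs{X_n(t_1,t_2)\cdots X_n(t_k,t_1)}\leq\max_{(i,j)}\E\abs{X_n(i,j)}^k<\infty$, and summing the $n^k$ terms shows that $\E\integrala{\sigma_n}{x^k}$ exists and is finite. Thus all random moments and their expectations are finite; the finiteness of the $\E D^{(k,i)}_n$ and of the $z$-th absolute moments $\E\bigabs{D^{(k,i)}_n-\E D^{(k,i)}_n}^z$ is part of the hypotheses of ii) and iii). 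Hence every finiteness condition in Theorem~\ref{thm:randommomentconvergence} is satisfied.

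It then remains only to match conditions. Condition i) here is exactly (M1), so Theorem~\ref{thm:randommomentconvergence}~i) produces a probability measure with moments $(m_k)_k$ to which $(\sigma_n)_n$ converges weakly in expectation; by uniqueness this measure is $\mu$. For ii), the assumed decomposition together with the stated condition is exactly (M1) and (M2), so Theorem~\ref{thm:randommomentconvergence}~ii) yields weak convergence in probability; for iii), the stated condition is (M3), so Theorem~\ref{thm:randommomentconvergence}~iii) yields weak convergence almost surely. (Equivalently, one may invoke Lemma~\ref{lem:convergencemodes} to turn (M2)/(M3) into convergence of each $D^{(k,i)}_n$ in probability / almost surely, add them up using (M1), and finish with Theorem~\ref{thm:randommethodofmoments}.)

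I do not expect a genuine obstacle: the only step requiring real, if routine, care is the well-definedness check — using Lemma~\ref{lem:holder} and Corollary~\ref{cor:momenttosum} to see that the random moments and all the expectations are finite, so that the abstract machinery of Theorem~\ref{thm:randommomentconvergence} is actually applicable. Everything else is bookkeeping and a dictionary lookup between the abstract and the random-matrix formulations.
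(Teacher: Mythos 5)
Your proposal is correct and follows exactly the paper's own route: the paper proves this theorem as a direct consequence of Theorem~\ref{thm:randommomentconvergence}, using Corollary~\ref{cor:momenttosum} and Lemma~\ref{lem:holder} to verify that all expected random moments and other expectations are well-defined and finite. Your additional detail on matching i), ii), iii) with (M1), (M2), (M3) is the same dictionary lookup the paper leaves implicit.
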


\begin{proof}
This is a direct consequence of Theorem~\ref{thm:randommomentconvergence}, considering that since matrix entries have moments of all orders, Corollary~\ref{cor:momenttosum} and Lemma~\ref{lem:holder} imply that expected random moments and all other expectations are well-defined and finite.	
\end{proof}

Next, as an application, let us discuss the proof strategy behind Wigner's semicircle law, Theorem~\ref{thm:wigner}, where we restrict our attention to convergence in probability:

\begin{example}
Consider the setup of Theorem~\ref{thm:wigner}.
Let $(m^{\sigma}_k)_{k\in\N}$ denote the moments of the semicircle distribution, then we can use Theorem~\ref{thm:expectationandvariance} and show that 
\begin{enumerate}[1.]
	\item For all $k\in\N$:
	\[
	\E\integrala{\sigma_n}{x^k}=\frac{1}{n^{1+k/2}}\sum_{t_1,\ldots,t_k=1}^{n}{\E a(t_1,t_2)a(t_2,t_3)\cdots a(t_k,t_1)} \xrightarrow[n\to\infty]{} m^{\sigma}_k.
	\]
	\item For all $k\in\N$:
	\[
	\E\left(\integrala{\sigma_n}{x^k}^2\right) \xrightarrow[n\to\infty]{} (m^{\sigma}_k)^2.
	\]
\end{enumerate}
This will imply statements i) and ii) from the preceding theorem with $z=2$, thus the semicircle law in probability.

	This is also exactly what is shown in \autocite{Anderson}, as can be seen from their Lemma 2.1.6 in combination with the proof of their Lemma 2.1.7. However, although Theorem~\ref{thm:expectationandvariance} yields that above points 1.\ and 2.\ suffice for weak convergence in probability, in \autocite{Anderson} further cumbersome calculations are carried out, utilizing the compactness of the support of the semicircle distribution, which can be observed on their pages 10 and 11.
\end{example}

\chapter{The Semicircle and MP Laws by the Moment Method}
\label{chp:MomentsSCLMPL}

\section{General Strategy and Combinatorial Structures}
\label{sec:genstrat-MOMENTS}
Assume that $(\sigma_n)_n$ is a sequence of ESDs of Wigner matrices $W_n$ as in Theorem~\ref{thm:wigner} and $(\mu_n)_n$ is a sequence of ESDs of MP matrices $V_n$ as in Theorem~\ref{thm:MP}. We would like to argue that $\sigma_n \to \sigma$ and $\mu_n\to\mu^y$ weakly for some $y>0$, and in some stochastic sense, for example in probability or almost surely. Here, $\sigma$ denotes the semicircle distribution and $\mu^y$ denotes the Marchenko-Pastur distribution on the real line. To show these convergence results, we carry out the following two steps, where notationally, either $\rho_n =\sigma_n$ and $\rho=\sigma$, or $\rho_n=\mu_n$ and $\rho=\mu^y$:
\begin{enumerate}
\item We show that for each fixed $k\in\N$, the expected moments $\E\integrala{\rho_n}{x^k}$ of the ESDs $\rho_n$ converge to the deterministic moments $\integrala{\rho}{x^k}$ of the limit measure $\rho$, as $n\to\infty$. By Theorem~\ref{thm:expectationandvariance}, this will ensure that the limit law holds in expectation.
\item For each fixed $k\in\N$, we find a finite decomposition of the random moments, $\integrala{\rho_n}{x^k}= D^{(k,1)}_n+ \ldots + D^{(k,\ell_k)}_n$, such that for each $k\in\N$ and each $i\in\oneto{\ell_k}$, $D^{(k,i)}_n$ converges in expectation to a constant as $n\to\infty$. This decomposition becomes clear from the analysis, for example from the first step, and. Then we show that for each $k\in\N $ and $i\in\oneto{\ell_k}$, there is a $z\in\N$ such that
\begin{equation}
\label{eq:secondstep}
\E\bigabs{D^{(k,i)}_n-\E D^{(k,i)}_n}^z\xrightarrow[n\to\infty]{} 0.
\end{equation}
Oftentimes, but not always, $z=2$ or $z=4$ will suffice. If \eqref{eq:secondstep} holds (resp.\ holds almost surely), then this will show that the $D^{(k,i)}_n$ converge in probability (resp.\ almost surely) to a constant so that with the first step, we obtain that for all $k\in\N$,
\[
\integrala{\rho_n}{x^k} = \sum_{i=1}^{\ell_k} D^{(k,i)}_n \xrightarrow[n\to\infty]{} \integrala{\rho}{x^k}
\]
in probability (resp./ almost surely).
\end{enumerate}

For our analysis, we introduce some combinatorial concepts. 
\begin{definition}
\label{def:coloring}
Let $k\in\N$ be arbitrary, then
\begin{enumerate}[i)]
\item A \emph{coloring} is a tuple $\ubar{c}\in\oneto{k}^k$ with the property that $c_1=1$ and
 \[
\forall\, a\in\{1,\ldots,k-1\}:\ c_{a+1}\leq 1+ \max_{\ell\in\oneto{a}}c_{\ell}.
\] 
Entries in a coloring will be called \emph{colors}.
\item If $\ubar{t}\in\oneto{n}^k$ is a tuple and $\ubar{c}$ is a coloring, then we say that \emph{$\ubar{t}$ matches the coloring $\ubar{c}$} (and write $\ubar{t}\sim\ubar{c}$), if
\[
\forall\, i,j\in\oneto{k}: t_i=t_j \Leftrightarrow c_i=c_j.
\]
In this case, we also call $\ubar{c}$ \emph{the coloring of $\ubar{t}$} and write $\ubar{c}=\ubar{c}(\ubar{t})$.
\end{enumerate}
\end{definition}
A coloring is used to indicate at which places in a tuple there are equal or different entries. It is clear that each tuple $\ubar{t}\in\oneto{n}^k$ matches exactly one (that is, \emph{its}) coloring, which is constructed inductively as follows. Set $c_1\defeq 1$, and for $\ell\in\{1,\ldots,k-1\}$, if there is no $m\in\oneto{\ell}$ with $t_{\ell+1}=t_m$, set $c_{\ell+1}=\max\{c_1,\ldots,c_{\ell}\}+1$, whereas if $t_{\ell+1}=t_m$ for some $m\in\oneto{\ell}$, set $c_{\ell+1}\defeq c_{m}$. As an example, the coloring of the tuple $(5,1,4,13,4)$ is given by $(1,2,3,4,3)$.
\begin{lemma}
\label{lem:coloring}
Let $n,k\in\N$ with $n\geq k$.
\begin{enumerate}[i)]
\item There are at most $k!$ colorings in $\oneto{k}^k$.
\item Let $\ubar{c}\in\oneto{k}^k$ be a coloring with $\ell$ colors, then
\begin{equation}
\label{eq:TuplesEquivColoring}	
\#\{\ubar{t}\in\oneto{n}^k:\ubar{t}\sim\ubar{c}\} = (n)_{\ell}\defeq n\cdot(n-1)\cdots (n-\ell+1)
\end{equation}
In addition, it always holds that $\ubar{c}\sim\ubar{c}$.
\item For a tuple $\ubar{t}\in\oneto{n}^k$ denote by $V(\ubar{t})\defeq\{t_1,\ldots,t_k\}$. Then $\ubar{c}(\ubar{t})$ has $\#V(\ubar{t})$ colors, hence
\begin{equation}
\label{eq:TuplesEquivTuple}
\#\{\ubar{t}'\in\oneto{n}^k: \ubar{t}'\sim c(\ubar{t})\} = (n)_{\#V(\ubar{t})}.
\end{equation}
\end{enumerate}
\end{lemma}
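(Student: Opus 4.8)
The plan is to dispatch the three parts in turn, each being a short combinatorial bookkeeping argument, and in the stated order.

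For part i) I would count colorings by building them from left to right. The key preliminary observation is that for any coloring $\ubar{c}$ and any $a$ one has $\max_{\ell\in\oneto{a}}c_\ell\leq a$: indeed $c_1=1$, and the defining inequality gives $\max_{\ell\in\oneto{a+1}}c_\ell=\max(\max_{\ell\in\oneto{a}}c_\ell,\,c_{a+1})\leq\max_{\ell\in\oneto{a}}c_\ell+1$, so the bound propagates by induction on $a$. Consequently, once $c_1,\ldots,c_a$ are fixed, the entry $c_{a+1}$ must be a positive integer in $\{1,\ldots,1+\max_{\ell\in\oneto{a}}c_\ell\}$, a set of at most $a+1$ elements, while $c_1$ is forced to be $1$. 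Multiplying the number of admissible choices at each of the $k$ steps gives at most $1\cdot 2\cdots k=k!$ colorings.

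For part ii), let $\ubar{c}$ be a coloring with $\ell$ colors, i.e.\ the set $C\defeq\{c_1,\ldots,c_k\}$ has $\ell$ elements (one checks from the inductive construction recalled before the lemma that in fact $C=\{1,\ldots,\ell\}$, though only $\#C=\ell$ is needed here). A tuple $\ubar{t}\in\oneto{n}^k$ satisfies $\ubar{t}\sim\ubar{c}$ precisely when $\ubar{t}$ is constant on each color class of $\ubar{c}$ and takes pairwise distinct values on distinct color classes; hence $\ubar{t}\mapsto(\text{the induced value on each color})$ is a bijection from $\{\ubar{t}\in\oneto{n}^k:\ubar{t}\sim\ubar{c}\}$ onto the set of injections $C\to\oneto{n}$, of which there are $n(n-1)\cdots(n-\ell+1)=(n)_\ell$. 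This proves \eqref{eq:TuplesEquivColoring}. Finally, since $n\geq k$ we have $\ubar{c}\in\oneto{k}^k\subseteq\oneto{n}^k$, and $c_i=c_j\Leftrightarrow c_i=c_j$ trivially, so $\ubar{c}\sim\ubar{c}$; in particular the set in \eqref{eq:TuplesEquivColoring} is nonempty, consistent with $(n)_\ell\geq 1$ as $\ell\leq k\leq n$.

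For part iii), write $\ubar{c}\defeq\ubar{c}(\ubar{t})$. The assignment sending a value $v\in V(\ubar{t})$ to the color $c_i$ of some position $i$ with $t_i=v$ is well defined and injective (because $t_i=t_j\Leftrightarrow c_i=c_j$) and surjective onto the set of colors of $\ubar{c}$ (since $c_i$ is the image of $t_i$). Hence $\ubar{c}(\ubar{t})$ has exactly $\#V(\ubar{t})$ colors, and \eqref{eq:TuplesEquivTuple} is nothing but \eqref{eq:TuplesEquivColoring} applied to the coloring $\ubar{c}(\ubar{t})$. I do not anticipate a genuine obstacle here; the only point demanding a little care is the bookkeeping of which colors actually occur in a coloring — that they form an initial segment, so that "number of colors'', "number of color classes'', and "number of injections of the color set into $\oneto{n}$'' all line up — which is exactly why the counts collapse to the falling factorials $(n)_\ell$.
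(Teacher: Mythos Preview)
Your proof is correct and follows essentially the same approach as the paper: both count colorings by bounding the number of admissible choices at each step (the paper phrases this as $c_{\ell+1}\in\{c_1,\ldots,c_\ell,\max(c_1,\ldots,c_\ell)+1\}$ with $\#\{\ldots\}\leq \ell+1$), and both obtain $(n)_\ell$ in part ii) by noting that a tuple matching $\ubar{c}$ is determined by an injective choice of values for the $\ell$ color classes. Your bijection-with-injections phrasing in ii) and your explicit induction in i) are slightly more polished than the paper's sequential construction, but the underlying argument is the same.
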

\begin{proof}
To prove $i)$, note that always $c_1=1$ and $c_{\ell+1}\in\{c_1,\ldots,c_{\ell},c_{\ell}+1\}$. But \newline$\#\{c_1,\ldots,c_{\ell},c_{\ell}+1\}\leq\ell+1$.
For $ii)$, in order to construct a tuple $\ubar{t}\in\oneto{n}^k$ matching the coloring $\ubar{c}$ we have $n$ choices for $t_1$. Then if $c_2=c_1$ this indicates that $t_2\overset{!}{=}{t_1}$ so we are left with only one choice for $t_2$. If $c_2\neq c_1$, however, we have $(n-1)$ choices for $t_2$. Proceeding this way, if $c_m=c_a$ for some $a<m$ then $t_m\overset{!}{=}{t_a}$ so we are left with only one choice for $t_m$. Otherwise, if $c_m$ is \emph{new} color, we have $n-\#\{c_1,\ldots,c_{m-1}\}$ choices for $t_m$. Now since there exactly $\ell$ different colors in $\ubar{c}$, we will encounter a new color exactly $\ell-1$ times.
Statement $iii)$ follows directly from $ii)$.
\end{proof}

\section{The Semicircle Law}
\label{sec:SemicircleByMoments}
Let $W_n = n^{-1/2}X_n$ be a sequence of Wigner matrices with ESDs $\sigma_n$. In order to show $\sigma_n\to\sigma$ weakly almost surely, we follow the general strategy as outlined in Section~\ref{sec:genstrat-MOMENTS}.
To utilize this method, we need the moments of $\sigma_n$ and $\sigma$. By Lemma~\ref{lem:momentsSCD}, the moments of $\sigma$ are given by
\begin{equation}
\label{eq:momentsWIG}	
\forall\, k\in\N: \integrala{\sigma}{x^k} = 
\begin{cases}
\frac{1}{\frac{k}{2}+1}\binom{k}{\frac{k}{2}} & \quad\text{if $k$ is even,}\\
0 &\quad\text{if $k$ is odd,}	
\end{cases}
\end{equation} 
whereas we may calculate the moments of $\sigma_n$ by (cf. Corollary~\ref{cor:momenttosum})
\begin{align}
&\integrala{\sigma_n}{x^k}\ =\ \integrala{\frac{1}{n}\sum_{t=1}^n\delta_{\lambda_t}}{x^k}\ = \ \frac{1}{n}\sum_{t\in\oneto{n}}\lambda_t^k\notag
=\frac{1}{n}\tr[W_n^k]\ =\ \frac{1}{n}\tr\left[\left(\frac{1}{\sqrt{n}}X_n\right)^k\right]\notag\\
&=\frac{1}{n^{1+\frac{k}{2}}}\sum_{t\in\oneto{n}} (X_n)^k(t,t)
\ =\ \frac{1}{n^{1+\frac{k}{2}}}\sum_{t_1,\ldots,t_k\in\oneto{n}}X_n(t_1,t_2)X_n(t_2,t_3)\ldots X_n(t_k,t_1)\notag\\
&=\frac{1}{n^{1+\frac{k}{2}}} \sum_{\ubar{t}\in\oneto{n}^k} X_n(\ubar{t}), \label{eq:elaboratesumWIG}
\end{align}
where for all $\ubar{t}\in\oneto{n}^k$ we define
\begin{equation}
\label{eq:Twalk}
X_n(\ubar{t}) \defeq X_n(t_1,t_2)X_n(t_2,t_3)\ldots X_n(t_k,t_1).
\end{equation}

\subsection*{Combinatorial Preparations and Graph Theory}

As we saw above in \eqref{eq:elaboratesumWIG}, the random moments $\integrala{\sigma_n}{x^k}$ expand into elaborate sums. In order to be able to analyze these sums, we sort them with the language of graph theory and then establish basic combinatorial facts.

Recall \eqref{eq:Twalk}, then we adopt the view that each tuple $\ubar{t}\in\oneto{n}^k$ spans a Eulerian graph as in Figure~\ref{fig:EulerianWig}

\begin{figure}[htbp]
\centering
\includegraphics[clip, trim=10cm 18cm 35cm 4cm, width=8cm]{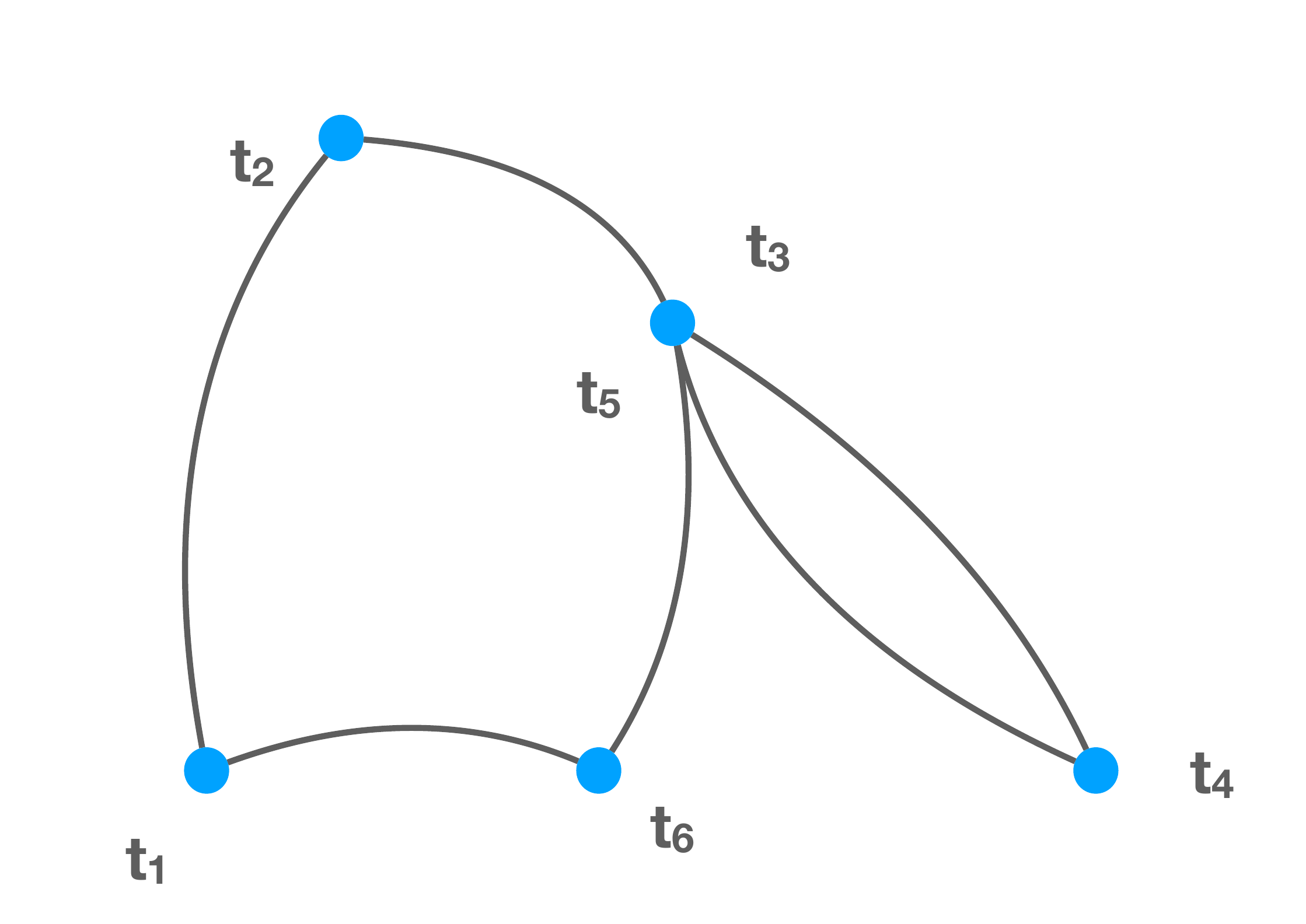}	
\caption{Eurlerian graph $\Gcal(\ubar{t})$.}
\label{fig:EulerianWig}
\end{figure}

To be precise, we obtain the (multi-)graph $\Gcal(\ubar{t}) = (V(\ubar{t}),E(\ubar{t}),\phi_{\ubar{t}})$, with vertex set $V(\ubar{t}) = \{t_1,\ldots,t_k\}$, edge set $E(\ubar{t}) = \{e_1,\ldots,e_k\}$ and incidence function
$\phi_{\ubar{t}}(e_i)=\{t_i,t_{i+1}\}$, where $k+1\equiv 1$.
Each tuple $\ubar{t}$ also denotes a Eulerian cycle of length $k$ through its graph $\Gcal(\ubar{t})$ by 
\begin{equation}
\label{eq:EuleriancycleWIG}	
t_1,e_1,t_2,e_2,t_3,\ldots,t_{k-1},e_{k-1},t_k,e_k,t_1.
\end{equation}

Note that $\Gcal(\ubar{t})$ may contain loops and multi-edges.
 The language of graph theory allows us to express $\integrala{\sigma_n}{x^k}$ in a different way. Recall
\begin{equation}
\label{eq:naivemomentWIG}	
\integrala{\mu_n}{x^k}\ =\ \frac{1}{n^{1+\frac{k}{2}}} \sum_{\ubar{t}\in\oneto{n}^k} X_n(\ubar{t})
\end{equation}
with
\begin{equation}
\label{eq:exmomentsummandWIG}
 X_n(\ubar{t}) = X_n(t_1,t_2)X_n(t_2,t_3)\cdots X_n(t_{k-1},t_k)X_n(t_k,t_1).
\end{equation}
For any tuple $\ubar{t}\in \oneto{n}^k$, we define its profile
\[
\rho(\ubar{t}) = (\rho_1(\ubar{t}),\ldots,\rho_{k}(\ubar{t})),
\]
where for all $\ell\in[k]$:
\[
\rho_{\ell}(\ubar{t}) \defeq \#\{\phi_{\ubar{t}}(e)\ |\ e\in E(\ubar{t})\ \text{is an $\ell$-fold edge}\}.
\]
Here, an $\ell$-fold edge in $E(\ubar{t})$ is any element $e\in E(\ubar{t})$ for which there are exactly $\ell-1$ distinct other elements $e'_2,\ldots,e'_{\ell} \in E(\ubar{t})$ so that $\phi_{\ubar{t}}(e)=\phi_{\ubar{t}}(e'_j)$ for $j\in\{2,\ldots,\ell\}$.

Then for all $\ell\in[k]$, the Eulerian cycle $\ubar{t}$ traverses exactly $\phi_{\ell}(\ubar{t})$ distinct $\ell$-fold edges. As a result, the following trivial but useful equality holds:
\begin{equation}
\label{eq:alledgessumWIG}
k = \sum_{\ell=1}^{k}\ell\cdot\rho_{\ell}(\ubar{t}).
\end{equation}
Now for all $k\in\N$ we define the following set of profiles:
\[
\Pi(k) = \left\{\rho \in\{0,\ldots,k\}^{k}\ |\ \rho \ \text{profile of some } \ubar{t}\in\oneto{n}^k\right\}.
\]
Now we achieve a finite decomposition
\begin{equation}
\label{eq:graphmomentWIG}	
\integrala{\sigma_n}{x^k} =\sum_{\rho\in\Pi(k)}\frac{1}{n^{1+\frac{k}{2}}}\sum_{\ubar{t}\in\Tcal^{n}(\rho)}  X_n(\ubar{t}),
\end{equation}
where
\[
\Tcal^{n}(\rho) \defeq \left\{\ubar{t}\in\oneto{n}^k\ |\ \rho(\ubar{t})=\rho \right\}.
\]
The transition from \eqref{eq:naivemomentWIG} to \eqref{eq:graphmomentWIG} allows us to identify exactly which components of the random moment contribute to the limit.

The next fundamental lemma will give an upper bound on the number of tuples  $\ubar{t}$ with at most $\ell\in\oneto{k}$ vertices. Notationally, we set $V(\ubar{u})\defeq\{u_1,\ldots,u_k\}$ for any $\ubar{u}\in\N^k$, even if we do not interpret $\ubar{u}$ as a graph. Further, if $M$ is a set, $\# M\label{sym:numberofelements} \in \N\cup\{\infty\}$ denotes the number of elements in $M$

\begin{lemma}\label{lem:maxnodestuplesWIG}
Let $n,k\in\N$ and $\ell\in\{1,2,\ldots,k\}$ be arbitrary. Then
\[
\# \left\{\ubar{t}\in\oneto{n}^k\, |\, \#V(\ubar{t})\leq \ell\right\} \leq k^k\cdot n^{\ell}.
\]
\end{lemma}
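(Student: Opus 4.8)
The plan is to group the tuples under consideration according to their coloring in the sense of Definition~\ref{def:coloring}, and then read off the counting facts from Lemma~\ref{lem:coloring}. Recall that every $\ubar{t}\in\oneto{n}^k$ matches exactly one coloring $\ubar{c}(\ubar{t})\in\oneto{k}^k$, and by Lemma~\ref{lem:coloring} iii) this coloring has exactly $\#V(\ubar{t})$ colors. Hence $\#V(\ubar{t})\leq\ell$ holds if and only if $\ubar{c}(\ubar{t})$ has at most $\ell$ colors, so that
\[
\#\left\{\ubar{t}\in\oneto{n}^k\,|\,\#V(\ubar{t})\leq\ell\right\} \;=\; \sum_{\ubar{c}}\#\left\{\ubar{t}\in\oneto{n}^k\,|\,\ubar{t}\sim\ubar{c}\right\},
\]
where the sum ranges over all colorings $\ubar{c}\in\oneto{k}^k$ with at most $\ell$ colors.

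First I would bound each summand. If $\ubar{c}$ is a coloring with $m\leq\ell$ colors, then by Lemma~\ref{lem:coloring} ii) the number of tuples $\ubar{t}\in\oneto{n}^k$ with $\ubar{t}\sim\ubar{c}$ equals $(n)_m=n(n-1)\cdots(n-m+1)$, which is at most $n^m\leq n^{\ell}$. (This elementary count is valid for all $n$: when $m>n$ the product vanishes, which is still $\leq n^{\ell}$, so the auxiliary hypothesis $n\geq k$ appearing in Lemma~\ref{lem:coloring} is not needed here, consistently with the fact that Lemma~\ref{lem:maxnodestuplesWIG} is stated without it.) Next I would bound the number of summands: by Lemma~\ref{lem:coloring} i) there are at most $k!$ colorings in $\oneto{k}^k$ altogether, hence a fortiori at most $k!$ with at most $\ell$ colors. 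Combining the two estimates yields
\[
\#\left\{\ubar{t}\in\oneto{n}^k\,|\,\#V(\ubar{t})\leq\ell\right\} \;\leq\; k!\cdot n^{\ell} \;\leq\; k^k\cdot n^{\ell},
\]
using $k!\leq k^k$.

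There is no real obstacle here — the statement is a deliberate overcount — so the only thing to watch is the bookkeeping: that every relevant tuple is counted exactly once through its coloring, and that the per-coloring count $(n)_m$ is uniformly dominated by $n^{\ell}$ for $m\leq\ell$. An entirely equivalent route that sidesteps even the bound on the number of colorings is to note that the map $\oneto{\ell}^{\oneto{k}}\times\oneto{n}^{\ell}\to\oneto{n}^k$, $(\psi,\ubar{w})\mapsto(w_{\psi(1)},\ldots,w_{\psi(k)})$, surjects onto $\{\ubar{t}\in\oneto{n}^k:\#V(\ubar{t})\leq\ell\}$ (given such a $\ubar{t}$, enumerate $V(\ubar{t})=\{v_1,\ldots,v_m\}$, let $\psi(i)$ be the index of $t_i$ in this enumeration, and pad $\ubar{w}$ arbitrarily), whence the cardinality in question is at most $\ell^k\cdot n^{\ell}\leq k^k\cdot n^{\ell}$.
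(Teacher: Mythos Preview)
Your proof is correct and follows essentially the same approach as the paper: group tuples by their coloring, bound the number of colorings via Lemma~\ref{lem:coloring}~i), and bound the number of tuples per coloring via Lemma~\ref{lem:coloring}~ii). Your version is in fact slightly more careful (you obtain the sharper intermediate bound $k!\cdot n^{\ell}$ and note that the count is valid for all $n$), and the alternative surjection argument at the end is a nice, self-contained way to see the bound without invoking colorings at all.
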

\begin{proof}
We first pick a coloring $\ubar{c}\in\oneto{k}^k$ with at most $\ell$ colors for which we have at most $k^k$ choices by Lemma~\ref{lem:coloring} $i)$. Since $\ubar{c}$ has at most $\ell$ colors, the number of tuples $\ubar{t}$ matching the coloring is bounded by $(n)_{\ell}$ by Lemma~\ref{lem:coloring} $ii)$. Therefore, we have at most $k^k(n)_{\ell}$ choices to pick an element from $\left\{\ubar{t}\in\oneto{n}^k\, |\, \#V(\ubar{t})\leq \ell\right\}$.
\end{proof}

\subsection*{Step 1: Convergence of expected moments}
We proceed to analyze the expectation of
\begin{equation}
\label{eq:randmomWIG}
\integrala{\sigma_n}{x^k} =\sum_{\rho\in\Pi(k)}\frac{1}{n^{1+\frac{k}{2}}}\sum_{\ubar{t}\in\Tcal^n(\rho)}  X_n(\ubar{t}).
\end{equation}
To this end, it suffices to analyze the expectation of each of the finitely many terms
\begin{equation}
\label{eq:randmomWIGsummand}
\frac{1}{n^{1+\frac{k}{2}}}\sum_{\ubar{t}\in\Tcal^n(\rho)}  X_n(\ubar{t})
\end{equation}
for $\rho\in\Pi(k)$ separately. We make two trivial observations: If $\rho\in\Pi(k)$ with $\rho_1>0$, then for all $\ubar{t}\in\Tcal^n(\rho)$ it holds $\E X_n(\ubar{t})=0$ due to independence and centeredness. Further, since $(X_n)_n$ is a Wigner scheme as in Definition~\ref{def:Wignerscheme}, we can always apply the trivial bound
\begin{equation}
\label{eq:trivialboundWIG}
\abs{\E X_n(\ubar{t})} \leq L_k	
\end{equation}
for any $\ubar{t}\in\oneto{n}^k$, where we also used Lemma~\ref{lem:holder}.

For the bounds on $\#\Tcal^n(\rho)$, we formulate the next lemma, which we take from \autocite{FleermannDiss}.

\begin{lemma}\label{lem:nodeandtuplecountWIG}
Let $k \in\N$ be arbitrary. Then it holds: 
\begin{enumerate}[i)]
\item $\#\Pi(k)\leq 4^k.$
\item Let $n\in\N$ and $\rho\in\Pi(k)$ be arbitrary, then
\begin{enumerate}[a)]
\item For any $\ubar{t}\in\Tcal^n(\rho)$ it holds
\[
\# V(\ubar{t})\leq 1 + \rho_1 + \ldots + \rho_{k} - L(\ubar{t}),
\]
where $L(\ubar{t})$ denotes the number of loops in $\ubar{t}$. In particular,
\[
\#\Tcal^n(\rho)\leq k^k \cdot  n^{1 + \rho_1 + \ldots + \rho_{k}}.
\]
\item If $\rho$ contains an odd edge, then for any $\ubar{t}\in\Tcal^{n}(\rho)$ it holds
\[
\# V(\ubar{t})\leq \rho_1 + \ldots + \rho_{k}.
\]
In particular,
\[
\#\Tcal^n(\rho)\leq k^k \cdot n^{\rho_1 + \ldots + \rho_{k}}.
\]
\end{enumerate}
\end{enumerate}
\end{lemma}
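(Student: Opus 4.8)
The plan is to handle the three assertions separately, using throughout that for any $\ubar{t}\in\oneto{n}^k$ the graph $\Gcal(\ubar{t})$ is connected, since the closed walk \eqref{eq:EuleriancycleWIG} visits every vertex of $V(\ubar{t})$, and that $\Gcal(\ubar{t})$ has exactly $\rho_1(\ubar{t})+\ldots+\rho_k(\ubar{t})$ distinct edges (loops included), because $\rho_\ell(\ubar{t})$ enumerates the distinct edges of multiplicity $\ell$.

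For part $i)$, recall from \eqref{eq:alledgessumWIG} that every $\rho\in\Pi(k)$ satisfies $\sum_{\ell=1}^{k}\ell\,\rho_\ell=k$. Hence each $\rho\in\Pi(k)$ is uniquely encoded by the composition of $k$ that contains $\rho_\ell$ parts equal to $\ell$ for every $\ell$ (listed, say, in increasing order), and the number of compositions of $k$ is $2^{k-1}$; thus $\#\Pi(k)\leq 2^{k-1}\leq 4^k$.

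For part $ii)a)$, fix $\ubar{t}\in\Tcal^n(\rho)$ and let $H$ be the simple graph on the vertex set $V(\ubar{t})$ whose edges are the distinct non-loop edges of $\Gcal(\ubar{t})$. Deleting loops and collapsing multi-edges preserves connectivity, so $H$ is connected and contains a spanning tree; therefore $\#E(H)\geq \#V(\ubar{t})-1$. Since $\#E(H)=\rho_1+\ldots+\rho_k-L(\ubar{t})$ (the distinct loops being exactly the $L(\ubar{t})$ edges that were removed), this rearranges to $\#V(\ubar{t})\leq 1+\rho_1+\ldots+\rho_k-L(\ubar{t})$. Dropping the nonnegative term $L(\ubar{t})$ and using $\#V(\ubar{t})\leq k$, we see that $\Tcal^n(\rho)$ is contained in $\{\ubar{u}\in\oneto{n}^k:\#V(\ubar{u})\leq\min(k,\,1+\rho_1+\ldots+\rho_k)\}$, whose cardinality is at most $k^k n^{1+\rho_1+\ldots+\rho_k}$ by Lemma~\ref{lem:maxnodestuplesWIG}.

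For part $ii)b)$, the extra ingredient is the elementary fact that a closed walk in a tree traverses every edge an even number of times: deleting an edge disconnects the tree, and the walk must return to the component it started in. Suppose $\rho$ contains an odd edge yet some $\ubar{t}\in\Tcal^n(\rho)$ had $\#V(\ubar{t})=1+\rho_1+\ldots+\rho_k$. By the inequality obtained in $ii)a)$ this forces $L(\ubar{t})=0$ and $\#E(H)=\#V(\ubar{t})-1$, so the connected simple graph $H$ is a tree. But the odd edge of $\rho$ is then a non-loop edge of $H$, and the projection of the closed walk \eqref{eq:EuleriancycleWIG} to $H$ traverses it an odd number of times — a contradiction. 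Hence $\#V(\ubar{t})\leq\rho_1+\ldots+\rho_k$ for every $\ubar{t}\in\Tcal^n(\rho)$, and Lemma~\ref{lem:maxnodestuplesWIG} again bounds $\#\Tcal^n(\rho)$ by $k^k n^{\rho_1+\ldots+\rho_k}$. The only genuinely nontrivial step is this parity argument in $ii)b)$; the remainder is connectivity bookkeeping feeding into Lemma~\ref{lem:maxnodestuplesWIG}.
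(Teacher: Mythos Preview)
Your proof is correct, but all three parts take a different route from the paper's.

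For $i)$, the paper bounds each coordinate $\rho_\ell\in\{0,\ldots,\lfloor k/\ell\rfloor\}$ separately and multiplies, obtaining $\binom{2k}{k}\leq 4^k$; your partition/composition encoding is shorter and actually yields the sharper bound $2^{k-1}$. For $ii)a)$, the paper traverses the Eulerian cycle and counts ``newly discovered'' vertices (only first instances of non-loop edges can discover a vertex), whereas you invoke the spanning-tree inequality $\#E(H)\geq\#V(H)-1$ directly---a more standard graph-theoretic formulation that arrives at the same inequality. For $ii)b)$, the paper's argument is combinatorial: among the $\ell$ instances of an odd edge there must be two consecutive ones traversed in the same direction, and starting the tour between them prevents that edge from ever discovering a new vertex. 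Your parity argument (equality in $a)$ forces $H$ to be a tree, but a closed walk in a tree crosses every edge an even number of times) is cleaner and more conceptual; it buys you the conclusion without the case analysis on traversal direction. Both approaches feed into Lemma~\ref{lem:maxnodestuplesWIG} identically for the cardinality bounds.
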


\begin{proof}
\underline{i)}  Each $\rho\in\Pi(k)$ is a $k$-tuple in which for all $\ell\in\{1,\ldots,k\}$ the entry $\rho_{\ell}$ lies in the set $\{0,1,\ldots,\lfloor k/\ell\rfloor\}$, which follows directly from \eqref{eq:alledgessumWIG}. Therefore,
\[
\#\Pi(k) \leq  \prod_{\ell=1}^{k}\left(\frac{k}{\ell}+1\right) = \frac{(2k)!}{k!\cdot k!} = \binom{2k}{k}  \lesssim \frac{4^k}{\sqrt{2k\pi}} \leq 4^k,
\]
where the fourth step is a well-known fact about the central binomial coefficient.\newline
\underline{ii)} It suffices to establish the upper bounds for $\#V(\ubar{t})$, since the bounds on $\#\Tcal^{n}(\rho)$ then follow directly with Lemma~\ref{lem:maxnodestuplesWIG}. Now to prove upper bounds for $\#V(\ubar{t})$, the idea is to travel the Eulerian cycle generated by $\ubar{t}:$
\begin{equation}
\label{eq:walkWIG}
t_1,e_1,t_2,e_2,t_2,e_3,t_3,\ldots,t_k, e_{k}, t_1
\end{equation}
by picking an initial node $t_i$ and then traversing the edges in increasing cyclic order until reaching the starting point again. On the way, we count the number of different vertices that were discovered. Whenever we pass an $\ell$-fold edge, only the first instance of that edge may discover a new vertex, and only if the edge is not a loop.\newline
\underline{a)}  We write $L(\ubar{t}) \defeq L_1(\ubar{t}) +\ldots + L_k(\ubar{t})$ where $L_i(\ubar{t})$ denotes the number of different $i$-fold loops in $\ubar{t}$. We start our tour at  $t_1$ and observe this very vertex. Then, as we travel along the cycle, for each $\ell\in\{1,\ldots,k\}$ we will pass $\ell\cdot (\rho_{\ell}-L_{\ell}(\ubar{t}))$ \emph{proper} $\ell$-fold edges out of which only the first instance may discover a new node, and there are $\rho_{\ell}-L_{\ell}(\ubar{t})$ of these first instances. Considering the initial node, we arrive at $\# V(\ubar{t})\leq 1 + \rho_1 - L_{1}(\ubar{t}) + \ldots + \rho_{k} - L_k(\ubar{t})$, which yields the desired inequality.\newline
\underline{b)} In presence of an odd edge, we can start the tour at a specific vertex such that the odd edge cannot contribute to the newly discovered vertices. To this end, fix an arbitrary $\ell$-fold edge in  $\ubar{t}$ with $\ell$ odd. Let $e_{i_1},\ldots,e_{i_{\ell}}$, $i_1 < \ldots < i_{\ell}$, be the instances of the $\ell$-fold edges in question in the cycle \eqref{eq:walkWIG}. Since $\ell$ is odd, we must find a $k\in\{1,\ldots,\ell\}$ such that $e_{i_k}$ and $e_{i_{k+1}}$ are traversed in the same direction, since we are on a cycle.  We then start our tour at $t_{i_k}$ and observe this vertex. However, now none of the edges $e_{i_1},\ldots,e_{i_{\ell}}$ may discover a new vertex, since if our $\ell$-fold edge is not a loop, the vertex $t_{i_{k+1}}$ must have been already discovered by some other edge. Therefore, the roundtrip leads to the discovery of at most $\rho_1 + \dots + (\rho_{\ell} - 1) + \ldots + \rho_{k}$ new nodes in addition to the first node.
\end{proof}

We proceed to analyze \eqref{eq:randmomWIGsummand} for all possible types of $\rho\in\Pi(k)$.\newline
\underline{Case 1: $\rho_1=0$ and $\rho_{\ell}>0$ for some $\ell\geq 3$.}\newline
Using Lemma~\ref{lem:nodeandtuplecountWIG} we obtain
\[
\#\Tcal^n(\rho)\leq
\left\{
\begin{array}{c}
k^k \cdot n^{\rho_1 + \ldots + \rho_{k}}\\
k^k \cdot n^{1 + \rho_1 + \ldots + \rho_{k}} 	
\end{array}
\right\}
\leq k^k n^{\frac{k}{2}},
\]
where the upper case is valid in presence of an odd edge (then $\rho_1 + \ldots + \rho_k \leq (k-3)/2 + 1$), and the lower case is valid if no odd edges are present (then $1+\rho_1 + \ldots + \rho_k \leq 1 + (k-4)/2+1$). Therefore, by \eqref{eq:trivialboundWIG}, \eqref{eq:randmomWIGsummand} converges to zero in expectation.

\noindent\underline{Case 2: $\rho_{1}>0$.}\newline
Then by centeredness and independence, the expectation of the term in \eqref{eq:randmomWIGsummand} is zero.

\noindent\underline{Case 3: $\rho_{2}=k/2$.}\newline
Returning to the random moment in \eqref{eq:randmomWIG}, we have seen in Cases 1 and 2 that for all $\rho\in\Pi(k)$ with $\rho_{2}\neq k/2$,
\[
\frac{1}{n^{1+\frac{k}{2}}}\sum_{\ubar{t}\in\Tcal^n(\rho)}  X_n(\ubar{t}) \xrightarrow[n\to\infty]{} 0 \qquad \text{in expectation.}
\]
As a result, the only asymptotic contribution from the expectation in \eqref{eq:randmomWIG} may stem from cycles $\ubar{t}$ containing only double edges. Their analysis is the content of this Case 3. Setting $\rho^{(k)}$ as the profile in $\Pi(k)$ with $\rho^{(k)}_2=k/2$ and $\rho^{(k)}_{\ell}=0$ for all $\ell\neq 2$, then it is our goal to show 
\begin{equation}
\label{eq:goalCase3}
\frac{1}{n^{1+\frac{k}{2}}}\sum_{\ubar{t}\in\Tcal^n(\rho^{(k)})} X_n(\ubar{t})\ \xrightarrow[n\to\infty]{}\	 \Cat_{\frac{k}{2}} \qquad \text{in expectation}.
\end{equation}
To this end, we observe
\begin{equation}
\label{eq:summandCase3}
\frac{1}{n^{1+\frac{k}{2}}}\sum_{\ubar{t}\in\Tcal^n(\rho^{(k)})} \E X_n(\ubar{t}) = \frac{1}{n^{1+\frac{k}{2}}}\#\Tcal^n(\rho^{(k)}).
\end{equation}
Next, we note that any $\ubar{t}\in\Tcal^n(\rho^{(k)})$ has at most $k/2+1$ vertices, so we may subdivide this set further by defining
\begin{align*}
\Tcal^{n}_{\leq k/2}(\rho^{(k)}) &\defeq \left\{\ubar{t}\in\Tcal^n(\rho^{(k)}):\ \# V(\ubar{t}) \leq k/2 \right\},\\
\Tcal^n_{k/2+1}(\rho^{(k)}) &\defeq \left\{\ubar{t}\in\Tcal^n(\rho^{(k)}):\ \# V(\ubar{t}) = k/2+1 \right\}.
\end{align*}
Note that by Lemma~\ref{lem:maxnodestuplesWIG}, $\#\Tcal^n_{\leq k/2}(\rho^{(k)}) \leq k^k n^{k/2}$, so that \eqref{eq:summandCase3} can be refined to
\begin{equation}
\label{eq:onlydoublekplusoneWIG}
\frac{1}{n^{1+\frac{k}{2}}}\sum_{\ubar{t}\in\Tcal^n(\rho^{(k)})} \E X_n(\ubar{t})  = \frac{1}{n^{1+\frac{k}{2}}}\#\Tcal^n_{k/2+1}(\rho^{(k)})  \ + \ o(1), 
\end{equation}
It is thus our task to show
\begin{equation}
\label{eq:convergetoCatalan}	
\frac{1}{n^{1+\frac{k}{2}}}\#\Tcal^n_{k/2+1} \xrightarrow[n\to\infty]{} \Cat_{\frac{k}{2}}.
\end{equation}

The main tool is to count all possible colorings of tuples in $\Tcal^n_{k/2+1}(\rho^{(k)})$, and then apply Lemma~\ref{lem:coloring}. It turns out that these colorings can be associated with a path difference sequence (pds) of the following form, where we may focus on even $k$, since otherwise, the set $\Tcal^n_{k/2+1}(\rho^{(k)})$ is empty:
\begin{definition}
\label{def:WIG-path}
A \emph{Wigner path difference sequence} (Wigner-pds) of length $2k$ is a tuple $(D_1,D_2,\ldots,D_{2k})$ which satifies the following conditions:
\begin{enumerate}[1)]
\item For all $i\in\oneto{2k}$: $D_i\in\{-1,+1\}$
\item $\sum_{i\in\oneto{2k}}D_i = 0$,
\item $\forall\,\ell\in\oneto{2k}: \sum_{i=1}^{\ell}D_i\geq 0$.
\end{enumerate}
We denote by $\Wcal(2k)$ the set of all Wigner-pds of length $2k$.	
\end{definition}

\begin{lemma}
\label{lem:coloringsWIG}
For all $k\in\N$ we find $\#\Wcal(2k) = \frac{1}{k+1}\binom{2k}{k}=\Cat_k$.
\end{lemma}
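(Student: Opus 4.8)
The plan is to recognize $\Wcal(2k)$ as the set of Dyck paths of semilength $k$ and to count it by the reflection principle. First I would note that conditions 1) and 2) in Definition~\ref{def:WIG-path} alone force a tuple $(D_1,\ldots,D_{2k})\in\{-1,+1\}^{2k}$ to have exactly $k$ entries equal to $+1$ and exactly $k$ entries equal to $-1$; consequently there are precisely $\binom{2k}{k}$ tuples satisfying 1) and 2). It therefore suffices to show that the number of tuples satisfying 1) and 2) but \emph{violating} 3) equals $\binom{2k}{k-1}$, because then
\[
\#\Wcal(2k) = \binom{2k}{k} - \binom{2k}{k-1} = \binom{2k}{k}\left(1 - \frac{k}{k+1}\right) = \frac{1}{k+1}\binom{2k}{k} = \Cat_k.
\]

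For the count of the bad tuples I would use the classical reflection bijection. Writing $S_\ell \defeq \sum_{i=1}^{\ell}D_i$ for $\ell\in\{0,1,\ldots,2k\}$, call a tuple satisfying 1) and 2) \emph{bad} if $S_\ell<0$ for some $\ell$; since the steps are $\pm 1$ this is equivalent to $S_{\ell}=-1$ for some $\ell$, and we let $\ell_0$ be the smallest such index. Map this bad tuple to $(D_1',\ldots,D_{2k}')$ obtained by flipping the signs of $D_1,\ldots,D_{\ell_0}$ and keeping $D_{\ell_0+1},\ldots,D_{2k}$ unchanged. Since $D_1,\ldots,D_{\ell_0}$ sum to $-1$, they contain one more entry equal to $-1$ than equal to $+1$, so after flipping the full tuple $(D_i')$ has $k+1$ entries equal to $+1$ and $k-1$ equal to $-1$; there are $\binom{2k}{k-1}$ such tuples. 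To see that this map is a bijection onto the set of all $\pm 1$-tuples of length $2k$ with $k+1$ plus-ones, I would construct the inverse: given such a tuple $(D_i')$ with partial sums $S_\ell'$, one has $S_0'=0$ and $S_{2k}'=2$, so (the steps being $\pm 1$) there is a first index $\ell_0$ with $S_{\ell_0}'=1$; flipping the signs of the first $\ell_0$ entries recovers a bad tuple, and one checks directly that the two constructions are mutually inverse, the key points being that $S_\ell'\le 0$ for $\ell<\ell_0$ on the target side and $S_\ell\ge 0$ for $\ell<\ell_0$ on the source side.

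The argument is essentially bookkeeping; the only place that needs a little care is verifying that the reflection map and its claimed inverse are genuinely well-defined and mutually inverse, in particular the elementary ``intermediate value'' observation that a $\pm 1$-walk from height $0$ to height $2$ must pass through height $1$, which is what guarantees the breakpoint $\ell_0$ exists on the target side. As an alternative, one could instead prove the Catalan recurrence $\#\Wcal(2k)=\sum_{j=0}^{k-1}\#\Wcal(2j)\,\#\Wcal(2(k-1-j))$ by splitting a Wigner-pds at its first return to $0$, together with $\#\Wcal(0)=1$, and then check that $(\Cat_k)_{k}$ satisfies this recurrence; but the reflection argument is shorter and self-contained.
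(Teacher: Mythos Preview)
Your proof is correct and follows essentially the same reflection-principle approach as the paper; the only cosmetic difference is that you reflect the prefix $D_1,\ldots,D_{\ell_0}$ (landing on tuples with sum $+2$, counted by $\binom{2k}{k-1}$), whereas the paper reflects the suffix $D_{t+1},\ldots,D_{2k}$ (landing on tuples with sum $-2$, counted by $\binom{2k}{k+1}=\binom{2k}{k-1}$). Both are standard variants of Andr\'e's reflection and yield the same subtraction $\binom{2k}{k}-\binom{2k}{k+1}=\Cat_k$.
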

\begin{proof}
We prove the lemma with a reflection principle. To this end, due property 2), a Wigner-pds must contain as many "$+1$"-entries as "$-1$"-entries. To arrange $k$ "$+1$"-entries and $k$ "$-1$"-entries, we have
\[
\binom{2k}{k}
\]
choices. But since these choices do not in general respect condition $3)$ we have to subtract the number of tuples $(D_1,\ldots,D_{2k})$ that lead to a violation of $3)$. We show that these violating tuples are in bijective correspondence to all $(D_1',\ldots,D_{2k}')$ with
\begin{enumerate}[1')]
\item $D'_i\in\{-1,+1\}$,
\item $\sum_{i\in\oneto{2k}}D_i' = -2$. 
\end{enumerate}
The number of these $(D_1',\ldots,D_k')$ is clearly given by 
\[
\binom{2k}{k+1}
\]
so that the number of $(D_1,\ldots,D_{2k})$ that \emph{do} satisfy 1), 2) \emph{and} 3) is given by
\[
\binom{2k}{k} - \binom{2k}{k+1} = \frac{1}{k+1}\binom{2k}{k}.
\]
For the bijection, let $(D_1,\ldots,D_{2k})$ be arbitrary with $k$ "$+1$"s and $k$ "$-1$"s so that 3) is violated. Then there is an index $t$ such that $\sum_{i=1}^t D_i = -1$ for the first time. Then $(D_{t+1},\ldots,D_{2k})$
 is a vector which contains one more "$+1$" than "$-1$" entry. We define the vector  $(D'_{t+1},\ldots,D'_{2k})\defeq (-D_{t+1},\ldots,-D_{2k})$. Then $(D'_{t+1},\ldots,D'_{2k})$ contains one more "$-1$" than "$+1$". Defining $(D_1',\ldots,D_t')\defeq(D_1,\ldots,D_t)$ we thus have created a vector $(D_1',\ldots,D_{2k}')$ satisfying 1') and 2'). On the other hand, any vector $(D_1',\ldots,D_{2k}')$ satisfying 1) and 2) has a first hitting time $t$ of $-1$. Applying exactly the same transformation as before, we will then obtain a vector $(D_1,\ldots,D_{2k})$ satisfying 1) and 2), but violating 3).
 \end{proof}

Now the clou is that all $D\in\Wcal(2k)$ can be associated canonically with a specific Eulerian cycle $\ubar{t}(D)\in\Tcal^n_{k+1}(\rho^{(2k)})$. To see how this is done, let us first analyze simple properties a Eulerian cycle $\ubar{t}\in\Tcal^n_{k+1}(\rho^{(2k)})$. First, the graph $\Gcal(\ubar{t})$ is a \emph{double edged tree}, that is, it consists of $k$ distinct double edges and has $k+1$ vertices, therefore is a tree in the regular sense after eliminating one of each of the double edges (it also follows that all doubles edges are \emph{proper}). Thus, the Eulerian cycle $\ubar{t}$ crosses each edge twice, once in each direction, since a tree does not contain circles. We recall the representation of the cycle as in \eqref{eq:EuleriancycleWIG}. Now given a $D\in\Wcal(2k)$, we set $t_1=1$, and whenever $D_{\ell}=+1$, this means that a new vertex shall be discovered, so we set $t_{\ell +1}\defeq\max(t_1,\ldots,t_{\ell})+1$. On the other hand, if $D_{\ell}=-1$ then we shall backtrack, that is, $t_{\ell+1}$ shall be equal to one of the $t_1,\ldots,t_{\ell}$, and so it must be equal to the $t_i$ with $i\in\{1,\ldots,\ell\}$ from which $t_{\ell}$ was visited, since otherwise, the cycle $\ubar{t}$ would contain a circle. This completes the construction of $\ubar{t}(D)$. It is clear by construction that $\ubar{t}(D)\in\Tcal^{n}_{k+1}(\rho^{(2k)})$ . We observe that $\ubar{c}(\ubar{t}(D))=\ubar{t}(D)$, that is $\ubar{t}(D)$ is its own coloring, since vertex numbers were always chosen as small as possible. Now if $\ubar{t}'\sim\ubar{c}(\ubar{t}(D))$, we must have $\ubar{t}'\in\Tcal^{n}_{k+1}(\rho^{(2k)})$, since $\ubar{t}'$ is then only an injective relabeling of vertices in $\ubar{t}$.

We formulate the following Lemma from which \eqref{eq:convergetoCatalan} follows immediately.
\begin{lemma}
\label{lem:TnWIG}
The set $\Tcal^n_{k+1}(\rho^{(2k)})$ has a decomposition as follows:
\begin{equation}
\label{eq:decomposeTnWIG}
\Tcal^{n}_{k+1}(\rho^{(2k)}) = \dot{\bigcup_{D\in\Wcal(2k)}}\left\{\ubar{t}'\in \Tcal^{n}_{k+1}(\rho^{(2k)}) \ | \ \ubar{t}'  \sim \ubar{c}(\ubar{t}(D))\right\}
\end{equation}
In particular, 
\begin{equation}
\label{eq:countTnWIG}
\#\Tcal^n_{k+1}(\rho^{(2k)}) = \frac{1}{k+1}\binom{2k}{k}\cdot(n)_{k+1}\ .
\end{equation}
\end{lemma}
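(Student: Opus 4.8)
The plan is to establish the set-theoretic decomposition \eqref{eq:decomposeTnWIG} and then read off the counting formula \eqref{eq:countTnWIG} by applying Lemma~\ref{lem:coloring}. The map $D\mapsto \ubar{t}(D)$ constructed in the paragraph preceding the lemma is the central object; I would organize the proof around three claims: (a) each block on the right-hand side is contained in $\Tcal^n_{k+1}(\rho^{(2k)})$; (b) the blocks are pairwise disjoint and exhaust $\Tcal^n_{k+1}(\rho^{(2k)})$; (c) each block has exactly $(n)_{k+1}$ elements.

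For (a), the construction already guarantees $\ubar{t}(D)\in\Tcal^n_{k+1}(\rho^{(2k)})$, and since every tuple $\ubar{t}'$ matching the coloring $\ubar{c}(\ubar{t}(D))$ is merely an injective relabeling of the vertices of $\ubar{t}(D)$, it inherits the graph structure: the same edge multiplicities (hence profile $\rho^{(2k)}$) and the same number $k+1$ of distinct vertices. So the right-hand side of \eqref{eq:decomposeTnWIG} is a union of subsets of $\Tcal^n_{k+1}(\rho^{(2k)})$. For (b), I would argue: given any $\ubar{t}'\in\Tcal^n_{k+1}(\rho^{(2k)})$, its graph $\Gcal(\ubar{t}')$ is a double-edged tree on $k+1$ vertices (as noted in the text, the profile forces $k$ proper double edges and a tree structure after deleting one copy of each), so the Eulerian cycle traverses each edge once in each direction. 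Walking the cycle and recording at each step whether we discover a new vertex ($+1$) or backtrack along an already-seen edge ($-1$) produces a tuple $D(\ubar{t}')\in\{-1,+1\}^{2k}$; it sums to zero (each vertex other than the root is discovered once and left once) and has nonnegative partial sums (we cannot backtrack past the root on a tree), so $D(\ubar{t}')\in\Wcal(2k)$. One then checks that $\ubar{t}'\sim\ubar{c}(\ubar{t}(D(\ubar{t}')))$, because both tuples encode the same discover/backtrack pattern on a tree and hence the same equality pattern among coordinates; and that $D(\ubar{t}(D))=D$, so the assignment is well-defined and the blocks indexed by distinct $D$ are disjoint. This shows the union equals $\Tcal^n_{k+1}(\rho^{(2k)})$ and is disjoint. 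For (c), since $\ubar{t}(D)$ is its own coloring with exactly $k+1$ colors, Lemma~\ref{lem:coloring}~ii) gives $\#\{\ubar{t}'\sim\ubar{c}(\ubar{t}(D))\}=(n)_{k+1}$. Summing over $D\in\Wcal(2k)$ and invoking Lemma~\ref{lem:coloringsWIG} ($\#\Wcal(2k)=\frac{1}{k+1}\binom{2k}{k}$) yields \eqref{eq:countTnWIG}.

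The main obstacle is claim (b): carefully verifying that the walk-encoding $\ubar{t}'\mapsto D(\ubar{t}')$ and the construction $D\mapsto \ubar{t}(D)$ are mutually inverse at the level of colorings. The subtle point is that on a double-edged tree a $-1$ step is \emph{forced} to return to the unique vertex from which the current vertex was first reached (any other choice would create a cycle in the underlying tree), so the backtracking structure is rigid; this is exactly what makes the equality pattern of $\ubar{t}'$ depend only on $D$, and it must be spelled out rather than asserted. Once this rigidity is in hand, disjointness and exhaustion are immediate, and the count follows mechanically from Lemmas~\ref{lem:coloring} and~\ref{lem:coloringsWIG}.
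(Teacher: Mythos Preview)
Your proposal is correct and follows essentially the same approach as the paper: the paper also establishes ``$\supseteq$'' via the construction $D\mapsto\ubar{t}(D)$ (done just before the lemma), then ``$\subseteq$'' by encoding an arbitrary $\ubar{t}'\in\Tcal^{n}_{k+1}(\rho^{(2k)})$ into a Wigner-pds $D(\ubar{t}')$ via the discover/backtrack walk and verifying $\ubar{t}'\sim\ubar{c}(\ubar{t}(D(\ubar{t}')))$, and finally derives the count from Lemmas~\ref{lem:coloring} and~\ref{lem:coloringsWIG}. Your emphasis on the rigidity of the backtrack step (forced by the tree structure) is exactly the point the paper relies on.
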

\begin{proof}
Before the statement of Lemma~\ref{lem:TnWIG}, we have already argued "$\supseteq$" in \eqref{eq:decomposeTnWIG}. To show "$\subseteq$", let $\ubar{t}'\in\Tcal^{n}_{k+1}(\rho^{(2k)})$ be arbitrary and  recall the representation of the cycle as in \eqref{eq:EuleriancycleWIG}. We encode this cycle into a Wigner-pds $D(\ubar{t}')$ and show that $\ubar{t}'\sim\ubar{c}(\ubar{t}(D(\ubar{t}'))$. To this end, start a tour at $t_1'$ and move along the cycle. For $\ell\in\{1,\ldots,2k\}$, if $e_{\ell}$ leads to a new vertex, we set $D_{\ell}=1$ and if $e_{\ell}$ backtracks to an old vertex, we set  $D_{\ell}=-1$. For example, we always have $D_{1}=1$, since each edge in $\ubar{t}$ is proper, and $D_k=-1$, since this edge leads back to the -- already seen --  vertex $t_1'$. 
Let us argue that the tuple $D(\ubar{t}')\defeq(D_1,D_2,\ldots,D_{2k})$ we just constructed satisfies conditions 1), 2) and 3) as above. Condition 1) is clearly satisfied. For condition 2), note that $\ubar{t}'$ has $k+1$ vertices, out of which $k$ -- all except the vertex $t_1'$ -- were considered new while traversing $\ubar{t}'$, so we must have $k$ "$+1$"-entries and $k$ "$-1$"-entries in $(D_1,D_2,\ldots,D_{2k})$. For condition 3) we realize that each vertex in $\ubar{t}'$ is visited exactly twice by the cycle $\ubar{t}'$, and that the first visit corresponds to a "$+1$"-entry while the second visit corresponds to a "$-1$"-entry in $(D_1,D_2,\ldots,D_{2k})$. Then 3) must be satisfied, since by nature of things, the "first" comes before the "second". The relation $\ubar{t}'\sim\ubar{c}(\ubar{t}(D(\ubar{t}'))$ follows with the construction of $\ubar{t}(D(\ubar{t}'))$ above the formulation of Lemma~\ref{lem:TnWIG}.

The equality \eqref{eq:countTnWIG} follows from Lemma~\ref{lem:coloringsWIG},  \eqref{eq:decomposeTnWIG} and Lemma~\ref{lem:coloring} $iii)$, since for all $D\in\Wcal(2k)$ we have $\#V(\ubar{t}(D))=k+1$ and all tuples matching the coloring $\ubar{c}(\ubar{t}(D))$ lie in $\Tcal^{n}_{k+1}(\rho^{(2k)})$.
\end{proof}

\subsection*{Step 2: Decay of central moments}

In Step 1, we have seen that for fixed $k\in\N$, the expectation of
\begin{equation}
\label{eq:randmomWIGStep2}
\integrala{\sigma_n}{x^k} =\sum_{\rho\in\Pi(k)}\frac{1}{n^{1+\frac{k}{2}}}\sum_{\ubar{t}\in\Tcal^n(\rho)}  X_n(\ubar{t})
\end{equation}
converges to the $k$-th moment of the semicircle distribution. In particular, we have seen that each of the finitely many summands 
\begin{equation}
\label{eq:randmomWIGsummandStep2}
\frac{1}{n^{1+\frac{k}{2}}}\sum_{\ubar{t}\in\Tcal^n(\rho)}  X_n(\ubar{t})
\end{equation}
converges to a constant in expectation. To show that the random moments in \eqref{eq:randmomWIGStep2} converge almost surely to the moments of the semicircle distribution, it thus suffices -- by Lemma~\ref{lem:convergencemodes} -- to show that for all $\rho\in\Pi(k)$, the variance of each term in \eqref{eq:randmomWIGsummandStep2} decays summably fast. The variance of \eqref{eq:randmomWIGsummandStep2} is given by 
\begin{equation}
\label{eq:WignerRhoVariance}
\frac{1}{n^{k+2}}\sum_{\ubar{t},\ubar{t}'\in\Tcal^n(\rho^{(k)})} \left[\E X_n(\ubar{t})X_n(\ubar{t}')- \E X_n(\ubar{t})\E X_n(\ubar{t}')\right].
\end{equation}

We observe that for all $\ubar{t},\ubar{t}'\in\Tcal^n(\rho^{(k)})$ which are edge-disjoint, the corresponding summand in \eqref{eq:WignerRhoVariance} vanishes. Thus it suffices to consider those $\ubar{t},\ubar{t}'\in\Tcal^n(\rho)$ which have at least one edge in common. To this end, 
denote for all $\ell\in\oneto{k}$: 
\[
\Tcal_{c(\ell)}^n(\rho)\defeq \left\{(\ubar{t},\ubar{t}') \in(\Tcal^n(\rho))^2 \,|\, \text{$\ubar{t}$ and $\ubar{t}'$ have exactly $\ell$ edges in common}\right\}.
\]
Our goal now is to evaluate for each $\ell\in\oneto{k}$ the term
\begin{equation}
\label{eq:tozeroas}
\frac{1}{n^{k+2}}\sum_{(\ubar{t},\ubar{t}')\in\Tcal_{c(\ell)}^n(\rho)} \left[\E X_n(\ubar{t})X_n(\ubar{t}')- \E X_n(\ubar{t})\E X_n(\ubar{t}')\right].
\end{equation}
To this end, we need to establish bounds on $\#\Tcal_{c(\ell)}^n(\rho)$.

\begin{lemma}
\label{lem:doubleboundWIG}
Let $\rho\in\Pi(k)$ and $\ell\in\oneto{k}$, then the following statements hold:
\begin{enumerate}[i)]
\item For all $\ubar{t},\ubar{t}'\in\Tcal^n(\rho)$ with at least $\ell$ common edges, it holds
\[
\#(V(\ubar{t})\cup V(\ubar{t}')) \leq 1 + 2\sum_{i=1}^{k} \rho_i - \ell
\]
In particular,
\[
\#\Tcal_{c(\ell)}^n(\rho) \leq (2k)^{2k} n^{1 + 2\sum_{i=1}^{k} \rho_i - \ell}
\]
\item If there is an $m\in\oneto{k}$ odd with $\rho_{m}\geq 1$, then for all $\ubar{t},\ubar{t}'\in\Tcal^n(\rho)$ with at least $\ell$ common edges, it holds
\[
\#(V(\ubar{t})\cup V(\ubar{t}')) \leq 2\sum_{i=1}^{k} \rho_i - \ell.
\]
In particular,
\[
\#\Tcal_{c(\ell)}^n(\rho) \leq (2k)^{2k} n^{2\sum_{i=1}^{k} \rho_i - \ell}.
\]
\end{enumerate}
\end{lemma}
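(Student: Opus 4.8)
The plan is to reduce the bound on $\#(V(\ubar t)\cup V(\ubar t'))$ to the single-cycle vertex estimates of Lemma~\ref{lem:nodeandtuplecountWIG}, by counting the vertices of $V(\ubar t)$ first and then counting only the \emph{additional} vertices contributed by $\ubar t'$. The point that produces the improvement by $\ell$ powers of $n$ is that at least $\ell$ of the distinct edges of $\Gcal(\ubar t')$ already occur as edges of $\Gcal(\ubar t)$, and such an edge cannot introduce a vertex outside $V(\ubar t)$.

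First I would bound $\#V(\ubar t)$. For i) this is $\#V(\ubar t)\le 1+\rho_1+\ldots+\rho_k$, which is Lemma~\ref{lem:nodeandtuplecountWIG} a) after dropping the loop correction $-L(\ubar t)$; for ii), since $\rho$ contains an odd edge, Lemma~\ref{lem:nodeandtuplecountWIG} b) gives the sharper bound $\#V(\ubar t)\le \rho_1+\ldots+\rho_k$.

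Next I would estimate $\#(V(\ubar t')\setminus V(\ubar t))$. Because $\ell\ge 1$, the cycles $\ubar t$ and $\ubar t'$ share an edge and hence a vertex, so I may cyclically rotate the Eulerian cycle $\ubar t'$ to begin at some $v_0\in V(\ubar t)\cap V(\ubar t')$ and then walk along it exactly as in the proof of Lemma~\ref{lem:nodeandtuplecountWIG}: each distinct edge of $\Gcal(\ubar t')$ can reveal at most one new vertex, and only on its first traversal, and never if it is a loop. Moreover, if a distinct edge of $\Gcal(\ubar t')$ is also an edge of $\Gcal(\ubar t)$, both of its endpoints already lie in $V(\ubar t)$, so it reveals no vertex of $V(\ubar t')\setminus V(\ubar t)$. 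Since $\Gcal(\ubar t')$ has $\rho_1+\ldots+\rho_k$ distinct edges, of which at least $\ell$ are shared with $\Gcal(\ubar t)$, this yields $\#(V(\ubar t')\setminus V(\ubar t))\le \rho_1+\ldots+\rho_k-\ell$. Adding the two estimates gives $\#(V(\ubar t)\cup V(\ubar t'))\le 1+2\sum_{i=1}^{k}\rho_i-\ell$ in case i) and $\le 2\sum_{i=1}^{k}\rho_i-\ell$ in case ii).

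The counting bounds on $\#\Tcal^n_{c(\ell)}(\rho)$ then follow immediately: a pair $(\ubar t,\ubar t')\in(\Tcal^n(\rho))^2$ is just a tuple in $\oneto{n}^{2k}$, namely the concatenation, whose coordinate set is precisely $V(\ubar t)\cup V(\ubar t')$, so Lemma~\ref{lem:maxnodestuplesWIG} applied with $2k$ in place of $k$ turns the vertex bounds into $\#\Tcal^n_{c(\ell)}(\rho)\le(2k)^{2k}n^{1+2\sum_i\rho_i-\ell}$ and, respectively, $\le(2k)^{2k}n^{2\sum_i\rho_i-\ell}$. The only mildly delicate step is the traversal bookkeeping for $V(\ubar t')\setminus V(\ubar t)$, in particular checking that the shared edges genuinely save the factor $n^{-\ell}$; but this is the same Eulerian-cycle accounting already carried out for Lemma~\ref{lem:nodeandtuplecountWIG}, so I do not anticipate any real difficulty.
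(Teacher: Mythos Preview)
Your proposal is correct and matches the paper's proof essentially line for line: the paper also bounds $\#V(\ubar t)$ via Lemma~\ref{lem:nodeandtuplecountWIG} a) resp.\ b), then walks the cycle $\ubar t'$ starting at a vertex on a common edge and observes that the $\ell$ shared distinct edges cannot contribute new vertices, yielding $\#(V(\ubar t')\setminus V(\ubar t))\le \sum_i\rho_i-\ell$, and finally concatenates $(\ubar t,\ubar t')\in\oneto{n}^{2k}$ and applies Lemma~\ref{lem:maxnodestuplesWIG}. The only cosmetic difference is that the paper phrases the two walks as a single ``joint Eulerian cycle'' starting before a common edge, whereas you treat them separately; the bookkeeping is identical.
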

\begin{proof}
For statement $ii)$ we assume w.l.o.g.\ that $\ubar{t}$ has an odd edge. Since the graphs spanned by $\ubar{t}$ and $\ubar{t}'$ share $\ell\geq 1$ common edges, we may take a tour around the joint Eulerian cycle, starting before a common edge, traveling first all edges of $\ubar{t}$ and then all edges of $\ubar{t}'$. While walking the edges of $\ubar{t}$, we can see at most $\rho_1 + \ldots + \rho_{k}$ different nodes by Lemma~\ref{lem:nodeandtuplecountWIG}. Next, traveling all edges of $\ubar{t}'$, at most all the single edges and first instances of $m$-fold edges with $m\in\{2,\ldots,k\}$ of $\ubar{t}'$ may discover a new node, but only if they have not been traversed before during the walk along $\ubar{t}$. Since we have $\ell$ common edges, we can see at most $\rho'_1+\ldots + \rho'_{k}-\ell$ new nodes. We established the bounds on the number of vertices in $ii)$. The second statement in $ii)$ follows immediately with Lemma~\ref{lem:maxnodestuplesWIG} $i)$ by concatenating $(\ubar{t},\ubar{t}')\in\oneto{n}^{2k}$. For statement $i)$ we proceed exactly in the same manner: Traveling $\ubar{t}$ we can see at most $1 + \rho_1 + \rho_2 + \ldots + \rho_{k}$ nodes by Lemma~\ref{lem:nodeandtuplecountWIG}, then traveling $\ubar{t}'$ we can see at most $\rho'_1+\ldots + \rho'_{k}-\ell$ new nodes. Now apply Lemma~\ref{lem:maxnodestuplesWIG} again.
\end{proof}

\noindent\underline{Case 1: $\rho_1\geq 1$}\newline
In this case, the term in \eqref{eq:tozeroas} simplifies and we must argue that for each $\ell\in\oneto{k}$,
\begin{equation}
\label{eq:RhoSingleVariance}
\frac{1}{n^{k+2}}\sum_{(\ubar{t},\ubar{t}')\in\Tcal_{c(\ell)}^n(\rho)} \E X_n(\ubar{t})X_n(\ubar{t}')
\end{equation}
decays summably fast to zero. But we note that if  $\ubar{t}$ and $\ubar{t}'$ have $1\leq\ell<\rho_1$ common edges, $\E X_n(\ubar{t})X_n(\ubar{t}')$ vanishes, since not all single edges can be eliminated due to overlapping. Thus, it suffices to consider those $\ubar{t},\ubar{t}'\in\Tcal^n(\rho)$ which have $\ell\geq \rho_1$ edges in common. Now if $\rho\in\Pi(k)$ with $\ell\geq\rho_1\geq 1$, then
\[
2\sum_{i=1}^{k} \rho_i - \ell \leq 2\left(\rho_1 + \frac{k-\rho_1}{2}\right) -\rho_1  \leq k.  
\]
so Lemma~\ref{lem:doubleboundWIG} $ii)$ yields
\[
\#\Tcal_{c(\ell)}^n(\rho) \leq (2k)^{2k} n^{2\sum_{i=1}^{k} \rho_i - \ell}\leq (2k)^{2k} n^k,
\]
Since every summand in \eqref{eq:RhoSingleVariance} is bounded by $L_{2k}$, it follows that \eqref{eq:RhoSingleVariance} is $O(n^{-2})$, thus converges to zero summably fast.

\noindent\underline{Case 2: $\rho_1=0$}\newline
In this case, each summand in \eqref{eq:tozeroas} is bounded by $L_{2k} + L_k^2$. Further, we obtain for all $\rho\in\Pi(k)$ with $\rho_1=0$ and $\ell\geq 1$ that
\[
1+ 2\sum_{i=1}^k \rho_i - \ell \leq 1 + 2 \cdot\frac{k}{2} -1 = k,
\]
so that by Lemma~\ref{lem:doubleboundWIG} $i)$ we find
\[
\#\Tcal_{c(\ell)}^n(\rho) \leq (2k)^{2k} n^{1+2\sum_{i=1}^{k} \rho_i - \ell}\leq (2k)^{2k} n^k,
\]
so that the sum in \eqref{eq:tozeroas} is $O(n^{-2})$, hence converges to zero summably fast.

\section{The Marchenko-Pastur Law}
\label{sec:MPByMoments}
Let $(X_n)_n$ be an MP scheme as in Definition~\ref{def:MPscheme}, $V_n \defeq n^{-1}X_nX_n^T$, and $\mu_n$ be the ESDs of $V_n$. Denote $y\defeq \lim_n p/n$. In order to show $\mu_n\to\mu^y$ weakly almost surely, we follow the general strategy as outlined in Section~\ref{sec:genstrat-MOMENTS}. To utilize this method, we need the moments of $\mu_n$ and $\mu^{y}$. By Lemma~\ref{lem:momentsMPD}, the moments of $\mu^{y}$ are given by
\begin{equation}
\label{eq:momentsMP}	
\forall\, k\in\N: \integrala{\mu^y}{x^k} = \sum_{r=0}^{k-1} \frac{y^r}{r+1}\binom{k}{r}\binom{k-1}{r},
\end{equation} 
whereas we may calculate the moments of $\mu_n$ by (cf. Corollary~\ref{cor:momenttosum})
\begin{align}
&\integrala{\mu_n}{x^k}\ =\ \integrala{\frac{1}{p}\sum_{s=1}^p\delta_{\lambda_s}}{x^k}\ =\ \frac{1}{p}\sum_{l\in\oneto{p}}\lambda_l^k
=\frac{1}{p}\tr[V_n^k]\ =\ \frac{1}{p}\tr\left[\left(\frac{1}{n}X_nX_n^T\right)^k\right]\notag\\
&=\frac{1}{pn^k}\sum_{s\in\oneto{p}} (X_nX_n^T)^k(s,s)
\ =\ \frac{1}{pn^k}\sum_{s_1,\ldots,s_k\in\oneto{p}}(X_n X_n^T)(s_1,s_2)(X_nX_n^T)(s_2,s_3)\ldots(X_nX_n^T)(s_k,s_1)\notag\\
&=\frac{1}{pn^k}\sum_{s_1,\ldots,s_k\in\oneto{p}}\sum_{t_1,\ldots,t_k\in\oneto{n}} X_n(s_1,t_1)X_n(s_2,t_1)X_n(s_2,t_2)X_n(s_3,t_2)\ldots X_n(s_k,t_k)X_n(s_1,t_k)\notag\\
&=\frac{1}{pn^k} \sum_{\ubar{s}\in\oneto{p}^k}\sum_{\ubar{t}\in\oneto{n}^k} X_n(\ubar{s},\ubar{t}),\label{eq:elaboratesumMP}
\end{align}
where for all $\ubar{s}\in\oneto{p}^k$ and $\ubar{t}\in\oneto{n}^k$ we define
\begin{equation}
\label{eq:STwalk}
X_n(\ubar{s},\ubar{t}) \defeq X_n(s_1,t_1)X_n(s_2,t_1)X_n(s_2,t_2)X_n(s_3,t_2)\ldots X_n(s_k,t_k)X_n(s_1,t_k).
\end{equation}

\subsection*{Combinatorial Preparations and Graph Theory}

As we saw above in \eqref{eq:elaboratesumMP}, the random moments $\integrala{\mu_n}{x^k}$ expand into elaborate sums. In order to be able to analyze these sums, we sort them with the language of graph theory and then establish basic combinatorial facts.

Recall \eqref{eq:STwalk}, then we adopt the view that each pair $(\ubar{s},\ubar{t})\in\oneto{p}^k\times\oneto{n}^k$ spans a Eulerian bipartite graph as follows:

\begin{figure}[htbp]
\centering
\includegraphics[clip, trim=10cm 12cm 10cm 4cm, width=\textwidth]{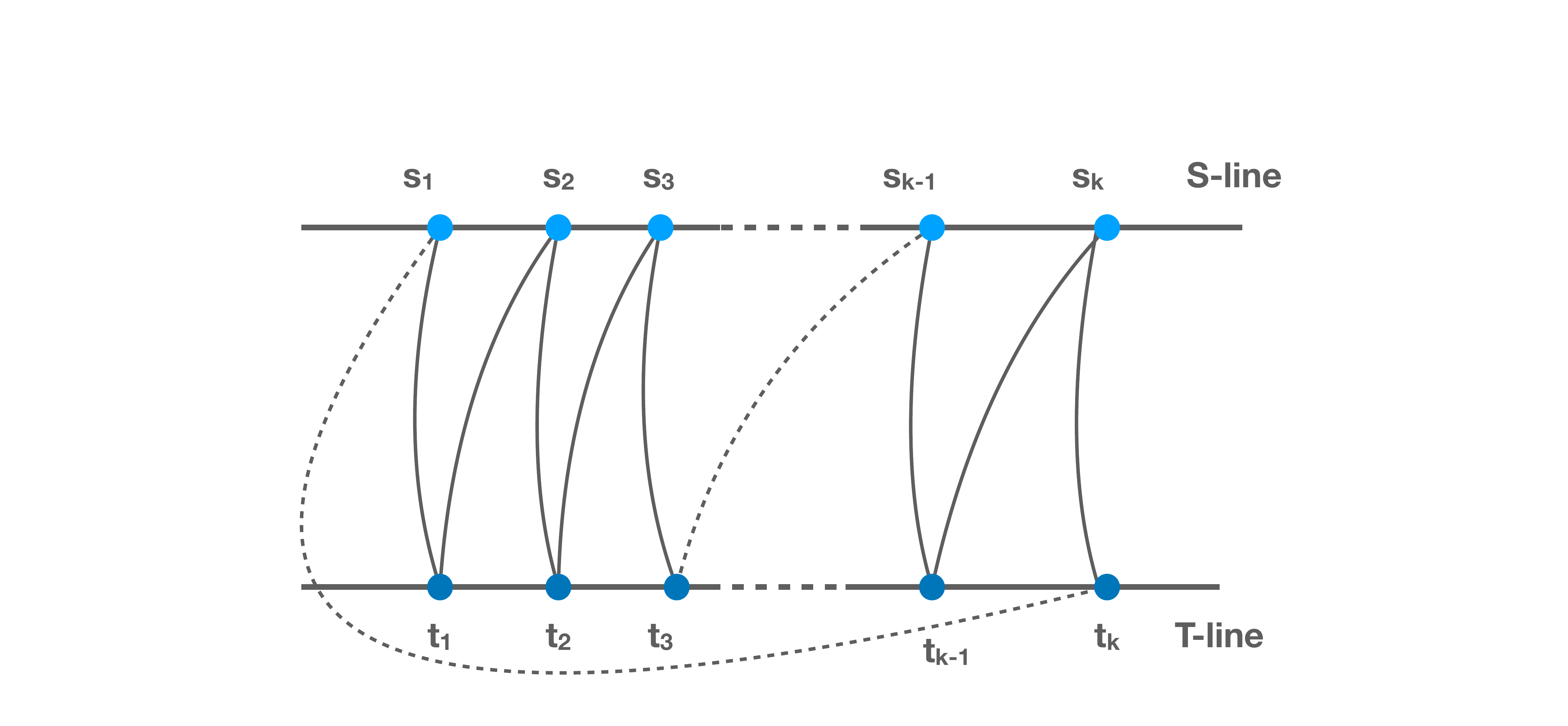}	
\caption{Eurlerian bipartite graph $\Gcal(\ubar{s},\ubar{t})$.}
\label{fig:Eulerian}
\end{figure}

Here, elements in the set $\{s_1,\ldots,s_k\}$ resp.\ $\{t_1,\ldots,t_k\}$ are called S-nodes resp.\ T-nodes. S and T-nodes are considered different even if their value is the same and are thus placed on separate lines -- called S-line and T-line -- which are drawn horizontally beneath each other. Then we draw an undirected edge $\{s_i,t_j\}$ between $s_i$ and $t_j$, $i\in\oneto{p}$, $j\in\oneto{n}$, whenever $(s_i,t_j)$ or $(t_j,s_i)$ appears in \eqref{eq:STwalk}, where we allow for multi-edges. 
This yields the (multi-)graph $\Gcal(\ubar{s},\ubar{t}) = (V(\ubar{s},\ubar{t}),E(\ubar{s},\ubar{t}),\phi_{\ubar{s},\ubar{t}})$, where
\begin{align*}
V(\ubar{s},\ubar{t}) \ &=\ \{s_1,\ldots,s_k\}\ \dot{\cup}\ \{t_1,\ldots,t_k\} \qquad \text{(disjoint union)}\\
E(\ubar{s},\ubar{t}) &= \{d_1,\ldots,d_k\}\ \dot{\cup}\ \{u_1,\ldots,u_k\}\qquad \text{(down edges, up edges)}	\\
&= \{e_1,e_2,\ldots, e_{2k}\} \qquad  (e_{2l-1} = d_l,\ e_{2l}=u_l,\ l=1,\ldots, k)\\
\phi_{\ubar{s},\ubar{t}}(d_i)&=\{s_i,t_i\}\\
\phi_{\ubar{s},\ubar{t}}(u_i)&=\{s_{i+1},t_i\}
\end{align*}
Each $(\ubar{s},\ubar{t})$ also denotes a Eulerian cycle of length $2k$ through its graph $\Gcal(\ubar{s},\ubar{t})$ by 
\begin{equation}
\label{eq:EuleriancycleMP}	
s_1,d_1,t_1,u_1,s_2,d_2,t_2,\ldots,u_{k-1},s_k,d_k,t_k,u_k,s_1
\end{equation}
Figure~\ref{fig:Eulerian} contains a visualisation of the graph $\Gcal(\ubar{s},\ubar{t})$. Note that by construction, $\Gcal(\ubar{s},\ubar{t})$ contains no loops, but may contain multi-edges.
 The language of graph theory allows us to express $\integrala{\mu_n}{x^k}$ in a different fashion. Recall
\begin{equation}
\label{eq:naivemoment}	
\integrala{\mu_n}{x^k}\ =\ \frac{1}{pn^k} \sum_{\ubar{s}\in\oneto{p}^k}\sum_{\ubar{t}\in\oneto{n}^k} X_n(\ubar{s},\ubar{t})
\end{equation}
with
\begin{equation}
\label{eq:exmomentsummandMP}
 X_n(\ubar{s},\ubar{t}) = X_n(s_1,t_1)X_n(s_2,t_1)X_n(s_2,t_2)X_n(s_3,t_2)\ldots X_n(s_k,t_k)X_n(s_1,t_k).
\end{equation}
For any pair of tuples $(\ubar{s},\ubar{t})\in \oneto{p}^k\times \oneto{n}^k$, we define its profile
\[
\rho(\ubar{s},\ubar{t}) = (\rho_1(\ubar{s},\ubar{t}),\ldots,\rho_{2k}(\ubar{s},\ubar{t})),
\]
where for all $\ell\in[2k]$:
\[
\rho_{\ell}(\ubar{s},\ubar{t}) = \#\{\phi_{\ubar{s},\ubar{t}}(e)\ |\ e\in E(\ubar{s},\ubar{t})\ \text{is an $\ell$-fold edge}\}.
\]
Here, an $\ell$-fold edge in $E(\ubar{s},\ubar{t})$ is any element $e\in E(\ubar{s},\ubar{t})$ for which there are exactly $\ell-1$ distinct other elements $e'_2,\ldots,e'_{\ell} \in E(\ubar{s},\ubar{t})$ so that $\phi_{\ubar{s},\ubar{t}}(e)=\phi_{\ubar{s},\ubar{t}}(e'_j)$ for $j\in\{2,\ldots,\ell\}$.

Then for all $\ell\in[2k]$, the Eulerian circuit $(\ubar{s},\ubar{t})$ traverses exactly $\phi_{\ell}(\ubar{s},\ubar{t})$ distinct $\ell$-fold edges. As a result, the following trivial but useful equality holds:
\begin{equation}
\label{eq:alledgessumMP}
2k = \sum_{\ell=1}^{2k}\ell\cdot\rho_{\ell}(\ubar{s},\ubar{t}).
\end{equation}
Now for all $k\in\{1,\ldots,2k\}$ we define the following set of profiles:
\[
\Pi(2k) = \left\{\rho \in\{0,\ldots,2k\}^{2k}\ |\ \rho \ \text{profile of some } (\ubar{s},\ubar{t})\in\oneto{p}^k\times\oneto{n}^k\right\}.
\]
Now we construct the finite decomposition
\begin{equation}
\label{eq:graphmomentMP}	
\integrala{\mu_n}{x^k} =\sum_{\rho\in\Pi(2k)}\frac{1}{pn^k}\sum_{(\ubar{s},\ubar{t})\in\Tcal^{p,n}(\rho)}  X_n(\ubar{s},\ubar{t}),
\end{equation}
where
\[
\Tcal^{p,n}(\rho) \defeq \left\{(\ubar{s},\ubar{t})\in\oneto{p}^k\times\oneto{n}^k\ |\ \rho(\ubar{s},\ubar{t})=\rho \right\}.
\]
The transition from \eqref{eq:naivemoment} to \eqref{eq:graphmomentMP} allows us to analyze the contribution of paths that match certain profiles, and to identify the profiles the paths of which contribute to the limit.

The next fundamental lemma will give an upper bound on the number of tuple pairs $(\ubar{s},\ubar{t})$ with at most $\ell\in\oneto{2k}$ vertices. Note that there are always at least two vertices present, since S-nodes and T-nodes are disjoint. Notationally, we set $V(\ubar{u})\defeq\{u_1,\ldots,u_k\}$ for any $\ubar{u}\in\N^k$ and $V(\ubar{u},\ubar{v})\defeq\{u_1,\ldots,u_k\}\,\dot{\cup}\,\{v_1,\ldots,v_k\}$ for any $\ubar{u}$, $\ubar{v}\in\N^k$, even if we do not view $(\ubar{u},\ubar{v})$ as a graph.

\begin{lemma}\label{lem:maxnodestuplesMP}
Let $p,n,k\in\N$, $a,b\in\{1,\ldots,k\}$ and $\ell\in\{2,3,\ldots,2k\}$ be arbitrary. Then
\begin{align*}
i)&\quad 
\# \left\{(\ubar{s},\ubar{t})\in\oneto{p}^k\times\oneto{n}^k\, |\, \#V(\ubar{s})=a, \#V(\ubar{t})=b\right\} \leq k^{2k}\cdot p^a n^b\\
ii)&\quad 
\# \left\{(\ubar{s},\ubar{t})\in\oneto{p}^k\times\oneto{n}^k\, |\, \#V(\ubar{s},\ubar{t})\leq \ell\right\} \leq k^{2k+2}\cdot (p\vee n)^{\ell}.
\end{align*}
\end{lemma}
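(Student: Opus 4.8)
The plan is to reduce both bounds to the coloring count in Lemma~\ref{lem:coloring}, applied separately to the $S$-line and the $T$-line, and then to combine them. For part $i)$, I would proceed exactly as in the proof of Lemma~\ref{lem:maxnodestuplesWIG}: a tuple $\ubar{s}\in\oneto{p}^k$ with $\#V(\ubar{s})=a$ matches exactly one coloring $\ubar{c}^S\in\oneto{k}^k$, and that coloring has precisely $a$ colors; similarly $\ubar{t}\in\oneto{n}^k$ with $\#V(\ubar{t})=b$ matches a coloring $\ubar{c}^T$ with $b$ colors. By Lemma~\ref{lem:coloring}~$i)$ there are at most $k!\le k^k$ choices for $\ubar{c}^S$ and at most $k^k$ for $\ubar{c}^T$, and by Lemma~\ref{lem:coloring}~$ii)$ the number of $\ubar{s}$ matching $\ubar{c}^S$ is $(p)_a\le p^a$ while the number of $\ubar{t}$ matching $\ubar{c}^T$ is $(n)_b\le n^b$. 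Since $S$- and $T$-tuples are chosen independently, the count is at most $k^k\cdot k^k\cdot p^a n^b = k^{2k} p^a n^b$, as claimed.

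For part $ii)$, I would decompose the set according to the exact vertex counts on each line. Write $a\defeq\#V(\ubar{s})$ and $b\defeq\#V(\ubar{t})$; since $S$- and $T$-nodes are disjoint, $\#V(\ubar{s},\ubar{t})=a+b$, so the constraint $\#V(\ubar{s},\ubar{t})\le\ell$ becomes $a+b\le\ell$ with $a,b\in\{1,\ldots,k\}$. Thus
\[
\#\left\{(\ubar{s},\ubar{t})\,|\,\#V(\ubar{s},\ubar{t})\le\ell\right\}
= \sum_{\substack{a,b\in\oneto{k}\\ a+b\le\ell}} \#\left\{(\ubar{s},\ubar{t})\,|\,\#V(\ubar{s})=a,\,\#V(\ubar{t})=b\right\}
\le \sum_{\substack{a,b\in\oneto{k}\\ a+b\le\ell}} k^{2k} p^a n^b
\]
by part $i)$. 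Now bound each summand by $k^{2k}(p\vee n)^{a+b}\le k^{2k}(p\vee n)^{\ell}$, using $p\le p\vee n$, $n\le p\vee n$ and $a+b\le\ell$. The number of pairs $(a,b)$ in the index set is at most $k^2$, so the total is at most $k^{2k}\cdot k^2\cdot(p\vee n)^{\ell} = k^{2k+2}(p\vee n)^{\ell}$, which is the desired bound.

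The only mild subtlety — and the step I would be most careful about — is the claim $\#V(\ubar{s},\ubar{t})=\#V(\ubar{s})+\#V(\ubar{t})$, i.e.\ that no cancellation occurs between the two lines. This is exactly the bipartite structure of $\Gcal(\ubar{s},\ubar{t})$: by construction $V(\ubar{s},\ubar{t})$ is the \emph{disjoint} union $\{s_1,\ldots,s_k\}\,\dot\cup\,\{t_1,\ldots,t_k\}$, with $S$-nodes and $T$-nodes regarded as distinct even when they carry the same numerical value, so the two counts simply add. Everything else is a routine repackaging of Lemma~\ref{lem:coloring}, so I do not expect any real obstacle beyond keeping the constants ($k^{2k}$ versus $k^{2k+2}$) straight.
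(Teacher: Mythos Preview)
Your proof is correct and follows essentially the same approach as the paper: fix the colorings of $\ubar{s}$ and $\ubar{t}$ separately (at most $k^{2k}$ choices) and then count the matching tuples for part~$i)$, and for part~$ii)$ decompose according to the pair $(a,b)$ with $a+b\le\ell$ (at most $k^2$ choices) and apply part~$i)$. Your explicit remark on the disjoint-union identity $\#V(\ubar{s},\ubar{t})=\#V(\ubar{s})+\#V(\ubar{t})$ is a nice clarification that the paper leaves implicit.
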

\begin{proof}
For i) we first fix the colorings for $\ubar{s}$ with $a$ colors and $\ubar{t}$ with $b$ colors, for which be have at most $k^{2k}$ choices (Lemma~\ref{lem:coloring}). After fixing the colorings, we are left with at most $p^a$ choices for the tuple $\ubar{s}\in\oneto{p}^k$ and at most $n^b$ choices for the tuple $\ubar{t}\in\oneto{n}^k$, which yields the desired inequality. 
For $ii)$ we first decide on the number $a\leq k$ of different vertices in $\ubar{s}$ and the number $b\leq k$ of different vertices in $\ubar{t}$ such that $a+b\leq \ell$. This choice of $(a,b)$ admits at most $k^2$ choices. Then with $i)$, the statement follows.
\end{proof}

\subsection*{Step 1: Convergence of expected moments}
We proceed to analyze the expectation of
\begin{equation}
\label{eq:randmomMP}
\integrala{\mu_n}{x^k} =\sum_{\rho\in\Pi(2k)}\frac{1}{pn^k}\sum_{(\ubar{s},\ubar{t})\in\Tcal^{p,n}(\rho)} X_n(\ubar{s},\ubar{t}).
\end{equation}
To this end, it suffices to analyze the expectation of each of the finitely many terms
\begin{equation}
\label{eq:randomMPsummand}
\frac{1}{pn^k}\sum_{(\ubar{s},\ubar{t})\in\Tcal^{p,n}(\rho)} X_n(\ubar{s},\ubar{t})	
\end{equation}
for $\rho\in\Pi(2k)$ separately. As a first observation, note that if $\rho_1\geq 1$, we have $\E X_n(\ubar{s},\ubar{t}) = 0$ for all $(\ubar{s},\ubar{t})\in\Tcal^{p,n}(\rho)$ due to independence and centeredness. Further, since $(X_n)_n$ is an MP-scheme as in Definition~\ref{def:MPscheme}, we can always apply the trivial bound
\begin{equation}
\label{eq:trivialboundMP}
\abs{\E X_n(\ubar{s},\ubar{t})} \leq L_{2k},	
\end{equation}
where we also used Lemma~\ref{lem:holder}. 

For the bounds on $\#\Tcal^{p,n}(\rho)$ we formulate the next lemma. It is a modification of similar lemmas obtained in \autocite{FleermannDiss}.

\begin{lemma}\label{lem:nodeandtuplecountMP}
Let $k \in\N$ be arbitrary. Then it holds: 
\begin{enumerate}[i)]
\item $\#\Pi(2k)\leq 16^k.$
\item Let $p,n\in\N$ and $\rho\in\Pi(2k)$ be arbitrary, then
\begin{enumerate}[a)]
\item For any $(\ubar{s},\ubar{t})\in\Tcal^{p,n}(\rho)$ we obtain
\[
\# V(\ubar{s},\ubar{t})\leq 1 + \rho_1 + \ldots + \rho_{2k}.
\]
In particular,
\[
\#\Tcal^{p,n}(\rho)\leq k^{2k+2} \cdot (p\vee n)^{1 + \rho_1 + \ldots + \rho_{2k}}.
\]
\item If $\rho$ contains an odd edge, then for any $(\ubar{s},\ubar{t})\in\Tcal^{p,n}(\rho)$ we obtain
\[
\# V(\ubar{s},\ubar{t})\leq \rho_1 + \ldots + \rho_{2k}.
\]
In particular,
\[
\#\Tcal^{p,n}(\rho)\leq k^{2k+2} \cdot (p\vee n)^{\rho_1 + \ldots + \rho_{2k}}.
\]
\end{enumerate}
\end{enumerate}
\end{lemma}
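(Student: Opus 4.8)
The plan is to follow the proof of Lemma~\ref{lem:nodeandtuplecountWIG}, transported to the bipartite setting, where the essential simplification is that the graph $\Gcal(\ubar{s},\ubar{t})$ never has loops. For $i)$ I would argue exactly as for the Wigner analogue: the edge-count identity \eqref{eq:alledgessumMP} shows that each profile $\rho\in\Pi(2k)$ satisfies $\rho_\ell\in\{0,1,\ldots,\lfloor 2k/\ell\rfloor\}$ for every $\ell\in\oneto{2k}$, so
\[
\#\Pi(2k)\ \leq\ \prod_{\ell=1}^{2k}\Bigl(\frac{2k}{\ell}+1\Bigr)\ =\ \binom{4k}{2k}\ \leq\ 4^{2k}\ =\ 16^k,
\]
using the standard bound $\binom{2m}{m}\leq 4^m$ with $m=2k$.

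For $ii)$, in both cases it suffices to bound $\#V(\ubar{s},\ubar{t})$ for an arbitrary fixed $(\ubar{s},\ubar{t})\in\Tcal^{p,n}(\rho)$: the bound on $\#\Tcal^{p,n}(\rho)$ then follows from Lemma~\ref{lem:maxnodestuplesMP} $ii)$, with the harmless caveat that if $1+\rho_1+\ldots+\rho_{2k}>2k$ --- which forces every edge to be single --- one simply uses the crude count $p^kn^k\leq(p\vee n)^{2k}$ instead. To bound $\#V(\ubar{s},\ubar{t})$ I would walk the Eulerian cycle \eqref{eq:EuleriancycleMP}: fix a starting vertex, traverse its $2k$ edges in cyclic order, and count the vertices newly discovered along the way. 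Because there are no loops, for each $\ell$ only the first of the $\ell$ traversals of a given $\ell$-fold edge can reveal a new vertex, and there are exactly $\rho_1+\ldots+\rho_{2k}$ distinct edges; adding the starting vertex yields $\#V(\ubar{s},\ubar{t})\leq 1+\rho_1+\ldots+\rho_{2k}$, proving $a)$.

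For $b)$ the gain of one is obtained by choosing the root cleverly. Fix a distinct $m$-fold edge $\{s,t\}$ of $(\ubar{s},\ubar{t})$ with $m$ odd, and let $e_{i_1},\ldots,e_{i_m}$ with $i_1<\ldots<i_m$ be its $m$ traversals in the cyclic order of \eqref{eq:EuleriancycleMP}. Each traversal runs either from the $S$-endpoint to the $T$-endpoint or the other way, and since $m$ is odd these orientations cannot strictly alternate around the cycle, so some cyclically consecutive pair $e_{i_j},e_{i_{j+1}}$ is traversed in the same direction. Now start the walk immediately after $e_{i_j}$, i.e.\ root it at the head of $e_{i_j}$. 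Then the first traversal of $\{s,t\}$ met by this re-rooted walk is $e_{i_{j+1}}$, and at that moment both its endpoints have already been observed: its head equals the head of $e_{i_j}$, which is the root, and its tail is the vertex we currently occupy. Hence $\{s,t\}$ discovers no new vertex, and (having no loop) this improves the count to $\#V(\ubar{s},\ubar{t})\leq 1+(\rho_1+\ldots+\rho_{2k}-1)=\rho_1+\ldots+\rho_{2k}$, proving $b)$.

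The step I expect to be the main obstacle is the orientation argument in $b)$: one must verify carefully that an odd number of traversals of an edge forces two cyclically adjacent traversals in the same direction, and that re-rooting just past such a pair really makes both endpoints of the edge's first re-rooted traversal already visited. The remainder is routine bookkeeping, and the absence of loops in the bipartite graph removes the loop-counting case distinctions that appear in the Wigner version.
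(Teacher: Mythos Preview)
Your proposal is correct and follows essentially the same route as the paper: the product bound $\prod_{\ell}(2k/\ell+1)=\binom{4k}{2k}\leq 16^k$ for $i)$, the Eulerian-walk discovery count for $ii)a)$, and the ``two cyclically adjacent traversals in the same direction'' re-rooting trick for $ii)b)$, with the cardinality bounds then read off from Lemma~\ref{lem:maxnodestuplesMP}~$ii)$. Your explicit handling of the degenerate case $1+\sum_\ell\rho_\ell>2k$ (all edges single) is in fact slightly more careful than the paper, which applies Lemma~\ref{lem:maxnodestuplesMP}~$ii)$ without commenting on this boundary case.
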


\begin{proof}
\underline{i)}  Each $\rho\in\Pi(2k)$ is a $2k$-tuple in which for all $\ell\in\{1,\ldots,2k\}$ the entry $\rho_{\ell}$ lies in the set $\{0,1,\ldots,\lfloor 2k/\ell\rfloor\}$, which follows directly from \eqref{eq:alledgessumMP}. Therefore,
\[
\#\Pi(2k) \leq  \prod_{\ell=1}^{2k}\left(\frac{2k}{\ell}+1\right) = \frac{(4k)!}{(2k)!\cdot (2k)!} = \binom{2 (2k)}{2k}  \lesssim \frac{4^{2k}}{\sqrt{2k\pi}} \leq 16^k,
\]
where the fourth step is a well-known fact about the central binomial coefficient.\newline
\underline{ii)} It suffices to establish the upper bounds for $\#V(\ubar{s},\ubar{t})$, since the bounds on $\#\Tcal^{p,n}(\rho)$ then follow directly with Lemma~\ref{lem:maxnodestuplesMP} ii). Now to prove upper bounds for $\#V(\ubar{s},\ubar{t})$, the idea is to travel the Eulerian cycle generated by $(\ubar{s},\ubar{t}):$
\begin{equation}
\label{eq:walkMP}
s_1,e_1,t_1,e_2,s_2,e_3,t_2,\ldots,t_k, e_{2k}, s_1
\end{equation}
by picking an initial node $s_i$ or $t_i$ and then traversing the edges in increasing cyclic order until reaching the starting point again. On the way, we count the number of different nodes that were discovered. Whenever we pass an $\ell$-fold edge, only the first instance of that edge may discover a new vertex. \newline
\underline{a)}  We start our tour at  $s_1$ and observe this very vertex. Then, as we travel along the cycle, for each $\ell\in\{1,\ldots,2k\}$ we will pass $\ell\cdot \rho_{\ell}$ $\ell$-fold edges out of which only the first instance may discover a new node, and there are $\rho_{\ell}$ of these first instances. Considering the initial node, we arrive at $\# V(\ubar{s},\ubar{t})\leq 1 + \rho_1 + \ldots + \rho_{2k}$, which yields the desired inequality.\newline
\underline{b)} In presence of an odd edge, we can start the tour at a specific vertex such that the odd edge cannot contribute to the newly discovered vertices. To this end, fix an $\ell$-fold edge in  $(\ubar{s},\ubar{t})$ with $\ell$ odd. Let $e_{i_1},\ldots,e_{i_{\ell}}$, $i_1 < \ldots < i_{\ell}$, be the instances of the $\ell$-fold edge in question in the cycle \eqref{eq:walkMP}. Since $\ell$ is odd, we must find a $k\in\{1,\ldots,\ell\}$ such that $e_{i_k}$ and $e_{i_{k+1}}$ are both up edges or both down edges (where $\ell+1\equiv 1$), since we are on a cycle. W.l.o.g.\ $e_{i_k}$ is a down edge, thus leading to $t_{i_k}$. We start our tour at $t_{i_k}$ and observe this vertex. However, now none of the edges $e_{i_1},\ldots,e_{i_{\ell}}$ may discover a new vertex, since the vertex $s_{i_{k+1}}$ must be discovered by some other edge. Therefore, the roundtrip leads to the discovery of at most $\rho_1 + \dots + (\rho_{\ell} - 1) + \ldots + \rho_{2k}$ new nodes in addition to the first node.
\end{proof}
We proceed to analyze \eqref{eq:randmomMP} for all possible types of $\rho\in\Pi(2k)$:

\noindent\underline{Case 1: $\rho_1=0$ and $\rho_{\ell}>0$ for some $\ell\geq 3$.}\newline
Using Lemma~\ref{lem:nodeandtuplecountMP} we obtain
\[
\#\Tcal^{p,n}(\rho)\leq k^{2k+2} \cdot (p\vee n)^{1 + \rho_1 + \ldots + \rho_{2k}}\leq k^{2k+2} (p\vee n)^k,
\]
since with $\rho_{\ell}>0$ for some $\ell\geq 3$ it follows
\[
1 + \rho_1 + \ldots + \rho_{2k} \leq  
\left\{
\begin{array}{c}
 1 + \frac{2k-6}{2} + 2\\
 1 + \frac{2k-4}{2} + 1
\end{array}
\right\} = k,
\]
where the upper case is valid in presence of an odd edge (so we find at least a second odd edge), and the lower case is valid if no odd edges are present. Therefore, by \eqref{eq:trivialboundMP}, \eqref{eq:randomMPsummand} converges to zero in expectation.

\noindent\underline{Case 2: $\rho_1 >0$.}\newline
Then by centeredness and independence, the expectation in \eqref{eq:randomMPsummand} is zero. 

\noindent\underline{Case 3: $\rho_{2}=k/2$.}\newline
Returning to the random moment in \eqref{eq:randmomMP}, we have seen in Cases 1 and 2 that for all $\rho\in\Pi(k)$ with $\rho_{2}\neq k$,
\[
\frac{1}{pn^k}\sum_{(\ubar{s},\ubar{t})\in\Tcal^{p,n}(\rho)} X_n(\ubar{s},\ubar{t}) \xrightarrow[n\to\infty]{} 0 \qquad \text{in expectation.}
\]
As a result, the only asymptotic contribution in \eqref{eq:randmomWIG} may stem from cycles $(\ubar{s},\ubar{t})$ containing only double edges. Their analysis is the content of this Case 3. Setting $\rho^{(k)}$ as the profile in $\Pi(2k)$ with $\rho^{(k)}_2=k$ and $\rho^{(k)}_{\ell}=0$ for all $\ell\neq 2$, then it is our goal to show (cf.\ \eqref{eq:momentsMP})
\begin{equation}
\label{eq:goalCase3MP}
\frac{1}{pn^k}\sum_{(\ubar{s},\ubar{t})\in\Tcal^{p,n}(\rho^{(k)})} X_n(\ubar{s},\ubar{t})\ \xrightarrow[n\to\infty]{}\ \sum_{r=0}^{k-1} \frac{y^r}{r+1}\binom{k}{r}\binom{k-1}{r} \qquad \text{in expectation.}
\end{equation}
To this end, we observe

\begin{equation}
\label{eq:summandCase3MP}
\frac{1}{pn^k}\sum_{(\ubar{s},\ubar{t})\in\Tcal^{p,n}(\rho^{(k)})} \E X_n(\ubar{s},\ubar{t}) = \frac{1}{pn^k}\#\Tcal^{p,n}(\rho^{(k)}).
\end{equation}
We note that any $(\ubar{s},\ubar{t})\in\Tcal^{p,n}(\rho^{(k)})$ has at most $k+1$ vertices, so we may subdivide this set further: We define
\begin{align*}
\Tcal^{p,n}_{\leq k}(\rho^{(k)}) &\defeq \left\{(\ubar{s},\ubar{t})\in\Tcal^{p,n}(\rho^{(k)}):\ \# V(\ubar{s},\ubar{t}) \leq k \right\},\\
\Tcal^{p,n}_{k+1}(\rho^{(k)}) &\defeq \left\{(\ubar{s},\ubar{t})\in\Tcal^{p,n}(\rho^{(k)}):\ \# V(\ubar{s},\ubar{t}) = k+1 \right\}.
\end{align*}
and note that by Lemma~\ref{lem:maxnodestuplesMP}, $\#\Tcal^{p,n}_{\leq k}(\rho^{(k)}) \leq k^{2k+2}(p\vee n)^{k}$, so that \eqref{eq:summandCase3MP} can be refined to
\begin{equation}
\label{eq:onlydoublekplusoneMP}
\frac{1}{pn^k}\sum_{(\ubar{s},\ubar{t})\in\Tcal^{p,n}(\rho^{(k)})} \E X_n(\ubar{s},\ubar{t})  = \frac{1}{pn^k}\#\Tcal^{p,n}_{k+1}(\rho^{(k)})  \ + \ o(1), 
\end{equation}

It is thus our task to show
\begin{equation}
\label{eq:convergetoMPmoment}	
\frac{1}{pn^k}\#\Tcal^{p,n}_{k+1}(\rho^{(k)})\ \xrightarrow[n\to\infty]{}\ \sum_{r=0}^{k-1} \frac{y^r}{r+1}\binom{k}{r}\binom{k-1}{r}.
\end{equation}

To this end, for all $(\ubar{s},\ubar{t})\in\Tcal_{k+1}^{p,n}(\rho^{(k)})$ we track the number of vertices in $\ubar{s}$ and the number vertices in $\ubar{t}$ that the cycle visits. Thus, for all $a,b\in\N$ with $a+b= k+1$ we define
\[
\Tcal^{p,n}_{a,b}(\rho^{(k)}) \defeq \left\{(\ubar{s},\ubar{t})\in\oneto{p}^k\times\oneto{n}^k\ |\ \rho(\ubar{s},\ubar{t})=\rho^{(k)}, \# V(\ubar{s})=a, \# V(\ubar{t})=b \right\}.
\]
Then we obtain a \emph{partition}
\[
\Tcal^{p,n}_{k+1}(\rho^{(k)}) = \bigcup_{r=0}^{k-1}  \Tcal^{p,n}_{r+1,k-r}(\rho^{(k)}).
\]
As a result, to show \eqref{eq:convergetoMPmoment} it suffices to show that for all $r\in\{0,\ldots,k-1\}$,
\begin{equation}
\label{eq:convergetoMPmomentFinal}	
\frac{1}{pn^k}\#\Tcal^{p,n}_{r+1,k-r}(\rho^{(k)})\ \xrightarrow[n\to\infty]{}\  \frac{y^r}{r+1}\binom{k}{r}\binom{k-1}{r}.
\end{equation}

It remains to evaluate $\# \Tcal^{p,n}_{r+1,k-r}(\rho^{(k)})$ for all $r\in\{0,\ldots, k-1\}$. This is done by identifying the number of different color structures that an $(\ubar{s},\ubar{t})\in\Tcal^{p,n}_{r+1,k-r}(\rho^{(k)})$ may assume and then by multiplying this number with the number of possible colorings, which is a trivial task. The main tool to count all possible color structures is to associate with each $(\ubar{s},\ubar{t})\in\Tcal^{p,n}_{r+1,k-r}(\rho^{(k)})$ a path difference sequence (pds) of the following form:

\begin{definition}
\label{def:MP-path}
A Marcenko-Pastur path difference sequence (MP-pds) of length $2k$ and weight $r\in\{0,\ldots, k-1\}$ is a tuple $(D_1,U_1,D_2,U_2,\ldots,D_k,U_k)=(M_1,\ldots,M_{2k})$ which satifies the following conditions:
\begin{enumerate}[1)]
\item $D_i\in\{-1,0\}$ and $U_i\in\{0,1\}$.
\item $\sum_{i\in\oneto{k}}U_i = r$  and  $\sum_{i\in\oneto{k}}D_i = -r$.
\item $\forall\,\ell\in\{1,\ldots, 2k\}: \sum_{i=1}^{\ell}M_i\geq 0$.
\end{enumerate}
We denote by $\Mcal(k,r)$ the set of all MP-pds of length $2k$ and weight $r$.	
\end{definition}

\begin{lemma}
\label{lem:colorstructuresMP}
For all $k\in\N$ and $r\in\{0,\ldots,k-1\}$ we find $\#\Mcal(k,r) = \frac{1}{r+1}\binom{k-1}{r}\binom{k}{r}$	.
\end{lemma}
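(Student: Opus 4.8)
The plan is to reduce $\#\Mcal(k,r)$ to a classical lattice‑path count via two bijections. First I would identify $\Mcal(k,r)$ with the set of pairs $(A,B)$ of $r$‑element subsets of $\oneto{k}$ satisfying $a_m<b_m$ for all $m\in\oneto{r}$, where $A=\{a_1<\dots<a_r\}$, $B=\{b_1<\dots<b_r\}$. Given an MP‑pds $(D_1,U_1,\dots,D_k,U_k)$, set $A\defeq\{i\in\oneto{k}:U_i=1\}$ and $B\defeq\{i\in\oneto{k}:D_i=-1\}$; conditions 1) and 2) say exactly that $A,B$ are $r$‑subsets of $\oneto{k}$, and dropping condition 3) this is plainly a bijection onto all such pairs. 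To translate 3), note that the partial sum after the entry $D_j$ equals $\#(A\cap\oneto{j-1})-\#(B\cap\oneto{j})$, while the partial sum after $U_j$ equals $\#(A\cap\oneto{j})-\#(B\cap\oneto{j})$, which exceeds the former (a non‑negative increment of $A$). Hence 3) is equivalent to $\#(A\cap\oneto{j-1})\ge\#(B\cap\oneto{j})$ for all $j$, and a short argument (take $j=b_m$ for "$\Rightarrow$"; for "$\Leftarrow$" set $m=\#(B\cap\oneto{j})$ and use $a_m<b_m\le j$) shows this is in turn equivalent to $a_m<b_m$ for all $m$.

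Second, since $a_1<b_1$ forces $b_1\ge2$, the map $(A,B)\mapsto(A,C)$ with $C\defeq\{b_1-1,\dots,b_r-1\}$ is a bijection onto the pairs $(A,C)$ of $r$‑subsets of $\oneto{k-1}$ with $a_m\le c_m$ for all $m$. Thus $\#\Mcal(k,r)$ equals the number of ordered pairs of $r$‑subsets of $\oneto{k-1}$, one entrywise dominating the other.

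Third, I would count these pairs by the Lindström–Gessel–Viennot lemma (equivalently, a reflection argument in the spirit of Lemma~\ref{lem:coloringsWIG}). Encode an $r$‑subset $\{x_1<\dots<x_r\}$ of $\oneto{k-1}$ as the monotone lattice path from $(0,0)$ to $(k-1-r,\,r)$ whose $m$‑th North step is preceded by $x_m-m$ East steps; this is a bijection, and the domination $a_m\le c_m$ becomes precisely the statement that the $A$‑path stays weakly above the $C$‑path. After the standard shift of the second path's source and sink by one, "weakly above" becomes "non‑intersecting", and the LGV determinant of binomial path‑counts gives
\[
\#\Mcal(k,r)=\binom{k-1}{r}^2-\binom{k-1}{r+1}\binom{k-1}{r-1}.
\]

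Finally, a routine manipulation finishes it: writing $\binom{k-1}{r+1}=\binom{k-1}{r}\frac{k-1-r}{r+1}$ and $\binom{k-1}{r-1}=\binom{k-1}{r}\frac{r}{k-r}$, the bracketed factor collapses to $\frac{(r+1)(k-r)-r(k-1-r)}{(r+1)(k-r)}=\frac{k}{(r+1)(k-r)}$, and since $\binom{k-1}{r}\frac{k}{k-r}=\binom{k}{r}$ this yields $\frac{1}{r+1}\binom{k-1}{r}\binom{k}{r}$, as claimed. The main obstacle is the third step: making the subset‑to‑path dictionary precise enough that entrywise domination corresponds to a non‑intersecting pair of paths with the correctly shifted endpoints, so that LGV (or the reflection) applies cleanly; the first step is some partial‑sum bookkeeping but is routine, and the last step is pure algebra.
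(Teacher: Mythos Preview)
Your proof is correct, and it reaches the same intermediate identity
\[
\#\Mcal(k,r)=\binom{k-1}{r}^2-\binom{k-1}{r+1}\binom{k-1}{r-1}
\]
as the paper, but via a genuinely different packaging. The paper works directly on the sequences: after observing $D_1=0$ and $U_k=0$, it counts all placements of the $r$ ``$+1$'' entries among $U_1,\dots,U_{k-1}$ and the $r$ ``$-1$'' entries among $D_2,\dots,D_k$ (giving $\binom{k-1}{r}^2$), then subtracts the bad sequences by a bespoke reflection: at the first time the partial sum hits $-1$, it swaps pairs $(1,0)\leftrightarrow(0,-1)$ in the tail, producing a bijection with sequences having $r-1$ up-steps and $r+1$ down-steps. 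You instead biject $\Mcal(k,r)$ to pairs of $r$-subsets $(A,B)$ with $a_m<b_m$, shift $B$ to land both in $\oneto{k-1}$ with entrywise domination, and then invoke LGV (or the standard lattice-path reflection) to get the same $2\times 2$ determinant. Your route is more structural and makes the connection to non-intersecting paths and the Narayana numbers transparent; the paper's route is more self-contained and avoids introducing the LGV machinery, at the cost of a reflection map that is tailored to this specific alternating $D/U$ structure. Both are clean; yours also spells out the final algebraic simplification to $\frac{1}{r+1}\binom{k-1}{r}\binom{k}{r}$, which the paper asserts without detail.
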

\begin{proof}
We assume $r\geq 1$ since for $r=0$ the statement is clear. We prove the lemma with a reflection principle. First note that $M_1=D_1=0$ and $M_{2k}=U_k=0$ so that we are interested in all sequences $(M_2,\ldots,M_{2k-1})$ where
\begin{enumerate}[1)]
\item $M_i\in\{-1,0\}$ for $i$ odd and $M_i\in\{0,1\}$ for $i$ even.
\item $\sum_{i \text{ odd}}M_i = -r$  and  $\sum_{i \text{ even}}D_i = r$.
\item $\forall\,\ell\in\{2,\ldots, 2k-1\}: \sum_{i=2}^{\ell}M_i\geq 0$.
\end{enumerate}
To this end, we have 
\[
\binom{k-1}{r}\cdot\binom{k-1}{r}
\]
choices to allocate $r$ "$+1$"s to $k-1$ places $r$ "$-1$"s to $k-1$ places. But since these choices do not in general respect condition $3)$ we have to subtract the number of tuples $(M_2,\ldots,M_{2k-1})$ that lead to a violation of $3)$. We show that these violating tuples are in bijective correspondence to all $(M_2',\ldots,M_{2k-1}')$ with
\begin{enumerate}[1')]
\item $M'_i\in\{-1,0\}$ for $i$ odd and $M_i'\in\{0,1\}$ for $i$ even.
\item $\sum_{i \text{ odd}}M_i' = -(r+1)$  and  $\sum_{i \text{ even}}M_i' = r-1$.
\end{enumerate}
The number of these $(M_2',\ldots,M_{2k-1}')$ is clearly given by 
\[
\binom{k-1}{r+1}\cdot\binom{k-1}{r-1}
\]
so that the number of $(M_2,\ldots,M_{2k-1})$ that \emph{do} satisfy 1), 2) \emph{and} 3) is given by
\[
\binom{k-1}{r}\cdot\binom{k-1}{r} - \binom{k-1}{r+1}\cdot\binom{k-1}{r-1} = \frac{1}{r+1}\binom{k-1}{r}\binom{k}{r}
\]
For the bijection, let $(M_2,\ldots,M_{2k-1})$ be arbitrary with $r$ "$+1$"s and $r$ "$-1$"s so that 3) is violated. Then there is an odd index $t$ such that $\sum_{i=2}^t M_i = -1$ for the first time. Then $(M_{t+1},\ldots,M_{2k-1})$
 is a vector of even length which contains one more "$+1$" than "$-1$" entry. We will transform this vector to a vector  $(M'_{t+1},\ldots,M'_{2k-1})$ by transforming  the pairs $(M_{t+1},M_{t+2}),\ldots,(M_{2k-2},M_{2k-1})$ as follows: If the pair is $(+1,-1)$ or $(0,0)$, we leave it unchanged. A pair $(1,0)$ will be changed to $(0,-1)$ and a pair $(0,-1)$ will be changed to $(1,0)$. Then $(M'_{t+1},\ldots,M'_{2k-1})$ contains one more "$-1$" than "$+1$". Defining $(M_2',\ldots,M_t')\defeq(M_2,\ldots,M_t)$ we thus have created a vector $(M_2',\ldots,M_{2k-1}')$ satisfying 1') and 2'). On the other hand, any vector $(M_2',\ldots,M_{2k-1}')$ satisfying 1) and 2) has a first hitting time $t$ of $-1$. Applying exactly the same transformation as before, we will then obtain a vector $(M_2,\ldots,M_{2k-1})$ satisfying 1) and 2), but violating 3).
 \end{proof}

Now to each $(\ubar{s},\ubar{t})\in\Tcal^{p,n}_{r+1,k-r}(\rho^{(k)})$ we can associate an $M\in\Mcal(k,r)$, and this association completely determines the color structure of $(\ubar{s},\ubar{t})$. To see how this is done, let us first analyze simple properties of a Eulerian cycle $(\ubar{s},\ubar{t})\in\Tcal^{p,n}_{r+1,k-r}(\rho^{(k)})$. First, the graph $\Gcal(\ubar{s},\ubar{t})$ is a \emph{double edged tree}, that is, it consists of $k$ distinct double edges and has $k+1$ vertices, therefore is a tree in the regular sense after eliminating one of each of the double edges. Thus, the Eulerian cycle $(\ubar{s},\ubar{t})$ crosses each edge twice, once in each direction, since a tree does not have circles. Further, $(\ubar{s},\ubar{t})$ starts at the S-vertex $s_1$ and then alternates between S- and T-vertices until reaching $s_1$ again. We recall the representation of the cycle as in \eqref{eq:EuleriancycleMP}. Now we will record two crucial pieces of information into the MP-pds $M$. We start a tour at $s_1$ and move along the cycle. Whenever a down edge $d_{\ell}$ leaves the S-vertex $s_{\ell}$ for the last time along the walk, we set $D_{\ell}=-1$, otherwise $D_{\ell}=0$. For example, we always have $D_{1}=0$, since $s_1$ is the last stop of the cycle. Additionally, whenever an up edge $u_{\ell}$ visits a new S-vertex $s_{\ell+1}$, which has not been visited before, we set $U_{\ell}=1$ and otherwise $U_{\ell}=0$. For example, we will always have $U_k=0$, since this edge leads to the starting point $s_1$ again. 

Let us argue that the tuple $(D_1,U_1,D_2,\ldots,U_k)$ we just constructed satisfies conditions 1), 2) and 3) as above. Condition 1) is clearly satisfied. For condition 2), note that $(\ubar{s},\ubar{t})$ has $r+1$ S-nodes, out of which $r$ -- all except the vertex $s_1$ -- were considered new, so that $\sum U_i =r$. Since the last edge $u_k$ leads back to the vertex $s_1$, we must have left each of the $r$ new S-vertices for a last time while on the cycle, so $\sum D_i = -r$. For condition 3) we realize only the $r$ new nodes are left for a last time along the cycle, and before they can be left a last time (leading to a summand $-1$) they must have been discovered (leading to a summand $+1$). Thus, condition 3) holds. 

Let us now see that each MP-pds $M\in\Mcal(k,r)$ completely determines the color structure of an $(\ubar{s},\ubar{t})\in\Tcal^{p,n}_{r+1,k-r}(\rho^{(k)})$ by constructing a canonical $(\ubar{s},\ubar{t})$ (that is, one with lowest vertex numbers possible) from $M$, and showing that we have only one choice for this construction. We set $s_1=1=t_1$. Then whenever $U_{\ell}=+1$, this means that a new S-node is discovered, so we set $s_{\ell +1}\defeq\max(s_1,\ldots,s_{\ell})+1$. On the other hand, if $U_{\ell}=0$ then this means that $s_{\ell+1}$ shall be equal to one of the $s_1,\ldots,s_{\ell}$, and so it must be equal to the $s_i$ with $i\in\{1,\ldots,\ell\}$ maximal from which $t_{\ell}$ was visited, since otherwise, the cycle $(\ubar{s},\ubar{t})$ would contain a circle. 

Now for $\ell\geq 2$, whenever $D_{\ell}=0$, this means that $s_{\ell}$ is \emph{not} visited the last time. But then $t_{\ell}$ must be different from $t_1,\ldots,t_{\ell-1}$ since otherwise the cycle $(\ubar{s},\ubar{t})$ would contain a circle. Therefore, for $\ell\geq 2$, if $D_{\ell}=0$ we set $t_{\ell}\defeq\max(t_1,\ldots,t_{\ell-1}) + 1$. Otherwise, if $D_{\ell}=-1$, this means that $s_{\ell}$ was visited for the last time by the cycle. But then $t_{\ell}$ must be equal to some element in $\{t_1,\ldots,t_{\ell-1}\}$, since if $t_{\ell}$ were new, the edge $\{s_{\ell},t_{\ell}\}$ would be new and there would then have to be a second edge traveling back from $t_{\ell}$ to $s_{\ell}$, which would entail yet another visit of $s_{\ell}$. So if $D_{\ell}=-1$, $t_{\ell}$ must be equal to some vertex in $\{t_1,\ldots,t_{\ell-1}\}$, and then it must be equal to the last vertex with the highest index number in the set, from which $s_{\ell}$ was visited, since otherwise, again, the cycle $\{s_{\ell},t_{\ell}\}$ would contain a circle. 

As we saw, an $(\ubar{s},\ubar{t})\in\Tcal^{p,n}_{r+1,k-r}(\rho^{(k)})$ is compatible with exactly one $M\in\Mcal(k,r)$, and we then write $(\ubar{s},\ubar{t})\sim M$. On the other hand, given an $M\in\Mcal(k,r)$ we could create exactly one canonical $(\ubar{s}^*,\ubar{t}^*)\in\Tcal^{p,n}_{r+1,k-r}(\rho^{(k)})$ compatible with $M$, which determines the color structure. All other $(\ubar{s},\ubar{t})$ compatible with $M$ are then obtained by picking different vertex names for the $r+1$ vertices in $\ubar{s}$ and $k-r$ vertices in $\ubar{t}$, which yields a total of $(p)_{r+1}\cdot (n)_{k-r}$ tuples in $\Tcal^{p,n}_{r+1,k-r}(\rho^{(k)})$ compatible with each $M\in\Mcal(k,r)$, where for any $\ell\leq m\in\N$, we set $(m)_{\ell}\defeq m\cdot (m-1)\cdots (m-\ell+1)$. This analysis yields the following lemma:

\begin{lemma}
\label{lem:TnMP}
The set $\Tcal^{p,n}_{r+1,k-r}(\rho^{(k)})$ has a decomposition as follows:
\begin{equation}
\label{eq:partitionstructureMP}
\Tcal^{p,n}_{r+1,k-r}(\rho^{(k)}) = \dot{\bigcup_{M\in\Mcal(k,r)}}\left\{(\ubar{s},\ubar{t})\in \Tcal^{p,n}_{r+1,k-r} \ | \ (\ubar{s},\ubar{t})\sim M\right\}
\end{equation}
Further, for all $M\in\Mcal(k,r)$,
\begin{equation}
\label{eq:compatiblecount}	
\#\left\{(\ubar{s},\ubar{t})\in \Tcal^{p,n}_{r+1,k-r} \ | \ (\ubar{s},\ubar{t})\sim M\right\} = (p)_{r+1}(n)_{k-r}\ ,
\end{equation}
such that by Lemma~\ref{lem:colorstructuresMP}, \eqref{eq:partitionstructureMP} and \eqref{eq:compatiblecount}, we obtain 
\[
\#\Tcal^{p,n}_{r+1,k-r}(\rho^{(k)}) = \frac{1}{r+1}\binom{k-1}{r}\binom{k}{r}\cdot(p)_{r+1}(n)_{k-r}\ .
\]
\end{lemma}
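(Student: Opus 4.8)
The plan is to verify the three ingredients of the claimed decomposition — that every tuple pair in $\Tcal^{p,n}_{r+1,k-r}(\rho^{(k)})$ is compatible with exactly one MP-pds, that the reverse reconstruction from an MP-pds is well defined and unique up to relabelling, and that each resulting block has exactly $(p)_{r+1}(n)_{k-r}$ elements — and then to invoke Lemma~\ref{lem:colorstructuresMP} for the final count. Most of the structural work has already been carried out in the discussion preceding the lemma, so the proof proper mainly needs to assemble it.

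First I would argue that the union on the right of \eqref{eq:partitionstructureMP} is disjoint and exhausts the set. Fix $(\ubar{s},\ubar{t})\in\Tcal^{p,n}_{r+1,k-r}(\rho^{(k)})$. Since $\rho^{(k)}$ has $\rho^{(k)}_2=k$ and all other entries zero, the graph $\Gcal(\ubar{s},\ubar{t})$ has exactly $k$ distinct (double) edges and, by membership in $\Tcal^{p,n}_{r+1,k-r}(\rho^{(k)})$, exactly $k+1$ vertices, hence is a double-edged tree; this is the structural fact that makes the walk-encoding deterministic. Walking the Eulerian cycle \eqref{eq:EuleriancycleMP} and recording $D_\ell,U_\ell$ as described above the lemma produces one and only one tuple $M=M(\ubar{s},\ubar{t})$, and the verification — already given in the running text — that this $M$ satisfies conditions 1)--3) of Definition~\ref{def:MP-path} shows $M\in\Mcal(k,r)$. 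Thus $(\ubar{s},\ubar{t})$ lies in the block indexed by $M(\ubar{s},\ubar{t})$ and in no other, which gives the inclusion ``$\subseteq$'' together with disjointness; the reverse inclusion ``$\supseteq$'' is immediate because each block is by definition a subset of $\Tcal^{p,n}_{r+1,k-r}(\rho^{(k)})$.

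Next I would establish \eqref{eq:compatiblecount}. Given $M\in\Mcal(k,r)$, the canonical construction described above the lemma produces a tuple pair $(\ubar{s}^*,\ubar{t}^*)$ by always choosing the smallest admissible vertex label; at each step the tree structure forces either ``introduce a brand-new vertex'' (whose label is then fixed by the minimality convention) or ``return to a previously seen vertex'' (whose identity is forced, being the unique neighbour from which the current vertex was last reached, since a tree has no circles), so $(\ubar{s}^*,\ubar{t}^*)$ is unique and its color structure — the pattern of coincidences among the $s_i$ and, separately, among the $t_j$ — depends only on $M$. Every $(\ubar{s},\ubar{t})\sim M$ is then obtained from $(\ubar{s}^*,\ubar{t}^*)$ by an injective relabelling: one assigns to the $r+1$ distinct S-values an ordered choice of $r+1$ elements of $\oneto{p}$ and to the $k-r$ distinct T-values an independent ordered choice of $k-r$ elements of $\oneto{n}$, exactly as in Lemma~\ref{lem:coloring}~iii). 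This yields $(p)_{r+1}\cdot(n)_{k-r}$ tuple pairs, proving \eqref{eq:compatiblecount}. Summing over $M\in\Mcal(k,r)$ and using $\#\Mcal(k,r)=\frac{1}{r+1}\binom{k-1}{r}\binom{k}{r}$ from Lemma~\ref{lem:colorstructuresMP} gives
\[
\#\Tcal^{p,n}_{r+1,k-r}(\rho^{(k)}) = \frac{1}{r+1}\binom{k-1}{r}\binom{k}{r}\cdot(p)_{r+1}(n)_{k-r}.
\]

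The only genuinely delicate point, I expect, is the forced-uniqueness at each step of the canonical reconstruction — spelling out why, under the constraint that $(\ubar{s},\ubar{t})$ spans a double-edged tree, a value $U_\ell=0$ (resp.\ $D_\ell=0$ or $D_\ell=-1$) leaves exactly one possibility for $s_{\ell+1}$ (resp.\ for $t_\ell$). Since the paper has already argued precisely this via the ``no circles in a tree'' observation in the paragraphs preceding the lemma, the write-up can compress this to a reference to that discussion rather than repeating the case analysis.
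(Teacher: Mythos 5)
Your proposal is correct and follows essentially the same route as the paper: the paper's own proof simply refers back to the discussion preceding the lemma, which contains exactly the walk-encoding into an MP-pds, the verification of conditions 1)--3), the canonical reconstruction with uniqueness forced by the tree structure, and the relabelling count $(p)_{r+1}(n)_{k-r}$ that you assemble. No gaps.
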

\begin{proof}
See the discussion before Lemma~\ref{lem:TnMP}.	
\end{proof}
Now since
\[
(p)_{r+1}(n)_{k-r} = p\cdot\underbrace{\frac{(p-1)_{r}}{n^{r}}}_{\to y^r}\cdot \underbrace{n^r\cdot (n)_{k-r}}_{\sim n^k}\ ,
\]
we find by Lemma~\ref{lem:TnMP} that
\[
\frac{1}{pn^k}\#\Tcal^{p,n}_{r+1,k-r}(\rho^{(k)})\ \xrightarrow[n\to\infty]{}\  \frac{y^r}{r+1}\binom{k}{r}\binom{k-1}{r},
\]
which is \eqref{eq:convergetoMPmomentFinal}. Therefore, we have shown \eqref{eq:convergetoMPmoment}	 which entails \eqref{eq:goalCase3MP}.

\subsection*{Step 2: Decay of central moments}

In Step 1 we have seen that for fixed $k\in\N$, the expectation of
\begin{equation}
\label{eq:randmomMPVAR}
\integrala{\mu_n}{x^k} =\sum_{\rho\in\Pi(2k)}\frac{1}{pn^k}\sum_{(\ubar{s},\ubar{t})\in\Tcal^{p,n}(\rho)} X_n(\ubar{s},\ubar{t})
\end{equation}
converges to the $k$-th moment of the MP distribution. In particular, we have seen that each of the finitely many summands
\begin{equation}
\label{eq:randomMPsummandVAR}
\frac{1}{pn^k}\sum_{(\ubar{s},\ubar{t})\in\Tcal^{p,n}(\rho)} X_n(\ubar{s},\ubar{t})	
\end{equation}
converges to a constant in expectation. To show that the random moments in \eqref{eq:randmomMPVAR} converge almost surely to the moments of the MP distribution, it thus suffices -- by Lemma~\ref{lem:convergencemodes} -- to show that for all $\rho\in\Pi(2k)$, the variance of \eqref{eq:randomMPsummandVAR} decays summably fast. The variance of \eqref{eq:randomMPsummandVAR} is given by
\begin{equation}
\label{eq:MPvariance}
\frac{1}{p^2n^{2k}}\sum_{(\ubar{s},\ubar{t}),(\ubar{s}',\ubar{t}')\in\Tcal^{p,n}(\rho)} \left[\E X_n(\ubar{s},\ubar{t})X_n(\ubar{s}',\ubar{t}')- \E X_n(\ubar{s},\ubar{t})\E X_n(\ubar{s}',\ubar{t}')\right].
\end{equation}

We see that whenever the Eulerian cycles $(\ubar{s},\ubar{t})$ and $(\ubar{s}',\ubar{t}')$ are edge-disjoint, the term in \eqref{eq:MPvariance} vanishes due to independence. Therefore, it suffices to consider those cycles $(\ubar{s},\ubar{t})$ and $(\ubar{s}',\ubar{t}')$ which have at least one edge in common. To this end, denote for all $\ell\in\{1,\ldots,2k\}$: 
\begin{multline*}
\Tcal_{c(\ell)}^{p,n}(\rho)\defeq \left\{((\ubar{s},\ubar{t}),(\ubar{s}',\ubar{t}')) \in (\Tcal^{p,n}(\rho))^2\,|\,\right. \\ \left.\text{$(\ubar{s},\ubar{t})$ and $(\ubar{s}',\ubar{t}')$ have exactly $\ell$ edges in common}\right\}.
\end{multline*}
Then it is now our goal to show that for each $\rho\in\Pi(2k)$,
\begin{equation}
\label{eq:MPvarianceSUF}
\frac{1}{p^2n^{2k}}\sum_{((\ubar{s},\ubar{t}),(\ubar{s}',\ubar{t}'))\in\Tcal_{c(\ell)}^{p,n}(\rho)} \left[\E X_n(\ubar{s},\ubar{t})X_n(\ubar{s}',\ubar{t}')- \E X_n(\ubar{s},\ubar{t})\E X_n(\ubar{s}',\ubar{t}')\right]
\end{equation}
converges to zero summably fast.
Before proceeding, we need to establish bounds on $\#\Tcal_{c(\ell)}^{p,n}(\rho)$.

\begin{lemma}
\label{lem:doubleboundMP}
Let $\rho\in\Pi(2k)$ and $\ell\in\oneto{2k}$, then the following statements hold:
\begin{enumerate}[i)]
\item For all $(\ubar{s},\ubar{t}),(\ubar{s}',\ubar{t}') \in\Tcal^{p,n}(\rho)$ with at least $\ell$ common edges, it holds
\[
\#(V(\ubar{s},\ubar{t})\cup V(\ubar{s}',\ubar{t}')) \leq 1 + 2\sum_{i=1}^{2k} \rho_i - \ell
\]
In particular,
\[
\#\Tcal_{c(\ell)}^{p,n}(\rho,\rho') \leq (2k)^{4k+2} (n\vee p)^{1 + 2 \sum_{i=1}^{2k} \rho_i  - \ell}
\]
\item If there is an $\ell\in\oneto{2k}$ odd with $\rho_{\ell}\geq 1$, then for all $(\ubar{s},\ubar{t}),(\ubar{s}',\ubar{t}')\in\Tcal^{p,n}(\rho)$ with at least $\ell$ common edges, it holds
\[
\#(V(\ubar{s},\ubar{t})\cup V(\ubar{s}',\ubar{t}')) \leq 2\sum_{i=1}^{2k} \rho_i - \ell.
\]
In particular,
\[
\#\Tcal_{c(\ell)}^{p,n}(\rho,\rho') \leq (2k)^{4k+2} (n\vee p)^{2\sum_{i=1}^{2k} \rho_i - \ell}.
\]
\end{enumerate}
\end{lemma}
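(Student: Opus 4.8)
The plan is to transcribe the proof of Lemma~\ref{lem:doubleboundWIG} into the bipartite setting, the only structural change being that the graphs $\Gcal(\ubar{s},\ubar{t})$ are loop-free so no loop-correction term appears. First I would observe that since $(\ubar{s},\ubar{t})$ and $(\ubar{s}',\ubar{t}')$ share at least one edge, the union of the graphs $\Gcal(\ubar{s},\ubar{t})$ and $\Gcal(\ubar{s}',\ubar{t}')$ is connected, so after rotating each Eulerian cycle so that it begins at one endpoint of a common edge we obtain a single closed walk of length $4k$: traverse all $2k$ edges of $(\ubar{s},\ubar{t})$, then all $2k$ edges of $(\ubar{s}',\ubar{t}')$. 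The bounds on $\#(V(\ubar{s},\ubar{t})\cup V(\ubar{s}',\ubar{t}'))$ will come from counting the vertices newly discovered along this combined tour.

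For statement $ii)$, I would assume without loss of generality that the odd edge lies in $(\ubar{s},\ubar{t})$. Traversing the cycle $(\ubar{s},\ubar{t})$ first, Lemma~\ref{lem:nodeandtuplecountMP}~b) gives that at most $\rho_1+\ldots+\rho_{2k}$ vertices are seen. Walking $(\ubar{s}',\ubar{t}')$ afterwards, only the first instance of each of its $\rho_1+\ldots+\rho_{2k}$ distinct edges can reveal a new vertex; but each of the (at least) $\ell$ edges common to the two graphs already has both its endpoints among the vertices of $\Gcal(\ubar{s},\ubar{t})$, hence already observed, so its first instance discovers nothing. Thus at most $\rho_1+\ldots+\rho_{2k}-\ell$ further vertices appear, and $\#(V(\ubar{s},\ubar{t})\cup V(\ubar{s}',\ubar{t}'))\leq 2\sum_i\rho_i-\ell$. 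Statement $i)$ is handled identically, except that without an available odd edge one invokes Lemma~\ref{lem:nodeandtuplecountMP}~a) for the first cycle, costing an extra $+1$.

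For the counting estimates, I would identify a pair $((\ubar{s},\ubar{t}),(\ubar{s}',\ubar{t}'))$ with the single tuple pair $\bigl((\ubar{s},\ubar{s}'),(\ubar{t},\ubar{t}')\bigr)\in\oneto{p}^{2k}\times\oneto{n}^{2k}$; this map is injective and $\#V\bigl((\ubar{s},\ubar{s}'),(\ubar{t},\ubar{t}')\bigr)=\#(V(\ubar{s},\ubar{t})\cup V(\ubar{s}',\ubar{t}'))$, because the $S$-part of the left-hand vertex set is $\{s_i\}\cup\{s_i'\}$ and the $T$-part is $\{t_i\}\cup\{t_i'\}$, the two parts being disjoint. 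Feeding the vertex bound just established into Lemma~\ref{lem:maxnodestuplesMP}~ii) applied with $2k$ in place of $k$ then yields $\#\Tcal_{c(\ell)}^{p,n}(\rho)\leq (2k)^{2(2k)+2}(p\vee n)^{1+2\sum_i\rho_i-\ell}$ in case $i)$, and the same bound without the $+1$ in case $ii)$, which is what is claimed.

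The only point requiring care is the bookkeeping in the joint tour: one must verify that \emph{every} common edge, not just the one used to glue the cycles, has both endpoints already visited during the first cycle. This is immediate, since any common edge is an edge of $\Gcal(\ubar{s},\ubar{t})$, so its endpoints are vertices of that graph and are therefore observed while traversing it. Everything else is a routine transcription of the Wigner computation.
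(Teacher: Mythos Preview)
Your proposal is correct and follows essentially the same approach as the paper: form a joint tour along the two Eulerian cycles glued at a common edge, invoke Lemma~\ref{lem:nodeandtuplecountMP} (part a) or b) as appropriate) for the first cycle, observe that at most $\sum_i\rho_i-\ell$ new vertices can appear in the second cycle because common edges contribute nothing, and then concatenate to apply Lemma~\ref{lem:maxnodestuplesMP}~ii) with $2k$ in place of $k$. Your write-up is in fact slightly more careful than the paper's in spelling out why every common edge (not just the gluing edge) fails to discover a new vertex and why the concatenation preserves the vertex count.
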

\begin{proof}
For statement $ii)$ we assume w.l.o.g.\ that $(\ubar{s},\ubar{t})$ has an odd edge. Since the graphs spanned by $(\ubar{s},\ubar{t})$ and $(\ubar{s}',\ubar{t}')$ share $l\geq 1$ common edges, we may take a tour around the joint Eulerian cycle, starting before a common edge, traveling first all edges of $(\ubar{s},\ubar{t})$ and then all edges of $(\ubar{s}',\ubar{t}')$. While walking the edges of $(\ubar{s},\ubar{t})$, we can see at most $\rho_1 + \ldots + \rho_{2k}$ different nodes by Lemma~\ref{lem:nodeandtuplecountMP}. Next, traveling all edges of $(\ubar{s}',\ubar{t}')$, at most all the single edges and first instances of $m$-fold edges with $m\in\{2,\ldots,2k\}$ of $(\ubar{s}',\ubar{t}')$ may discover a new node, but only if they have not been traversed before during the walk along $(\ubar{s},\ubar{t})$. Since we have $\ell$ common edges, we can see at most $\rho'_1+\ldots + \rho'_{2k}-\ell$ new nodes. We established the bounds on the number of vertices in $ii)$. The second statement in $ii)$ follows immediately with Lemma~\ref{lem:maxnodestuplesMP} $ii)$ by concatenating $(\ubar{s},\ubar{s}')\in\oneto{p}^{2k}$ and $(\ubar{t},\ubar{t}')\in\oneto{n}^{2k}$. For statement $i)$ we proceed exactly in the same manner: Traveling $(\ubar{s},\ubar{t})$ we can see at most $1 + \rho_1 + \rho_2 + \ldots + \rho_{2k}$ nodes by Lemma~\ref{lem:nodeandtuplecountMP}, then traveling $(\ubar{s}',\ubar{t}')$ we can see at most $\rho'_1+\ldots + \rho'_{2k}-\ell$ new nodes. Now apply Lemma~\ref{lem:maxnodestuplesMP} $ii)$ again.
\end{proof}
Returning to \eqref{eq:MPvarianceSUF}, we distinguish the following cases:

\noindent\underline{Case 1: $\rho_1\geq 1$}\newline
In this case, the term in \eqref{eq:MPvarianceSUF} simplifies and we must argue that for each $\ell\in\oneto{2k}$,
\begin{equation}
\label{eq:tozeroasMP}
\frac{1}{p^2n^{2k}}\sum_{((\ubar{s},\ubar{t}),(\ubar{s}',\ubar{t}'))\in\Tcal_{c(\ell)}^{p,n}(\rho)} \E X_n(\ubar{s},\ubar{t})X_n(\ubar{s}',\ubar{t}')
\end{equation}
converges to zero summably fast. If $(\ubar{s},\ubar{t})$ and $(\ubar{s}',\ubar{t}')$ have $1\leq\ell<\rho_1$ common edges, $\E X_n(\ubar{s},\ubar{t})X_n(\ubar{s}',\ubar{t}')$ vanishes, since not all single edges can be eliminated due to overlapping. Thus, it suffices to consider those $(\ubar{s},\ubar{t}),(\ubar{s}',\ubar{t}')\in\Tcal^{p,n}(\rho)$ which have $\ell\geq \rho_1$ edges in common.
Then
\[
2\sum_{i=1}^{2k} \rho_i - \ell \leq 2\left(\rho_1 + \frac{2k-\rho_1}{2}\right) -\rho_1  \leq 2k.  
\]
so Lemma~\ref{lem:doubleboundMP} $ii)$ yields
\[
\#\Tcal_{c(\ell)}^{p,n}(\rho) \leq (2k)^{4k+2} (n\vee p)^{2\sum_{i=1}^{2k} \rho_i - \ell}\leq (2k)^{4k+2} (n\vee p)^{2k},
\]
Since every summand in \eqref{eq:tozeroasMP} is bounded by $L_{4k}$, it follows that \eqref{eq:tozeroasMP} is $O(n^{-2})$, thus converges to zero summably fast.

\noindent\underline{Case 2: $\rho_1 = 0$}\newline
In this case, each summand in \eqref{eq:MPvarianceSUF} is bounded by $L_{4k} + L_{2k}^2$. Further, we obtain for all $\rho\in\Pi(2k)$ with $\rho_1 = 0$ and $\ell\geq 1$ that
\[
1 + 2\sum_{i=1}^{2k} \rho_i - \ell \leq 1 + 2\cdot\frac{2k}{2} - 1 = 2k
\]

so that by Lemma~\ref{lem:doubleboundMP} $i)$, 
\[
\#\Tcal_{c(\ell)}^{p,n}(\rho) \leq (2k)^{4k+2} (n\vee p)^{1 + 2\sum_{i=1}^{2k} \rho_i - \ell} \leq (2k)^{4k+2} (n\vee p)^{2k},
\]
so that the sum in \eqref{eq:MPvarianceSUF} is $O(n^{-2})$, hence converges to zero summably fast.

\chapter{The Stieltjes Transform Method}
\label{chp:stieltjesprops}
\section{Motivation and Basic Properties}

In order to analyze properties of random variables and their distributions, it is a common technique to use transforms of these distributions which make analysis more accessible due to their favorable algebraic structure. For example, a common and short proof of the central limit theorem is conducted by using the Fourier transform of the random variables involved, owing to the property that Fourier transforms handle convolutions particularly well, and the central limit theorem is about a sum of random variables.

In random matrix theory, however, when analyzing empirical spectral distributions of diverse matrix ensembles, it is desirable to use a tool for analysis that relates the behavior of the empirical spectral distribution back to the level of the entries of the matrices. For example, using the method of moments, one sees in equation \eqref{eq:momenttosum} that the moments of the ESD $\sigma_n$ of a random matrix $X_n$ can be calculated through:
\[
\forall\,k\in\N:~\integrala{\sigma_n}{x^k} =\frac{1}{n}\tr(X_n^k)=\frac{1}{n} \sum_{i_1,\ldots,i_k=1}^n X_n(i_1,i_2)X_n(i_2,i_3)\cdots X_n(i_k,i_1).
\]
In other words, instead trying to work with an ESD directly, we can analyze its moments which allows us to work on the level of the matrix entries.

A tool that combines both worlds, that is, that provides the structure of a transform with favorable algebraic properties and that allows us to work on the level of the matrix entries is the so called Stieltjes transform: 

\begin{definition}
\label{def:stieltjestransform}
Let $\mu$ be a finite measure on $(\R,\Bcal)$. Then we define the Stieltjes transform $S_{\mu}$\label{sym:Stieltjestrans} of $\mu$ as the map
\map{S_{\mu}}{\C\backslash\R}{\C}{z}{\int_{\R} \frac{1}{x-z} \mu(\de{x})}.\label{sym:complexnumbers}
\end{definition}

We note that the Stieltjes transform is defined via a measure-theoretical integral over a complex-valued function. We assume the reader to be acquainted with measure-theoretical integration of real-valued functions on measure spaces and give a very short introduction to complex-valued integration in the form of one definition and two lemmata.

\begin{definition}
Let $(\Omega,\Acal,\mu)$ be a measure space, $f:(\Omega,\Acal)\to\C$ measurable, then $f$ is called $\mu$-\emph{integrable}, if the real-valued functions 
$\Re f$ and $\Im f$ both are $\mu$- integrable. In this case, we define 
\[
\int_\Omega f\de{\mu} \defeq \int_\Omega \Re{f}\de{\mu} + i \int_\Omega \Im{f}\de{\mu}.
\]
We will denote the space of $\C$-valued integrable functions as $\mc{L}_1(\mu,\C)$.
\end{definition}

It is worth noting the following lemma about the properties of the integral:
\begin{lemma}
Let $(\Omega,\Acal,\mu)$ be a measure space, then the following statements hold:
\begin{enumerate}
	\item The map $\mc{L}_1(\mu,\C)\to \C$, $f\mapsto \int f \de{\mu}$ is $\C$-linear.
	\item $\forall\, f\in \mc{L}_1(\mu,\C): \overline{\int f \de{\mu}} = \int \overline{f} \de{\mu}$.
	\item $\forall\, f\in \mc{L}_1(\mu,\C): \bigabs{\int f \de{\mu}} \leq \int \abs{f} \de{\mu}$.
\end{enumerate}
\end{lemma}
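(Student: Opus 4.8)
The plan is to reduce all three statements to the corresponding facts about the real integral, together with the defining decomposition $\int f\,\de\mu = \int \Re f\,\de\mu + i\int\Im f\,\de\mu$ and the elementary algebra of real and imaginary parts.

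For the $\C$-linearity in statement~1, I would first observe that $\R$-linearity is immediate: for $f,g\in\mc{L}_1(\mu,\C)$ and $\alpha\in\R$ one has $\Re(f+g)=\Re f+\Re g$, $\Im(f+g)=\Im f+\Im g$, $\Re(\alpha f)=\alpha\Re f$ and $\Im(\alpha f)=\alpha\Im f$, so $\R$-linearity of the real integral applied to the real and imaginary parts yields $\R$-linearity of $f\mapsto\int f\,\de\mu$. To upgrade to $\C$-linearity it then suffices to check $\int (if)\,\de\mu = i\int f\,\de\mu$. Writing $f=\Re f+i\Im f$ gives $if=-\Im f+i\Re f$, hence $\Re(if)=-\Im f$ and $\Im(if)=\Re f$; plugging into the definition and comparing with $i\int f\,\de\mu = i\bigl(\int\Re f\,\de\mu+i\int\Im f\,\de\mu\bigr) = -\int\Im f\,\de\mu+i\int\Re f\,\de\mu$ proves the claim. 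Since every $\lambda\in\C$ can be written $\lambda=a+ib$ with $a,b\in\R$, combining the two observations gives full $\C$-linearity.

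For statement~2, I would simply note $\Re\overline f=\Re f$ and $\Im\overline f=-\Im f$, so directly from the definition $\int\overline f\,\de\mu = \int\Re f\,\de\mu - i\int\Im f\,\de\mu = \overline{\int\Re f\,\de\mu + i\int\Im f\,\de\mu} = \overline{\int f\,\de\mu}$.

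Statement~3 is the only one requiring an idea. First I would record that $\abs f$ is $\mu$-integrable, since it is measurable with $0\le\abs f\le\abs{\Re f}+\abs{\Im f}$ and the right-hand side is integrable, so $\int\abs f\,\de\mu$ is a well-defined finite real number. Then I would apply the standard rotation trick: set $c\defeq\int f\,\de\mu$; if $c=0$ there is nothing to prove, otherwise choose $\theta\in\R$ with $c=\bigabs c\,e^{i\theta}$. Using statement~1 with the constant $e^{-i\theta}$, we get $\bigabs c = e^{-i\theta}c = \int e^{-i\theta}f\,\de\mu$. Since $\bigabs c$ is real it equals its own real part, so by the definition of the complex integral $\bigabs c = \Re\int e^{-i\theta}f\,\de\mu = \int\Re\bigl(e^{-i\theta}f\bigr)\,\de\mu$. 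Finally $\Re\bigl(e^{-i\theta}f\bigr)\le\bigabs{e^{-i\theta}f}=\abs f$ pointwise, so monotonicity of the real integral gives $\bigabs c\le\int\abs f\,\de\mu$, as desired. The main (and essentially only non-mechanical) obstacle is this rotation trick in statement~3, plus the bookkeeping point that $\Re$ commutes with the integral, which holds by construction of the complex integral; everything else is routine manipulation of real and imaginary parts.
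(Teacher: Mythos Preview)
Your proof is correct and follows essentially the same approach as the paper: parts~1 and~2 are handled by the defining decomposition into real and imaginary parts (the paper dismisses these as ``elementary calculations'' and ``by definition''), and for part~3 both you and the paper use the rotation trick, choosing a unimodular constant $z=e^{-i\theta}$ so that $z\int f\,\de\mu$ is real and nonnegative, then bounding $\Re(zf)\le\abs{zf}=\abs f$.
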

\begin{proof}
1) follows by elementary calculations and 2) holds by the definition of the integral. To see 3), let $z\in\C$ with $\abs{z}=1$, such that $z  \int f \de{\mu} = \bigabs{\int f \de{\mu}}$, then it follows 
\[
\bigabs{\int f \de{\mu}} = z  \int f \de{\mu} = \int{\Re(zf)}\de{\mu} + i \underbrace{\int\Im(zf)\de{\mu}}_{=0} \leq \int \abs{zf}\de\mu = \int \abs{f}\de\mu.
\]	
\end{proof}

\begin{lemma}[Lebesgue's Dominated Convergence Theorem]
Let $(\Omega,\Acal,\mu)$ be a measure space, $(f_n)_n, f: \Omega\to\C$ be measurable with $f_n\to f$ $\mu$-almost everywhere. 
If there exists a $\mu$-integrable $g:\Omega\to\R$ with $\abs{f_n}\leq g$ $\mu$-almost everywhere for all $n$, then $f$ is $\mu$-integrable and it holds
\[
\lim_{n\to\infty} \int \abs{f-f_n}\de\mu = 0,
\]
so that in particular
\[
\lim_{n\to\infty} \int{f_n}\de\mu = \int f \de\mu.
\]
\end{lemma}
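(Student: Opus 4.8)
The statement to prove is Lebesgue's Dominated Convergence Theorem for $\C$-valued functions, given that we already know (are assumed to know) the real-valued version. The plan is to reduce the complex case to two applications of the real-valued theorem, one for the real part and one for the imaginary part of the functions involved.

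\textbf{Step 1: Reduce integrability and convergence to real and imaginary parts.} First I would observe that $f_n \to f$ $\mu$-almost everywhere implies $\Re f_n \to \Re f$ and $\Im f_n \to \Im f$ $\mu$-almost everywhere, since $|\Re z| \le |z|$ and $|\Im z| \le |z|$ for any $z \in \C$. From the same inequalities, the domination hypothesis $|f_n| \le g$ $\mu$-a.e.\ gives $|\Re f_n| \le g$ and $|\Im f_n| \le g$ $\mu$-a.e. Applying the real-valued dominated convergence theorem separately to $(\Re f_n)_n$ and $(\Im f_n)_n$ then shows that $\Re f$ and $\Im f$ are $\mu$-integrable --- hence $f \in \mc{L}_1(\mu,\C)$ by definition --- and that $\int |\Re f - \Re f_n| \,\de\mu \to 0$ and $\int |\Im f - \Im f_n|\,\de\mu \to 0$.

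\textbf{Step 2: Deduce $L^1$-convergence of $f_n$ to $f$.} Using $|z| \le |\Re z| + |\Im z|$, I would estimate
\[
\int |f - f_n| \,\de\mu \;\le\; \int |\Re f - \Re f_n|\,\de\mu \;+\; \int |\Im f - \Im f_n|\,\de\mu,
\]
and both terms on the right vanish as $n \to \infty$ by Step 1, giving $\lim_{n\to\infty}\int|f-f_n|\,\de\mu = 0$.

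\textbf{Step 3: Deduce convergence of the integrals.} Finally, from part 3 of the preceding lemma (or just from linearity plus the triangle inequality for the $\C$-integral),
\[
\Bigl| \int f \,\de\mu - \int f_n \,\de\mu \Bigr| \;=\; \Bigl| \int (f - f_n)\,\de\mu \Bigr| \;\le\; \int |f - f_n|\,\de\mu \;\xrightarrow[n\to\infty]{}\; 0,
\]
which is the desired conclusion. There is no real obstacle here: the entire content is the bookkeeping of splitting into real and imaginary parts and invoking the real-valued theorem twice; the only thing to be slightly careful about is that the various almost-everywhere statements hold off a common null set (a finite union of null sets is null), so that the hypotheses of the real-valued theorem are genuinely met for $\Re f_n$ and $\Im f_n$.
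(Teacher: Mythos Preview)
Your proposal is correct and follows essentially the same approach as the paper: split into real and imaginary parts, apply the real-valued dominated convergence theorem to each, and recombine via $|f-f_n|\le |\Re f-\Re f_n|+|\Im f-\Im f_n|$. The paper's proof is terser but identical in substance.
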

\begin{proof}
 Certainly, $\abs{\Re f_n},\abs{\Im f_n} \leq \abs{f_n}\leq \abs{g}$ and $\Re f_n\to \Re f$,  $\Im f_n\to \Im f$ $\mu$-almost everywhere. Also, $\abs{f-f_n} \leq  \abs{\Re f-\Re f_n} + \abs{\Im f-\Im f_n}$. Now for real-valued measurable functions, the theorem is assumed to be known. See \autocite[142]{Klenke} for a reference.
 \end{proof}

The following lemma studies the Stieltjes transform $S_\mu(z)=\int_{\R} \frac{1}{x-z} \mu(\de{x})$. Note that we do not have to consider the trivial case where $\mu\equiv 0$, since in this case, $S_{\mu}\equiv 0$. Notationally, we set $\C_+\defeq\{z\in\C\,|\,\Im(z)>0\}$.\label{sym:complexnumbersposim}\label{sym:imaginarypart}
\begin{lemma}
\label{lem:stieltjesprops}
Let $\mu$ be a finite measure on $(\R,\Bcal)$ with $\mu(\R)>0$ and $S_{\mu}$ be its Stieltjes transform. Further, let $E\in\R$, $\eta \in\R\backslash\{0\}$ and $z\defeq E+i\eta$, then we obtain:

\begin{enumerate}[i)]
	\item For any $x\in\R$ we find: $\frac{1}{x-z}=\frac{x-E}{(x-E)^2+\eta^2} + i \frac{\eta}{(x-E)^2+\eta^2}$.
	\item $\Re S_{\mu}(z)=\int\frac{x-E}{(x-E)^2+\eta^2}\mu(\de x) \quad \text{and}\quad \Im S_{\mu}(z)=\int\frac{\eta}{(x-E)^2+\eta^2}\mu(\de x).$\label{sym:realpart}
	\item $\Im(z)\gtrless 0 \Leftrightarrow \Im S_{\mu}(z) \gtrless 0$.
	\item $S_\mu(\overline{z})= \overline{S_\mu(z)}$.
	\item $S_\mu$ is uniquely determined by its restriction $S_\mu: \C_+\to \C_+$. 
	\item $\abs{S_\mu (z)} \leq \frac{\mu(\R)}{\abs{\Im(z)}}$
	\item $S_\mu$ is holomorphic.
	\item In particular, $S_\mu$ is continuous, can be represented by a power series around any $z_0\in\C\backslash\R$, and is infinitely often differentiable.
\end{enumerate}

\end{lemma}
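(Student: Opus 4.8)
The plan is to verify the eight listed properties essentially in the order they are stated, since each one is either a direct computation with the integrand $1/(x-z)$ or an immediate consequence of an earlier item together with a standard fact from complex analysis and the dominated convergence theorem.

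First I would prove $i)$ by the elementary algebraic identity $\frac{1}{x-z} = \frac{\overline{x-z}}{\abs{x-z}^2} = \frac{(x-E)+i\eta}{(x-E)^2+\eta^2}$, writing $z = E+i\eta$. Splitting into real and imaginary parts and integrating (both integrands are bounded for fixed $z\in\C\backslash\R$, since the denominator is bounded below by $\eta^2>0$, so $\mu$-integrability is automatic for the finite measure $\mu$) gives $ii)$. For $iii)$, note from $ii)$ that $\Im S_\mu(z) = \int \frac{\eta}{(x-E)^2+\eta^2}\mu(\de x)$, and since $\mu(\R)>0$ and the integrand has the same sign as $\eta$ everywhere, this integral is strictly positive iff $\eta>0$ and strictly negative iff $\eta<0$. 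Property $iv)$ follows from $ii)$ by replacing $z$ with $\overline{z}$, i.e.\ $\eta$ with $-\eta$: the real part is unchanged and the imaginary part changes sign, which is exactly conjugation. Then $v)$ is a consequence of $iv)$: knowing $S_\mu$ on $\C_+$ determines it on $\C_-$ via $S_\mu(z) = \overline{S_\mu(\overline z)}$, and $\C\backslash\R = \C_+\cup\C_-$. For $vi)$ I would estimate directly: $\abs{S_\mu(z)} \leq \int \frac{1}{\abs{x-z}}\mu(\de x) \leq \int \frac{1}{\abs{\Im z}}\mu(\de x) = \frac{\mu(\R)}{\abs{\Im z}}$, using $\abs{x-z}\geq\abs{\Im(x-z)} = \abs{\Im z}$.

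The substantive point is $vii)$, holomorphy of $S_\mu$ on $\C\backslash\R$. The plan is to show that $S_\mu$ is complex-differentiable at every $z_0\in\C\backslash\R$ by computing the difference quotient and passing the limit inside the integral. For $z$ near $z_0$ one has $\frac{1}{x-z}-\frac{1}{x-z_0} = \frac{z-z_0}{(x-z)(x-z_0)}$, so the difference quotient equals $\int \frac{1}{(x-z)(x-z_0)}\mu(\de x)$; as $z\to z_0$ the integrand converges pointwise to $\frac{1}{(x-z_0)^2}$, and on a small closed ball $\overline{B_\delta(z_0)}$ disjoint from $\R$ we have the uniform bound $\abs{(x-z)(x-z_0)}^{-1}\leq (\operatorname{dist}(z_0,\R)-\delta)^{-1}\cdot(\operatorname{dist}(z_0,\R))^{-1}$, which is a constant, hence $\mu$-integrable since $\mu$ is finite. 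Dominated convergence (in the complex-valued form stated in the excerpt) then yields $S_\mu'(z_0) = \int \frac{1}{(x-z_0)^2}\mu(\de x)$, so $S_\mu$ is holomorphic. Alternatively one could invoke Morera's theorem together with Fubini to swap a contour integral with $\mu$, but the direct difference-quotient argument is cleaner and self-contained. The main obstacle — really the only place requiring care — is producing the honest domination for the difference quotient uniformly in $z$ near $z_0$; everything else is bookkeeping.

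Finally, $viii)$ is purely a restatement of standard facts about holomorphic functions: a holomorphic function on an open subset of $\C$ is continuous, is locally representable by a convergent power series (expand around any $z_0\in\C\backslash\R$ in a ball contained in $\C\backslash\R$), and is infinitely often complex-differentiable. I would simply cite these from any standard complex analysis reference and note that $\C\backslash\R$ is open so all of this applies at every point of the domain. No new computation is needed here.
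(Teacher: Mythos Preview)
Your proposal is correct and follows essentially the same route as the paper: parts $i)$--$vi)$ are handled by the identical elementary computations, and for $vii)$ both you and the paper compute the difference quotient directly and pass to the limit via dominated convergence (the paper bounds $\abs{(x-z_n)(x-z)}^{-1}\leq \abs{\Im z_n}^{-1}\abs{\Im z}^{-1}\leq C$ along a sequence, you do the equivalent thing on a small ball around $z_0$). Part $viii)$ is treated in both as an immediate consequence of holomorphy.
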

\begin{proof} 
Statement i) is obvious, ii) follows from i) by definition of the complex-valued integral, iii) follows directly from ii) and so does iv) in combination with the construction of the integral. Statement v) follows directly from iii) and iv), and vi) follows from
\[
\bigabs{\frac{1}{x-z}}=\frac{1}{\abs{x-z}}\leq \frac{1}{\abs{\Im(x-z)}}=\frac{1}{\abs{\Im(z)}}.
\]
To show statement vii), let $(z_n)_n$ and $z \in \C\backslash\R$ with $z_n\to z$, but $z_n\neq z$ be arbitrary, then:
	\begin{align*}
	&\frac{S_\mu(z_n)-S_\mu(z)}{z_n - z} = \frac{1}{z_n - z} \int \frac{1}{x - z_n} - \frac{1}{x - z}\mu(\de x)\\
	&=\frac{1}{z_n - z}  \int \frac{z_n-z}{(x - z_n)(x - z)} \mu(\de x) \xrightarrow[n\to\infty]{} \int \frac{1}{(x - z)^2} \mu(\de x) 
	\end{align*}
	by dominated convergence, since for some $C>0$ and all $n\in\N$,
	\[
	\bigabs{\frac{1}{(x - z_n)(x - z)}}\leq \frac{1}{\abs{\Im(z_n)}\abs{\Im(z)}}\leq C,
	\]
	for convergent sequences are bounded.
\end{proof}

\begin{theorem}[Retrieval of Measure]
\label{thm:invert}
For any bounded interval $I\subseteq\R$ with end points $\alpha<\beta$, we obtain the following:
\[
\mu((\alpha,\beta)) + \frac{1}{2}(\mu(\{\alpha\})+\mu(\{\beta\})) = \lim_{\eta\searrow 0} \frac{1}{\pi}\int_{I} \Im S_\mu(E+i\eta)\lebesgue(\de E).
\]
\end{theorem}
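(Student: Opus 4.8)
The plan is to compute the inner integral by Fubini's theorem and then pass to the limit $\eta\searrow 0$ via dominated convergence. First I would write, for fixed $\eta>0$, the quantity $\frac{1}{\pi}\int_I \Im S_\mu(E+i\eta)\,\lebesgue(\de E)$ using Lemma~\ref{lem:stieltjesprops} ii), so that $\Im S_\mu(E+i\eta)=\int \frac{\eta}{(x-E)^2+\eta^2}\,\mu(\de x)$. Since the integrand $(E,x)\mapsto \frac{\eta}{(x-E)^2+\eta^2}$ is non-negative and jointly measurable, Tonelli's theorem lets me swap the order of integration:
\[
\frac{1}{\pi}\int_I \Im S_\mu(E+i\eta)\,\lebesgue(\de E) = \int_\R \left(\frac{1}{\pi}\int_\alpha^\beta \frac{\eta}{(x-E)^2+\eta^2}\,\lebesgue(\de E)\right)\mu(\de x).
\]
The inner Lebesgue integral is elementary: substituting $u=(E-x)/\eta$ gives $\frac{1}{\pi}\big(\arctan\frac{\beta-x}{\eta}-\arctan\frac{\alpha-x}{\eta}\big)=:g_\eta(x)$.

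Next I would analyze the pointwise limit of $g_\eta(x)$ as $\eta\searrow 0$. Using $\arctan(t/\eta)\to \frac{\pi}{2}\operatorname{sgn}(t)$ for $t\neq 0$ and $\arctan(0)=0$, one checks that $g_\eta(x)\to \one_{(\alpha,\beta)}(x) + \frac{1}{2}\one_{\{\alpha,\beta\}}(x)$ for every $x\in\R$: if $x$ is strictly between $\alpha$ and $\beta$ the two arctangents tend to $\frac{\pi}{2}$ and $-\frac{\pi}{2}$, giving limit $1$; if $x<\alpha$ or $x>\beta$ both tend to the same value and the difference is $0$; if $x=\alpha$ (or $x=\beta$) one term is exactly $0$ and the other tends to $\pm\frac{\pi}{2}$, giving limit $\frac{1}{2}$. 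Moreover $0\le g_\eta(x)\le 1$ for all $\eta,x$, since $g_\eta(x)=\frac{1}{\pi}\int_\alpha^\beta\frac{\eta}{(x-E)^2+\eta^2}\,\lebesgue(\de E)\le \frac{1}{\pi}\int_\R \frac{\eta}{(x-E)^2+\eta^2}\,\lebesgue(\de E)=1$. Hence the constant function $1$ serves as an integrable dominating function with respect to the finite measure $\mu$, and Lebesgue's dominated convergence theorem yields
\[
\lim_{\eta\searrow 0}\frac{1}{\pi}\int_I \Im S_\mu(E+i\eta)\,\lebesgue(\de E) = \int_\R \left(\one_{(\alpha,\beta)}(x) + \tfrac{1}{2}\one_{\{\alpha,\beta\}}(x)\right)\mu(\de x) = \mu((\alpha,\beta)) + \tfrac{1}{2}\big(\mu(\{\alpha\})+\mu(\{\beta\})\big),
\]
which is the claim. (The statement is insensitive to whether $I$ is open, closed, or half-open, since only the endpoints $\alpha,\beta$ and the open interval enter.)

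I do not expect a serious obstacle here; the proof is a standard Stieltjes inversion (Stieltjes–Perron) argument. The two points that need a little care are: (a) justifying the interchange of integrals — which is clean because the integrand is non-negative, so Tonelli applies with no integrability hypothesis beyond $\mu$ being finite; and (b) the endpoint bookkeeping in the pointwise limit of $g_\eta$, i.e.\ correctly getting the factor $\frac12$ at $x=\alpha$ and $x=\beta$ from $\arctan(0)=0$. The uniform bound $g_\eta\le 1$ is the key estimate that makes dominated convergence available, and it follows immediately from extending the Lebesgue integral from $(\alpha,\beta)$ to all of $\R$ and recognizing the Poisson-kernel normalization $\frac{1}{\pi}\int_\R \frac{\eta}{u^2+\eta^2}\,\lebesgue(\de u)=1$.
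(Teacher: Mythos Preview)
Your proof is correct and follows essentially the same approach as the paper's: swap the order of integration, compute the inner $E$-integral as a difference of arctangents, identify the pointwise limit $\one_{(\alpha,\beta)}+\tfrac12\one_{\{\alpha,\beta\}}$, and pass to the limit by dominated convergence. Your justification is arguably a bit tighter, since you invoke Tonelli (non-negative integrand) rather than Fubini and make the dominating constant $1$ explicit via the Poisson-kernel normalization, whereas the paper leaves the bound implicit in the range of $\arctan$.
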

\begin{proof}
Let $I$ be an interval with end points $\alpha<\beta$ and $\eta>0$. Then we obtain via Fubini:
\begin{align*}
\frac{1}{\pi} \int_I \Im S_{\mu}(E+i\eta)\lebesgue(\text{d}E) 
&= \frac{1}{\pi} \int_{I}\int_{\R}\frac{\eta}{(x-E)^2+\eta^2}\mu(\text{d}x)\lebesgue(\text{d}E)\\
&= \frac{1}{\pi} \int_{\R}\int_{I}\frac{\eta}{(x-E)^2+\eta^2}\lebesgue(\text{d}E)\mu(\text{d}x)\\
&= \frac{1}{\pi} \int_{\R}\int_{\alpha}^{\beta}\frac{\eta}{(x-E)^2+\eta^2}\text{d}E\mu(\text{d}x).
\end{align*}
Now since
\begin{align*}
\int_{\alpha}^{\beta}\frac{\eta}{(x-E)^2+\eta^2}\text{d}E 
&= \frac{1}{\eta} \int_{\alpha}^{\beta} \frac{1}{(\frac{E-x}{\eta})^2+1}\text{d}E \\
&= \int_{\frac{\alpha-x}{\eta}}^{\frac{\beta-x}{\eta}}\frac{1}{E^2+1}\text{d}E\\
&= \arctan\left(\frac{\beta-x}{\eta}\right) - \arctan\left(\frac{\alpha-x}{\eta}\right),
\end{align*}
and $\arctan:\R\to(-\frac{\pi}{2},+\frac{\pi}{2})$ is strictly increasing with $\lim_{x\to\pm\infty} \arctan(x) = \pm \frac{\pi}{2}$, we obtain
\[
\lim_{\eta\searrow 0}\left[ \arctan\left(\frac{\beta-x}{\eta}\right) -\arctan\left(\frac{\alpha-x}{\eta}\right)\right] = 
\begin{cases}
\pi &\text{ if } x\in(\alpha,\beta)\\
0 & \text{ if } x \notin [\alpha,\beta]\\
\frac{\pi}{2} &\text{ if } x=\alpha \vee x=\beta.
\end{cases}
\]
Thus, by dominated convergence we find
\begin{align*}
\lim_{\eta\searrow 0}\frac{1}{\pi} \int_I \Im S_{\mu}(E+i\eta)\lebesgue(\text{d}E)  
&= \lim_{\eta\searrow 0}\frac{1}{\pi} \int_{\R}\arctan\left(\frac{\beta-x}{\eta}\right) - \arctan\left(\frac{\alpha-x}{\eta}\right)\mu(\text{d}x)\\
&= \int_{\R}\one_{(\alpha,\beta)}(x) + \frac{1}{2}\one_{\{\alpha,\beta\}}(x)\mu(\text{d}x)\\
&= \mu((\alpha,\beta)) + \frac{1}{2}(\mu(\{\alpha\})+\mu(\{\beta\})) 
\end{align*}

\end{proof}

The previous theorem and the following corollary are similar to Theorem 2.4.3 in \autocite{Anderson}. As usual, for a subset $I$ of a topological space, we denote by $\partial I$ its boundary, which is a concept we assume to be known to the reader.
\begin{corollary}
\label{cor:invert}
For any bounded interval $I\subseteq\R$ with $\mu(\partial I)$ = 0, we find:
\[
\mu(I) = \lim_{\eta\searrow 0} \frac{1}{\pi}\int_{I} \Im S_\mu(E+i\eta)\lebesgue(\de E).
\]
Thus, any finite measure $\mu$ on $(\R,\Bcal)$ is uniquely determined by $S_\mu$. In other words, $\mu\mapsto S_{\mu}$ is injective.
\end{corollary}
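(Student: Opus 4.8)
The plan is to read off both assertions from Theorem~\ref{thm:invert}. Let $I$ be a bounded interval with endpoints $\alpha\le\beta$ and $\mu(\partial I)=0$. The degenerate case $\alpha=\beta$ is trivial: then $\mu(I)=\mu(\partial I)=0$ and the right-hand side is a Lebesgue integral over a null set, hence $0$ as well. So assume $\alpha<\beta$; then $\partial I=\{\alpha,\beta\}$, and the hypothesis gives $\mu(\{\alpha\})=\mu(\{\beta\})=0$. Hence $\mu(I)=\mu((\alpha,\beta))$, independently of which endpoints $I$ contains, and moreover $\mu((\alpha,\beta))+\tfrac12(\mu(\{\alpha\})+\mu(\{\beta\}))=\mu((\alpha,\beta))$.

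The remaining observation is that the right-hand side is also insensitive to the endpoints: for each fixed $\eta>0$ the map $E\mapsto\Im S_\mu(E+i\eta)$ is a fixed continuous function, and $I$ and $(\alpha,\beta)$ differ only by the $\lebesgue$-null set $I\cap\{\alpha,\beta\}$, so the integrals over $I$ and over $(\alpha,\beta)$ agree for every $\eta>0$. Substituting these two facts into Theorem~\ref{thm:invert}, applied to the interval $(\alpha,\beta)$, yields exactly $\mu(I)=\lim_{\eta\searrow0}\tfrac1\pi\int_I\Im S_\mu(E+i\eta)\,\lebesgue(\de E)$.

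For injectivity of $\mu\mapsto S_\mu$ on $\Mcal_f(\R)$, suppose $\mu,\nu\in\Mcal_f(\R)$ satisfy $S_\mu=S_\nu$. Given any bounded interval $I$ with $\mu(\partial I)=\nu(\partial I)=0$, the identity just established applies to $\mu$ (since $\mu(\partial I)=0$) and, separately, to $\nu$ (since $\nu(\partial I)=0$); as $S_\mu=S_\nu$, the two limit expressions coincide, so $\mu(I)=\nu(I)$. Lemma~\ref{lem:equalityoffinitemeasures}~i) then forces $\mu=\nu$. I do not expect a genuine obstacle here; the only care needed is the degenerate interval and the bookkeeping showing that neither side of the identity changes when the (both $\lebesgue$-null and $\mu$-null) endpoints are added to or removed from $I$, which is precisely what lets Theorem~\ref{thm:invert} be invoked verbatim.
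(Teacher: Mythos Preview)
Your proof is correct and follows essentially the same route as the paper: both read off the first assertion directly from Theorem~\ref{thm:invert}, and for injectivity both reduce to showing that $\mu$ and $\nu$ agree on enough intervals. The only cosmetic difference is that you invoke Lemma~\ref{lem:equalityoffinitemeasures}~i) for the last step, whereas the paper repeats that lemma's atom-avoiding argument inline; your version is the more economical of the two.
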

\begin{proof} The convergence statement follows from Theorem~\ref{thm:invert}.
Now if $\mu$ and $\nu$ are finite measures on $(\R,\Bcal)$ with $S_{\mu}=S_{\nu}$, denote their sets of atoms by $A_{\rho}\defeq\{x\in\R,\rho(\{x\})>0\}$ for $\rho\in\{\mu,\nu\}$. Then $A_{\mu}$ and $A_{\nu}$ are at most countable. Let $a<b$ be arbitrary real numbers, then there are sequences $(a_n)_n$ and $(b_n)_n$ in $\R\backslash(A_{\mu}\cup A_{\nu})$ with $a<a_n<b_n<b$ for all $n\in\N$ such that $a_n\searrow a$ and $b_n\nearrow b$. It follows
\[
\mu((a,b)) = \lim_{n\to\infty}\mu((a_n,b_n)) = \lim_{n\to\infty}\nu((a_n,b_n)) = \nu((a,b)),
\]
where we used continuity of measures in the first and last step and Theorem~\ref{thm:invert} in the middle step. Since $\mu$ and $\nu$ agree on all open bounded intervals, we conclude $\mu=\nu$.
\end{proof}

The last theorem and its corollary suggest that for any finite measure $\mu$ on $(\R,\Bcal)$ and $\eta > 0$ small, $E\mapsto \frac{1}{\pi}\Im S_{\mu}(E+i\eta)$ acts as a Lebesgue density for (a measure approximating) $\mu$. In particular, even measures that do not possess a Lebesgue density (for example, all empirical measures) can be approximated in this way by using the Stieltjes transform. In Section~\ref{sec:imaginarystieltjes} we will see how this can be made precise.

\section{The Stieltjes Transform and Weak Convergence}
\label{sec:stieltjesconv}

For any finite measure $\mu$, $S_{\mu}$ carries all the information of $\mu$ (cf. Corollary~\ref{cor:invert}). Therefore, it is not surprising that this tool can be used particularly well to analyze weak convergence of probability measures. The following theorem generalizes Theorem 2.4.4 in \autocite{Anderson}.

\begin{theorem}[Convergence Theorem]
\label{thm:stieltjesconvergence}
Let $Z\subseteq\C\backslash\R$ be a subset that has an accumulation point in  $\C\backslash\R$ (which is not necessarily an element of $Z$ itself). Then the following statements hold:
\begin{enumerate}
	\item Let $(\mu_n)_n$ in $\Mcal_1(\R)$, such that for all $z\in Z$ we find that $S(z)\defeq\lim_{n\to\infty} S_{\mu_n}(z)$ exists. Then there is a sub-probability measure $\mu$ with $\mu_n\to\mu$ vaguely and $S_\mu=S$.
	\item Let $(\mu_n)_n$ and $\mu$ in $\Mcal_1(\R)$, then we find:
	\[
	\mu_n \to\mu \text{ weakly}\ \Leftrightarrow\ S_{\mu_n}(z)\to S_\mu(z) \text{ for all } z\in Z.
	\]
	\item Let $(\mu_n)_n$ be random probability measures and $\mu$ be a deterministic probability measure, then:
	\begin{enumerate}
		\item[a)] $\mu_n \to\mu$ weakly in expectation $\Leftrightarrow \E S_{\mu_n}(z) \to S_\mu(z)$ for all $z\in Z$.
		\item[b)] $\mu_n \to\mu$ weakly in probability $\Leftrightarrow S_{\mu_n}(z) \to S_\mu(z)$ in probability for all $z\in Z$.
		\item[c)] $\mu_n \to\mu$ weakly almost surely $\Leftrightarrow \left[S_{\mu_n}(z) \to S_\mu(z)~\text{almost surely}\right]$ for all $z\in Z$.
	\end{enumerate}
\end{enumerate}
\end{theorem}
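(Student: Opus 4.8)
The plan is to isolate one \emph{core lemma} and deduce everything from it together with the vague/weak-convergence toolkit of Chapter~\ref{chp:weakconvergence}.

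Two preliminary observations drive the proof. First, for fixed $z=E+i\eta\in\C\backslash\R$ the functions $x\mapsto\Re\frac{1}{x-z}=\frac{x-E}{(x-E)^2+\eta^2}$ and $x\mapsto\Im\frac{1}{x-z}=\frac{\eta}{(x-E)^2+\eta^2}$ lie in $\Ccal_0(\R)$ (Lemma~\ref{lem:stieltjesprops}); hence by Lemma~\ref{lem:vagueczero}, if $\nu_n\to\nu$ vaguely with $\nu_n$ probability measures and $\nu$ a sub-probability measure, then $S_{\nu_n}(z)\to S_\nu(z)$ for every $z\in\C\backslash\R$. Second, a finite measure is already determined by the values of its Stieltjes transform on $Z$: if $\rho,\rho'$ are finite measures with $S_\rho=S_{\rho'}$ on $Z$, pick an accumulation point $z_0$ of $Z$ in $\C\backslash\R$; say $z_0$ lies in the upper half plane $\C_+$ (the lower-half-plane case is symmetric by Lemma~\ref{lem:stieltjesprops} iv)). Then the holomorphic functions $S_\rho,S_{\rho'}$ (Lemma~\ref{lem:stieltjesprops}) agree on a subset of the domain $\C_+$ having an accumulation point in $\C_+$, so they agree on all of $\C_+$ by the identity theorem, hence on all of $\C\backslash\R$ by Lemma~\ref{lem:stieltjesprops} v), and therefore $\rho=\rho'$ by Corollary~\ref{cor:invert}.

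Given these, Statement 1 follows by a Helly-type subsequence argument: for an arbitrary $J\subseteq\N$, Lemma~\ref{lem:convergentsubsequence} i) gives $I\subseteq J$ and a sub-probability measure $\mu^{(I)}$ with $\mu_n\to\mu^{(I)}$ vaguely along $I$, and the first observation forces $S_{\mu^{(I)}}=S$ on $Z$; by the second observation $\mu^{(I)}$ is independent of the chosen subsequence, so Lemma~\ref{lem:subsequential} yields $\mu_n\to\mu^{(I)}=:\mu$ vaguely, with $S_\mu=S$ on $Z$. The \emph{core lemma} is then: if $(\nu_n)_n,\mu\in\Mcal_1(\R)$ satisfy $S_{\nu_n}(z)\to S_\mu(z)$ for all $z\in Z$, then $\nu_n\to\mu$ weakly --- indeed, Statement 1 gives a sub-probability measure $\nu$ with $\nu_n\to\nu$ vaguely and $S_\nu=S_\mu$ on $Z$, so $\nu=\mu$ by the second observation, and $\nu_n\to\mu$ weakly by Lemma~\ref{lem:vaguetoweak} since $\mu$ is a probability measure. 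Statement 2 is now immediate: ``$\Leftarrow$'' is the core lemma, and ``$\Rightarrow$'' is weak $\Rightarrow$ vague $\Rightarrow$ Stieltjes convergence via the first observation. For Statement 3a, note that $x\mapsto\frac{1}{x-z}$ is bounded and measurable, so Theorem~\ref{thm:expectedmeasure} ii) (applied to real and imaginary parts) gives $S_{\E\mu_n}(z)=\E S_{\mu_n}(z)$; since $\E\mu=\mu$, ``$\mu_n\to\mu$ weakly in expectation'' means ``$\E\mu_n\to\mu$ weakly'', which by Statement 2 applied to the deterministic sequence $(\E\mu_n)_n$ is equivalent to $\E S_{\mu_n}(z)\to S_\mu(z)$ for all $z\in Z$.

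For the stochastic Statements 3b and 3c, first fix a \emph{countable} $Z_0\subseteq Z$ still accumulating at a point of $\C\backslash\R$ (take a sequence in $Z$ tending to the accumulation point $z_0$); Statement 1 and the core lemma remain valid with $Z_0$ in place of $Z$. In 3c, ``$\Rightarrow$'' uses Theorem~\ref{thm:stochasticweakconvergence} ii)b) (a.s.\ weak convergence of $\mu_n(\omega)$) and the easy direction of Statement 2 pointwise in $\omega$; ``$\Leftarrow$'' observes that off the null set $\bigcup_{z\in Z_0}\{S_{\mu_n}(z)\not\to S_\mu(z)\}$ the core lemma forces $\mu_n(\omega)\to\mu$ weakly, so $\mu_n\to\mu$ weakly a.s.\ again by Theorem~\ref{thm:stochasticweakconvergence} ii)b). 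In 3b, ``$\Rightarrow$'' applies Theorem~\ref{thm:stochasticweakconvergence} i)c) to $\Re\frac{1}{\cdot-z}$ and $\Im\frac{1}{\cdot-z}$; for ``$\Leftarrow$'', given $J\subseteq\N$, Lemma~\ref{lem:uniformsubsequence} applied to the countable family $(S_{\mu_n}(z))_{z\in Z_0}$ yields $I\subseteq J$ and a null set off which $S_{\mu_n(\omega)}(z)\to S_\mu(z)$ for all $z\in Z_0$ simultaneously, so the core lemma gives $\mu_n(\omega)\to\mu$ weakly there, i.e.\ $d_M(\mu_n,\mu)\to 0$ a.s.\ along $I$; as $J$ was arbitrary, Lemma~\ref{lem:subsequence} gives $d_M(\mu_n,\mu)\to0$ in probability, which is weak convergence in probability by Theorem~\ref{thm:stochasticweakconvergence} i). The main obstacle is the second preliminary observation --- pinning a finite measure down from the Stieltjes transform on a set having merely an accumulation point, rather than on all of $\C\backslash\R$ --- which forces us to invoke the identity theorem and, because $\C\backslash\R$ is disconnected, to patch the two half-planes together using the reflection symmetry of Lemma~\ref{lem:stieltjesprops}; the remainder is bookkeeping with subsequences.
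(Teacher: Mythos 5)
Your proposal is correct and follows essentially the same route as the paper's proof: Helly selection plus Lemma~\ref{lem:subsequential} for statement 1, uniqueness of the subsequential limit via holomorphy of Stieltjes transforms and Corollary~\ref{cor:invert}, the vague-to-weak upgrade of Lemma~\ref{lem:vaguetoweak} for statement 2, and a countable subfamily of $Z$ combined with Lemmas~\ref{lem:subsequence} and~\ref{lem:uniformsubsequence} for 3b) and 3c). The only difference is presentational: you make explicit the identity-theorem step and the patching of the two half-planes via $S_\mu(\overline{z})=\overline{S_\mu(z)}$, which the paper compresses into the remark that Stieltjes transforms are holomorphic.
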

\begin{proof}
\underline{1.} Let $(\mu_n)_{n\in J}$ be an arbitrary subsequence of $(\mu_n)_{n\in \N}$. Due to Lemma~\ref{lem:convergentsubsequence}, there exists a subsequence $(\mu_n)_{n\in I}$, $I\subseteq J$, such that $\mu_n\to\mu$ vaguely for $n\in I$ and a sub-probability measure $\mu$. Since $x\mapsto \frac{1}{x-z}$ vanishes at $\pm \infty$, it follows $S_{\mu_n}(z)\to S_{\mu}(z)$ for $n\in I$ for all $z\in Z$ (cf.\ Lemma~\ref{lem:vagueczero}). Therefore, $S(z)=S_{\mu}(z)$ for all $z\in Z$. If $\nu$ is another subsequential limit of $(\mu_n)_{n\in J}$, we find by the same argument that $S_{\mu}(z) = S(z) = S_{\nu}(z)$ for all $z\in Z$. This implies $S_{\mu}=S_{\nu}$, since Stieltjes transforms are holomorphic. Therefore, $\mu=\nu$ by Theorem~\ref{thm:invert}. By Lemma~\ref{lem:subsequential}, we find $\mu_n\to\mu$ vaguely for $n\in\N$.\newline
\underline{2.} Since $x\mapsto \frac{1}{x-z}$ is continuous, "$\Rightarrow$" is obvious. To show "$\Leftarrow$", statement 1 yields that $\mu_n\to\mu$ vaguely, thus $\mu_n\to\mu$ weakly, since all measures involved are probability measures (cf.\ Lemma~\ref{lem:vaguetoweak}).\newline
\underline{3.a)} This follows directly from statement 2, considering
\[
\E S_{\mu_n}(z) = \E \int\frac{1}{x-z}\mu_n(\text{d}x)=\int \frac{1}{x-z} \E\mu_n(\text{d}x) = S_{\E\mu_n}(z),
\] 
where we used Theorem~\ref{thm:expectedmeasure}.\newline
\underline{3.c)} If $\mu_n\to\mu$ weakly on a measurable set $A$ with $\Prob(A)=1$, then we have on $A$ that for all $z\in Z$ we find $S_{\mu_n}(z)\to S_{\mu}(z)$ (by statement 2). This shows "$\Rightarrow$", and to show "$\Leftarrow$", fix a sequence $(z_k)_k$ in $Z$ that converges to some $z\in\C\backslash\R$. For all $k\in\N$ we find a measurable set $A_k$ with $\Prob(A_k)=1$ on which $S_{\mu_n}(z_k) \to S_{\mu}(z_k)$ as $n\to\infty$. Then $A\defeq\cap_{k\in\N} A_k$ is measurable with $\Prob(A)=1$, and on $A$ we find that for all $z\in Z'\defeq \{z_k|k\in\N\}$ we have $S_{\mu_n}(z)\to S_{\mu}(z)$. Since $Z'$ has an accumulation point in $\C\backslash\R$, we find on the set $A$ that $\mu_n\to\mu$ weakly by statement 2.\newline
\underline{3.b)} The direction "$\Rightarrow$" is trivial since $x\mapsto \frac{x-E}{(x-E)^2 + \eta^2}$ and $x\mapsto \frac{\eta}{(x-E)^2 + \eta^2}$ are bounded and continuous (cf.\ Theorem~\ref{thm:stochasticweakconvergence}). For "$\Leftarrow$" we let $f\in\Ccal_b(\R)$ be arbitrary. Then we need to show that $\integrala{\mu_n}{f}\to\integrala{\mu}{f}$ in probability. Let $J\subseteq \N$ be a subsequence, then by Lemma~\ref{lem:uniformsubsequence}, we find a subsequence $I\subseteq J$ and a measurable set $N$ with $\Prob(N) = 0$, such that for $(z_k)_k$ fixed as in the proof of 3.c):
\[
\forall\, \omega\in \Omega\backslash N:\, \forall\, k\in\N: S_{\mu_n(\omega)}(z_k) \xrightarrow[n\in I]{} S_{\mu(\omega)}(z_k).
\]
Therefore, it follows with statement 3.c) that
$\mu_n\xrightarrow[n\in I]{}\mu$ almost surely, so in particular $\integrala{\mu_n}{f} \xrightarrow[n\in I]{} \integrala{\mu}{f}$ almost surely. Then $\integrala{\mu_n}{f} \xrightarrow[n\in \N]{} \integrala{\mu}{f}$ in probability by Lemma~\ref{lem:subsequence}.
\end{proof}

We refer the reader to Remark~\ref{rem:stochasticweakconvergence} for an explanation on the use of brackets $[\ldots]$ in Theorem~\ref{thm:stieltjesconvergence} 3.\ $c)$.

\section{The Imaginary Part of the Stieltjes Transform}
\label{sec:imaginarystieltjes}

In Corollary~\ref{cor:invert} we saw that if $\mu\in\Mcal_1(\R)$, then for a small $\eta>0$, the function $E\mapsto \frac{1}{\pi}\Im S_{\mu}(E+i\eta)$ should be the Lebesgue density of a probability measure on $(\R,\Bcal)$ that approximates $\mu$ well. But so far, we do not even know whether $E\mapsto \frac{1}{\pi}\Im S_{\mu}(E+i\eta)$ yields a density of a \emph{probability measure} at all. How can this intuition be portrayed in the right context, and is there a connection to the weak convergence results of Section~\ref{sec:stieltjesconv}? This section aims to shed light onto these aspects. First, we will rigorously delve into convolution of probability measures, which will be based on \autocite{Alsmeyer}. Second, we will introduce kernel density estimators, which motivate further the use of the Stieltjes transform when analyzing ESDs of random matrices.
We begin by making the following definition:

\begin{definition}
\label{def:convolution}
Let $\mu$ and $\nu$ be probability measures on $(\R,\Bcal)$ and $f,g:\R\to\R$ Lebesgue-density functions (i.e.\ $h\geq 0$ and $\int h \de\lebesgue =1$, $h\in\{f,g\}$). 
\begin{enumerate}[i)]
\item The convolution of the probability measures $\mu$ and $\nu$ is defined as $\mu\ast\nu\defeq (\mu\otimes\nu)^{+}$\label{sym:measureconvmeasure}. Here, $\mu\otimes\nu$\label{sym:productmeasure} is the product measure on $(\R^2,\Bcal^2)$, $+:\R^2\to\R$ is the addition map, and $(\mu\otimes\nu)^{+}$ is the push-forward of the product measure under the addition map.
\item The convolution of the density $f$ and the probability measure $\nu$ is defined as the function $f\ast\nu:\R\to\R$\label{sym:functionconvmeasure} with
\[
\forall\,x\in\R:~ (f\ast\nu)(x)\defeq \int_{\R}f(x-y)\nu(\de y).
\]
\item The convolution of the densities $f$ and $g$ is the function $f\ast g:\R\to\R$\label{sym:functionconvfunction} with
\[
\forall\,x\in\R:~ (f\ast g)(x)\defeq \int_{\R}f(x-y)g(y)\lebesgue(\de y).
\]
\end{enumerate}
Note that in ii) and iii) above, the definitions of the convolution are to be understood for $\lebesgue$-almost all $x\in\R$, since the respective integrals are well-defined only for $\lebesgue$-almost all $x\in\R$, which can be observed via Fubini/Tonelli. The convolutions are understood to equal zero on the respective sets of measure zero. 
\end{definition}

We will now casually discuss Definition~\ref{def:convolution} and summarize the most important aspects of our findings in the next lemma. So let us assume we are in the situation of said definition. 

Let us first discuss point $i)$ of Definition~\ref{def:convolution}: Per construction, the convolution of two probability measures yields another probability measure on the real line, and if $B\in\Bcal$ is arbitrary, we find
\[
(\mu\ast\nu)(B) = (\mu\otimes\nu)\left(\left\{(x,y)\in\R^2: x+y\in B\right\}\right).
\]
Further, if $f:\R\to\R$ is $\mu\ast\nu$-integrable, then we obtain by transformation:
\[
\int_{\R}f \de (\mu\ast\nu)
= \int_{\R^2}(f\circ +) \de(\mu\otimes\nu)
= \int_{\R^2}f(x+y) (\mu\otimes\nu)(\de(x,y))
= \int_{\R}\int_{\R}f(x+y)\mu(\de x)\nu(\de y),
\]
so that in particular for an indicator function $f=\one_B$ for some $B\in\Bcal$:
\[
(\mu\ast\nu)(B)=\int_{\R}\one_B\de(\mu\ast\nu)=\int_{\R}\int_{\R}\one_B(x+y)\mu(\de x)\nu(\de y)\\=\int_{\R}\mu(B-y)\nu(\de y),
\]
where the fact that the first term is equal to the third shows in a particularly nice way that the convolution is commutative (via Fubini). Let us introduce a quick but enlightening example:
\begin{example}[Convolution with Dirac measures]
\label{ex:convolution}
For all $a\in\R$, denote by $\delta_a$ the Dirac measure in $a$ and by $T_a$ the translation by $a$, that is, $T_a:\R\to\R$, $T_a(x)=x+a$ for all $x\in\R$. Then we find for any probability measure $\mu\in\Mcal_1(\R)$:
\[
\mu\ast\delta_a=\mu^{T_a},\quad \text{in particular:}\quad \mu\ast\delta_0 =\mu,
\]
since $T_0=\text{id}_{\R}$. We conclude that $\delta_0$ is the neutral element of convolution (there is no other neutral element, since $\ast$ is commutative). To prove our claim, we calculate for an arbitrary $B\in\Bcal$:
\[
(\mu\ast\delta_a)(B) = \int_{\R} \mu(B-y)\delta_a(\de y) = \mu(B-a)=\mu^{T_a}(B),
\]
where we used that $T_a^{-1}=T_{-a}$.
\end{example}

Now, let us discuss point $ii)$ of Definition~\ref{def:convolution}: First of all, we point out that $f\ast\nu$ is a Lebesgue-density function, for it is non-negative, and via Fubini we obtain immediately that $\int f\ast\nu\de\lebesgue=1$. But even more holds: $f\ast\nu$ is the Lebesgue-density of the convolution $(f\lebesgue)\ast\nu$, so that the equality $(f\lebesgue)\ast\nu = (f\ast\mu)\lebesgue$ holds. In particular, this convolution is Lebesgue-continuous. To verify our statement, we calculate for an arbitrary $B\in\Bcal$:
\begin{align*}
[(f\lebesgue)\ast\nu](B)
			&= \int_{\R}(f\lebesgue)(B-y)\nu(\de y)\\
			&= \int_{\R}\int_{B-y}f(x)\lebesgue(\de x)\nu(\de y)\\
			&=\int_{\R}\int_{B}f(x-y)\lebesgue(\de x)\nu(\de y)\\
			&=\int_{B}\int_{\R}f(x-y)\nu(\de y)\lebesgue(\de x)\\
			&=\int_B [f\ast\nu](x)\lebesgue(\de x),
\end{align*}
where the third step follows from 
\[
\int_{B-y}f(x)\lebesgue(\de x) = \int_{T^{-1}_y(B)}(f\circ T^{-1}_y\circ T_y)(x)\lebesgue(\de x) = \int_{B}(f\circ T^{-1}_y)(x)\lebesgue^{T_y}(\de x),
\]
and the Lebesgue measure is translation invariant, thus $\lebesgue^{T_y}=\lebesgue$.

Lastly, let us discuss point $iii)$ of Definition~\ref{def:convolution}: Again by Fubini, we find immediately that $f\ast g$ is a Lebesgue-density function. Now since from the definition we have for all $x\in\R$ that $(f\ast g)(x)= (f\ast (g\lebesgue))(x)$, we find through our discussion of point $ii)$ that $f\ast g$ is the Lebesgue-density of the convolution $(f\lebesgue)\ast(g\lebesgue)$, so $(f\ast g)\lebesgue = (f\lebesgue)\ast(g\lebesgue)$. Let us summarize our findings in the following lemma:
\begin{lemma}
\label{lem:convolution}
In the situation of Definition~\ref{def:convolution}, we make the following observations (point $x$ here is with respect to point $x$ in Definition~\ref{def:convolution}, $x\in\{i),ii),iii)\}$):
\begin{enumerate}[i)]
\item The convolution is a commutative binary operation on the space of probability measures. The neutral element is given by $\delta_0$. Further, the following formula holds:
\[
\forall\,B\in\Bcal: (\mu\ast\nu)(B) = \int_{\R}\mu(B-y)\nu(\de y).
\]
\item $f\ast\nu$ is a Lebesgue-density for the convolution $(f\lebesgue)\ast\nu$, that is, $(f\lebesgue)\ast\nu = (f\ast\nu)\lebesgue$.
\item $f\ast g$ is a Lebesgue-density for the convolution $(f\lebesgue)\ast(g\lebesgue)$, that is, $(f\lebesgue)\ast(g\lebesgue) = (f\ast g)\lebesgue$.
\end{enumerate}
\end{lemma}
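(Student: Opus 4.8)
The plan is to verify the three assertions of Lemma~\ref{lem:convolution} in order, since each builds on the previous one, and all of them have essentially already been justified in the casual discussion preceding the lemma. So the ``proof'' is really a matter of organizing that discussion into a clean argument.

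First I would treat $i)$. That $\mu\ast\nu$ is again a probability measure is immediate from the definition $\mu\ast\nu = (\mu\otimes\nu)^+$: it is a push-forward of a probability measure. Commutativity follows either from the symmetry of the addition map together with the symmetry of the product measure, or more concretely from the formula $(\mu\ast\nu)(B)=\int_{\R}\mu(B-y)\nu(\de y)$ combined with Fubini/Tonelli, which exhibits the same quantity with the roles of $\mu$ and $\nu$ interchanged. The key formula itself is obtained by the transformation theorem: for $B\in\Bcal$,
\[
(\mu\ast\nu)(B)=\int_{\R^2}\one_B(x+y)\,(\mu\otimes\nu)(\de(x,y))=\int_{\R}\int_{\R}\one_B(x+y)\,\mu(\de x)\,\nu(\de y)=\int_{\R}\mu(B-y)\,\nu(\de y),
\]
using Tonelli in the middle step since the integrand is non-negative. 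Finally, that $\delta_0$ is neutral follows from Example~\ref{ex:convolution} (with $a=0$), and uniqueness of the neutral element is forced by commutativity.

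Next, for $ii)$, I would show that $(f\lebesgue)\ast\nu$ has Lebesgue density $f\ast\nu$ by computing $[(f\lebesgue)\ast\nu](B)$ for arbitrary $B\in\Bcal$ via the formula from $i)$, unwinding $(f\lebesgue)(B-y)=\int_{B-y}f\,\de\lebesgue$, then using translation invariance of $\lebesgue$ to rewrite $\int_{B-y}f(x)\lebesgue(\de x)=\int_B f(x-y)\lebesgue(\de x)$, and finally swapping the order of integration (Tonelli, non-negative integrand) to arrive at $\int_B\bigl(\int_{\R}f(x-y)\nu(\de y)\bigr)\lebesgue(\de x)=\int_B(f\ast\nu)(x)\lebesgue(\de x)$. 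That $f\ast\nu$ is itself a density (non-negative, integrates to $1$) is then either read off directly or follows from the fact that it is the density of a probability measure. For $iii)$, I would simply observe that $(f\ast g)(x)=(f\ast(g\lebesgue))(x)$ by definition, and then apply $ii)$ with $\nu\defeq g\lebesgue$ to conclude $(f\lebesgue)\ast(g\lebesgue)=(f\ast g)\lebesgue$.

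There is no real obstacle here; the one point that needs a little care is the repeated use of Fubini/Tonelli, and in particular making sure one invokes Tonelli (non-negative integrands) rather than Fubini so that no integrability hypothesis is needed, together with the translation invariance of Lebesgue measure in the $ii)$ computation. Since the body text already carried out all of these computations in detail, the cleanest write-up is to state that the three claims were established in the preceding discussion and to point to the relevant displays and to Example~\ref{ex:convolution}; if a self-contained proof is preferred, one reproduces the $B\in\Bcal$ computation for $ii)$ as above and derives $i)$ and $iii)$ as its corollaries.
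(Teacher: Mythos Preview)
Your proposal is correct and takes essentially the same approach as the paper: the paper's proof is the single sentence ``This follows from the discussion preceding the lemma,'' and your write-up simply organizes that discussion (the transformation formula, Fubini/Tonelli, translation invariance of $\lebesgue$, and Example~\ref{ex:convolution}) into a clean argument.
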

\begin{proof}
This follows from the discussion preceding the lemma.
\end{proof}
The following lemma will capture a very important property of the convolution:
\begin{lemma}
The convolution of probability measures on $(\R,\Bcal)$ is continous with respect to weak convergence. That is, if $(\mu_n)_n$, $(\nu_n)_n$, $\mu$ and $\nu$ are probability measures on $(\R,\Bcal)$ with $\mu_n\to\mu $ and $\nu_n\to\nu$ weakly, then $\mu_n\ast\nu_n\to\mu\ast\nu$ weakly.
\end{lemma}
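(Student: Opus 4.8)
The plan is to verify the definition of weak convergence directly, after two painless reductions. By Lemma~\ref{lem:convolution}~i) both $\mu_n\ast\nu_n$ and $\mu\ast\nu$ are probability measures, and the transformation identity recorded just before that lemma gives, for every bounded measurable $h\colon\R\to\R$,
\[
\integrala{\mu_n\ast\nu_n}{h}=\int_{\R}\int_{\R}h(x+y)\,\mu_n(\de x)\,\nu_n(\de y),
\]
and likewise for $\mu,\nu$. Since the prospective limit $\mu\ast\nu$ is a probability measure, Lemma~\ref{lem:vaguetoweak} lets me replace weak convergence by \emph{vague} convergence, so it suffices to prove, for every $f\in\Ccal_c(\R)$, that
\[
\int_{\R}\int_{\R}f(x+y)\,\mu_n(\de x)\,\nu_n(\de y)\ \xrightarrow[n\to\infty]{}\ \int_{\R}\int_{\R}f(x+y)\,\mu(\de x)\,\nu(\de y).
\]
Working with $f\in\Ccal_c(\R)$ rather than $f\in\Ccal_b(\R)$ is exactly what makes the argument go through, because such an $f$ is uniformly continuous, with a modulus of continuity $\omega_f$ satisfying $\omega_f(\delta)\to 0$ as $\delta\to 0$.

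Fix such an $f$ and put $g(y)\defeq\int_{\R}f(x+y)\,\mu(\de x)$ and $\psi_n(y)\defeq\int_{\R}f(x+y)\,\mu_n(\de x)-g(y)$. These are bounded functions of $y$, and $g\in\Ccal_b(\R)$ since $|g(y)-g(y')|\le\omega_f(|y-y'|)$, using $\mu(\R)=1$. Inserting the intermediate measure $\mu\otimes\nu_n$ and applying Fubini, the difference I must control equals
\[
\int_{\R}\psi_n(y)\,\nu_n(\de y)\ +\ \Bigl(\integrala{\nu_n}{g}-\integrala{\nu}{g}\Bigr).
\]
The second bracket tends to $0$ because $\nu_n\to\nu$ weakly and $g\in\Ccal_b(\R)$, so everything comes down to showing $\int\psi_n\,\de\nu_n\to0$.

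For that I would use tightness. Since $\nu_n\to\nu$ weakly it converges vaguely to a probability measure, so $(\nu_n)_n$ is tight by Lemma~\ref{lem:vaguetoweak}; given $\varepsilon>0$ choose $R>0$ with $\nu_n([-R,R]^c)<\varepsilon$ for all $n$. On $[-R,R]$ I claim $\psi_n\to0$ uniformly: for each fixed $y$ the function $x\mapsto f(x+y)$ lies in $\Ccal_b(\R)$, so $\psi_n(y)\to0$ by weak convergence of $\mu_n$, and to upgrade this to uniformity I pick a finite $\delta$-net $y_1,\dots,y_m$ of $[-R,R]$ and, for $y\in[-R,R]$ with nearest net point $y_j$, estimate
\[
|\psi_n(y)|\le|\psi_n(y)-\psi_n(y_j)|+|\psi_n(y_j)|\le 2\,\omega_f(\delta)+\max_{1\le j\le m}|\psi_n(y_j)|,
\]
the middle step using $|f(x+y)-f(x+y_j)|\le\omega_f(\delta)$ and $\mu_n(\R)=\mu(\R)=1$; letting $n\to\infty$ and then $\delta\to0$ yields $\sup_{|y|\le R}|\psi_n(y)|\to0$. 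Hence
\[
\Bigl|\int_{\R}\psi_n\,\de\nu_n\Bigr|\le\sup_{|y|\le R}|\psi_n(y)|+2\supnorm{f}\,\nu_n([-R,R]^c)\le\sup_{|y|\le R}|\psi_n(y)|+2\supnorm{f}\,\varepsilon,
\]
so $\limsup_n\bigl|\int\psi_n\,\de\nu_n\bigr|\le2\supnorm{f}\varepsilon$; as $\varepsilon$ was arbitrary this is $0$. Combining, $\integrala{\mu_n\ast\nu_n}{f}\to\integrala{\mu\ast\nu}{f}$ for all $f\in\Ccal_c(\R)$, so $\mu_n\ast\nu_n\to\mu\ast\nu$ vaguely, hence weakly.

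\textbf{Main obstacle.} The delicate point is exactly $\int\psi_n\,\de\nu_n\to0$: one is integrating an $n$-dependent integrand against an $n$-dependent measure, so neither dominated convergence nor a bare application of weak convergence applies off the shelf. The way around it is the combination used above — passing to compactly supported (hence uniformly continuous) test functions to buy a uniform modulus of continuity, and then marrying tightness of $(\nu_n)_n$ (to discard the tails in $y$) with a finite-net argument (to turn the pointwise convergence $\psi_n(y)\to0$ into convergence uniform on compacta).
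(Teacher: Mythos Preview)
Your proof is correct, but it takes a genuinely different route from the paper's. The paper proceeds in one line by invoking an external result (from \autocite{Billingsley}) that weak convergence of the marginals $\mu_n\to\mu$, $\nu_n\to\nu$ implies weak convergence of the product measures $\mu_n\otimes\nu_n\to\mu\otimes\nu$ on $\R^2$; since for $f\in\Ccal_b(\R)$ the map $(x,y)\mapsto f(x+y)$ is bounded and continuous, the claim is then immediate from the definition of convolution as a push-forward.

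Your argument, by contrast, is fully self-contained within the paper's own toolkit: you reduce to $f\in\Ccal_c(\R)$ via Lemma~\ref{lem:vaguetoweak}, then handle the ``diagonal'' integration against $\mu_n\otimes\nu_n$ by hand, splitting off one factor at a time and using tightness plus uniform continuity to control the residual term $\int\psi_n\,\de\nu_n$. What this buys you is that you never need the (nontrivial) fact about weak convergence of product measures; what the paper's approach buys is brevity, at the cost of an external citation. Both are standard, and your route is essentially a direct proof of the special case of the product-measure result that is needed here.
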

\begin{proof}
With \autocite[23]{Billingsley} it follows that $\mu_n\otimes\nu_n\to\mu\otimes\nu$. Now if $f\in\Ccal_b(\R)$ is arbitrary, then we also have that $(x,y)\mapsto f(x+y)$ is a continuous and bounded function on $\R^2$, so
\[
\int_{\R} f \de(\mu_n\ast\nu_n) = \int_{\R^2} (f\circ+) \de (\mu_n\otimes\nu_n) \xrightarrow[n\to\infty]{} \int_{\R^2} (f\circ+) \de (\mu\otimes\nu) = \int_{\R} f \de(\mu\ast\nu).
\]
\end{proof}

Now, we will bring the Stieltjes transform into play:

\begin{definition}
\label{def:cauchykernel}
For all $\eta>0$, we define the Cauchy kernel $P_{\eta}:\R\to\R$\label{sym:Cauchykernel} as the function with
\[
\forall\,x\in\R: P_{\eta}(x) \defeq \frac{1}{\pi}\frac{\eta}{x^2+\eta^2}, 
\]
which is the $\lebesgue$-density function of the Cauchy distribution with scale parameter $\eta$.
\end{definition}

We will collect a quick lemma before proceeding:

\begin{lemma}
\label{lem:diracdelta}
As $\eta\searrow 0$, we find $(P_{\eta}\lebesgue)\to \delta_0$ weakly.	
\end{lemma}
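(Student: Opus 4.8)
The plan is to show $(P_\eta \lebesgue) \to \delta_0$ weakly by verifying the defining condition: $\int_{\R} f \, \de(P_\eta \lebesgue) \to \int_{\R} f \, \de\delta_0 = f(0)$ for every $f \in \Ccal_b(\R)$. First I would use the fact that $P_\eta$ is a probability density (it is the density of the Cauchy distribution with scale parameter $\eta$, as recorded in Definition~\ref{def:cauchykernel}), so that $\int_{\R} P_\eta(x) \lebesgue(\de x) = 1$. This lets me write the difference as
\[
\int_{\R} f(x) P_\eta(x) \lebesgue(\de x) - f(0) = \int_{\R} \big(f(x) - f(0)\big) P_\eta(x) \lebesgue(\de x).
\]

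Next I would perform the substitution $x = \eta u$, which is the natural scaling for the Cauchy kernel: since $P_\eta(\eta u) = \frac{1}{\pi}\frac{\eta}{\eta^2 u^2 + \eta^2} = \frac{1}{\eta}\cdot\frac{1}{\pi}\frac{1}{u^2+1} = \frac{1}{\eta} P_1(u)$, the change of variables gives
\[
\int_{\R} \big(f(x) - f(0)\big) P_\eta(x) \lebesgue(\de x) = \int_{\R} \big(f(\eta u) - f(0)\big) P_1(u) \lebesgue(\de u).
\]
Now I would apply dominated convergence: for each fixed $u \in \R$, $f(\eta u) - f(0) \to 0$ as $\eta \searrow 0$ by continuity of $f$ at $0$, and the integrand is dominated by $2\supnorm{f}\, P_1(u)$, which is $\lebesgue$-integrable since $P_1$ is a probability density. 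Hence the integral tends to $0$, which is exactly what we need. (Strictly speaking, to conclude for $\eta \searrow 0$ along the reals one passes through sequences $\eta_n \searrow 0$, invoking the dominated convergence lemma stated in the excerpt; since the limit $0$ is independent of the sequence, this yields the claimed convergence as $\eta \searrow 0$.)

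There is no real obstacle here — the only mild subtlety is making sure the domination is uniform in $\eta$, which is immediate after the rescaling because the dominating function $2\supnorm{f}\,P_1$ no longer depends on $\eta$. Alternatively, one could avoid the substitution and argue directly by splitting the integral $\int_{\R}(f(x)-f(0))P_\eta(x)\lebesgue(\de x)$ over a small neighborhood $[-\delta,\delta]$ of $0$ (where $|f(x)-f(0)|$ is small) and its complement (where $\int_{|x|>\delta} P_\eta(x)\lebesgue(\de x) = 1 - \frac{2}{\pi}\arctan(\delta/\eta) \to 0$), but the rescaling-plus-dominated-convergence route is cleaner and is the one I would write up.
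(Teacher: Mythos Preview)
Your proof is correct, but it takes a different route from the paper's. The paper argues via characteristic functions: the characteristic function of $P_\eta\lebesgue$ is $t\mapsto e^{-\eta|t|}$, which converges pointwise to the constant function $1=e^0$ as $\eta\searrow 0$, and this is the characteristic function of $\delta_0$; weak convergence then follows from L\'evy's continuity theorem. Your approach instead verifies the definition of weak convergence directly by the rescaling $x=\eta u$ and dominated convergence. The paper's argument is shorter but relies on two external facts (the explicit Fourier transform of the Cauchy density and the continuity theorem), whereas your argument is entirely self-contained within the tools already developed in the text --- only continuity of $f$, the fact that $P_1$ is a probability density, and dominated convergence are used. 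Both are perfectly valid; yours has the advantage of not importing any results from outside the paper.
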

\begin{proof}
The characteristic function of the measure $P_{\eta}\lebesgue$ is given by $t\mapsto e^{-\eta\abs{t}}$, see \autocite[330]{Klenke} and \autocite[208]{Rueschendorf}. Fixing $t\in\R$ and letting $\eta\to 0$ will yield the statement, since $e^0$ is the characteristic function of $\delta_0$.
\end{proof}
Now, as we see, for any probability measure $\mu$ on $(\R,B)$, we have 
\[
\frac{1}{\pi}\Im S_{\mu}(E+i\eta) = \int_{\R}\frac{1}{\pi}\frac{\eta}{(E-x)^2+\eta^2}\mu(\de x) = (P_{\eta}\ast \mu)(E)
\]
Therefore, $1/\pi\Im S_{\mu}(\cdot + i\eta)$ is the convolution of the density $P_{\eta}$ with $\mu$ and thus a Lebesgue-density for the probability measure $(P_\eta\lebesgue)\ast\mu$. In particular, as $\eta \searrow 0$ we have that 
\[
\frac{1}{\pi}\Im S_{\mu}(\cdot+i\eta)\lebesgue = (P_\eta\lebesgue)\ast\mu \longrightarrow \delta_0\ast\mu =\mu \quad \text{weakly.}
\]
This immediately proves Corollary~\ref{cor:invert} again (using the Portmanteau theorem). But due to continuity of the convolution, we can say much more:

Assume that $(\sigma_n)_n$ is a sequence of ESDs of Hermitian random matrices, so that $\sigma_n$ converges almost surely to the semicircle distribution $\sigma$. We assume this convergence takes place on a measurable set $A$ with $\Prob(A)=1$. Then we find on $A$ that the following commutative diagram holds, where all arrows indicate weak convergence:
\begin{center}
\begin{tikzpicture}
  \matrix (m) [matrix of math nodes,row sep=7em,column sep=6em,minimum width=2em]
  {
     (P_{\eta}\ast\sigma_n)\lebesgue & (P_{\eta}\ast\sigma)\lebesgue \\
     \delta_0\ast\sigma_n=\sigma_n & \sigma \\};
  \path[-stealth]
    (m-1-1) edge node [left] {$\eta\searrow 0$} (m-2-1)
            edge node [below] {$n\to\infty$} (m-1-2)
    (m-2-1.east|-m-2-2) edge node [below] {$n\to\infty$}
            (m-2-2)
    (m-1-2) edge node [right] {$\eta\searrow 0$} (m-2-2)
    (m-1-1) edge node [below left] {$\substack{n\to\infty\\ \eta\searrow 0}$} (m-2-2);
\end{tikzpicture}
\end{center}
\noindent
In particular, the diagonal arrow says that we obtain weak convergence  $(P_{\eta_n}\ast\sigma_n)\lebesgue\to\sigma$ as $n\to\infty$ for any sequence $\eta_n\searrow 0$. This is an interesting result, but it does not tell us if also densities align. More concretely, write $\sigma = f_{\sigma}\lebesgue$, then from $(P_{\eta}\ast\sigma_n)\lebesgue \to f_{\sigma}\lebesgue$ weakly we cannot infer that also $P_{\eta}\ast\sigma_n \to f_{\sigma}$ in some sense, for example in $\supnorm{\cdot}$ over a specified compact interval. This is desirable since it allows conclusion about \emph{local} estimation of $\sigma_n$ by $\sigma$.
If $\eta=\eta_n$ drops too quickly to zero as $n\to\infty$, then $(P_{\eta_n}\ast\sigma_n)$ will have steep peaks at each eigenvalue, thus will not approximate the density of the semicircle distribution uniformly. This "problem" is typical for kernel density estimators in general (see \autocite{Rueschendorf}, especially their Remark 11.2.10), which we will introduce next.

\begin{definition}
A kernel $K$ is a Lebesgue-probability-density function $\R\to\R$, that is, $K$ is non-negative and
\[
\int_{\R} K(y) \lebesgue(dy) =1.
\]
Further, if $K$ is a kernel and $h>0$, we define $K_h$ as the kernel with $K_h(x) = \frac{1}{h}K(\frac{x}{h})$ for all $x\in\R$ and call $K_h$ the kernel $K$ at bandwidth $h$. In particular, $K=K_1$.
\end{definition}

In above definition, it is clear that $K_h$ is a kernel if $K$ is a kernel and $h>0$. An example of a kernel is the Cauchy kernel $P_{1}$ from Definition~\ref{def:cauchykernel}, which yields the standard Cauchy distribution.  We have for all $x\in\R$ and $\eta>0$:
\[
P_1(x) = \frac{1}{\pi}\frac{1}{x^2+1}\quad \text{and}\quad P_{\eta}(x)= \frac{1}{\pi\eta}\frac{1}{\left(\frac{x}{\eta}\right)^2+1}=\frac{1}{\pi}\frac{\eta}{x^2+\eta^2}.
\]

 Now given a vector $v= (v_1,\ldots,v_n)$ of real-valued observations, we are interested in constructing a Lebesgue-density that describes the experiment of drawing uniformly at random from these observations, in other words that approximates the empirical probability measure
\begin{equation}
\label{eq:empmeasure}
\nu_N\defeq \frac{1}{n}\sum_{i=1}^n \delta_{v_i}.
\end{equation}
This can be done with help of a kernel $K$, which is oftentimes chosen to be unimodal and symmetric around $0$, just as the Cauchy kernel $P_1$.  

\begin{definition}
The \emph{kernel density estimator with kernel $K$ and bandwidth $h>0$} for an empirical measure $\nu$ as in \eqref{eq:empmeasure} is the Lebesgue-density given by the convolution $K_h\ast \nu$, thus
\map{K_h\ast \nu}{\R}{\R}{x}{(K_h\ast \nu)(x) = \frac{1}{n}\sum_{i=1}^n K_h(x-v_i)= \frac{1}{nh}\sum_{i=1}^n K_1\left(\frac{x-v_i}{h}\right)}
\end{definition}

Heuristically speaking, the concept works in the following way: The center of the kernel is placed upon each observation, whose influence (i.e. probability mass of $1/n$) is smoothed over its neighborhood. The size of this neighborhood is governed by the bandwidth $h$: A small $h$ will restrain the probability mass of $1/n$ to be closer to its observation, whereas a larger $h$ will result in a wider spread of probability mass. Therefore, a smaller $h$ will result in a peaky density function (with steep peaks at the observation), whereas a larger $h$ will result in a smoother density function.  

We now assume we are given an empirical spectral distribution $\sigma_N$ from a real symmetric $n\times n$ matrix $X_n$. The kernel density estimator at location $E\in\R$ for $\sigma_n$ with kernel $P_1$ at bandwidth $\eta >0$ is then given by 
\[
(P_{\eta}\ast\sigma_n)(E)= \frac{1}{n \eta} \sum_{i=1}^n \frac{1}{\pi}\frac{1}{\left(\frac{E-\lambda^{X_n}_i}{\eta}\right)^2 +1} = \frac{1}{\pi n}\sum_{i=1}^n \frac{\eta}{(E-\lambda^{X_n}_i)^2 + \eta^2} =\frac{1}{\pi}\Im S_{\sigma_n}(E+i\eta).
\]
This gives the imaginary part of the Stieltjes transform the new role of a kernel density estimator for the empirical spectral distribution. Let us conduct a simulation study for $n=100$. Let $A_{100}$ be a symmetric $100\times 100$ random matrix with independent Rademacher distributed variables in the upper half triangle, including the main diagonal. Let $X_{100} \defeq \frac{1}{\sqrt{100}}A_{100}$. Denote by $\sigma_{100}$ the empirical spectral distribution of $X_{100}$. Further, we define the bandwidths $\eta_1\defeq n^{-1/2} = 1/10$ and $\eta_2\defeq n^{-1}=1/100$.  With respect to the commutative diagram after Lemma~\ref{lem:diracdelta} and the discussion below it, let us analyze how well $P_{\eta_1} \ast \sigma_{100}$ and $P_{\eta_2}\ast\sigma_{100}$ can be approximated by the density of the semicircle distribution, $f_{\sigma}$, in Figures~\ref{fig:goodeta} and~\ref{fig:badeta}, which are based on the same simulation outcome.

\begin{figure}[htbp]
\centering
\includegraphics[clip, trim=0cm 1.5cm 0cm 1cm, width=\textwidth]{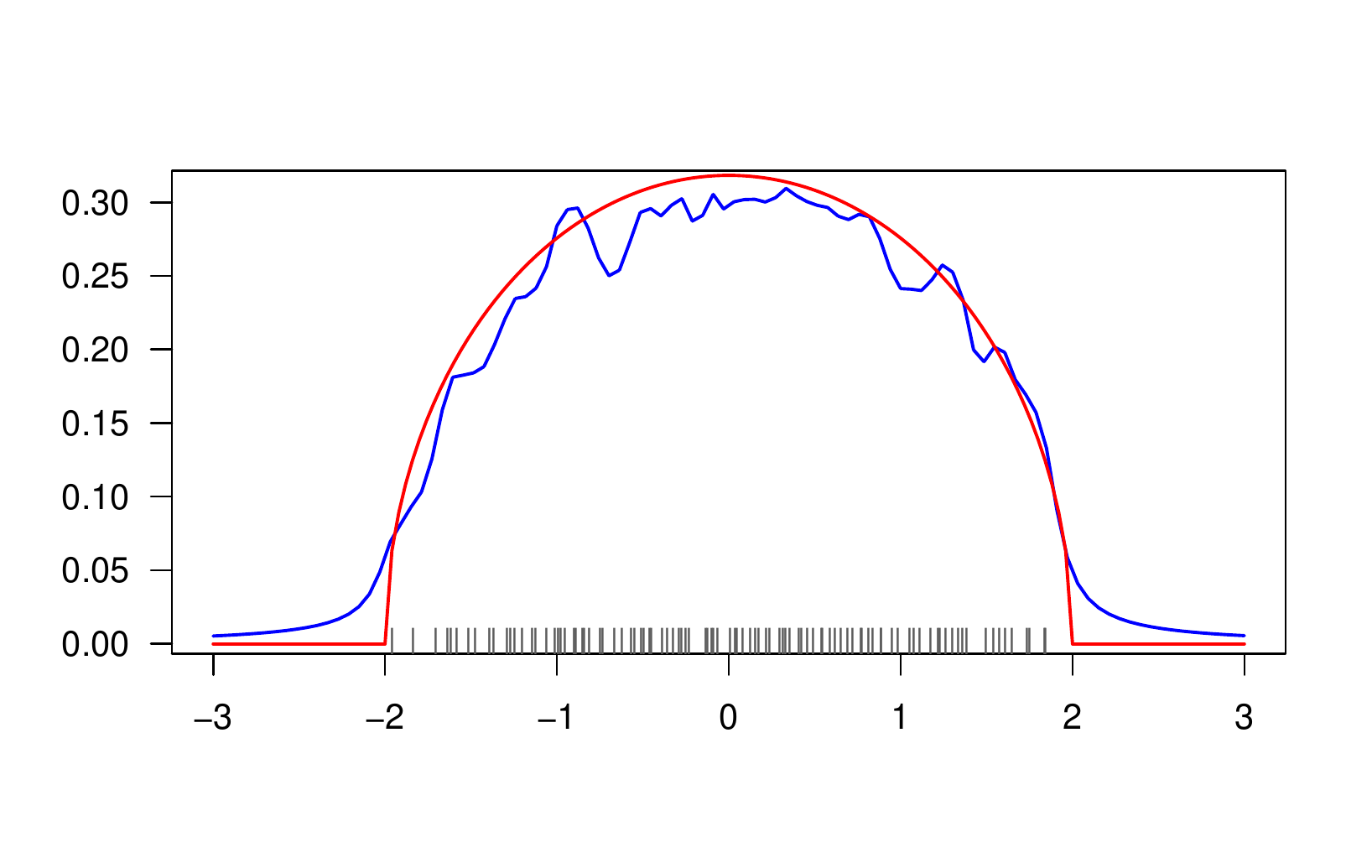}	
\caption{Red line: $f_{\sigma}$. Blue line: $\frac{1}{\pi}\Im S_{\sigma_{100}}(\cdot+i\eta_1)=P_{\eta_1} \ast \sigma_{100}$. Grey bars: eigenvalue locations.}
\label{fig:goodeta}
\end{figure}

\begin{figure}[htbp]
\centering
\includegraphics[clip, trim=0cm 1.5cm 0cm 1cm, width=\textwidth]{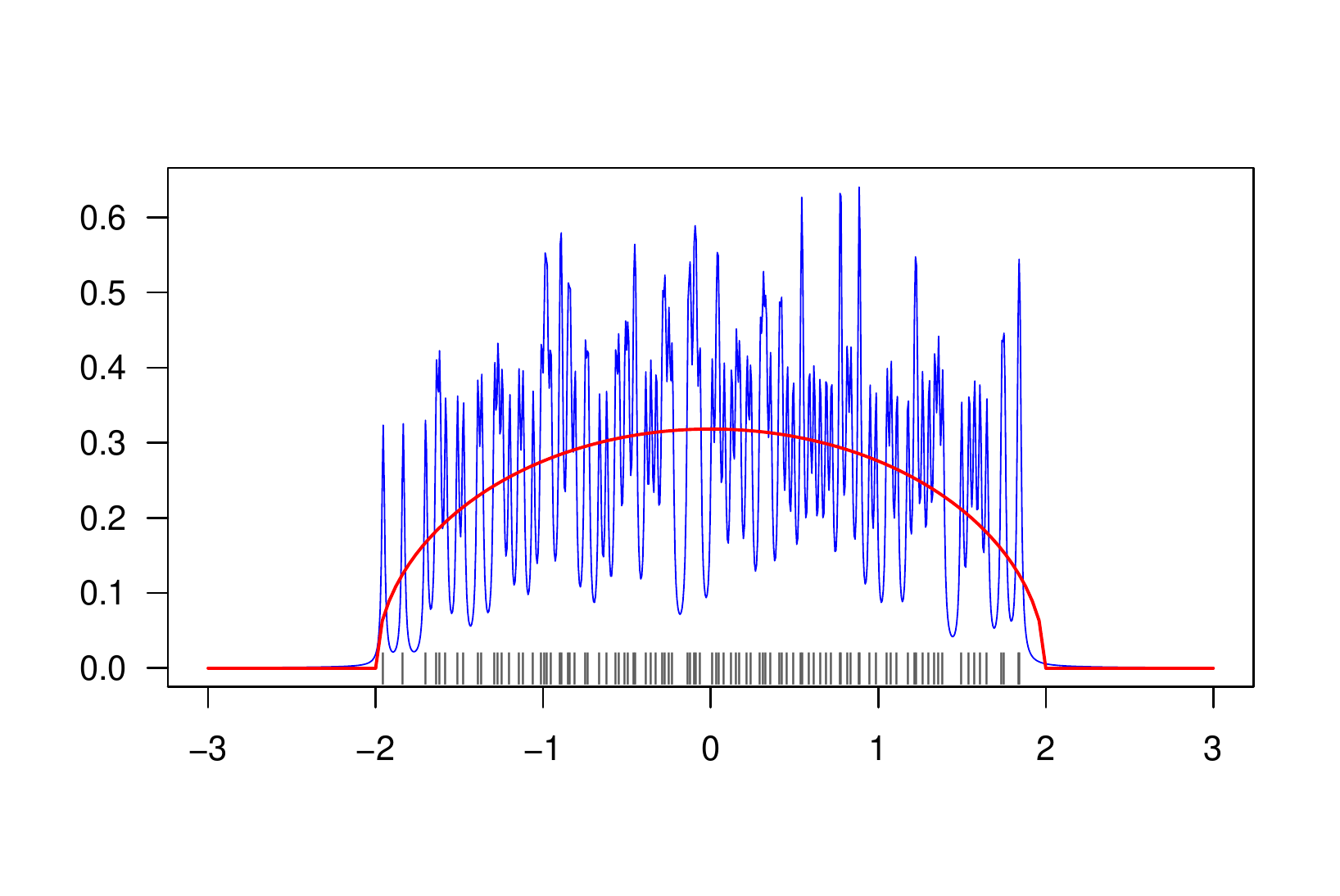}	
\caption{Red line: $f_{\sigma}$. Blue line: $\frac{1}{\pi}\Im S_{\sigma_{100}}(\cdot+i\eta_2)=P_{\eta_2} \ast \sigma_{100}$. Grey bars: Eigenvalue locations.}
\label{fig:badeta}
\end{figure}

As we see, considering that we are in the case of a very low $n=100$, we already obtain a decent approximation by the semicircle density in Figure~\ref{fig:goodeta}. Reducing the scale from $\eta_1$ to $\eta_2$ we obtain the result in Figure~\ref{fig:badeta}. There we observe that for the smaller bandwidth parameter $\eta_2$, we do not obtain a useful approximation by the semicircle density anymore. Indeed, the scale $n^{-1}$ is too fast to obtain uniform convergence of the estimated density to the target density, whereas a scale of $n^{\gamma-1}$ for any $\gamma\in(0,1)$ would be sufficient. Nevertheless, Figure~\ref{fig:badeta} displays nicely how the kernel density estimator works: A closer look -- in particular to the edges of the bulk -- shows how the probability mass of each individual eigenvalue is spread around its neighborhood.

\section{The Stieltjes Transform of ESDs of Hermitian Matrices}

As we motivated the Stieltjes transform in the beginning of this chapter, it is possible to relate the Stieltjes transform of an ESD of a random matrix to the entries of the random matrix. We will now see how this is done.
Notationally, as the Stieltjes transform of the semicircle distribution received the special letter $s\defeq S_{\sigma}$, the Stieltjes transform of an ESD $\sigma_n$ of an Hermitian $n\times n$ matrix $X_n$ is denoted by $s_n\defeq S_{\sigma_n}$\label{sym:StieltjesESD}. The following theorem summarizes the findings of this section (see also \autocite[470-472]{BaiSi}).

\begin{theorem}
\label{thm:StieltjesESD}
Let $X_n$ be an Hermitian $n\times n$ matrix with ESD $\sigma_n$.
\begin{enumerate}[i)]
\item For all $z\in\C\backslash\R$ we find:
\[
s_n(z)=S_{\sigma_n}(z)=\frac{1}{n}\tr (X_n-z)^{-1} = \frac{1}{n}\sum_{k=1}^n \frac{1}{X_n(k,k)-z-x^*_k (X_n^{(k)}-z)^{-1} x_k}.
\]
\item  For $z=E+i\eta$, where $E\in\R$ and $\eta>0$, we obtain for all $k\in\{1,\ldots,n\}$:
\[
\bigabs{\tr{(X_n-z)^{-1}}-\tr{(X_n^{(k)}-z)^{-1}}}\leq\frac{1}{\eta}.
\]
\end{enumerate}
Here, $X_n^{(k)}$\label{sym:princmin} denotes the $k$-th principal minor of $X_n$ (thus an $(n-1)\times(n-1)$ matrix) and $x_k$\label{sym:erasedcolumn} the $k$-th column of $X_n$ without the $k$-th entry (thus an $(n-1)$-vector).
\end{theorem}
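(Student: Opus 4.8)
The plan is to prove the two statements separately: part i) will rest on the spectral theorem for Hermitian matrices (Lemma~\ref{lem:symmetricspectrum}) together with the block--inverse (Schur complement) formula, while part ii) will follow from an identity extracted from that same block--inverse formula, combined with an estimate on imaginary parts.

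For the first equality in i), recall that $\sigma_n=\frac{1}{n}\sum_{l=1}^n\delta_{\lambda_l^{X_n}}$, so by definition of the Stieltjes transform $S_{\sigma_n}(z)=\frac{1}{n}\sum_{l=1}^n\frac{1}{\lambda_l^{X_n}-z}$, every summand being well defined since $z\notin\R$ and the $\lambda_l^{X_n}$ are real. For the second equality I would diagonalize $X_n=S^{-1}DS$ with $D=\diag(\lambda_1^{X_n},\dots,\lambda_n^{X_n})$; then $X_n-zI$ is invertible, $(X_n-zI)^{-1}=S^{-1}(D-zI)^{-1}S$, and invariance of the trace under conjugation (Lemma~\ref{lem:trace}) gives $\frac{1}{n}\tr(X_n-zI)^{-1}=\frac{1}{n}\tr(D-zI)^{-1}=\frac{1}{n}\sum_l\frac{1}{\lambda_l^{X_n}-z}$. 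For the third equality I would write $\tr(X_n-z)^{-1}=\sum_{k=1}^n[(X_n-z)^{-1}](k,k)$ and compute these diagonal entries. After a simultaneous permutation of rows and columns --- a similarity transformation, which preserves all the quantities appearing in the claim --- one may assume $k=n$ and split $X_n-z$ into the obvious $2\times 2$ block form with upper-left block $X_n^{(n)}-z$, off-diagonal blocks $x_n$ and $x_n^*$, and lower-right entry $X_n(n,n)-z$. Since $X_n^{(n)}$ is Hermitian and $z\notin\R$, the block $X_n^{(n)}-z$ is invertible, and the block--inverse formula identifies the $(n,n)$-entry of $(X_n-z)^{-1}$ with the inverse of the Schur complement, $\bigl(X_n(n,n)-z-x_n^*(X_n^{(n)}-z)^{-1}x_n\bigr)^{-1}$; this Schur complement is nonzero because $\det(X_n-z)=\det(X_n^{(n)}-z)\cdot\bigl(X_n(n,n)-z-x_n^*(X_n^{(n)}-z)^{-1}x_n\bigr)$ and the left-hand side does not vanish. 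Summing over $k$ yields the asserted formula.

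For part ii), the same block--inverse formula gives, after the reduction to $k=n$, the trace identity
\[
\tr(X_n-z)^{-1}-\tr(X_n^{(k)}-z)^{-1}=\frac{1+x_k^*(X_n^{(k)}-z)^{-2}x_k}{X_n(k,k)-z-x_k^*(X_n^{(k)}-z)^{-1}x_k},
\]
the numerator arising as the trace of the rank-one correction term and being, as one checks, exactly minus the derivative in $z$ of the denominator. To bound this, I would invoke the spectral decomposition $X_n^{(k)}=\sum_j\mu_jQ_j$ into real eigenvalues $\mu_j$ and orthogonal projections $Q_j$, and set $a_j\defeq x_k^*Q_jx_k=\norm{Q_jx_k}^2\ge 0$. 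With $z=E+i\eta$, $\eta>0$, and $X_n(k,k)\in\R$ (the diagonal of a Hermitian matrix is real), the numerator equals $1+\sum_j a_j/(\mu_j-z)^2$ and the denominator equals $X_n(k,k)-z-\sum_j a_j/(\mu_j-z)$. Hence the modulus of the numerator is at most $1+\sum_j a_j/\bigl((\mu_j-E)^2+\eta^2\bigr)$, whereas the imaginary part of the denominator equals exactly $-\eta\bigl(1+\sum_j a_j/((\mu_j-E)^2+\eta^2)\bigr)$. Consequently the modulus of the numerator is at most $\eta^{-1}$ times the modulus of the imaginary part of the denominator, which is in turn at most $\eta^{-1}$ times the modulus of the denominator (itself nonzero, since its imaginary part is); dividing gives the bound $1/\eta$.

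The linear-algebra ingredients --- block inversion, differentiation of resolvents, the spectral theorem --- are entirely standard, so the routine work is light. The point that deserves care, and the reason a crude eigenvalue-interlacing or rank argument will not suffice, is that the \emph{sharp} constant $1/\eta$ in part ii) genuinely requires the algebraic identity above rather than a mere counting estimate; the decisive observation is then that the imaginary part of the Schur-complement denominator reproduces, up to the factor $-\eta$, precisely the expression that bounds the numerator.
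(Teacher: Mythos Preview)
Your proposal is correct and follows essentially the same route as the paper: spectral diagonalization for the second equality in i), the Schur complement (block--inverse) formula for the diagonal entries of the resolvent, the resulting trace identity for ii), and then the spectral decomposition of $X_n^{(k)}$ to see that the imaginary part of the denominator equals $-\eta$ times the quantity that bounds the numerator. The paper organizes these steps into separate lemmas and corollaries (Lemma~\ref{def:schurcomplement} through Corollary~\ref{cor:tracedifference}), but the substance is the same.
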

\begin{proof}
\underline{$i)$} The first equality is just a notational convention and the last equality is the statement of Corollary~\ref{cor:inversediagonal} below. For the second equality, let $\lambda_1,\ldots,\lambda_n$ be the eigenvalues of $X_n$, then by the spectral theorem for normal operators, $\frac{1}{\lambda_1-z},\ldots,\frac{1}{\lambda_n-z}$ are the eigenvalues of $(X_n-z)^{-1}$. Since  for normal matrices, the trace yields the sum of the eigenvalues, we conclude 
\[
S_{\sigma_n}(z) = \int_{\R}\frac{1}{x-z}\sigma_n(\text{d}x) = \frac{1}{n}\sum_{i=1}^n \frac{1}{\lambda_i-z}=\frac{1}{n}\tr (X_n-z)^{-1}.
\]
\underline{$ii)$} This is the statement of Corollary~\ref{cor:tracedifference} below. 
\end{proof}

Note that Theorem~\ref{thm:StieltjesESD} $i)$ also allows us to work with the Stieltjes transform $S_{\E\sigma_n}$ of the expected ESD $\E\sigma_n$, since as in the proof of Theorem~\ref{thm:stieltjesconvergence} we have $S_{\E\sigma_n} = \E S_{\sigma_n} = \E s_n$.

The remainder of this section will be devoted to the proof of Theorem~\ref{thm:StieltjesESD}, for which we follow the roadmap as in \autocite{BaiSi}.
In the following Lemma, the Schur complement is defined and studied (see also \autocite{ZhangMatrixTheory}).

\begin{lemma}
\label{def:schurcomplement}
Let 
\[
A = 
\begin{pmatrix}
A_{11} & A_{12}\\
A_{21} & A_{22}	
\end{pmatrix}
\]
be a quadratic block matrix with $A_{11}$ invertible. Then the Schur complement of $A_{11}$ in $A$ is  defined as 
\[
B\defeq A_{22}- A_{21}A_{11}^{-1}A_{12}
\]
and has the following properties, where $I$ resp.\ $0$ are identity matrices resp.\ $0$-matrices of appropriate dimension:
\begin{enumerate}[i)]
\item We obtain the \emph{Schur complement formula}
\[
\begin{pmatrix}
I & 0\\
-A_{21}A_{11}^{-1} & I
\end{pmatrix}
\begin{pmatrix}
A_{11} & A_{12}\\
A_{21} & A_{22}	
\end{pmatrix}
\begin{pmatrix}
I & -A_{11}^{-1}A_{12}\\
0 & I	
\end{pmatrix}
=
\begin{pmatrix}
A_{11} & 0\\
0 & B	
\end{pmatrix}.
\]
\item We find the \emph{Schur complement determinant formula}
 \[
 \det(A) = \det(A_{11})\det(B) = \det(A_{11})\det(A_{22}- A_{21}A_{11}^{-1}A_{12})
 \]
\item If $A$ is invertible, so is $B=A_{22}- A_{21}A_{11}^{-1}A_{12}$.
\item In case $A$ is invertible, we find the \emph{Schur complement inversion formula}
\begin{align*}
A^{-1} &= 
\begin{pmatrix}
I & -A_{11}^{-1} A_{12}\\
0 & I
\end{pmatrix}
\begin{pmatrix}
A_{11}^{-1} & 0\\
0 & B^{-1}
\end{pmatrix}
\begin{pmatrix}
I & 0\\
-A_{21}A_{11}^{-1} & I
\end{pmatrix}\\
&=
\begin{pmatrix}
A_{11}^{-1} + A_{11}^{-1}A_{12}B^{-1}A_{21}A_{11}^{-1} & -A_{11}^{-1}A_{12}B^{-1}\\
-B^{-1}A_{21}A_{11}^{-1} & B^{-1}
\end{pmatrix}.
\end{align*}
\end{enumerate}
\end{lemma}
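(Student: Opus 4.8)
The plan is to treat part~i) as the single genuine computation and then to read off ii), iii) and iv) from it purely formally. Write $L \defeq \begin{pmatrix} I & 0 \\ -A_{21}A_{11}^{-1} & I \end{pmatrix}$ and $R \defeq \begin{pmatrix} I & -A_{11}^{-1}A_{12} \\ 0 & I \end{pmatrix}$ for the two triangular factors. First I would multiply out $LA$ and then $(LA)R$ as $2\times 2$ block matrices; the only input is that $A_{11}^{-1}$ exists. One checks that the bottom-left block of $LA$ is $A_{21}-A_{21}A_{11}^{-1}A_{11}=0$, and that after the second multiplication the top-right block vanishes since $-A_{12}+A_{12}=0$. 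This yields $LAR = \diag(A_{11},B)$, which is exactly i).

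For ii) I would take determinants on both sides of i): $L$ and $R$ are block triangular with identity diagonal blocks, so $\det L = \det R = 1$, and the determinant of a block-diagonal matrix is the product of the determinants of its diagonal blocks, giving $\det A = \det(A_{11})\det(B)$. Part iii) is then immediate, since $A$ invertible means $\det A \neq 0$, and as $\det(A_{11})\neq 0$ as well, ii) forces $\det(B)\neq 0$; alternatively, i) rewrites as $A = L^{-1}\diag(A_{11},B)R^{-1}$, so invertibility of $A$, $L$, $R$ forces that of $\diag(A_{11},B)$ and hence of $B$.

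Finally, for iv), assuming $A$ invertible (so that $B^{-1}$ exists by iii)), I would invert the identity $LAR = \diag(A_{11},B)$ to obtain $R^{-1}A^{-1}L^{-1} = \diag(A_{11}^{-1},B^{-1})$, hence $A^{-1} = R\,\diag(A_{11}^{-1},B^{-1})\,L$ using $(L^{-1})^{-1}=L$ and $(R^{-1})^{-1}=R$, and then multiply these three block matrices out to reach the displayed closed form. Nothing here is hard; the only points requiring care are keeping the order of the triangular factors straight, noticing that the outer factors in the stated inversion formula are $R$ and $L$ themselves rather than their inverses, and invoking iii) before writing $B^{-1}$. So there is essentially no obstacle beyond careful block-matrix bookkeeping — if desired, \autocite{ZhangMatrixTheory} can be cited for the standard facts about determinants of block-triangular and block-diagonal matrices used in the argument.
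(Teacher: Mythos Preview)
Your proposal is correct and follows essentially the same route as the paper's proof: the paper likewise verifies i) by direct block multiplication, derives ii) from i), iii) from ii), and obtains iv) by inverting the Schur complement formula and multiplying out. You have simply spelled out the details that the paper leaves implicit.
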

\begin{proof}
Statement $i)$ requires mere verification by multiplication of the matrices, $ii)$ follows directly from $i)$ and $iii)$ follows directly from $ii)$. The first equality of statement $iv)$ follows directly by inverting the Schur complement formula and multiplying from the left and right with the appropriate matrices. The second equality is again verified through simple multiplication of the matrices.
\end{proof}

\begin{lemma}
\label{lem:inversediagonal}
Let $A$ be an invertible $n\times n$ matrix. If $A^{(k)}$ is invertible for some $k\in\{1,\ldots,n\}$, then
\[
A^{-1}(k,k) = \frac{1}{A(k,k)-r_k A^{(k)-1}c_k},
\]
where $r_k$ is the $k$-th row of $A$ without the $k$-th entry and $c_k$ is the $k$-th column of $A$ without the $k$-th entry.
\end{lemma}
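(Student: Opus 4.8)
The plan is to reduce the statement to the block-matrix setting of Lemma~\ref{def:schurcomplement} by conjugating with a permutation matrix. First I would let $P$ be the permutation matrix implementing the cyclic shift that moves index $k$ to the last position while preserving the relative order of all other indices (so $k\mapsto n$, $j\mapsto j-1$ for $k<j\le n$, and $j\mapsto j$ for $j<k$). Then $\tilde A\defeq PAP^{-1}=PAP^{T}$ is again invertible, and by the very definitions of $A^{(k)}$, $r_k$ and $c_k$ it has the block form
\[
\tilde A=\begin{pmatrix} A^{(k)} & c_k\\ r_k & A(k,k)\end{pmatrix},
\]
with $A_{11}=A^{(k)}$ an $(n-1)\times(n-1)$ block (invertible by hypothesis), $A_{12}=c_k$, $A_{21}=r_k$, and $A_{22}=A(k,k)$ a $1\times1$ block. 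Since conjugation by a permutation matrix permutes the diagonal of the inverse, and $P$ sends the original index $k$ to $n$, we have $\tilde A^{-1}(n,n)=A^{-1}(k,k)$; hence it suffices to compute the bottom-right entry of $\tilde A^{-1}$.

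Next I would apply Lemma~\ref{def:schurcomplement}. The Schur complement of $A_{11}$ in $\tilde A$ is the $1\times1$ matrix $B=A(k,k)-r_k A^{(k)-1}c_k$, which is invertible — i.e.\ nonzero — by part iii), because $\tilde A$ is invertible. The Schur complement inversion formula (part iv)) then identifies the lower-right block of $\tilde A^{-1}$ as exactly $B^{-1}$, i.e.\ the scalar $\bigl(A(k,k)-r_k A^{(k)-1}c_k\bigr)^{-1}$. Combining this with the identity $\tilde A^{-1}(n,n)=A^{-1}(k,k)$ from the previous step yields
\[
A^{-1}(k,k)=\frac{1}{A(k,k)-r_k A^{(k)-1}c_k},
\]
which is the claim.

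The proof is essentially a direct citation of the Schur complement inversion formula, so there is no deep obstacle; the only point deserving genuine care is the permutation bookkeeping in the first step — verifying that conjugating $A$ by the chosen cyclic shift places $r_k$ and $c_k$ into the off-diagonal blocks \emph{in the stated order} and leaves precisely $A^{(k)}$ (with the original ordering of the remaining indices, not some other minor) in the top-left block. Using the transposition $(k\ n)$ instead would also work but would reorder the other rows and columns, so that the top-left block equals $A^{(k)}$ only up to a simultaneous permutation of its rows and columns; this is harmless for invertibility and leaves the scalar $r_k A^{(k)-1}c_k$ unchanged, but the cyclic shift avoids the extra remark altogether. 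Everything else is routine verification.
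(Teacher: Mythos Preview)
Your proof is correct and follows essentially the same approach as the paper: both reduce to the case $k=n$ via conjugation by a permutation matrix that moves index $k$ to the last position, and then read off the bottom-right entry of the inverse from the Schur complement inversion formula of Lemma~\ref{def:schurcomplement}. The only cosmetic difference is that the paper first treats $k=n$ as a separate base case and then invokes the permutation for $k<n$, whereas you do the permutation step uniformly for all $k$; your extra remark on why the cyclic shift is cleaner than the transposition $(k\ n)$ is a nice touch but not needed.
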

\begin{proof}
We first prove the statement for $k=n$. We write
\[
A =
\begin{pmatrix}
A^{(n)} & c_n\\
r_n & A(n,n)
\end{pmatrix}
\]
and set $B\defeq A(n,n)-r_n A^{(n)-1} c_n$. Then by the Schur complement inversion formula, $A^{-1}(n,n)=B^{-1}$, which shows the statement for $k=n$. Next, we assume $k<n$. Then define a permutation matrix column-wise as
\[
V \defeq (e_1|e_2|\ldots|\widehat{e_{k}}|\ldots|e_n|e_k)
\]
where the $e_i$ are the standard $n$-dimensional basis vectors, and the hat over $e_{k}$ indicates that this vector is left out. In other words, $V$ is obtained through the identity matrix by erasing its $k$-th column $e_k$ and appending it at the end of the matrix. We obtain immediately that $V^T=V=V^{-1}$. Then $AV$ is the matrix $A$ with erased and then appended $k$-th column and $VA$ is the matrix $A$ with erased and then appended $k$-th row. Therefore, $(VAV)^{(n)} = A^{(k)}$ and by the case above
\[
A^{-1}(k,k) = (V A^{-1} V)(n,n) = (VAV)^{-1}(n,n) = \frac{1}{(VAV)(n,n) - r'_n (VAV)^{(n)-1} c'_n},
\]
where $r'_n$ denotes the $n$-th row of $VAV$ and $c'_n$ denotes the $n$-th column of $VAV$, both without their $n$-th entry. But $r'_n = r_k$, $c'_n=c_k$ and $(VAV)(n,n)= A(k,k)$.
\end{proof}

\begin{corollary}
\label{cor:inversediagonal}
Let $X_n$ be an Hermitian $n\times n$ matrix, then it holds for $z\in\C\backslash \R$:
\[
\tr (X_n-z)^{-1} = \sum_{k=1}^n \frac{1}{X_n(k,k)-z-x^*_k (X^{(k)}_n-z)^{-1}x_k},
\]
\end{corollary}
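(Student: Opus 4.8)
The plan is to reduce the statement about the trace of the resolvent $(X_n-z)^{-1}$ to a statement about its diagonal entries, and then invoke Lemma~\ref{lem:inversediagonal} entry by entry. First I would observe that $X_n - z$ is invertible for any $z\in\C\backslash\R$: indeed, $X_n$ is Hermitian with real eigenvalues $\lambda_1,\ldots,\lambda_n$, so $z$ is not an eigenvalue of $X_n$ when $\Im(z)\neq 0$, and hence $\det(X_n-z)\neq 0$. By definition of the trace,
\[
\tr (X_n-z)^{-1} = \sum_{k=1}^n (X_n-z)^{-1}(k,k),
\]
so it suffices to compute each diagonal entry $(X_n-z)^{-1}(k,k)$ separately.

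Next I would apply Lemma~\ref{lem:inversediagonal} with $A \defeq X_n - z$. For this I must check the hypothesis that the $k$-th principal minor $A^{(k)} = X_n^{(k)} - z$ is invertible. This follows because $X_n^{(k)}$ is again Hermitian (a principal submatrix of a Hermitian matrix is Hermitian), hence has real eigenvalues, and so $X_n^{(k)} - z$ is invertible for $z\in\C\backslash\R$ by the same argument as above. Lemma~\ref{lem:inversediagonal} then yields
\[
(X_n-z)^{-1}(k,k) = \frac{1}{A(k,k) - r_k\, A^{(k)-1} c_k},
\]
where $r_k$ is the $k$-th row of $A = X_n-z$ with the $k$-th entry deleted and $c_k$ the $k$-th column likewise. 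Since subtracting $z$ only affects the diagonal, we have $A(k,k) = X_n(k,k) - z$, $A^{(k)-1} = (X_n^{(k)}-z)^{-1}$, and the off-diagonal vectors are unchanged: $c_k = x_k$, and $r_k = x_k^*$ because $X_n$ is Hermitian (the $k$-th row off the diagonal is the conjugate transpose of the $k$-th column off the diagonal). Substituting gives exactly
\[
(X_n-z)^{-1}(k,k) = \frac{1}{X_n(k,k) - z - x_k^* (X_n^{(k)}-z)^{-1} x_k},
\]
and summing over $k$ completes the proof.

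I do not expect any serious obstacle here; the only points requiring a word of care are the two invertibility checks (for $X_n - z$ and for $X_n^{(k)} - z$), both of which are immediate from Lemma~\ref{lem:symmetricspectrum}, and the bookkeeping identification $r_k = x_k^*$, $c_k = x_k$, which uses that $X_n$ is Hermitian so that deleting $z$ from the diagonal leaves the Hermitian symmetry of the off-diagonal part intact. Everything else is a direct substitution into Lemma~\ref{lem:inversediagonal}.
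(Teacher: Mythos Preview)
Your proof is correct and follows essentially the same route as the paper's: verify invertibility of $X_n-z$ and of each $X_n^{(k)}-z$ via the Hermitian property, note that subtracting $z$ only alters the diagonal so the off-diagonal row/column vectors of $X_n-z$ coincide with those of $X_n$, and apply Lemma~\ref{lem:inversediagonal} entry by entry before summing. The paper's proof is terser but the logical structure is identical; your explicit identification $r_k = x_k^*$ from Hermiticity is exactly the bookkeeping the paper leaves implicit.
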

where $X_n^{(k)}$ denotes the $k$-th principal minor of $X_n$ and $x_k$ the $k$-th column of $X_n$ without the $k$-th entry.

\begin{proof}
$X_n$ and all $X_n^{(k)}$ are Hermitian, thus $X_n-z$ and $X_n^{(k)}-z= (X_n-z)^{(k)}$ are invertible for all $k$. We also know that the $k$-th column (resp.\ row) of $X_n$ without the $k$-th entry is also the  $k$-th column (resp.\ row) of $(X_n-z)$ without the $k$-th entry. Therefore, the statement follows directly with Lemma~\ref{lem:inversediagonal}.
\end{proof}

\begin{lemma}
\label{lem:tracedifference}
Let $A$ be an invertible $n\times n$ matrix and $k\in\{1,\ldots,n\}$, such that $A^{(k)}$ is invertible. Then we obtain:
\[
\tr A^{-1} - \tr A^{(k)-1} = \frac{1 + r_k A^{(k)-2}c_k}{A(k,k) - r_k A^{(k)-1} c_k},
\]
where $r_k$ denotes the $k$-th row of $A$ without the $k$-th entry and $c_k$ denotes the $k$-th column of $A$ without the $k$-th entry.
\end{lemma}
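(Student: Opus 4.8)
The proof will follow the same pattern as that of Lemma~\ref{lem:inversediagonal}: first establish the identity in the case $k=n$ via the Schur complement inversion formula, then reduce the general case $k<n$ to this one by conjugating with a suitable permutation matrix.

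For $k=n$, I would write $A$ in block form with $A_{11}=A^{(n)}$, $A_{12}=c_n$, $A_{21}=r_n$, $A_{22}=A(n,n)$, and set $B\defeq A(n,n)-r_n A^{(n)-1}c_n$, which is the (scalar) Schur complement of $A_{11}$ in $A$. Since $A$ and $A^{(n)}$ are invertible, Lemma~\ref{def:schurcomplement} $iii)$ gives $B\neq 0$ and the Schur complement inversion formula yields
\[
A^{-1}=
\begin{pmatrix}
A_{11}^{-1} + A_{11}^{-1}A_{12}B^{-1}A_{21}A_{11}^{-1} & -A_{11}^{-1}A_{12}B^{-1}\\
-B^{-1}A_{21}A_{11}^{-1} & B^{-1}
\end{pmatrix}.
\]
Taking the trace and using $\tr A_{11}^{-1}=\tr A^{(n)-1}$, we get
\[
\tr A^{-1}-\tr A^{(n)-1} = B^{-1} + \tr\!\left(A_{11}^{-1}A_{12}B^{-1}A_{21}A_{11}^{-1}\right).
\]
Because $B^{-1}$ is a scalar and the trace is cyclic, the second summand equals $B^{-1}\tr\!\left(A_{21}A_{11}^{-2}A_{12}\right)=B^{-1}\, r_n A^{(n)-2}c_n$, which is a scalar. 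Factoring out $B^{-1}$ gives exactly $\tr A^{-1}-\tr A^{(n)-1}=\dfrac{1+r_n A^{(n)-2}c_n}{A(n,n)-r_n A^{(n)-1}c_n}$, the claimed formula for $k=n$.

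For general $k\in\oneto{n}$, I would introduce the permutation matrix $V\defeq(e_1|\ldots|\widehat{e_k}|\ldots|e_n|e_k)$ exactly as in the proof of Lemma~\ref{lem:inversediagonal}, so that $V=V^T=V^{-1}$, $(VAV)^{(n)}=A^{(k)}$, the $n$-th row resp.\ column of $VAV$ without the last entry equal $r_k$ resp.\ $c_k$, and $(VAV)(n,n)=A(k,k)$. Since trace is invariant under conjugation (Lemma~\ref{lem:trace}), $\tr A^{-1}=\tr(VA^{-1}V)=\tr\big((VAV)^{-1}\big)$ and similarly $\tr A^{(k)-1}=\tr\big((VAV)^{(n)-1}\big)$, so applying the $k=n$ case to $VAV$ finishes the proof.

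I do not anticipate a genuine obstacle here; the only points requiring care are keeping the block dimensions straight (so that the cyclic-trace manipulation $\tr(A_{11}^{-1}A_{12}A_{21}A_{11}^{-1})=\tr(A_{21}A_{11}^{-2}A_{12})$ is legitimate and produces a $1\times1$ matrix) and transcribing the permutation-similarity bookkeeping from Lemma~\ref{lem:inversediagonal} accurately.
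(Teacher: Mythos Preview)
Your proposal is correct and follows essentially the same route as the paper: Schur complement inversion for $k=n$, then the permutation-matrix reduction of Lemma~\ref{lem:inversediagonal} for general $k$. The only cosmetic difference is that you invoke cyclicity of the trace to obtain $\tr\!\big(A_{11}^{-1}A_{12}A_{21}A_{11}^{-1}\big)=r_nA^{(n)-2}c_n$, whereas the paper writes this out as an explicit index sum and reorders it by hand.
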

\begin{proof}
We first prove the statement vor $k=n$. The Schur complement inversion formula for 
\[
A =
\begin{pmatrix}
A^{(n)} & c_n\\
r_n & A(n,n)
\end{pmatrix}
\]
 yields with $B\defeq A(n,n) - r_n A^{(n)-1} c_n\in\C$, that
 \[
 A^{-1} = 
 \begin{pmatrix}
 A^{(n)-1} + A^{(n)-1}c_n B^{-1} r_n A^{(n)-1} & - A^{(n)-1}c_n B^{-1}\\
 - B^{-1} r_n A^{(n)-1} & B^{-1}
 \end{pmatrix}.
 \]
 Therefore, since the trace is linear and only depends on the diagonal block matrices, we find
 \begin{align*}
 \tr A^{-1} - \tr A^{(n)-1} 
 & = \tr
 \begin{pmatrix}
   A^{(n)-1}c_n B^{-1} r_n A^{(n)-1} & 0\\
0 & B^{-1}
 \end{pmatrix}\\
 & = \frac{1}{B}\tr
 \begin{pmatrix}
   A^{(n)-1}c_n r_n A^{(n)-1} & 0\\
0 & 1
 \end{pmatrix}\\
 & = \frac{1}{B} \left(1 + \sum_{k,l,m=1}^{n-1} A^{(n)-1}(k,l)c_n(l)r_n(m)A^{(n)-1}(m,k)\right)\\
 & = \frac{1}{B} \left(1 + \sum_{k,l,m=1}^{n-1} r_n(m)A^{(n)-1}(m,k)A^{(n)-1}(k,l)c_n(l)\right)\\
 & = \frac{1}{B} \left(1 + r_nA^{(n)-2}c_n\right),	
 \end{align*}
which concludes the statement for $k=n$. Now if $k<n$, let $V$ be the permutation matrix as in the proof of Lemma~\ref{lem:inversediagonal}, then since $A^{(k)}=(VAV)^{(n)}$, we obtain with first part that 
\begin{align*}
\tr A^{-1} - \tr A^{(k)-1} 
&= \tr VA^{-1}V -\tr(VAV)^{(n)-1}\\
&= \tr (VAV)^{-1}-\tr(VAV)^{(n)-1}\\
&= \frac{1+r'_n(VAV)^{(n)-2}c'_n}{(VAV)(n,n) - r'_n (VAV)^{(n)-2} c'_n} 	
\end{align*}
where $r'_n$ (resp. $c'_n$) is the $n$-th row (resp. column) of $VAV$ without the $n$-th entry. This concludes the statement, since $r'_n=r_k$, $c'_n=c_k$, and $(VAV)(n,n) = A(k,k)$.
\end{proof}

\begin{corollary}
\label{cor:tracedifference}
Let $X_n$ be an Hermitian $n\times n$ matrix, $z=E+i\eta$ where $E\in\R$ and $\eta>0$, then we find for any $k\in\{1,\ldots,n\}$:
\[
\bigabs{\tr (X_n-z)^{-1} - \tr(X_n^{(k)}-z)^{-1}} \leq\frac{1}{\eta},
\]
where for all $k\in\{1,\ldots,n\}$, $X^{(k)}_n$ denotes the $k$-th principal minor of $X_n$ and $x_k$ denotes the $k$-th column of $X_n$ without the $k$-th entry.
\end{corollary}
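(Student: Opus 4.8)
The plan is to deduce this corollary directly from Lemma~\ref{lem:tracedifference} by estimating the absolute value of the quotient that appears there. First I would observe that the matrix $A \defeq X_n - z$ is invertible, since $X_n$ is Hermitian and $z\in\C_+$ (so $X_n-z$ has no real eigenvalue, in particular $0$ is not an eigenvalue); likewise each principal minor $A^{(k)} = X_n^{(k)} - z$ is invertible because $X_n^{(k)}$ is Hermitian. Moreover, since $r_k$ and $c_k$ in Lemma~\ref{lem:tracedifference} are the $k$-th row resp.\ column of $A$ with the $k$-th entry deleted, and $X_n$ is Hermitian, we have $r_k = x_k^*$ and $c_k = x_k$, so Lemma~\ref{lem:tracedifference} gives
\[
\tr(X_n-z)^{-1} - \tr(X_n^{(k)}-z)^{-1} = \frac{1 + x_k^* (X_n^{(k)}-z)^{-2} x_k}{X_n(k,k) - z - x_k^* (X_n^{(k)}-z)^{-1} x_k}.
\]

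Next I would identify the key structural fact: by Lemma~\ref{lem:inversediagonal} (applied to $A = X_n - z$, using again $r_k = x_k^*$, $c_k = x_k$), the denominator above equals $1/(X_n-z)^{-1}(k,k)$, i.e.\ it is the reciprocal of a diagonal entry of the resolvent of the Hermitian matrix $X_n$ at the point $z$. So the whole quotient can be rewritten as
\[
\bigl[1 + x_k^* (X_n^{(k)}-z)^{-2} x_k\bigr]\cdot (X_n-z)^{-1}(k,k).
\]
The main obstacle — and really the only nontrivial point — is to bound this by $1/\eta$. I would handle it by expanding in the spectral basis of the Hermitian matrix $X_n^{(k)}$: writing its eigenvalues as $\lambda_j$ and the corresponding components of $x_k$ as $a_j$, one has $1 + x_k^*(X_n^{(k)}-z)^{-2}x_k = 1 + \sum_j \frac{|a_j|^2}{(\lambda_j - z)^2}$ and $x_k^*(X_n^{(k)}-z)^{-1}x_k = \sum_j \frac{|a_j|^2}{\lambda_j - z}$. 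The clean way to finish is to note the elementary identity, valid for any real $\lambda$ and $z = E + i\eta$ with $\eta>0$,
\[
\Im\!\left(\frac{-1}{\lambda - z}\right) = \frac{\eta}{(\lambda-E)^2 + \eta^2} > 0,
\qquad
\Im\!\left(-z - \sum_j \frac{|a_j|^2}{\lambda_j - z}\right) = \eta\left(1 + \sum_j \frac{|a_j|^2}{(\lambda_j-E)^2 + \eta^2}\right),
\]
so that the denominator $w \defeq X_n(k,k) - z - x_k^*(X_n^{(k)}-z)^{-1}x_k$ has imaginary part $\Im w = -\eta\bigl(1 + \sum_j \frac{|a_j|^2}{(\lambda_j-E)^2+\eta^2}\bigr)$, using that $X_n(k,k)\in\R$. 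Meanwhile the numerator satisfies $\bigl|1 + \sum_j \frac{|a_j|^2}{(\lambda_j-z)^2}\bigr| \le 1 + \sum_j \frac{|a_j|^2}{(\lambda_j-E)^2+\eta^2} = |\Im w|/\eta$ by the triangle inequality and $|\lambda_j - z|^2 = (\lambda_j-E)^2+\eta^2$. Therefore
\[
\left|\frac{1 + x_k^*(X_n^{(k)}-z)^{-2}x_k}{w}\right| \le \frac{|\Im w|/\eta}{|w|} \le \frac{|\Im w|/\eta}{|\Im w|} = \frac{1}{\eta},
\]
which is exactly the claimed bound. Finally I would remark that the auxiliary statement about $x_k$ in the corollary is only notation carried over from Lemma~\ref{lem:inversediagonal}/Corollary~\ref{cor:inversediagonal} and requires no further comment. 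Altogether this is a short argument: invertibility is immediate, the algebraic identity is Lemma~\ref{lem:tracedifference}, and the estimate reduces to the observation that numerator and the imaginary part of the denominator are controlled by the same positive sum $1 + \sum_j |a_j|^2/((\lambda_j-E)^2+\eta^2)$.
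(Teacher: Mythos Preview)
Your proposal is correct and follows essentially the same route as the paper: apply Lemma~\ref{lem:tracedifference}, diagonalize the Hermitian minor $X_n^{(k)}$, and observe that the absolute value of the numerator and the imaginary part of the denominator are both controlled by the same quantity $1+\sum_j |a_j|^2/((\lambda_j-E)^2+\eta^2)$, giving the bound $1/\eta$. (One cosmetic slip: in your displayed ``elementary identity'' the sign of $\Im(-1/(\lambda-z))$ is wrong---it is $-\eta/((\lambda-E)^2+\eta^2)$---but you then state $\Im w=-\eta(1+\sum\ldots)$ correctly, so the argument is unaffected.)
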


\begin{proof}
By Lemma~\ref{lem:tracedifference}, we know that
\begin{align}
\bigabs{\tr (X_n-z)^{-1} - \tr(X_n^{(k)}-z)^{-1}} 
&= \bigabs{\frac{1 + x^*_k (X_n^{(k)}-z)^{-2}x_k}{X_n(k,k) - z- x^*_k (X_n^{(k)}-z)^{-1} x_k}}\notag\\
&\leq \frac{1+\abs{x^*_k (X_n^{(k)}-z)^{-2}x_k}}{\abs{-\eta-\Im(x^*_k (X_n^{(k)}-z)^{-1} x_k)}}\label{eq:tracediffstepone}
\end{align}
where $x_k$ denotes the $k$-th column of $X_n$ without the $k$-th entry. We also used that $X_n(k,k)\in\R$, since $X_n$ is Hermitian. We proceed by inspecting the numerator and the denominator separately. For the numerator, Let $U$ be unitary such that $U X_n^{(k)} U^* = \diag(\lambda_1,\ldots,\lambda_{n-1})=:D$, where $\lambda_1,\ldots,\lambda_{n-1}$ are the eigenvalues of  $X_n^{(k)}$. Since $X_n^{(k)}$ is Hermitian, these eigenvalues are real, and such a $U$ actually exists. Set $x_k^*U^* = (y_1,\ldots,y_{n-1})$, then we get
\begin{align*}
x^*_k (X_n^{(k)}-z)^{-2} x_k &= x_k^*(U^*(D-z)U)^{-2}x_k = x_k^*U^*(D-z)^{-2}Ux_k \\
&= \sum_{\ell=1}^{n-1}\frac{\abs{y_{\ell}}^2}{(\lambda_{\ell}-z)^2} \underset{\abs{\ldots}}{\leq}\sum_{\ell=1}^{n-1}\frac{\abs{y_{\ell}}^2}{(\lambda_{\ell}-E)^2 + \eta^2}\\
& = x_k^*[(X_n^{(k)}- E I_{n-1})^2 + v^2I_{n-1}]^{-1}x_k,
\end{align*}
where the last equality follows with
\[
[(X_n^{(k)}- E I_{n-1})^2 + v^2I_{n-1}]^{-1} = [U^*[(D-EI_{n-1})^2 + \eta^2 I_{n-1}]U]^{-1} = U^*[(D-EI_{n-1})^2 + \eta^2 I_{n-1}]^{-1}U.
\]
With the exact arguments we just used, we further obtain for the denominator in \eqref{eq:tracediffstepone} that
\begin{align*}
x^*_k (X_n^{(k)}-z)^{-1} x_k &= \sum_{\ell=1}^{n-1}\frac{\abs{y_{\ell}}^2}{\lambda_{\ell}-z} \\
&= \sum_{\ell=1}^{n-1}\frac{\abs{y_{\ell}}^2}{(\lambda_{\ell}-E)^2 + \eta^2}(\lambda_{\ell}-E) + i\sum_{\ell=1}^{n-1}\frac{\abs{y_{\ell}}^2}{(\lambda_{\ell}-E)^2 + \eta^2}\eta,
\end{align*}
so that
\[
-\eta-\Im(x^*_k (X_n^{(k)}-z)^{-1} x_k) = -\eta (1+x_k^*[(X_n^{(k)}- E I_{n-1})^2 + v^2I_{n-1}]^{-1}x_k).
\]

\end{proof}

\chapter{The Semicircle and MP Laws by the Stieltjes Transform Method}

\label{chp:StieltjesSCLMPL}

\section{General Strategy and Quadradic Form Estimates}
\label{sec:genstrat}
In this very short section we introduce a general strategy behind the proofs of limit laws in random matrix theory utilizing Stieltjes transforms. We also introduce some versatile quadratic form estimates which allow us to carry out smooth proofs of the semicircle law and Marchenko-Pastur law in the following sections. Assume that $(\sigma_n)_n$ is a sequence of ESDs of Wigner matrices and $(\mu_n)_n$ is a sequence of ESDs of MP matrices. We would like to argue that $\sigma_n \to \sigma$ or $\mu_n\to\mu^y$ weakly for some $y>0$, and in some stochastic sense, for example in probability or almost surely. To this end, we carry out the following three steps, where notationally, either $\rho_n =\sigma_n$ and $\rho=\sigma$, or $\rho_n=\mu_n$ and $\rho=\mu^y$:
\begin{enumerate}
\item We show that the Stieltjes transform $S_{\rho}$ of the limit measure $\rho$ satisfies a self-consistent quadratic equation and that the solutions can be separated so that if some $S_{\nu}$ solves the equation for some probability measure $\nu$ on $\R$ (if $\rho=\sigma$) or on $\R_+$ (if $\rho=\mu^y$), then necessarily $S_{\nu}=S_{\rho}$.
\item Applying the Schur complement formula, the Stieltjes transforms of the ESDs $\rho_n$ can be written as a sum of inverses of complex numbers. We decompose each summand into a part pertaining to the self-consistent equation (the wanted part $w$) and an error term (the remainder $r$), using
\begin{equation}
\label{eq:splitfraction}
\frac{1}{w+r} = \frac{1}{w} - \frac{r}{w(w+r)}.
\end{equation} 
We establish that if the error term converges to zero in probability resp.\ almost surely, the limit law holds in probability resp.\ almost surely.
\item We establish that the error term converges to zero almost surely by employing quadratic form estimates in combination with an estimate on the difference of Stieltjes transforms of the ESD of a random matrix and its minors. Quadratic form estimates are elementary yet very powerful. They belong to the main ingredients to prove some of the most fruitful results in contemporary random matrix theory - namely the so-called local laws, see \autocite{AnttiLLSurvey} or \autocite{Erdos}.
\end{enumerate}

For the third step, we use quadratic form estimates which can later be applied to Wigner and Marchenko-Pastur matrices. In the following, for $p\geq 1$ the norm $\norm{\cdot}_p$ \label{sym:pnorm} shall denote the $\Lcal_p(\Prob)$-seminorm, so for any random variable $Y:(\Omega,\Acal,\Prob)\longrightarrow \C$, $\norm{Y}_p = (\E\abs{Y}^p)^{1/p}$.

\begin{theorem}[Marcinkiewicz-Zygmund Inequality]
\label{thm:marcinzygmund}
If $Y_1,\ldots,Y_n$ are independent, centered and complex-valued random variables with existing absolute moments, then for every $p\geq 1$ there exists a positive constant $A_p$ which depends only on $p$, such that
\[
\bignorm{\sum_{i=1}^n Y_i}_p \leq A_p \bignorm{\left(\sum_{i=1}^n \abs{Y_i}^2\right)^{\frac{1}{2}}}_p
\]
\end{theorem}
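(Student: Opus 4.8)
The plan is to prove the inequality by the classical route of symmetrization, Rademacher randomization, and a conditional application of Khintchine's inequality. All the $\Lcal_p$-quantities appearing below are finite: since $n$ is fixed and each $Y_i$ has a finite $p$-th absolute moment, both $\sum_i Y_i$ and $(\sum_i\abs{Y_i}^2)^{1/2}$ lie in $\Lcal_p(\Prob)$.

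\textbf{Step 1: reduction to symmetric summands.} Let $(Y_i')_{i=1}^n$ be an independent copy of $(Y_i)_{i=1}^n$ (enlarging the probability space if necessary) and set $Z_i\defeq Y_i-Y_i'$. Since the $Y_i'$ are centered and independent of $(Y_i)_i$, conditional expectation onto $\sigma((Y_i)_i)$ sends $\sum_i Z_i$ to $\sum_i Y_i$, so conditional Jensen applied to the convex function $x\mapsto\abs{x}^p$ gives $\norm{\sum_i Y_i}_p\leq\norm{\sum_i Z_i}_p$. The $Z_i$ are independent and each $Z_i$ is symmetric, i.e.\ $Z_i$ and $-Z_i$ have the same law; hence, for independent Rademacher variables $\varepsilon_1,\ldots,\varepsilon_n$ independent of $(Z_i)_i$, the families $(Z_i)_i$ and $(\varepsilon_i Z_i)_i$ have the same distribution, and in particular $\norm{\sum_i Z_i}_p=\norm{\sum_i\varepsilon_i Z_i}_p$.

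\textbf{Step 2: conditional Khintchine.} Conditioning on the $Z_i$ and integrating out the signs, I would use the following complex form of Khintchine's inequality: for fixed $z_1,\ldots,z_n\in\C$, writing $z_j=\Re z_j+i\,\Im z_j$ and bounding $\bigabs{\sum_j\varepsilon_j z_j}\leq\bigabs{\sum_j\varepsilon_j\Re z_j}+\bigabs{\sum_j\varepsilon_j\Im z_j}$, then applying the real Khintchine inequality $\E\bigabs{\sum_j\varepsilon_j a_j}^p\leq B_p\big(\sum_j a_j^2\big)^{p/2}$ to each term together with $\sum_j(\Re z_j)^2\leq\sum_j\abs{z_j}^2$ (and likewise for imaginary parts) and convexity of $t\mapsto t^p$ to absorb the triangle-inequality factor, one gets $\E_\varepsilon\bigabs{\sum_j\varepsilon_j z_j}^p\leq C_p\big(\sum_j\abs{z_j}^2\big)^{p/2}$ with $C_p$ depending only on $p$. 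Applying this pointwise with $z_j=Z_j(\omega)$ and taking expectation over $(Z_i)_i$ yields $\norm{\sum_i\varepsilon_i Z_i}_p\leq C_p^{1/p}\bignorm{(\sum_i\abs{Z_i}^2)^{1/2}}_p$.

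\textbf{Step 3: return to the $Y_i$.} From $\abs{Z_i}^2\leq 2\abs{Y_i}^2+2\abs{Y_i'}^2$ we get $(\sum_i\abs{Z_i}^2)^{1/2}\leq\sqrt{2}\,(\sum_i\abs{Y_i}^2)^{1/2}+\sqrt{2}\,(\sum_i\abs{Y_i'}^2)^{1/2}$; taking $\norm{\cdot}_p$, using the triangle inequality in $\Lcal_p(\Prob)$ and that $(Y_i')_i$ has the same law as $(Y_i)_i$, this is at most $2\sqrt{2}\,\bignorm{(\sum_i\abs{Y_i}^2)^{1/2}}_p$. Chaining Steps 1--3 gives the claim with $A_p\defeq 2\sqrt{2}\,C_p^{1/p}$, which depends only on $p$.

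\textbf{Main obstacle.} The one genuinely nontrivial input is Khintchine's inequality for real signs, which the present text has not developed. For $p=2$ it is an identity, for $1\leq p\leq 2$ it follows immediately from Lyapunov's inequality (so $B_p\leq 1$), and for $p>2$ it is typically proved via the moment-generating function: one bounds $\E\exp(t\sum_j\varepsilon_j a_j)\leq\exp\!\big(\tfrac{t^2}{2}\sum_j a_j^2\big)$ using $\cosh t\leq e^{t^2/2}$, deduces a subgaussian tail bound, and integrates it against $p\,t^{p-1}\,\diff t$. I would therefore either insert a short self-contained lemma proving Khintchine's inequality or cite it from a standard reference; the rest of the argument is elementary. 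A minor point to be careful about is the measure-theoretic bookkeeping for the independent copy and the Rademacher variables, but since $n$ is finite and all moments in sight exist, no integrability issue arises.
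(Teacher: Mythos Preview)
Your argument is correct and follows the standard symmetrization--Khintchine route; all three steps are sound, including the reduction of the complex Khintchine inequality to the real one via real and imaginary parts, and the constant you extract depends only on $p$ as required.

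The paper, however, does not actually prove this theorem: it simply cites Chow--Teicher for the real-valued case and Bai--Silverstein for the extension to complex-valued random variables. So your write-up is strictly more informative than what appears in the text. What you gain is a self-contained argument whose only external input is the real Khintchine inequality (which you correctly flag and sketch how to prove via the subgaussian bound $\cosh t\leq e^{t^2/2}$); what the paper gains by citing is brevity, since the Marcinkiewicz--Zygmund inequality is used here only as a black-box tool feeding into Lemma~\ref{lem:quadradicform} and Theorem~\ref{thm:largedev}. If you were to insert your proof into the paper, the short Khintchine lemma you mention would be the only addition needed to make everything fully self-contained.
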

\begin{proof}
In \autocite[386]{ChowTeicher}, the statement is proved for independent real-valued random variables. The statement is extended to the complex valued case in \autocite[33]{BaiSi}.
\end{proof}

We now formulate an important lemma, which is mainly based on Theorem~\ref{thm:marcinzygmund}.

\begin{lemma}
\label{lem:quadradicform}
 Let $Y_1,\ldots, Y_n$ be independent, centered and complex-valued random variables which are uniformly $\norm{\cdot}_p$-bounded for all $p \geq 2$. Then it holds for any complex numbers $(b_i)_{i\in\oneto{n}}$  and $(a_{i,j})_{i,j\in\oneto{n}}$
\begin{align*}
i)\quad &\forall\ p\geq 2: \bignorm{\sum_{i=1}^n b_i Y_i}_p \leq A_p \left(\sum_{i=1}^n \abs{b_i}^2 \right)^{\frac{1}{2}},\\
ii)\quad & \forall\ p\geq 2: \bignorm{\sum_{i\neq j=1}^n a_{i,j} Y_i Y_j}_p \leq A_p \left(\sum_{i\neq j=1}^n \abs{a_{i,j}}^2 \right)^{\frac{1}{2}},
\end{align*}
where $A_p$ is a constant depending only on $p$ and the uniform $\norm{\cdot}_p$-bound.
\end{lemma}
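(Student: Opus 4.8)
The plan is to prove both estimates by reducing them to the Marcinkiewicz-Zygmund inequality (Theorem~\ref{thm:marcinzygmund}). For statement $i)$, I would apply Theorem~\ref{thm:marcinzygmund} directly to the independent, centered random variables $b_i Y_i$, $i\in\oneto{n}$. This gives
\[
\bignorm{\sum_{i=1}^n b_i Y_i}_p \leq A_p \bignorm{\left(\sum_{i=1}^n \abs{b_i}^2\abs{Y_i}^2\right)^{\frac{1}{2}}}_p = A_p \left(\E\left(\sum_{i=1}^n\abs{b_i}^2\abs{Y_i}^2\right)^{\frac{p}{2}}\right)^{\frac{1}{p}}.
\]
To bound the right-hand side, I would use the triangle inequality in $\Lcal_{p/2}(\Prob)$ (valid since $p/2\geq 1$) applied to the sum $\sum_i \abs{b_i}^2\abs{Y_i}^2$, obtaining $\norm{\sum_i\abs{b_i}^2\abs{Y_i}^2}_{p/2}\leq \sum_i\abs{b_i}^2\norm{\abs{Y_i}^2}_{p/2} = \sum_i\abs{b_i}^2\norm{Y_i}_p^2 \leq C_p^2\sum_i\abs{b_i}^2$, where $C_p$ is the uniform $\norm{\cdot}_p$-bound. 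Taking square roots and absorbing $C_p$ into the constant $A_p$ yields $i)$.

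For statement $ii)$, the natural approach is to condition. Fix the variables $Y_1,\ldots,Y_n$ in the "outer" index and sum over the "inner" one: write $\sum_{i\neq j} a_{i,j}Y_iY_j = \sum_{i=1}^n Y_i\bigl(\sum_{j\neq i} a_{i,j}Y_j\bigr)$. However, the terms $Y_i(\sum_{j\neq i}a_{i,j}Y_j)$ are not independent across $i$, so I would instead use a conditioning argument: conditionally on $\mathcal{F}_i \defeq \sigma(Y_j : j\neq i)$ — no, cleaner is to condition on a single variable. The standard route is to first condition on $(Y_j)_{j\geq 2}$, etc., but the genuinely clean argument is: apply Theorem~\ref{thm:marcinzygmund} twice with a conditioning step in between. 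Concretely, condition on $Y_1,\dots,Y_n$ through one of the two factors. I would write $Z \defeq \sum_{i\neq j}a_{i,j}Y_iY_j$ and condition on the sigma-algebra generated by, say, $(Y_i)_{i\in\oneto{n}}$ viewed in the first slot — but since both slots use the same variables this needs care. The correct move: decouple by conditioning. Set $T_i \defeq \sum_{j\,:\,j\neq i} a_{i,j}Y_j$. Then $Z = \sum_i Y_i T_i$. Given the full vector $(Y_j)_j$, each $T_i$ is determined; the trick is that $\E[Z\mid (Y_j)_{j\neq i}\text{ for all }i]$ doesn't simplify. So instead I would use the conditional Marcinkiewicz-Zygmund inequality: conditionally on $(Y_j)_{j\in\oneto{n}}$ fixed in the $T_i$, treat $\sum_i Y_i T_i$ — but the $T_i$ depend on all $Y_j$.

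The cleanest rigorous path, which I would actually follow, is the one in \autocite[33--34]{BaiSi}: condition on $Y_1$. Write $Z = \sum_{i\neq 1}a_{i,1}Y_iY_1 + \sum_{i\neq 1}a_{1,i}Y_1Y_i + \sum_{i,j\neq 1, i\neq j}a_{i,j}Y_iY_j = Y_1 W_1 + Z_1$ where $W_1 = \sum_{i\neq 1}(a_{i,1}+a_{1,i})Y_i$ and $Z_1$ involves only $Y_2,\dots,Y_n$. Then by the conditional $\norm{\cdot}_p$-triangle inequality and independence of $Y_1$ from $(W_1,Z_1)$, $\norm{Z}_p \leq \norm{Y_1}_p\norm{W_1}_p + \norm{Z_1}_p$; bound $\norm{W_1}_p$ by part $i)$, and recurse on $Z_1$, which has the same structure with $n-1$ variables. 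Summing the resulting geometric-type bound over the induction and using $\bigl(\sum_{i\neq 1}\abs{a_{i,1}+a_{1,i}}^2\bigr)^{1/2}\leq 2\bigl(\sum_{i,j}\abs{a_{i,j}}^2\bigr)^{1/2}$ at each stage gives $ii)$ with a constant depending only on $p$ and the uniform bound. The main obstacle is precisely handling the dependence in the double sum — getting the induction bookkeeping to close with a constant that does not blow up in $n$ — and I would resolve it via this conditioning-plus-induction scheme, carefully tracking that the accumulated factors form a convergent (indeed bounded) expression independent of $n$.

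Throughout, I would state explicitly at the start that since $Y_1,\dots,Y_n$ are uniformly $\norm{\cdot}_p$-bounded for every $p\geq 2$, all moments appearing are finite, so every application of Minkowski's inequality in $\Lcal_{p/2}$ and every conditional expectation is justified. The constant $A_p$ in the statement is then obtained by combining the Marcinkiewicz-Zygmund constant (also called $A_p$ in Theorem~\ref{thm:marcinzygmund}) with powers of the uniform moment bound $C_p$ and the numerical factors ($2$ from symmetrizing $a_{i,j}$, and the sum of the induction factors). I would end by remarking that although the bounds are stated for $p\geq 2$, the case of real interest is $p$ large, which is exactly what the subsequent Borel--Cantelli arguments for the almost sure laws will need.
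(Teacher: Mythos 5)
The paper itself does not prove this lemma (it defers to the cited survey), so I compare your argument against what a correct proof requires. Part $i)$ is correct: Marcinkiewicz--Zygmund applied to the independent centered variables $b_iY_i$, followed by the $\Lcal_{p/2}$ triangle inequality on $\sum_i\abs{b_i}^2\abs{Y_i}^2$ and the uniform moment bound, is exactly the standard argument and closes cleanly.

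Part $ii)$ has a genuine gap: the conditioning-plus-induction scheme you settle on does not produce an $n$-independent constant. Unrolling your recursion gives
\[
\bignorm{Z}_p \;\leq\; \sum_{k=1}^{n}\norm{Y_k}_p\,\bignorm{W_k}_p \;\leq\; C_p A_p \sum_{k=1}^{n}\Bigl(\sum_{i>k}\abs{a_{i,k}+a_{k,i}}^2\Bigr)^{\frac{1}{2}},
\]
and a sum of square roots is not controlled by the square root of the sum — it can be larger by a factor of $\sqrt{n}$. Concretely, take $a_{i,j}=1/n$ for all $i\neq j$: then $\bigl(\sum_{i\neq j}\abs{a_{i,j}}^2\bigr)^{1/2}\approx 1$, but your bound evaluates to $\sum_k \tfrac{2}{n}\sqrt{n-k}\asymp\sqrt{n}$, while the lemma (correctly) asserts $O(1)$ — as one checks directly since $\sum_{i\neq j}Y_iY_j/n = \bigl((\sum_iY_i)^2-\sum_iY_i^2\bigr)/n$. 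There is no ``geometric-type'' decay in the recursion; every peeled-off term contributes comparably, so the claim that the accumulated factors stay bounded in $n$ is false. The fix is to keep the $\ell^2$-aggregation over the outer index \emph{inside} the norm: write $Z=\sum_{j}Y_jB_j$ with $B_j\defeq\sum_{i<j}(a_{i,j}+a_{j,i})Y_i$, observe that $(Y_jB_j)_j$ is a martingale difference sequence for the filtration $\sigma(Y_1,\ldots,Y_j)$, and apply Burkholder's inequality (the martingale extension of Theorem~\ref{thm:marcinzygmund}; the independent-variable version stated in the paper does not apply to the dependent terms $Y_jB_j$). This yields $\bignorm{Z}_p\leq C_p\bignorm{(\sum_j\abs{Y_jB_j}^2)^{1/2}}_p$, and then the $\Lcal_{p/2}$ triangle inequality gives $\bignorm{\sum_j\abs{Y_j}^2\abs{B_j}^2}_{p/2}\leq\sum_j\norm{Y_j}_p^2\norm{B_j}_p^2$ by independence of $Y_j$ from $B_j$; bounding each $\norm{B_j}_p^2$ by part $i)$ and summing the resulting \emph{squares} over $j$ recovers $2\sum_{i\neq j}\abs{a_{i,j}}^2$ under a single square root, which is exactly where your version loses.
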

\begin{proof}
The proofs of the two statements can be found in \autocite{AnttiLLSurvey}.
\end{proof}

The following theorem establishes deviation bounds for the expressions in Lemma~\ref{lem:quadradicform}, when the constants $b_i$ and $a_{i,j}$ are replaced by functions of random variables.

\begin{theorem}
\label{thm:largedev}
Let for all $n\in\N$, $Y$ and $W$ be $n$-dependent objects ($Y=Y^{(n)}, W=W^{(n)}$) that satisfy the following for all $n\in\N$:
\begin{itemize}
\item $W=W^{(n)}$ is a finite index set.
\item $Y_W=(Y_i)_{i\in W}=(Y^{(n)}_i)_{i\in W^{(n)}}=Y^{(n)}_{W^{(n)}}$  is a family of independent, real-valued and centered random variables, so that for all $p\geq 2$, the family $(Y^{(n)}_i:i\in W^{(n)}, n\in\N)$ is uniformly $\Lcal_p$-bounded.
\end{itemize}
Further, denote for all subsets $K\subseteq W$ by $\Fcal_W(\R^K)$ the set of tuples $C=(C_i)_{i\in W}$, where for each $i\in W$, $C_i :\R^K\to \C$ is a complex-valued measurable function. Analogously, define for all subsets $K\subseteq W$ by $\Fcal_{W\times W}(\R^K)$ the set of tuples $C=(C_{i,j})_{i,j\in W}$, where for all $i,j\in W$, $C_{i,j} :\R^K\to \C$ is a complex-valued measurable function. 
Then we obtain the following probability bounds:
\begin{enumerate}[i)]
\item For all $\varepsilon,D>0$ there is a constant $C_{\varepsilon,D}\geq 0$, such that for all $n\in\N$, all disjoint subsets $I,K\subseteq W$ and all function tuples $B\in\Fcal_W(\R^K)$ the following holds:
\[
\Prob\left(\bigabs{\sum\limits_{i\in I} B_i[Y_K] Y_i} \ >\ n^{\varepsilon}\,\cdot\,  \sqrt{\sum\limits_{i\in I} \abs{B_i[Y_K]}^2}\right) \leq \frac{C_{\varepsilon,D}}{n^D}.
\]

\item For all $\varepsilon,D>0$ there is a constant $C_{\varepsilon,D}\geq 0$, such that for all $n\in\N$, all disjoint subsets $I,K\subseteq W$, and all function tuples $A\in\Fcal_{W\times W}(\R^K)$ the following holds:
\[
\Prob\left(\bigabs{\sum\limits_{i,j\in I, i\neq j} Y_i A_{i,j}[Y_K] Y_j} \ >\ n^{\varepsilon}\,\cdot\,   \sqrt{\sum\limits_{i,j\in I,i\neq j} \abs{A_{i,j}[Y_K]}^2}\right) \leq \frac{C_{\varepsilon,D}}{n^D}.
\]
\end{enumerate}
\end{theorem}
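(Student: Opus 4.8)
\textbf{Proof plan for Theorem~\ref{thm:largedev}.}

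The plan is to reduce the high-probability statements to the $\Lcal_p$-moment bounds of Lemma~\ref{lem:quadradicform} via Markov's inequality, applied conditionally on the randomness in $Y_K$. First I would fix $\varepsilon, D > 0$ and work with a single $n\in\N$, disjoint subsets $I, K\subseteq W$, and a function tuple $B\in\Fcal_W(\R^K)$. The crucial observation is that the family $(Y_i)_{i\in I}$ is independent of $(Y_i)_{i\in K}$, since $I$ and $K$ are disjoint and $Y_W$ is an independent family. Hence, conditioning on $\Gcal\defeq\sigma(Y_i : i\in K)$, the coefficients $B_i[Y_K]$ become $\Gcal$-measurable constants, while $(Y_i)_{i\in I}$ remains an independent, centered family with the same uniform $\Lcal_p$-bounds. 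Therefore Lemma~\ref{lem:quadradicform} $i)$ applies to the conditional law, giving
\[
\E\left[\bigabs{\sum_{i\in I} B_i[Y_K] Y_i}^p \,\Big|\, \Gcal\right] \leq A_p^p \left(\sum_{i\in I}\abs{B_i[Y_K]}^2\right)^{p/2}
\]
for every $p\geq 2$, where $A_p$ depends only on $p$ and the uniform bound. (I would need to note that Lemma~\ref{lem:quadradicform} is stated for deterministic coefficients, but the proof via Marcinkiewicz--Zygmund goes through verbatim for the conditional expectation given $\Gcal$, since the coefficients are $\Gcal$-measurable and the $Y_i$ are conditionally independent and centered; alternatively one applies the lemma pathwise for each fixed value of $Y_K$.)

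Next I would apply the conditional Markov inequality: on the event that $S\defeq\sqrt{\sum_{i\in I}\abs{B_i[Y_K]}^2}>0$,
\[
\Prob\left(\bigabs{\sum_{i\in I} B_i[Y_K] Y_i} > n^{\varepsilon} S \,\Big|\, \Gcal\right) \leq \frac{A_p^p S^p}{n^{\varepsilon p} S^p} = \frac{A_p^p}{n^{\varepsilon p}},
\]
and on $\{S=0\}$ the sum vanishes almost surely so the conditional probability is $0$; in either case the bound $A_p^p n^{-\varepsilon p}$ holds. Taking expectations removes the conditioning and yields the unconditional bound $A_p^p n^{-\varepsilon p}$. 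Now I choose $p = p(\varepsilon, D)$ large enough that $\varepsilon p \geq D$, i.e.\ $p\geq D/\varepsilon$ (and $p\geq 2$), and set $C_{\varepsilon,D}\defeq A_{p(\varepsilon,D)}^{p(\varepsilon,D)}$; this constant depends only on $\varepsilon$, $D$, and the uniform $\Lcal_p$-bound, but not on $n$, $I$, $K$, or $B$, which is exactly the uniformity required. Statement $ii)$ is handled identically: conditioning on $\Gcal$, the functions $A_{i,j}[Y_K]$ become $\Gcal$-measurable constants and $(Y_i)_{i\in I}$ is conditionally independent and centered, so Lemma~\ref{lem:quadradicform} $ii)$ gives the conditional $p$-th moment bound $A_p^p(\sum_{i\neq j\in I}\abs{A_{i,j}[Y_K]}^2)^{p/2}$, and the same Markov-plus-choice-of-$p$ argument concludes.

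I do not expect a serious obstacle here; the only subtlety to address carefully is the legitimacy of invoking Lemma~\ref{lem:quadradicform} with random (but $\Gcal$-measurable) coefficients. This is where I would be most explicit: either phrase it as a statement about regular conditional distributions, noting that for $\Prob$-almost every realization $y$ of $Y_K$ the family $(Y_i)_{i\in I}$ has unchanged (independent, centered, uniformly $\Lcal_p$-bounded) law under $\Prob(\,\cdot\mid Y_K = y)$ so the lemma applies with the deterministic coefficients $B_i[y]$, and then integrate the resulting bound over the law of $Y_K$; or, more simply, observe that the proof of Lemma~\ref{lem:quadradicform} only uses the Marcinkiewicz--Zygmund inequality, which has a conditional version with the \emph{same} constant $A_p$ whenever one conditions on a $\sigma$-algebra with respect to which the summands remain conditionally independent and centered. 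A minor bookkeeping point is to handle the degenerate case $S = 0$ (all coefficients zero) separately, as done above, so that the division in Markov's inequality is justified. Everything else is a routine application of Markov's inequality and a choice of exponent.
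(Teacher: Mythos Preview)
Your proposal is correct and follows essentially the same approach as the paper: the paper proves $ii)$ by choosing $p$ with $p\varepsilon>D$, applying Markov's inequality, and then using Fubini (via the independence of $Y_I$ and $Y_K$) to reduce to Lemma~\ref{lem:quadradicform} with the deterministic coefficients $A_{i,j}[y_K]$, while handling the degenerate case via an indicator on $\{\sum|A_{i,j}[y_K]|^2>0\}$. Your conditioning on $\Gcal=\sigma(Y_K)$ and pathwise application of the lemma is exactly the Fubini step rephrased in conditional-expectation language, and your treatment of the $S=0$ case and choice of $p$ match the paper's.
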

\begin{proof}
We only prove $ii)$, since $i)$ can be proved analogously. Let $\varepsilon,D>0$ be arbitrary and choose $p\in \N$ with $p\geq 2$ so large that $p\varepsilon>D$. Then we pick an $n\in\N$, disjoint subsets $I,K\subseteq W^{(n)}$ and a function tuple $A\in \Fcal_{W\times W}(\R^K)$. To avoid division by zero, we define the set:
\[
\Acal_{2}\defeq\left\{y_K\in\R^K \,\vert\, \sum\limits_{i,j\in I,i\neq j} \abs{A_{i,j}[y_K]}^2 > 0\right\}.
\]
Then we conduct the following calculation (explanations are found below the calculation; the sums over "$i\neq j$" are over all $i,j\in I$ with $i\neq j$):

\begin{align*}
&\Prob\left(
\bigabs{\sum_{i\neq j} Y_i A_{i,j}[Y_K] Y_j} > n^{\varepsilon} \left(\sum_{i\neq j}\abs{A_{i,j}[Y_K]}^2\right)^{\frac{1}{2}}
\right)\\
& =\Prob\left(
\bigabs{\frac{\sum_{i\neq j} Y_i A_{i,j}[Y_K] Y_j}{\left(\sum_{i\neq j}\abs{A_{i,j}[Y_K]}^2\right)^{\frac{1}{2}}}}^p \one_{\Acal_{2}}(Y_K)> n^{p\varepsilon}
\right)\\
& \leq\frac{1}{n^{p\varepsilon}} \E{\bigabs{\frac{\sum_{i\neq j} Y_i A_{i,j}[Y_K] Y_j}{\left(\sum_{i\neq j}\abs{A_{i,j}[Y_K]}^2\right)^{\frac{1}{2}}}}^p}\one_{\Acal_{2}}(Y_K)\\
& =\frac{1}{n^{p\varepsilon}} \int_{\R^K}\int_{\R^I} \bigabs{\frac{\sum_{i\neq j} y_i A_{i,j}[y_K] y_j}{\left(\sum_{i\neq j}\abs{A_{i,j}[y_K]}^2\right)^{\frac{1}{2}}}}^p\text{d} \Prob^{Y_I}(y_I) \one_{\Acal_{2}}(y_K)\text{d} \Prob^{Y_K}(y_K)\\
& \leq \frac{(A_p)^p}{n^{p\varepsilon}} \leq \frac{(A_p)^p}{n^D}
 \end{align*}
where the first step follows from the fact that for 
\[
\bigabs{\sum_{i\neq j} Y_i A_{i,j}[Y_K] Y_j} > n^{\varepsilon} \left(\sum_{i\neq j}\abs{A_{i,j}[Y_K]}^2\right)^{\frac{1}{2}}
\]
to hold, not all $A_{i,j}[Y_K]$ may vanish, in the second step we used Markov's inequality,
in the third step we used Fubini, in the fourth step we applied Lemma~\ref{lem:quadradicform} and in the last step we used the choice of $p$ in the beginning of the proof. Note that $(A_p)^p$
 denotes a constant which depends only on $p$, which in turn depends only on the choices of $\varepsilon$ and $D$. In particular, this constant does not depend on the choice of $n\in\N$, the sets $I$ and $K$ or the function tuple $A$.  This shows $ii)$.
 \end{proof}

\section{The Semicircle Law}
\label{sec:SemicircleByStieltjes}
We follow the general strategy outlined in Section~\ref{sec:genstrat}.

\subsection*{Step 1: Self-Consistent Equation and Separation of Solutions.}

The first step of the proof consists of the following lemma:
\begin{lemma}
\label{lem:factsSCD}
The Stieltjes transform of the semicircle distribution $\sigma$ is given by
\[
\forall\, z\in\C_+: S_{\sigma}(z) = \frac{-z+\sqrt{z^2-4}}{2}.
\]
Consider the equation equation in $m\in\C$, where $z\in\C_+$ is fixed:
\begin{equation}
\label{eq:sceSCD} %self-consistent equation semicircle distribution
m=\frac{1}{-z-m}	
\end{equation}
Then the following statements hold:
\begin{enumerate}[i)]
\item The solutions to \eqref{eq:sceSCD} are given by
\[
m_{+,-} = \frac{-z \pm \sqrt{z^2-4}}{2}.
\]
\item $S_{\sigma}(z)$ is the positive branch of the solution in \eqref{eq:sceSCD}, that is, $S_{\sigma}(z)=m_+$.
\item For the denominator in \eqref{eq:sceSCD} it holds $\Im(-z-m_{-}) \geq -\frac{1}{2}\Im(z)$
\item If $\nu\in\Mcal_1(\R)$, then for all $z\in\C_+$ it holds
\[
\Im(-z-S_{\nu}(z)) \leq -\Im(z).
\]
In particular, if $S_{\nu}(z)$ satisfies \eqref{eq:sceSCD}, we must have $S_{\nu}(z)=S_{\sigma}(z)$.
\end{enumerate}
\end{lemma}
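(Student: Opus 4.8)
The plan is to establish the four statements of Lemma~\ref{lem:factsSCD} in a natural order, where the real work is in the first claim (the explicit formula for $S_\sigma$) and in the final separation estimate. First I would verify the formula for $S_\sigma(z)$ on $\C_+$. The cleanest route is to \emph{not} compute the integral $\int (x-z)^{-1} f_\sigma(x)\,\de x$ directly, but instead to observe that $S_\sigma$ is holomorphic on $\C\backslash\R$ (Lemma~\ref{lem:stieltjesprops}), to expand it for large $\abs{z}$ as a power series $S_\sigma(z) = -\sum_{k\geq 0} m_k^\sigma z^{-k-1}$ in terms of the moments computed in Lemma~\ref{lem:momentsSCD}, and to check that the candidate function $g(z) \defeq \tfrac{1}{2}(-z+\sqrt{z^2-4})$ — with the branch of the square root that behaves like $+z$ at infinity, so that $g(z)\to 0$ — has the same Laurent expansion, since the generating function of the Catalan numbers is algebraic and satisfies $g = 1/(-z-g)$. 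Together with the observation that $g$ maps $\C_+$ into $\C_+$ and agrees with $S_\sigma$ on a set with an accumulation point, the identity theorem for holomorphic functions forces $S_\sigma = g$ on $\C_+$, hence on all of $\C\backslash\R$ by Lemma~\ref{lem:stieltjesprops}~v). Alternatively one carries out the substitution $x = 2\cos\theta$ and a contour/residue computation; either way this is the most calculation-heavy part.

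Once the formula is in hand, statement i) is the quadratic formula: rewriting \eqref{eq:sceSCD} as $m(-z-m) = 1$, i.e.\ $m^2 + zm + 1 = 0$, gives $m_{\pm} = \tfrac{1}{2}(-z\pm\sqrt{z^2-4})$. Statement ii) then follows by matching: $S_\sigma(z)$ solves \eqref{eq:sceSCD} because $g$ does, and among $m_+, m_-$ it is the one with $\Im m > 0$ for $z \in \C_+$ (Lemma~\ref{lem:stieltjesprops}~iii)), which is $m_+$ for the branch chosen above; a short sign-check on the imaginary parts of $m_\pm$ (their sum is $-z$, so $\Im m_+ + \Im m_- = -\Im z < 0$, and their product is $1$) pins down which root is which. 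For iii), substitute $m_-$ into $-z - m_-$: one computes $-z - m_- = -z - \tfrac{1}{2}(-z - \sqrt{z^2-4}) = \tfrac{1}{2}(-z + \sqrt{z^2-4}) = m_+ = S_\sigma(z)$, and since $m_+$ has positive imaginary part we even get $\Im(-z-m_-) = \Im m_+ > 0 > -\tfrac12\Im z$; so iii) is essentially free once i) and ii) are done.

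The last statement iv) is where I expect the real obstacle, and it is the crux of the "separation of solutions" philosophy. For any $\nu \in \Mcal_1(\R)$ and $z = E + i\eta$ with $\eta > 0$, write $\Im(-z - S_\nu(z)) = -\eta - \Im S_\nu(z)$. By Lemma~\ref{lem:stieltjesprops}~ii), $\Im S_\nu(z) = \int \frac{\eta}{(x-E)^2+\eta^2}\,\nu(\de x) \geq 0$, so $-\eta - \Im S_\nu(z) \leq -\eta = -\Im z$, which is exactly the claimed bound. For the "in particular" clause: suppose $S_\nu(z)$ satisfies \eqref{eq:sceSCD}, so $S_\nu(z) \in \{m_+, m_-\}$. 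If $S_\nu(z) = m_-$, then by iii) the denominator $-z - S_\nu(z) = -z - m_-$ has $\Im(-z-m_-) \geq -\tfrac12\Im z$, i.e.\ strictly greater than $-\Im z$ (as $\Im z>0$); but $S_\nu(z)$ being a Stieltjes transform forces $\Im(-z - S_\nu(z)) \leq -\Im z$ by the inequality just proved — contradiction. Hence $S_\nu(z) = m_+ = S_\sigma(z)$. The subtle point to get right is the strictness of the inequalities and making sure the two bounds ($\geq -\tfrac12\Im z$ from iii), $\leq -\Im z$ from the Stieltjes property) are genuinely incompatible; this is immediate since $-\tfrac12\Im z > -\Im z$ whenever $\Im z > 0$. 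I would also note that although the lemma is stated for $z \in \C_+$, the conclusion extends to $z \in \C_-$ by the reflection $S_\nu(\bar z) = \overline{S_\nu(z)}$ of Lemma~\ref{lem:stieltjesprops}~iv) if that is needed downstream.
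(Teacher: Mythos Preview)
Your proposal is correct and follows essentially the same route as the paper. The paper simply cites \autocite{BaiSi} for the formula $S_\sigma(z)=\tfrac12(-z+\sqrt{z^2-4})$ rather than deriving it, handles i) and ii) by the quadratic formula, proves iii) by the same direct substitution (your observation $-z-m_-=m_+$ via Vieta is a slightly cleaner way to write it), and deduces iv) from $\Im S_\nu(z)\geq 0$ exactly as you do.
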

\begin{proof}
The Stieltjes transform of the semicircle distribution is derived in Lemma 2.11 in \autocite{BaiSi}. 	Statements $i)$ and $ii)$ can be shown directly by solving the quadratic equation \eqref{eq:sceSCD}. Statement $iii)$ follows since
\[
\Im\left(-z - \frac{-z - \sqrt{z^2-4}}{2}\right)= -\Im(z) + \frac{\Im(z)}{2} + \frac{\Im\sqrt{z^2-4}}{2} \geq - \frac{\Im(z)}{2},
\]
since we defined the complex square root $\sqrt{\cdot}$ to be the square root with non-negative imaginary part. Statement $iv)$ follows trivially since $\Im S_{\nu}(z)\geq 0$.
\end{proof}

\subsection*{Step 2: Derivation of the Error Term}

By Corollary~\ref{cor:inversediagonal}, the Stieltjes transform $s_n$ of a Wigner matrix $\frac{1}{\sqrt{n}}X_n$ is given by
\begin{equation}
\label{eq:snWIG}
s_n(z) = \frac{1}{n}\sum_{k\in\oneto{n}} \frac{1}{\frac{1}{\sqrt{n}}X_n(k,k)-z-\frac{1}{n} x^T_k\left(\frac{1}{\sqrt{n}}X_n^{(k)}-z\right)^{-1}x_k},
\end{equation}
where $X_n^{(k)}$ denotes the $k$-th principle minor of $X_n$ and $x_k$ the $k$-th column of $X_n$ without the $k$-th entry.
The desired denominator in each summand of \eqref{eq:snWIG} is 
\[
 - z - s_n(z),
\]
stemming from the self-consistent equation \eqref{eq:sceSCD}. In the $k$-th summand for $k\in\{1,\ldots,n\}$, we obtain the remainder term
\[
\Omega_n^{(k)}(z) = \frac{1}{\sqrt{n}}X_n(k,k) + s_n(z) -\frac{1}{n}x_k^T\left(\frac{1}{\sqrt{n}}X_n^{(k)}-z\right)^{-1}x_k.
\]
Using \eqref{eq:splitfraction}, we conclude
\begin{equation}
\label{eq:snsplitWIG}
s_n(z) = \frac{1}{- z - s_n(z)} - \delta_n(z)
\end{equation}
with
\[
\delta_n(z) = \frac{1}{n} \sum_{k\in\oneto{n}} \frac{\Omega_n^{(k)}(z)}{(-z-s_n(z))(-z- s_n(z)+\Omega^{(k)}_n(z))}.
\]
The error term $\delta_n(z)$ can be bounded in absolute terms as follows: By Lemma~\ref{lem:factsSCD}, we find
\[
\forall\, n\in \N: \Im(-z-s_n(z))\leq -\Im(z).
\]
If we assume 
\begin{equation}
\label{eq:omegasmallWIG}	
\max_k\abs{\Omega^{(k)}_n(z)} \leq \frac{1}{2}\Im(z)
\end{equation}
we may therefore conclude
\begin{align}
\abs{\delta_n(z)} &= \bigabs{\frac{1}{n} \sum_{k\in\oneto{n}} \frac{\Omega_n^{(k)}(z)}{(-z-s_n(z))(-z- s_n(z)+\Omega^{(k)}_n(z))}}\notag\\
&\leq
\frac{1}{n} \sum_{k\in\oneto{n}}\frac{\abs{\Omega_n^{(k)}}}{\Im(z)^2/2} \ \leq\  \frac{2}{\Im(z)^2} \max_k\abs{\Omega^{(k)}_n}.\label{eq:deltaboundWIG}
\end{align}

The following lemma puts our findings into perspective:
\begin{theorem}
\label{thm:reductionWIG}
In above situation, the following statements hold for any fixed $z\in\C_+$:
\begin{enumerate}[i)]
\item If in \eqref{eq:snsplitWIG}, $\delta_n(z)\to 0$ in probability resp.\ almost surely, then $s_n(z)\to s(z)$ in probability resp.\ almost surely.
\item If $\max_{k\in\oneto{n}}\abs{\Omega_n^{(k)}(z)}\xrightarrow[n\to\infty]{} 0$ in probability resp.\ almost surely, then $\delta_n(z)\to 0$ in probability resp.\ almost surely.
\end{enumerate}
\end{theorem}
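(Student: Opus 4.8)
The plan is to prove the two statements separately, noting that ii) is essentially immediate from the a priori bound \eqref{eq:deltaboundWIG} already established, whereas i) carries the real content and will need a subsequence argument combined with the separation of the two roots of the self-consistent equation \eqref{eq:sceSCD}.

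For ii) I would proceed as follows. Write $M_n\defeq\max_{k\in\oneto{n}}\abs{\Omega_n^{(k)}(z)}$, so the hypothesis is $M_n\to 0$ (in probability resp.\ almost surely) and \eqref{eq:deltaboundWIG} reads: on the event $\{M_n\leq\tfrac12\Im(z)\}$ one has $\abs{\delta_n(z)}\leq\frac{2}{\Im(z)^2}M_n$. In the almost sure case, on the probability-one set where $M_n\to 0$ we eventually have $M_n\leq\tfrac12\Im(z)$, whence $\abs{\delta_n(z)}\leq\frac{2}{\Im(z)^2}M_n\to 0$. In the in-probability case, the contrapositive of the displayed implication yields, for every $\varepsilon>0$,
\[
\{\abs{\delta_n(z)}>\varepsilon\}\ \subseteq\ \{M_n>\tfrac12\Im(z)\}\ \cup\ \{M_n>\tfrac{\Im(z)^2}{2}\varepsilon\},
\]
and the probability of each set on the right tends to $0$ since $M_n\to 0$ in probability.

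For i) I would first record the a priori facts that make a compactness argument work: each $\sigma_n$ is a probability measure, so Lemma~\ref{lem:stieltjesprops} vi) gives $\abs{s_n(z)}\leq 1/\Im(z)$, and Lemma~\ref{lem:factsSCD} iv) applied to $\nu=\sigma_n$ gives $\Im(-z-s_n(z))\leq-\Im(z)$, hence $\abs{-z-s_n(z)}\geq\Im(z)>0$, all uniformly in $n$. Then I treat the almost sure case: on the probability-one event $\{\delta_n(z)\to 0\}$ fix $\omega$; the bounded sequence $(s_n(z))_n$ has, along any subsequence, a convergent sub-subsequence with some limit $m$, and passing to the limit in \eqref{eq:snsplitWIG} along it — legitimate because the denominators stay bounded away from $0$ and $\delta_n(z)\to 0$ — shows $m$ solves \eqref{eq:sceSCD}, so $m\in\{m_+,m_-\}$ by Lemma~\ref{lem:factsSCD} i). Since $\Im(-z-m)=\lim\Im(-z-s_n(z))\leq-\Im(z)<-\tfrac12\Im(z)$ while Lemma~\ref{lem:factsSCD} iii) gives $\Im(-z-m_-)\geq-\tfrac12\Im(z)$, the branch $m_-$ is excluded and $m=m_+=s(z)$. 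As every subsequential limit of the bounded sequence equals $s(z)$, we get $s_n(z)(\omega)\to s(z)$, hence $s_n(z)\to s(z)$ almost surely. The in-probability case then follows from the subsequence principle (Lemma~\ref{lem:subsequence}): any subsequence $J$ admits $I\subseteq J$ along which $\delta_n(z)\to 0$ almost surely, and the argument above run verbatim along $I$ gives $s_n(z)\to s(z)$ almost surely along $I$; applying Lemma~\ref{lem:subsequence} once more yields convergence in probability.

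The step I expect to be the main obstacle — indeed the only step that is not pure bookkeeping — is the separation of solutions inside i): one must use the uniform bound $\abs{s_n(z)}\leq 1/\Im(z)$ to extract convergent subsequences and then invoke the imaginary-part inequalities of Lemma~\ref{lem:factsSCD} iii)--iv) to discard the spurious root $m_-$ and pin every subsequential limit to $m_+=s(z)$. Everything else reduces to routine manipulations with the subsequence characterizations of convergence in probability and almost surely.
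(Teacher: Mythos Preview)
Your proposal is correct and follows essentially the same route as the paper: part ii) is dispatched via the a priori bound \eqref{eq:deltaboundWIG}, and part i) is handled by the Bolzano--Weierstrass sub-subsequence argument together with the imaginary-part separation of roots from Lemma~\ref{lem:factsSCD}, with the in-probability case reduced to the almost sure one via Lemma~\ref{lem:subsequence}. Your write-up is in fact slightly more explicit than the paper's in justifying why the denominator stays bounded away from zero when passing to the limit in \eqref{eq:snsplitWIG}.
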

\begin{proof}
Statement $ii)$ follows with \eqref{eq:deltaboundWIG}, using \eqref{eq:omegasmallWIG}. 
We proceed to show statement $i)$ in the almost sure sense.
Fix $z\in\C_+$. Let $A$ be a measurable set with $\Prob(A)=1$, on which $\delta_n(z) \to 0$. Let $\omega\in A$ be arbitrary, and denote by $s^{\omega}_n(z)$ the realization of $s_n(z)$ at $\omega$. To show that $s^{\omega}_n(z)$ converges to $s(z)$, we show that any subsequence of $s^{\omega}_n(z)$ contains another subsequence that converges to $s(z)$. To this end, let $J\subseteq\N$ be a subsequence. Then $(s^{\omega}_n(z))_{n\in J}$ is a bounded sequence of complex numbers (with absolute bound $\Im(z)^{-1}>0$), therefore has a convergent subsequence $(s^{\omega}_n(z))_{n\in I}$, $I\subseteq J$, with some limit $t\in\C$ (Bolzano-Weierstrass). Considering \eqref{eq:snsplitWIG}, $t$ satisfies
\[
t = \frac{1}{-z-t}.
\]
Since $\Im(- z - s_n^{\omega}(z)) \leq -\Im(z)$ by Lemma~\ref{lem:factsSCD}, we find $\Im(-z-t) \leq -\Im(z)$, so $t=s(z)$ by Lemma~\ref{lem:factsSCD}. We have seen that any subsequence of $(s^{\omega}_n(z))_{n\in\N}$ has a subsequence which converges to $s(z)$. Therefore, $s_n(z)\to s(z)$ on $A$, that is, almost surely. Statement $i)$ in probability follows from the almost sure version we just proved, using Lemma~\ref{lem:subsequence}: To show that $s_n(z) \to s(z)$ in probability, it suffices to show that for any subsequence $I\subseteq\N$ there is a subsequence $J\subseteq I$ such that $s_n(z)\to s(z)$ for $n\in J$. So let $I\subseteq \N$ be an arbitrary subsequence. Since $\delta_n(z)\to 0$ in probability, there is a subsequence $J\subseteq I$ with $\delta_n(z)\to 0$ almost surely for $n\in J$. But then $s_n(z)\to s(z)$ for $n\in J$ almost surely as we just proved above. This completes the argument.
\end{proof}

\subsection*{Step 3: Analysis of the Error Term}

By Theorem~\ref{thm:reductionWIG}, it suffices to show that $\max_{k\in\oneto{n}}\abs{\Omega_n^{(k)}(z)}\xrightarrow[n\to\infty]{} 0$ in probability or almost surely, where
\begin{equation}
\label{eq:remainderWIG}	
\Omega_n^{(k)}(z) = \frac{1}{\sqrt{n}}X_n(k,k) + s_n(z) -\frac{1}{n}x_k^T\left(\frac{1}{\sqrt{n}}X_n^{(k)}-z\right)^{-1}x_k.
\end{equation}
In this subsection, we will show almost sure convergence. Note that
\begin{align}
\Omega_n^{(k)}(z)
&= \frac{1}{\sqrt{n}}X_n(k,k)\notag\\
&\quad\,  - \frac{1}{n} \sum_{i\neq j}^n x_k(i) \left(\frac{1}{\sqrt{n}}X_n^{(k)}-z\right)^{-1}(i,j) x_k(j)\notag\\
&\quad\, - \frac{1}{n} \sum_{i=1}^n (x_k(i)^2-1) \left(\frac{1}{\sqrt{n}}X_n^{(k)}-z\right)^{-1}(i,i)\notag\\
&\quad  - \frac{1}{n} \tr \left(\frac{1}{\sqrt{n}}X_n^{(k)}-z\right)^{-1} + s_n(z)\notag\\
&=: A(n,k) + B(n,k,z)+ C(n,k,z) + D(n,k,z) \label{eq:ABCD-WIG}.
\end{align}

We will analyze these four terms separately and show that their maxima over $k\in\{1,\ldots,n\}$ converge to zero almost surely. For $B$ and $C$ we will use Theorem~\ref{thm:largedev}, whereas for $D$ we will use Corollary~\ref{cor:tracedifference}. 

\begin{lemma}
\label{lem:SummandA-WIG}
In \eqref{eq:ABCD-WIG}, $\max_{k\in\oneto{n}}\abs{A(n,k)}\to 0$ almost surely as $n\to\infty$.
\end{lemma}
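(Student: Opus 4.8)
The plan is to use a union bound over the diagonal entries together with the uniformly bounded moments of the Wigner scheme and the Borel--Cantelli lemma. Recall that $A(n,k) = \tfrac{1}{\sqrt n}X_n(k,k)$, and that by Definition~\ref{def:Wignerscheme} i) there is, for every $q\in\N$, a constant $L_q\in(0,\infty)$ with $\E\abs{X_n(k,k)}^q \le L_q$ for all $n\in\N$ and all $k\in\oneto{n}$.

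First I would fix $\varepsilon>0$ and estimate, using the union bound over $k\in\oneto{n}$ and then Markov's inequality with exponent $q$,
\[
\Prob\Big(\max_{k\in\oneto{n}}\abs{A(n,k)} > \varepsilon\Big)
\ \le\ \sum_{k=1}^n \Prob\big(\abs{X_n(k,k)} > \varepsilon\sqrt n\big)
\ \le\ \sum_{k=1}^n \frac{\E\abs{X_n(k,k)}^q}{\varepsilon^q n^{q/2}}
\ \le\ \frac{L_q}{\varepsilon^q}\, n^{1-q/2}.
\]
Second, choosing $q=6$ (any $q\ge 6$ works), the right-hand side is $O(n^{-2})$ and hence summable in $n$. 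Thus the Borel--Cantelli lemma yields
\[
\Prob\Big(\max_{k\in\oneto{n}}\abs{A(n,k)} > \varepsilon \text{ for infinitely many } n\Big) = 0,
\]
that is, $\limsup_{n\to\infty}\max_{k\in\oneto{n}}\abs{A(n,k)} \le \varepsilon$ almost surely.

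Finally I would intersect these full-probability events over $\varepsilon = 1/m$, $m\in\N$: on the resulting set of probability $1$ we have $\limsup_{n\to\infty}\max_{k\in\oneto{n}}\abs{A(n,k)} \le 1/m$ for every $m\in\N$, hence $\max_{k\in\oneto{n}}\abs{A(n,k)} \to 0$ almost surely. There is no genuine obstacle in this argument; the only point requiring care is to pick the moment order $q$ large enough that $n^{1-q/2}$ is summable, which is precisely where the assumption of \emph{uniformly bounded moments of all orders} (rather than merely of low order) enters.
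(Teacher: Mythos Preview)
Your proof is correct and follows essentially the same approach as the paper: a union bound over $k\in\oneto{n}$, Markov's inequality with a high enough moment, and Borel--Cantelli. The only cosmetic difference is that the paper uses the eighth moment together with the $n$-dependent threshold $n^{-1/8}$ (so that a single application of Borel--Cantelli already gives convergence to zero), whereas you use a fixed $\varepsilon$ with the sixth moment and then intersect over $\varepsilon=1/m$; both variants are standard and equivalent in strength.
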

\begin{proof}
Let $C_8$ be an upper bound of $\E (X_n(i,j))^8$ for all $n$, $i$, $j$, then we find
\[
\forall\, n\in\N: \forall\, k\in\oneto{n}:\ \Prob\left(\bigabs{\frac{1}{\sqrt{n}}X_n(k,k)} > \frac{1}{n^{\frac{1}{8}}}\right) 
=\Prob\left(\abs{X_n(k,k)}^8 > \frac{n^4}{n}\right)
  \leq \frac{C_8}{n^3}.
\]
Therefore, taking the union bound, we obtain for all $n\in\N$:
\[
\Prob\left(\max_{k
\in\oneto{n}}\bigabs{\frac{1}{\sqrt{n}}X_n(k,k)} > \frac{1}{n^{\frac{1}{8}}}\right)\leq\sum_{k\in\oneto{n}}\Prob\left(\bigabs{\frac{1}{\sqrt{n}}X_n(k,k)} > \frac{1}{n^{\frac{1}{8}}}\right)\leq\frac{nC_{8}}{n^3}
\]
which converges to zero summably fast. This concludes the proof by Borel-Cantelli. 
\end{proof}
For $B(n,k,z)$ we define the terms
\[
S(n,k,z) \defeq
\sum_{i\neq j}^n x_k(i) \left(\frac{1}{\sqrt{n}}X_n^{(k)}-z\right)^{-1} (i,j) x_k(j)	
\]
and 
\[
R(n,k,z) \defeq \sqrt{\sum_{i\neq j}^n \bigabs{\left(\frac{1}{\sqrt{n}}X_n^{(k)}-z\right)^{-1}(i,j)}^2}
\]	
to employ Theorem~\ref{thm:largedev} $ii)$. To bound $R(n,k,z)$, we use the following trivial lemma:
\begin{lemma}
\label{lem:RnBoundWIG}
	Let $X$ be an Hermitian $n\times n$ matrix and $z\in\C_+$. Then
\[
\sqrt{\sum_{i, j\in\oneto{n}}\abs{(X-z)^{-1}(i,j)}^2}\leq \frac{\sqrt{n}}{\Im(z)}.
\]
\end{lemma}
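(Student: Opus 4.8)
The plan is to recognize the quantity under the square root as the squared Frobenius norm $\fnorm{(X-z)^{-1}}^2$ of the resolvent and to compute it by diagonalizing $X$. Since $X$ is Hermitian, the spectral theorem (already invoked in the proof of Theorem~\ref{thm:StieltjesESD}) provides a unitary matrix $U$ and real numbers $\lambda_1,\ldots,\lambda_n$, the eigenvalues of $X$, with $X = U^*\diag(\lambda_1,\ldots,\lambda_n)U$. Hence $(X-z)^{-1} = U^*\diag\bigl((\lambda_1-z)^{-1},\ldots,(\lambda_n-z)^{-1}\bigr)U$, which is unitarily equivalent to a diagonal matrix.

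Next I would use that the Frobenius norm is unitarily invariant: for any $A\in\Mat{n}{\C}$ and unitary $U$, cyclicity of the trace (Lemma~\ref{lem:trace}) gives $\fnorm{U^*AU}^2 = \tr(U^*A^*U\,U^*AU) = \tr(A^*A) = \fnorm{A}^2$. Applying this with $A=\diag\bigl((\lambda_i-z)^{-1}\bigr)_i$ yields
\[
\sum_{i,j\in\oneto{n}}\bigabs{(X-z)^{-1}(i,j)}^2 \;=\; \fnorm{(X-z)^{-1}}^2 \;=\; \sum_{i=1}^n \frac{1}{\abs{\lambda_i-z}^2}.
\]
Since each $\lambda_i$ is real and $\Im(z)>0$ (as $z\in\C_+$), we have $\abs{\lambda_i-z}\geq\abs{\Im(\lambda_i-z)}=\Im(z)$, so each summand is at most $\Im(z)^{-2}$ and the full sum is at most $n\,\Im(z)^{-2}$. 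Taking square roots gives the asserted bound $\sqrt{n}/\Im(z)$.

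I do not expect any genuine obstacle here — this is the "trivial lemma" promised by the text, and essentially every ingredient (unitary diagonalization of Hermitian matrices, cyclicity of the trace, the elementary estimate $\abs{\lambda-z}\geq\Im(z)$ for $\lambda\in\R$) has already appeared earlier in the excerpt. The only minor care needed is to state explicitly that the Frobenius norm is unitarily invariant before using it; alternatively one could bypass this by recording the general inequality $\fnorm{A}\leq\sqrt{n}\,\opnorm{A}$ together with $\opnorm{(X-z)^{-1}}\leq 1/\Im(z)$, but the direct eigenvalue computation above is the shortest route.
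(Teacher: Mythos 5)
Your proof is correct. The paper's own argument is the alternative you mention in your last sentence: it combines the general inequality $\fnorm{A}\leq\sqrt{n}\,\opnorm{A}$ with the resolvent bound $\opnorm{(X-z)^{-1}}\leq\Im(z)^{-1}$ for Hermitian $X$, and never writes down the eigenvalue sum. Your main route instead computes the Frobenius norm exactly, $\fnorm{(X-z)^{-1}}^2=\sum_{i=1}^n\abs{\lambda_i-z}^{-2}$, via unitary invariance and the spectral theorem, and only then estimates each summand by $\Im(z)^{-2}$. The two arguments are close cousins — the operator-norm bound the paper invokes is itself a spectral-theorem fact — but yours is slightly more self-contained (it proves the exact identity rather than citing two norm inequalities) and makes visible exactly where the factor $\sqrt{n}$ comes from, namely that the bound $\abs{\lambda_i-z}\geq\Im(z)$ is applied to each of the $n$ eigenvalues; the paper's version is more modular and reusable, since $\fnorm{A}\leq\sqrt{n}\,\opnorm{A}$ holds for arbitrary matrices and is reused in the Marchenko--Pastur analogue (Lemma~\ref{lem:RnBoundMP}), where the matrix $F(X)$ is no longer a resolvent and the direct eigenvalue computation would not go through as cleanly. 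Your appeal to Lemma~\ref{lem:trace} for the unitary invariance of the Frobenius norm is legitimate, since $\tr(U^*A^*AU)=\tr(A^*A)$ is exactly similarity invariance of the trace applied to $A^*A$.
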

\begin{proof}
For $n\times n$ matrices $X$, the general inequality $\norm{X}_{F}\leq\sqrt{n}\norm{X}_{op}$ holds, where $\norm{\cdot}_F$ is the Frobenius norm and $\norm{\cdot}_{op}$ is the operator norm. On the other hand, for any \emph{Hermitian} $n\times n$ matrix $X$, $\norm{(X-z)^{-1}}_{op}\leq\Im(z)^{-1}$. Combining these facts, we obtain 
\[
\sqrt{\sum_{i, j\in\oneto{n}}\abs{(X-z)^{-1}(i,j)}^2} = \norm{(X-z)^{-1}}_{F} \leq \sqrt{n} \norm{(X-z)^{-1}}_{op} \leq \frac{\sqrt{n}}{\Im(z)}.
\]
\end{proof}

\begin{lemma}
\label{lem:SummandB-WIG}
In \eqref{eq:ABCD-WIG}, $\max_{k\in\oneto{n}}\abs{B(n,k,z)}\to 0$ almost surely as $n\to\infty$.
\end{lemma}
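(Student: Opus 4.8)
The strategy is to apply the large deviation bound of Theorem~\ref{thm:largedev}~$ii)$ conditionally on the $k$-th principal minor, and then take a union bound over $k\in\oneto{n}$. First I would fix $z\in\C_+$ and observe that $B(n,k,z) = -\frac{1}{n}S(n,k,z)$, where $S(n,k,z)$ is the quadratic form defined just above the lemma. The crucial structural point is that for each fixed $k$, the vector $x_k$ of off-diagonal entries in column $k$ is independent of the principal minor $X_n^{(k)}$ (by the independence assumption in the Wigner scheme, Definition~\ref{def:Wignerscheme}~iv), together with symmetry), so the matrix $\bigl(\tfrac{1}{\sqrt n}X_n^{(k)}-z\bigr)^{-1}$ plays the role of the ``function tuple'' $A_{i,j}[Y_K]$ in Theorem~\ref{thm:largedev}, with $Y_K$ the entries of $X_n^{(k)}$ and $Y_I$ the entries $x_k(i)$, $i\in\oneto{n-1}$. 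These index sets are disjoint, and the entries are independent, centered, and uniformly $\Lcal_p$-bounded for all $p$ by the moment condition; thus the hypotheses of Theorem~\ref{thm:largedev}~$ii)$ are met.

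\textbf{Main steps.} Applying Theorem~\ref{thm:largedev}~$ii)$ with, say, $\varepsilon = 1/4$ and $D = 2$, we get a constant $C$ with
\[
\Prob\!\left(\abs{S(n,k,z)} > n^{1/4}\, R(n,k,z)\right) \leq \frac{C}{n^2}
\]
for every $n$ and every $k\in\oneto{n}$, where $R(n,k,z)$ is the square-root-of-sum-of-squares term defined above. By Lemma~\ref{lem:RnBoundWIG} (applied to the Hermitian matrix $\tfrac{1}{\sqrt n}X_n^{(k)}$, which is $(n-1)\times(n-1)$), we have $R(n,k,z)\leq \sqrt{n}/\Im(z)$. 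Hence on the complementary event, $\abs{B(n,k,z)} = \frac{1}{n}\abs{S(n,k,z)} \leq \frac{1}{n}\cdot n^{1/4}\cdot \frac{\sqrt n}{\Im(z)} = \frac{1}{n^{1/4}\Im(z)}\to 0$. Taking the union bound over $k\in\oneto{n}$,
\[
\Prob\!\left(\max_{k\in\oneto{n}}\abs{B(n,k,z)} > \frac{1}{n^{1/4}\Im(z)}\right)\leq \sum_{k\in\oneto{n}}\frac{C}{n^2} = \frac{C}{n},
\]
which is not summable; so I would instead choose $D = 3$ from the outset, giving the bound $C/n^3$ per index and $C/n^2$ after the union bound, which is summable. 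Borel--Cantelli then yields that almost surely, eventually $\max_{k}\abs{B(n,k,z)} \leq n^{-1/4}/\Im(z)$, hence $\max_{k\in\oneto{n}}\abs{B(n,k,z)}\to 0$ almost surely.

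\textbf{Expected obstacle.} The only subtle point is the careful bookkeeping to make the conditional application of Theorem~\ref{thm:largedev} legitimate: one must spell out that the inverse of the shifted minor is a measurable function of $Y_K = \{X_n^{(k)}(i,j)\}$ alone (true, since matrix inversion and the spectral shift are continuous hence measurable on the set where the inverse exists, and it always exists for $z\in\C_+$ since $X_n^{(k)}$ is Hermitian), and that the randomness in $x_k$ is exactly the $Y_I$ part, disjoint from $Y_K$. Everything else is routine: the deterministic bound on $R(n,k,z)$ from Lemma~\ref{lem:RnBoundWIG}, the choice of exponents to make the tail summable, and Borel--Cantelli. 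The convergence-in-probability statement, if needed, follows a fortiori, but since we are proving the almost sure version directly, no extra argument is required.
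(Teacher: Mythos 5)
Your proposal is correct and follows essentially the same route as the paper: the same application of Theorem~\ref{thm:largedev}~$ii)$ with $\varepsilon=1/4$, $D=3$, the same bound $R(n,k,z)\leq\sqrt{n}/\Im(z)$ from Lemma~\ref{lem:RnBoundWIG}, and the same union bound plus Borel--Cantelli conclusion. The paper leaves the independence/measurability bookkeeping implicit, so your explicit identification of $Y_I$ with the entries of $x_k$ and $Y_K$ with the entries of $X_n^{(k)}$ is a welcome addition rather than a deviation.
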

\begin{proof}
Using the terms $S(n,k,z)$ and $R(n,k,z)$ defined above, we	find by Lemma~\ref{lem:RnBoundWIG} that
\[
\abs{R(n,k,z)} \leq \frac{\sqrt{n}}{\Im(z)},
\]
Therefore, using Theorem~\ref{thm:largedev} $ii)$ with $\varepsilon=1/4$ and $D=3$, we obtain a constant $C_{\frac{1}{4},3}\geq 0$, such that for all $n\in\N$ and all $k\in\oneto{n}$:
\[
\Prob\left(\abs{B(n,k,z)} > \frac{n^{\frac{1}{4}}\sqrt{n}}{n\Im(z)}\right) \leq \Prob\left(\abs{S(n,k,z)}>n^{\frac{1}{4}}R(n,k,z)\right) \leq \frac{C_{\frac{1}{4},3}}{n^3}.
\]
Applying the union bound as in the proof of Lemma~\ref{lem:SummandA-WIG} concludes the statement.
\end{proof}
For $C(n,k,z)$, we define the terms
\[
S'(n,k,z) \defeq
\sum_{i\in\oneto{n}} (x_k(i)^2-1) \left(\frac{1}{\sqrt{n}}X_n^{(k)}-z\right)^{-1}(i,i) 	
\]
and 
\[
R'(n,k,z) \defeq \sqrt{\sum_{i\in\oneto{n}} \bigabs{\left(\frac{1}{\sqrt{n}}X_n^{(k)}-z\right)^{-1}(i,i)}^2}
\]	
to employ Theorem~\ref{thm:largedev} $i)$.

\begin{lemma}
\label{lem:SummandC-WIG}
In \eqref{eq:ABCD-WIG}, $\max_{k\in\oneto{n}}\abs{C(n,k,z)}\to 0$ almost surely as $n\to\infty$.
\end{lemma}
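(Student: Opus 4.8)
The plan is to mimic the proof of Lemma~\ref{lem:SummandB-WIG} almost verbatim, replacing the quadratic form estimate Theorem~\ref{thm:largedev} $ii)$ by its linear counterpart Theorem~\ref{thm:largedev} $i)$. First I would observe that for fixed $k\in\oneto{n}$ the random variables $Y_i\defeq x_k(i)^2-1$ (where $i$ ranges over the coordinates of $x_k$) are independent, since they are built from the independent family $(X_n(i,k))_{i\neq k}$, and centered, since the entries are standardized so that $\E x_k(i)^2=\V x_k(i)=1$. Moreover they are uniformly $\norm{\cdot}_p$-bounded for every $p\geq 2$, because $\E\abs{x_k(i)^2-1}^p\leq 2^{p-1}(\E\abs{x_k(i)}^{2p}+1)\leq 2^{p-1}(L_{2p}+1)$. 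The coefficients $\left(\frac{1}{\sqrt n}X_n^{(k)}-z\right)^{-1}(i,i)$ are measurable functions of the entries of the principal minor $X_n^{(k)}$, which form a family of random variables disjoint from, hence independent of, $(x_k(i))_{i\neq k}$. Thus $S'(n,k,z)$ is exactly of the form handled by Theorem~\ref{thm:largedev} $i)$, with $I$ the index set of the column entries and $K$ the index set of the minor entries, and $\abs{C(n,k,z)}=\frac1n\abs{S'(n,k,z)}$.

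Next I would bound the normalizing factor by noting that, since the diagonal contributes only a partial sum, Lemma~\ref{lem:RnBoundWIG} applied to $\frac{1}{\sqrt n}X_n^{(k)}$ gives
\[
R'(n,k,z)=\sqrt{\sum_{i}\bigabs{\left(\tfrac{1}{\sqrt n}X_n^{(k)}-z\right)^{-1}(i,i)}^2}\ \leq\ \sqrt{\sum_{i,j}\bigabs{\left(\tfrac{1}{\sqrt n}X_n^{(k)}-z\right)^{-1}(i,j)}^2}\ \leq\ \frac{\sqrt n}{\Im(z)},
\]
the same bound obtained for $R(n,k,z)$ in the proof of Lemma~\ref{lem:SummandB-WIG}. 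Applying Theorem~\ref{thm:largedev} $i)$ with $\varepsilon=1/4$ and $D=3$ then yields a constant $C_{1/4,3}\geq 0$ with
\[
\Prob\left(\abs{C(n,k,z)}>\frac{n^{1/4}\sqrt n}{n\,\Im(z)}\right)\leq\Prob\left(\abs{S'(n,k,z)}>n^{1/4}R'(n,k,z)\right)\leq\frac{C_{1/4,3}}{n^3}
\]
for every $n\in\N$ and $k\in\oneto{n}$. Since $\frac{n^{1/4}\sqrt n}{n}=n^{-1/4}\to 0$, a union bound over $k\in\oneto{n}$ exactly as in the proof of Lemma~\ref{lem:SummandA-WIG} gives $\Prob\bigl(\max_{k\in\oneto{n}}\abs{C(n,k,z)}>n^{-1/4}/\Im(z)\bigr)\leq C_{1/4,3}/n^2$, which is summable in $n$; Borel--Cantelli then yields $\max_{k\in\oneto{n}}\abs{C(n,k,z)}\to 0$ almost surely.

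I do not expect a genuine obstacle here, as the argument is structurally identical to that of Lemma~\ref{lem:SummandB-WIG}; the only points that need a moment of care are the verification that $Y_i=x_k(i)^2-1$ is centered (which uses the standardization assumption on the Wigner scheme) and that the coefficients depend only on the minor entries, so that the disjointness hypothesis $I\cap K=\emptyset$ of Theorem~\ref{thm:largedev} is genuinely satisfied.
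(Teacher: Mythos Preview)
Your proposal is correct and follows essentially the same approach as the paper: bound $R'(n,k,z)$ via Lemma~\ref{lem:RnBoundWIG}, apply Theorem~\ref{thm:largedev}~$i)$ with $\varepsilon=1/4$ and $D=3$, then take a union bound over $k$ and invoke Borel--Cantelli. You are in fact more careful than the paper in explicitly checking the hypotheses of Theorem~\ref{thm:largedev} (centeredness and uniform $\Lcal_p$-bounds of $x_k(i)^2-1$, and disjointness of the index sets $I$ and $K$), which the paper leaves implicit.
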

\begin{proof}
Using the terms $S'(n,k,z)$ and $R'(n,k,z)$ defined above, we	find by Lemma~\ref{lem:RnBoundWIG} that
\[
\abs{R'(n,k,z)} \leq \frac{\sqrt{n}}{\Im(z)},
\]
Therefore, using Theorem~\ref{thm:largedev} $i)$ with $\varepsilon=1/4$ and $D=3$, we obtain a constant $C_{\frac{1}{4},3}\geq 0$, such that for all $n\in\N$ and all $k\in\oneto{n}$:
\[
\Prob\left(\abs{C(n,k,z)} > \frac{n^{\frac{1}{4}}\sqrt{n}}{n}\right) \leq \Prob\left(\abs{S'(n,k,z)}>n^{\frac{1}{4}}R'(n,k,z)\right) \leq \frac{C_{\frac{1}{4},3}}{n^3}.
\]
Applying the union bound as in the proof of Lemma~\ref{lem:SummandA-WIG} concludes the statement.
\end{proof}

\begin{lemma}
\label{lem:SummandD-WIG}
In \eqref{eq:ABCD-WIG}, $\max_{k\in\oneto{n}}\abs{D(n,k,z)}\to 0$ almost surely as $n\to\infty$.
\end{lemma}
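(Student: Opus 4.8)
The plan is to recognize that $D(n,k,z)$ is, up to the prefactor $1/n$, exactly the difference between the trace of the resolvent of an Hermitian matrix and the trace of the resolvent of one of its principal minors, so that Corollary~\ref{cor:tracedifference} applies directly and in fact yields a deterministic bound.

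First I would rewrite the term. Using $s_n(z) = S_{\sigma_n}(z) = \tfrac{1}{n}\tr\bigl(\tfrac{1}{\sqrt{n}}X_n-z\bigr)^{-1}$ from Theorem~\ref{thm:StieltjesESD}~i) (equivalently Corollary~\ref{cor:inversediagonal}), and observing that $\tfrac{1}{\sqrt{n}}X_n^{(k)} = \bigl(\tfrac{1}{\sqrt{n}}X_n\bigr)^{(k)}$ is the $k$-th principal minor of the Hermitian matrix $\tfrac{1}{\sqrt{n}}X_n$, one gets
\[
D(n,k,z) = -\frac{1}{n}\tr\Bigl(\tfrac{1}{\sqrt{n}}X_n^{(k)}-z\Bigr)^{-1} + s_n(z) = \frac{1}{n}\left[\tr\Bigl(\tfrac{1}{\sqrt{n}}X_n-z\Bigr)^{-1} - \tr\Bigl(\tfrac{1}{\sqrt{n}}X_n^{(k)}-z\Bigr)^{-1}\right].
\]

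Next I would apply Corollary~\ref{cor:tracedifference} to the Hermitian matrix $\tfrac{1}{\sqrt{n}}X_n$ (in place of $X_n$ there) with $z=E+i\eta\in\C_+$, so $\eta=\Im(z)>0$. This gives, for every $k\in\oneto{n}$,
\[
\bigabs{\tr\Bigl(\tfrac{1}{\sqrt{n}}X_n-z\Bigr)^{-1} - \tr\Bigl(\tfrac{1}{\sqrt{n}}X_n^{(k)}-z\Bigr)^{-1}} \leq \frac{1}{\Im(z)},
\]
hence $\abs{D(n,k,z)} \leq \dfrac{1}{n\,\Im(z)}$.

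Finally, taking the maximum over $k\in\oneto{n}$ and letting $n\to\infty$ yields $\max_{k\in\oneto{n}}\abs{D(n,k,z)} \leq \dfrac{1}{n\,\Im(z)} \xrightarrow[n\to\infty]{} 0$, which holds surely, and in particular almost surely. I do not expect any genuine obstacle here: unlike the summands $A$, $B$, $C$, which required a tail estimate together with a Borel--Cantelli/union-bound argument, the term $D$ carries no randomness to control once Corollary~\ref{cor:tracedifference} is invoked — the matrix-analytic work (Schur complement inversion formula and the spectral bound on numerator and denominator) was already carried out in the proof of that corollary.
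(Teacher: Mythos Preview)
Your proof is correct and follows essentially the same route as the paper: rewrite $D(n,k,z)$ as $\tfrac{1}{n}$ times the difference of the resolvent traces of $\tfrac{1}{\sqrt{n}}X_n$ and its $k$-th principal minor, apply Corollary~\ref{cor:tracedifference} to obtain the deterministic bound $\abs{D(n,k,z)}\leq (n\Im(z))^{-1}$, and conclude. The paper's proof is the same, only more terse.
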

\begin{proof}
With Corollary~\ref{cor:tracedifference}, we obtain for any $n\in\N$ and $k\in\oneto{n}$ that $\abs{D(n,k,z)} \leq (n\Im(z))^{-1}$, so that
\[
\max_{k\in\oneto{n}}\abs{D(n,k,z)} \leq \frac{1}{n\Im(z)} \xrightarrow[n\to\infty]{} 0 \quad \text{almost surely.}
\]	
\end{proof}

\begin{theorem}
\label{thm:sufficient-WIG}
In above situation, we find for any fixed $z\in\C_+$ that 
\[
\max_{k\in\oneto{n}}\abs{\Omega_n^{(k)}(z)}\xrightarrow[n\to\infty]{} 0 \quad \text{almost surely}.
\]
\end{theorem}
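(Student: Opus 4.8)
The plan is to obtain the statement directly from the decomposition \eqref{eq:ABCD-WIG} together with the four lemmas just established, so essentially no new computation is required at this stage. First I would fix $z\in\C_+$ and record, for every $n\in\N$, the elementary bound
\[
\max_{k\in\oneto{n}}\abs{\Omega_n^{(k)}(z)} \leq \max_{k\in\oneto{n}}\abs{A(n,k)} + \max_{k\in\oneto{n}}\abs{B(n,k,z)} + \max_{k\in\oneto{n}}\abs{C(n,k,z)} + \max_{k\in\oneto{n}}\abs{D(n,k,z)},
\]
which follows since the maximum over $k$ of a finite sum of nonnegative numbers is at most the sum of the individual maxima.

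Next I would combine the four almost sure convergences. By Lemmas~\ref{lem:SummandA-WIG}, \ref{lem:SummandB-WIG}, \ref{lem:SummandC-WIG} and~\ref{lem:SummandD-WIG} there are measurable sets $G_1,G_2,G_3,G_4\subseteq\Omega$, each of probability $1$, on which $\max_{k\in\oneto{n}}\abs{A(n,k)}$, $\max_{k\in\oneto{n}}\abs{B(n,k,z)}$, $\max_{k\in\oneto{n}}\abs{C(n,k,z)}$ and $\max_{k\in\oneto{n}}\abs{D(n,k,z)}$, respectively, tend to $0$ as $n\to\infty$. Setting $G\defeq G_1\cap G_2\cap G_3\cap G_4$ we still have $\Prob(G)=1$, and on $G$ every term on the right-hand side of the displayed inequality tends to $0$; hence so does the left-hand side. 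This proves $\max_{k\in\oneto{n}}\abs{\Omega_n^{(k)}(z)}\to 0$ almost surely.

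Because the theorem reduces to this short assembly, there is no real obstacle left here; the substance has been pushed into the four preceding lemmas. For orientation: $A(n,k)$ is controlled by a crude eighth-moment Markov estimate followed by a union bound over $k$ and Borel--Cantelli; $B(n,k,z)$ and $C(n,k,z)$ are the genuinely delicate terms, handled by the quadratic form deviation bounds of Theorem~\ref{thm:largedev} (parts $ii)$ and $i)$), with the Frobenius-norm estimate of Lemma~\ref{lem:RnBoundWIG} supplying the normalization; and $D(n,k,z)$ is dispatched by the deterministic trace-perturbation inequality of Corollary~\ref{cor:tracedifference}. The only point that needs care -- and the closest thing to a pitfall -- is that each of the four convergences must hold on a \emph{single} full-measure set that is uniform in $k$, which is precisely why each lemma was stated with the maximum over $k$ already inside, so that the final intersection $G$ is again of full measure.
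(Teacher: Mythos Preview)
Your proposal is correct and follows exactly the paper's approach: the paper's proof simply states that the result follows directly from the decomposition \eqref{eq:ABCD-WIG} together with Lemmas~\ref{lem:SummandA-WIG}--\ref{lem:SummandD-WIG}. Your write-up spells out the triangle inequality for the maxima and the intersection of the four full-measure sets, which is a welcome clarification but not a different argument.
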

\begin{proof}
This follows directly by the decomposition \eqref{eq:ABCD-WIG} with Lemma~\ref{lem:SummandA-WIG}, Lemma~\ref{lem:SummandB-WIG}, Lemma~\ref{lem:SummandC-WIG} and Lemma~\ref{lem:SummandD-WIG}.
\end{proof}

\section{The Marchenko-Pastur Law}
\label{sec:MPByStieltjes}
Again, we follow the general strategy outlined in Section~\ref{sec:genstrat}.

\subsection*{Step 1: Self-Consistent Equation and Separation of Solutions.}

The first step of the proof consists of the following lemma:
\begin{lemma}
\label{lem:factsMPD}
Fix $y>0$. The Stieltjes transform of the Marchenko-Pastur distribution $\mu^y$ is given by
\[
\forall\ z\in\C_+:~S_{\mu^{y}}(z) = \frac{1-y-z+\sqrt{(z-1-y)^2-4y}}{2yz}.
\]
Consider the equation in $m\in\C$, where $z\in\C_+$ is fixed:
\begin{equation}
\label{eq:sceMPD} %self-consistent equation semicircle distribution
m = \frac{1}{1-z-y-yzm}	
\end{equation}
Then the following statements hold:
\begin{enumerate}[i)]
\item The solutions to \eqref{eq:sceMPD} are given by
\[
m_{+,-} = \frac{1-y-z\pm \sqrt{(1-y-z)^2-4yz}}{2yz}.
\]
\item $S_{\mu^{y}}(z)$ is the positive branch of the solutions to \eqref{eq:sceMPD}, that is, $S_{\mu^{y}}(z) = m_+$.	
\item For the denominator in \eqref{eq:sceMPD} it holds $\Im(1-z-y-yzm_{-})\geq -\frac{1}{2}\Im(z)$.
\item If $\nu\in\Mcal_1([0,\infty))$, then for all $z\in\C_+$ we find
\[
\Im(1-z-y-yz S_{\nu}(z)) \leq -\Im(z)
\]
In particular, if $S_{\nu}(z)$ satisfies \eqref{eq:sceMPD}, we must have $S_{\nu}(z)=S_{\mu^y}(z)$.
\end{enumerate}
\end{lemma}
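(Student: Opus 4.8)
The plan is to mirror exactly the structure of Lemma~\ref{lem:factsSCD}, since the Marchenko-Pastur setting is formally analogous. For the explicit formula of $S_{\mu^y}$, I would cite the corresponding derivation in \autocite{BaiSi} (as is done for the semicircle case), or alternatively verify directly that the claimed function satisfies \eqref{eq:sceMPD}, lies in $\C_+$ for $z\in\C_+$, and behaves like $-1/z$ as $\Im(z)\to\infty$, which pins down the correct measure via Corollary~\ref{cor:invert}. For statement $i)$, I would simply rearrange \eqref{eq:sceMPD}: multiplying through by $1-z-y-yzm$ gives the quadratic $yzm^2 + (z+y-1)m + 1 = 0$, whose roots are $m_{\pm} = \frac{1-y-z\pm\sqrt{(1-y-z)^2-4yz}}{2yz}$, using the convention that $\sqrt{\cdot}$ denotes the branch of the square root with non-negative imaginary part. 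Statement $ii)$ then follows by comparing the explicit formula for $S_{\mu^y}(z)$ with $m_+$ (noting $(z-1-y)^2 = (1-y-z)^2$), or by an imaginary-part argument: the positive branch is the one whose imaginary part has the correct sign for a Stieltjes transform of a probability measure, which we know from Lemma~\ref{lem:stieltjesprops} $iii)$.

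For statement $iii)$, I would substitute $m_-$ into the denominator $1-z-y-yzm_-$. Observe that $1-z-y-yzm_- = 1/m_-$ by the defining equation \eqref{eq:sceMPD}, but more directly: plugging in,
\[
1 - z - y - yz\cdot\frac{1-y-z-\sqrt{(1-y-z)^2-4yz}}{2yz} = \frac{1-y-z + \sqrt{(1-y-z)^2-4yz}}{2}.
\]
Taking imaginary parts, $\Im(1-z-y-yzm_-) = -\tfrac{1}{2}\Im(z) + \tfrac{1}{2}\Im\sqrt{(1-y-z)^2-4yz} \geq -\tfrac{1}{2}\Im(z)$, since the square root has non-negative imaginary part by our branch convention. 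This is the exact analogue of the computation in Lemma~\ref{lem:factsSCD} $iii)$.

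For statement $iv)$, let $\nu\in\Mcal_1([0,\infty))$ and $z\in\C_+$. I would compute $\Im(1-z-y-yzS_\nu(z)) = -\Im(z) - y\,\Im(zS_\nu(z))$, so it suffices to show $\Im(zS_\nu(z))\geq 0$. Writing $z=E+i\eta$ with $\eta>0$ and using Lemma~\ref{lem:stieltjesprops} $i)$,
\[
zS_\nu(z) = \int_{[0,\infty)}\frac{z}{x-z}\,\nu(\de x) = \int_{[0,\infty)}\left(\frac{x}{x-z}-1\right)\nu(\de x),
\]
and $\Im\frac{x}{x-z} = \frac{x\eta}{(x-E)^2+\eta^2}\geq 0$ precisely because $x\geq 0$ on the support of $\nu$ — this is the one place where the restriction to $[0,\infty)$ genuinely matters (the semicircle proof needed no such restriction because there the analogous term was just $-S_\nu(z)$). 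Hence $\Im(zS_\nu(z))\geq 0$ and the bound follows. The final ``in particular'' clause is then immediate: if $S_\nu(z)$ solves \eqref{eq:sceMPD}, it equals $m_+$ or $m_-$; but $m_-$ is excluded by $iii)$ (it would give $\Im$ of the denominator $\geq -\tfrac12\Im(z)$, contradicting $iv)$'s bound of $\leq -\Im(z)$ unless $\Im(z)=0$), so $S_\nu(z)=m_+=S_{\mu^y}(z)$ by $ii)$.

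The main obstacle is minor but worth care: getting the branch-of-square-root bookkeeping consistent between the explicit formula for $S_{\mu^y}$, the two roots $m_\pm$, and the sign estimates in $iii)$ and $iv)$ — in particular making sure that $\sqrt{(z-1-y)^2-4y}$ and $\sqrt{(1-y-z)^2-4yz}$ are reconciled (they differ, and one must check the intended formula in the lemma statement is internally consistent, possibly via a typo-tolerant reading as in the semicircle case). Everything else is routine algebra parallel to Lemma~\ref{lem:factsSCD}.
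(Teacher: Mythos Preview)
Your proposal is correct and matches the paper's proof essentially step for step: citation of \autocite{BaiSi} for the explicit $S_{\mu^y}$, solving the quadratic for $i)$, the substitution-and-imaginary-part computation for $iii)$, and the $\Im(zS_\nu(z))\geq 0$ argument for $iv)$. Your one stated obstacle actually dissolves: the paper verifies $ii)$ by the algebraic identity $(1-y-z)^2-4yz=(z-1-y)^2-4y$, so the two radicands are equal and no branch reconciliation or typo-tolerance is needed.
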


\begin{proof}
The Stieltjes transform $S_{\mu^y}$ is derived in Lemma 3.11 in \autocite{BaiSi}.
Statement $i)$ is verified by solving the quadratic equation \eqref{eq:sceMPD}. For $ii)$, we calculate $(1-y-z)^2 - 4yz = z^2 - 2yz + y^2 -2y-2z +1 = (z-y-1)^2-4y$. For $iii)$, we calculate
\begin{align*}
&\Im(1-z-y-yzm_{-})\\
&=\Im\left(1-z-y-yz\frac{1-y-z-\sqrt{(1-y-z)^2-4yz}}{2yz}\right)\\
&=\Im\left(\frac{1-y-z+\sqrt{(1-y-z)^2-4yz}}{2}\right)\\
&=\frac{1}{2}\left(-\Im(z)+\underbrace{\Im\sqrt{(1-y-z)^2-4yz}}_{\geq 0 \text{ per definition of } \sqrt{\cdot}}\right) \geq -\frac{\Im(z)}{2}.
\end{align*}
For $iv)$, note that with $z=E+i\eta$, where $E\in\R$ and $\eta>0$, we find
\begin{align*}	
\Im(z S_{\nu}(z)) &= \Re(z)\Im S_{\nu}(z) + \Im(z)\Re S_{\nu}(z)\\
&= E\int\frac{\eta}{(x-E)^2+\eta^2}\nu(\de x) + \eta \int \frac{x-E}{(x-E)^2+\eta^2}\nu(\de x)\\
&=\eta \int\frac{x}{(x-E)^2+\eta^2}\nu(\de x).
\end{align*}
Therefore,
\begin{align*}
\Im(1-z-y-yz S_{\nu}(z)) & = -\eta - y\eta \int_{[0,\infty)}\frac{x}{(x-E)^2 + \eta^2}\nu(\de x)\\
&= -\eta\left(1+y\int_{[0,\infty)}\frac{x}{(x-E)^2+\eta^2}\nu(\de x)\right)\\
& \leq -\eta.	
\end{align*}
The very last statement follows with part $iii)$.

\end{proof}

\subsection*{Step 2: Derivation of the Error Term}

By Corollary~\ref{cor:inversediagonal}, the Stieltjes transform $s_n$ of an MP matrix $\frac{1}{n}X_nX_n^T$ is given by
\begin{equation}
\label{eq:snMP}
s_n(z) = \frac{1}{p}\sum_{k=1}^p \frac{1}{\frac{1}{n}\alpha_k^T\alpha_k-z-\frac{1}{n^2}\alpha_k^T X_n^{(k)T}\left(\frac{1}{n}X_n^{(k)}X_n^{(k)T}-z\right)^{-1}X_n^{(k)}\alpha_k},
\end{equation}
where $\alpha_k^T$ is the $k$-th row of $X_n$ (note that $\alpha_k$ also depends on $n$, which we drop from the notation), $X_n^{(k)}$ is $X_n$ with $k$-th row removed (thus a $(p-1)\times n$-matrix). 
The desired denominator in each summand of \eqref{eq:snMP} is 
\[
1 - z - y_n - y_n z s_n(z),
\]
stemming from the self-consistent equation \eqref{eq:sceMPD}, where $y_n\defeq p/n$ and it is assumed that there exists a $y\in(0,\infty)$ such that $y_n\to y$. (It is favorable to work with $y_n$ instead of $y$, since this leads to a cancellation within the error term $\Omega_n^{(k)}(z)$ we define below, see the proof of Lemma~\ref{lem:SummandD-MP} below.) In the $k$-th summand for $k\in\{1,\ldots,p\}$, we obtain the remainder term
\[
\Omega_n^{(k)}(z) = \frac{1}{n}\alpha_k^T\alpha_k - 1 - \frac{1}{n^2}\alpha_k^T X_n^{(k)T}\left(\frac{1}{n}X_n^{(k)}X_n^{(k)T}-z\right)^{-1}X_n^{(k)}\alpha_k + y_n + y_n z s_n(z).
\]
Using \eqref{eq:splitfraction}, we conclude
\begin{equation}
\label{eq:snsplitMP}
s_n(z) = \frac{1}{1 - z - y_n - y_n z s_n(z)} - \delta_n(z)
\end{equation}
with
\[
\delta_n(z) = \frac{1}{p} \sum_{k=1}^p \frac{\Omega_n^{(k)}(z)}{(1-z-y_n-y_nzs_n(z))(1-z-y_n-y_n z s_n(z)+\Omega^{(k)}_n(z))}.
\]
The error term $\delta_n(z)$ can be bounded in absolute terms as follows: By Lemma~\ref{lem:factsMPD}, we find
\[
\forall\, n\in \N: \Im(1-z-y_n-y_n z s_n(z))\leq -\Im(z).
\]
If we assume 
\begin{equation}
\label{eq:omegasmallMP}	
\max_{k\in\oneto{p}}\abs{\Omega^{(k)}_n(z)} \leq \frac{1}{2}\Im(z)
\end{equation}
we may therefore conclude
\begin{align}
\abs{\delta_n(z)} &= \bigabs{\frac{1}{p} \sum_{k=1}^p \frac{\Omega_n^{(k)}(z)}{(1-z-y_n-y_nzs_n(z))(1-z-y_n-y_n z s_n(z)+\Omega^{(k)}_n(z))}}\notag\\
&\leq
\frac{1}{p} \sum_{k=1}^p\frac{\abs{\Omega_n^{(k)}}}{\Im(z)^2/2} \ \leq\  \frac{2}{\Im(z)^2} \max_{k\in\oneto{p}}\abs{\Omega^{(k)}_n}.\label{eq:deltaboundMP}
\end{align}

The following lemma puts our findings into perspective:
\begin{theorem}
\label{thm:reductionMPL}
In above situation, the following statements hold for any fixed $z\in\C_+$:
\begin{enumerate}[i)]
\item If in \eqref{eq:snsplitMP}, $\delta_n(z)\to 0$ in probability resp.\ almost surely, then $s_n(z)\to s(z)$ in probability resp. almost surely.
\item If $\max_{k\in\oneto{p}}\abs{\Omega_n^{(k)}(z)}\xrightarrow[n\to\infty]{} 0$ in probability resp.\ almost surely, then $\delta_n(z)\to 0$ in probability resp.\ almost surely.
\end{enumerate}
\end{theorem}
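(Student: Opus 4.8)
The plan is to mirror exactly the proof of Theorem~\ref{thm:reductionWIG}, since the structure of \eqref{eq:snsplitMP} is identical to that of \eqref{eq:snsplitWIG} up to the shape of the self-consistent equation. Statement $ii)$ is the easy half: it is an immediate consequence of the absolute bound \eqref{eq:deltaboundMP}, which was derived under the assumption \eqref{eq:omegasmallMP}. Concretely, if $\max_{k\in\oneto{p}}\abs{\Omega_n^{(k)}(z)}\to 0$ in probability (resp.\ almost surely), then eventually \eqref{eq:omegasmallMP} holds (along the relevant subsequence, resp.\ almost surely for $n$ large), so \eqref{eq:deltaboundMP} applies and gives $\abs{\delta_n(z)}\leq 2\Im(z)^{-2}\max_{k\in\oneto{p}}\abs{\Omega_n^{(k)}(z)}\to 0$ in the same mode. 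One small bookkeeping point: $p=p_n\to\infty$ since $p/n\to y>0$, so "maximum over $k\in\oneto{p}$" is a genuine growing maximum, but this does not affect the argument because the bound \eqref{eq:deltaboundMP} is uniform in the number of summands.

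For statement $i)$, I would first prove the almost sure version and then deduce the in-probability version by the subsequence principle (Lemma~\ref{lem:subsequence}), exactly as in the proof of Theorem~\ref{thm:reductionWIG}. Fix $z\in\C_+$ and let $A$ be a measurable set of probability $1$ on which $\delta_n(z)\to 0$. Fix $\omega\in A$ and write $s_n^{\omega}(z)$ for the realization of $s_n(z)$. The sequence $(s_n^{\omega}(z))_n$ is bounded in $\C$ (by $\Im(z)^{-1}$, via Lemma~\ref{lem:stieltjesprops} $vi)$, since $\sigma_n$ is a probability measure), so by Bolzano--Weierstrass any subsequence has a further subsequence $(s_n^{\omega}(z))_{n\in I}$ converging to some $t\in\C$. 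Passing to the limit along $I$ in \eqref{eq:snsplitMP} and using $\delta_n(z)\to 0$ yields that $t$ solves the self-consistent equation $t=(1-z-y-yzt)^{-1}$, i.e.\ \eqref{eq:sceMPD} with $y_n$ replaced by its limit $y$ --- here one uses $y_n\to y$ and the continuity of the map $(m,y')\mapsto (1-z-y'-y'zm)^{-1}$ near the limit point (noting the denominator stays bounded away from $0$ because its imaginary part is $\leq -\Im(z)$ by Lemma~\ref{lem:factsMPD}). Moreover, since $\Im(1-z-y_n-y_nzs_n^{\omega}(z))\leq -\Im(z)$ for every $n$ by Lemma~\ref{lem:factsMPD} $iv)$ (applied to the probability measure $\sigma_n$ supported on $[0,\infty)$, as $V_n=\frac1n X_nX_n^T$ is positive semi-definite), this inequality is preserved in the limit: $\Im(1-z-y-yzt)\leq -\Im(z)$. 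By Lemma~\ref{lem:factsMPD} $iii)$ the ``$-$'' branch $m_-$ violates this, so $t=m_+=S_{\mu^y}(z)=s(z)$. Every subsequence of $(s_n^{\omega}(z))_n$ thus has a sub-subsequence converging to $s(z)$, hence $s_n^{\omega}(z)\to s(z)$; since this holds for all $\omega\in A$, we get $s_n(z)\to s(z)$ almost surely.

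The in-probability statement then follows verbatim from the argument at the end of the proof of Theorem~\ref{thm:reductionWIG}: given any subsequence $I\subseteq\N$, extract $J\subseteq I$ along which $\delta_n(z)\to 0$ almost surely (possible since $\delta_n(z)\to 0$ in probability), apply the almost-sure result just proved to conclude $s_n(z)\to s(z)$ along $J$ almost surely, and invoke Lemma~\ref{lem:subsequence}.

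The only genuinely new wrinkle compared to the Wigner case --- and the point I would expect to require the most care --- is the presence of the sequence $y_n=p/n$ instead of a fixed constant in the self-consistent equation \eqref{eq:snsplitMP}. In the Wigner proof the limiting equation is literally the equation the subsequential limit $t$ satisfies; here one must carry the extra term $y_n\to y$ through the limit, which is harmless but should be stated explicitly. Everything else --- boundedness of $s_n(z)$, the imaginary-part inequality, separation of the two solution branches --- is supplied directly by Lemma~\ref{lem:factsMPD}, so no new estimates are needed.
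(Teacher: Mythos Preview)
Your proposal is correct and follows essentially the same approach as the paper: statement $ii)$ via the bound \eqref{eq:deltaboundMP}, and statement $i)$ by the Bolzano--Weierstrass/subsequence argument together with Lemma~\ref{lem:factsMPD} to identify the limit, with the in-probability case deduced from the almost-sure case via Lemma~\ref{lem:subsequence}. The only slip is notational: the ESD in the Marchenko--Pastur section is $\mu_n$, not $\sigma_n$; otherwise your argument (including the explicit handling of $y_n\to y$) matches the paper's proof.
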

\begin{proof}
Statement $ii)$ follows with \eqref{eq:deltaboundMP}, using \eqref{eq:omegasmallMP}. 
We proceed to show statement $i)$ in the almost sure sense.
Fix $z\in\C_+$. Let $A$ be a measurable set with $\Prob(A)=1$, on which $\delta_n(z) \to 0$. Let $\omega\in A$ be arbitrary, and denote by $s^{\omega}_n(z)$ the realization of $s_n(z)$ at $\omega$. To show that $s^{\omega}_n(z)$ converges to $s(z)$, we show that any subsequence of $s^{\omega}_n(z)$ contains another subsequence that converges to $s(z)$. To this end, let $J\subseteq\N$ be a subsequence. Then $(s^{\omega}_n(z))_{n\in J}$ is a bounded sequence of complex numbers (with absolute bound $\Im(z)^{-1}>0$), therefore has a convergent subsequence $(s^{\omega}_n(z))_{n\in I}$, $I\subseteq J$, with some limit $t\in\C$ (Bolzano-Weierstrass). Also, as $n\to\infty$ we find $y_n\to y$. Therefore, considering \eqref{eq:snsplitMP}, $t$ satisfies
\[
t = \frac{1}{1-z-y-yzt}.
\]
For all realizations of the ESDs $\mu_n$ of $\frac{1}{n}X_nX_n^T$, $\mu_n([0,\infty))=1$, since the matrix has only non-negative spectrum. Therefore, by Lemma~\ref{lem:factsMPD},
\[
\forall\, n\in I: \Im(1-z-y-yz s^{\omega}_n(z))\leq -\Im(z),
\]
so also $\Im(1-z-y-yz t)\leq -\Im(z)$, hence $t=s(z)$ by Lemma~\ref{lem:factsMPD}. We have seen that any subsequence of $(s^{\omega}_n(z))_{n\in\N}$ has a subsequence which converges to $s(z)$. Therefore, $s_n(z)\to s(z)$ on $A$, that is, almost surely. The statement about convergence in probability can be proved verbatim as in the proof of Theorem~\ref{thm:reductionWIG}.
\end{proof}

\subsection*{Step 3: Analysis of the Error Term}

By Theorem~\ref{thm:reductionMPL}, it suffices to show that $\max_{k\in\oneto{p}}\abs{\Omega_n^{(k)}(z)}\xrightarrow[n\to\infty]{} 0$ in probability or almost surely, where
\begin{equation}
\label{eq:remainderMP}	
\Omega_n^{(k)}(z) = \frac{1}{n}\alpha_k^T\alpha_k - 1 - \frac{1}{n^2}\alpha_k^T X_n^{(k)T}\left(\frac{1}{n}X_n^{(k)}X_n^{(k)T}-z\right)^{-1}X_n^{(k)}\alpha_k + y_n + y_n z s_n(z).
\end{equation}
In this subsection, we show almost sure convergence. Note that
\begin{align}
\Omega_n^{(k)}(z)
&= \frac{1}{n}\alpha_k^T\alpha_k - 1\notag\\
&\quad\,  - \frac{1}{n^2} \sum_{i\neq j}^n \alpha_k(i) \left[ X_n^{(k)T}\left(\frac{1}{n}X_n^{(k)}X_n^{(k)T}-z\right)^{-1} X_n^{(k)}\right](i,j) \alpha_k(j)\notag\\
&\quad\, - \frac{1}{n^2} \sum_{i=1}^n (\alpha_k(i)^2-1) \left[ X_n^{(k)T}\left(\frac{1}{n}X_n^{(k)}X_n^{(k)T}-z\right)^{-1} X_n^{(k)}\right](i,i)\notag\\
&\quad  - \frac{1}{n^2} \tr \left[ X_n^{(k)T}\left(\frac{1}{n}X_n^{(k)}X_n^{(k)T}-z\right)^{-1} X_n^{(k)}\right] + y_n + y_n z s_n(z) + y_n + y_n z s_n(z)\notag\\
&=: A(n,k) + B(n,k,z)+ C(n,k,z) + D(n,k,z) \label{eq:ABCD-MP}.
\end{align}

We will analyze these four terms separately and show that their maxima over $k\in\{1,\ldots,p\}$ converge to zero almost surely. For $A$, $B$ and $C$ we will use Theorem~\ref{thm:largedev}, whereas for $D$ we will use Corollary~\ref{cor:tracedifference}. 

\begin{lemma}
\label{lem:SummandA-MP}
In \eqref{eq:ABCD-MP}, $\max_{k\in\oneto{p}}\abs{A(n,k)}\to 0$ almost surely as $n\to\infty$.
\end{lemma}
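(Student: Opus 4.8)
The plan is to mirror the corresponding Wigner lemma (Lemma~\ref{lem:SummandA-WIG}), adapting the argument to the rectangular MP setting. Recall that here $A(n,k) = \frac{1}{n}\alpha_k^T\alpha_k - 1 = \frac{1}{n}\sum_{i=1}^n X_n(k,i)^2 - 1 = \frac{1}{n}\sum_{i=1}^n (X_n(k,i)^2 - 1)$, using that the entries are standardized so $\E X_n(k,i)^2 = 1$. Thus $A(n,k)$ is a normalized sum of $n$ independent, centered random variables $Z_{k,i} \defeq X_n(k,i)^2 - 1$, and we want to show $\max_{k\in\oneto{p}}\abs{A(n,k)} \to 0$ almost surely.

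First I would invoke the quadratic-form deviation bound, Theorem~\ref{thm:largedev} $i)$, with the index set $W = \oneto{n}$, the empty ``conditioning'' set $K = \emptyset$, the family $Y_i \defeq X_n(k,i)^2 - 1$ (which is independent, centered, and uniformly $\Lcal_p$-bounded for every $p$ because the $X_n(k,i)$ have uniformly bounded absolute moments of all orders, hence so do their squares), and the constant coefficient tuple $B_i \equiv 1$. This yields, for any $\varepsilon, D > 0$, a constant $C_{\varepsilon,D}$ such that
\[
\Prob\left( \bigabs{\sum_{i=1}^n (X_n(k,i)^2 - 1)} > n^{\varepsilon}\sqrt{n} \right) \leq \frac{C_{\varepsilon,D}}{n^D}
\]
for all $n$ and all $k\in\oneto{p}$. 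Choosing $\varepsilon = 1/4$ and $D = 3$, and dividing through by $n$, we get $\Prob(\abs{A(n,k)} > n^{1/4}\sqrt{n}/n) = \Prob(\abs{A(n,k)} > n^{-1/4}) \leq C_{1/4,3}/n^3$.

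Next I would take the union bound over $k\in\oneto{p}$, exactly as in the proof of Lemma~\ref{lem:SummandA-WIG}: since $p = p_n$ satisfies $p/n \to y \in (0,\infty)$, we have $p = O(n)$, so
\[
\Prob\left( \max_{k\in\oneto{p}} \abs{A(n,k)} > n^{-1/4} \right) \leq \sum_{k\in\oneto{p}} \Prob\left( \abs{A(n,k)} > n^{-1/4} \right) \leq \frac{p\, C_{1/4,3}}{n^3} = O(n^{-2}),
\]
which is summable in $n$. By the Borel--Cantelli lemma, $\max_{k\in\oneto{p}}\abs{A(n,k)} \to 0$ almost surely, completing the proof. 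There is no real obstacle here; the only point requiring a moment's care is noticing that $A(n,k)$ is already in the exact form ($\sum b_i Y_i$ with $b_i \equiv 1$) required to apply Theorem~\ref{thm:largedev} $i)$ directly, with no conditioning set, and that the uniform moment bounds on the entries transfer to their squares via Lemma~\ref{lem:holder}. The bookkeeping with $p$ versus $n$ in the union bound is harmless because $p/n$ converges to a finite constant.
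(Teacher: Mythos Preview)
Your proof is correct and follows essentially the same route as the paper: apply Theorem~\ref{thm:largedev}~$i)$ with $B_i\equiv 1$, $\varepsilon=1/4$, $D=3$, then take a union bound over $k\in\oneto{p}$ and conclude via Borel--Cantelli. (Minor remark: the Wigner analogue, Lemma~\ref{lem:SummandA-WIG}, actually handles a single diagonal entry via a direct Markov bound rather than Theorem~\ref{thm:largedev}, so your opening sentence about ``mirroring'' it is slightly off, but your argument itself matches the paper's MP proof exactly.)
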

\begin{proof}
We employ Theorem~\ref{thm:largedev} $i)$ with $B_i\equiv 1$, $\varepsilon=1/4$ and $D=3$ to obtain a constant $C_{\frac{1}{4},3}\geq 0$ such that
\[
\forall\, n\in\N: \forall\, k\in\oneto{p}:\ \Prob\left(\bigabs{\frac{1}{n}\sum_{i\in\oneto{n}} (X_n(k,i)^2-1)} > \frac{n^{1/4}\sqrt{n}}{n}\right)\leq \frac{C_{\frac{1}{4},3}}{n^3}.
\]
Therefore, taking the union bound, we obtain for all $n\in\N$:
\[
\Prob\left(\max_{k
\in\oneto{p}}\bigabs{\frac{1}{n}\sum_{i\in\oneto{n}} (X_n(k,i)^2-1)} > \frac{n^{1/4}\sqrt{n}}{n}\right)\leq\frac{pC_{1/4,3}}{n^3}
\]
which converges to zero summably fast. This concludes the proof by Borel-Cantelli. 
\end{proof}
For $B(n,k,z)$, we define the terms
\[
S(n,k,z) \defeq
\sum_{i\neq j}^n \alpha_k(i) \left[X_n^{(k)T}\left(\frac{1}{n}X_n^{(k)}X_n^{(k)T}-z\right)^{-1} X_n^{(k)}\right](i,j) \alpha_k(j)	
\]
and 
\[
R(n,k,z) \defeq \sqrt{\sum_{i\neq j}^n \bigabs{\left[X_n^{(k)T}\left(\frac{1}{n}X_n^{(k)}X_n^{(k)T}-z\right)^{-1} X_n^{(k)}\right](i,j)}^2}
\]	
to employ Theorem~\ref{thm:largedev} $ii)$. To bound $R(n,k,z)$, we formulate the following lemma, which is taken from \cite{Fleermann:Heiny:2020}:
\begin{lemma}
\label{lem:RnBoundMP}
	Let $X$ be a $p\times n$ matrix with real-valued entries, $z\in\C_+$. Define
\begin{equation}
\label{eq:Fdefined}
F(X)\defeq X^T\left(\frac{1}{n}XX^T-z\right)^{-1}X.
\end{equation}
Then we obtain the following bound:
\[
\quad\sqrt{\sum_{i, j\in\oneto{n}}\abs{F_{ij}(X)}^2}\leq n\sqrt{p} \left(1+\frac{\abs{z}}{\Im(z)}\right)
\]
\end{lemma}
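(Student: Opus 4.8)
The plan is to diagonalize the $p\times p$ matrix $M\defeq\frac1n XX^T$, reduce the Frobenius norm of $F(X)$ to a sum over its $p$ eigenvalues, and then control each term by an elementary scalar estimate. Throughout, write $z=E+i\eta$ with $\eta=\Im(z)>0$. First I would take a singular value decomposition $X=U\Sigma V^T$ with $U\in\Mat{p}{\R}$, $V\in\Mat{n}{\R}$ orthogonal and $\Sigma$ the $p\times n$ matrix carrying the singular values $s_1,\ldots,s_{p\wedge n}\geq 0$, so that $M=U\diag(d_1,\ldots,d_p)U^T$ with $d_k\defeq s_k^2/n\geq 0$. Substituting and using orthogonality of $U$ and $V$ gives
\[
F(X)=X^T\left(\tfrac1n XX^T-z\right)^{-1}X=V\,\Sigma^T\left(\tfrac1n\Sigma\Sigma^T-z\right)^{-1}\Sigma\,V^T,
\]
where $\Sigma^T\left(\tfrac1n\Sigma\Sigma^T-z\right)^{-1}\Sigma$ is an $n\times n$ diagonal matrix with entries $\frac{s_k^2}{s_k^2/n-z}=n\frac{d_k}{d_k-z}$ for $k\leq p\wedge n$ and $0$ otherwise. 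Since the Frobenius norm is unitarily invariant, this yields
\[
\sum_{i,j\in\oneto{n}}\abs{F_{ij}(X)}^2=n^2\sum_{k=1}^{p\wedge n}\frac{d_k^2}{\abs{d_k-z}^2}\leq n^2\sum_{k=1}^{p}\frac{d_k^2}{\abs{d_k-z}^2}.
\]
(The same identity can be reached without the SVD by writing the left side as $\tr(F(X)^*F(X))$, using that $X$ is real so $F(X)^*=X^T(M-\bar z)^{-1}X$, applying cyclicity of the trace together with $XX^T=nM$ to obtain $n^2\tr\bigl(M(M-\bar z)^{-1}M(M-z)^{-1}\bigr)$, and then diagonalizing $M$.)

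The only genuinely non-routine ingredient is the pointwise bound: for every real $d\geq 0$ one has $\frac{d}{\abs{d-z}}\leq 1+\frac{\abs{z}}{\Im(z)}$. This follows at once from the triangle inequality $d\leq\abs{d-z}+\abs{z}$ together with $\abs{d-z}\geq\abs{\Im(d-z)}=\Im(z)$. Inserting this estimate into each of the $p$ summands above gives $\sum_{i,j}\abs{F_{ij}(X)}^2\leq n^2 p\left(1+\tfrac{\abs{z}}{\Im(z)}\right)^2$, and taking square roots yields the claimed inequality.

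I do not expect a serious obstacle; the argument is short. The points that need care are the bookkeeping in the matrix manipulation — tracking which products are $p\times p$ versus $n\times n$, and verifying that $\Sigma^T\left(\tfrac1n\Sigma\Sigma^T-z\right)^{-1}\Sigma$ really is diagonal of the stated form, which is immediate from the block structure of $\Sigma$ — and stating the scalar estimate for all $d\geq 0$ rather than only for the $d_k$, so that it applies uniformly irrespective of where the eigenvalues of $M$ sit.
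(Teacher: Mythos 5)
Your proof is correct and is essentially the paper's argument in explicit form: both reduce the Frobenius norm of the $n\times n$ matrix $F(X)$ to the $p$ quantities $n\,d_k/(d_k-z)$ attached to the eigenvalues of $\frac1n XX^T$ and bound each by $1+\abs{z}/\Im(z)$ via the identity $\frac{d}{d-z}=1+\frac{z}{d-z}$. The paper phrases this through the spectral identity $\mathrm{Spec}(X^T(XX^T-z)^{-1}X)\cup\{0\}=\mathrm{Spec}((XX^T-z)^{-1}XX^T)\cup\{0\}$ and the estimate $\fnorm{\cdot}\leq\sqrt{p}\,\opnorm{\cdot}$ rather than an explicit SVD, but the content is the same.
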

\begin{proof}
We recall that
\begin{enumerate}[a)]
\item $\text{Spectrum}(X^T(XX^T-z)^{-1}X)\cup\{0\} = \text{Spectrum}((XX^T-z)^{-1}XX^T)\cup\{0\}$,
\item $(XX^T-z)^{-1}XX^T= I + z (XX^T-z)^{-1}$,
\end{enumerate}
and that $\norm{\cdot}_F\leq\sqrt{m}\norm{\cdot}_{op}$ for $m\times m$ matrices, where $\norm{\cdot}_F$ denotes the Frobenius norm and $\norm{\cdot}_{op}$ denotes the operator norm. Therefore,

\begin{align*}
&\sqrt{\sum_{i,j\in\oneto{n}}\abs{F_{ij}(X)}^2} = n \bignorm{\frac{1}{n}X^T\left(\frac{1}{n}XX^T-z\right)^{-1} X}_F\notag\\
&= n \bignorm{\left(\frac{1}{n}XX^T-z\right)^{-1} \left(\frac{1}{n}XX^T\right)}_F= n \bignorm{I_{p} + z\left(\frac{1}{n}XX^T-z\right)^{-1} }_F\\
&\leq n\sqrt{p} \bignorm{I_{p} + z\left(\frac{1}{n}XX^T-z\right)^{-1}}_{op}\leq n\sqrt{p} \left(1+\frac{|z|}{\Im(z)}\right).
\end{align*}
\end{proof}

\begin{lemma}
\label{lem:SummandB-MP}
In \eqref{eq:ABCD-MP}, $\max_{k\in\oneto{p}}\abs{B(n,k,z)}\to 0$ almost surely as $n\to\infty$.
\end{lemma}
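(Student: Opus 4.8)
The plan is to follow the proof of Lemma~\ref{lem:SummandB-WIG} verbatim, replacing the crude resolvent estimate of Lemma~\ref{lem:RnBoundWIG} by the bilinear-form estimate of Lemma~\ref{lem:RnBoundMP}. First I would record that, with $S(n,k,z)$ and $R(n,k,z)$ as defined just above the lemma, we have $B(n,k,z) = -\frac{1}{n^2}S(n,k,z)$, and that the coefficients
\[
A_{i,j} \defeq F_{ij}(X_n^{(k)}) = \left[X_n^{(k)T}\left(\tfrac{1}{n}X_n^{(k)}X_n^{(k)T}-z\right)^{-1}X_n^{(k)}\right](i,j)
\]
are measurable functions of the entries of $X_n^{(k)}$ alone, hence — by the full independence of the entries of an MP scheme (Definition~\ref{def:MPscheme}) — are independent of the $k$-th row $\alpha_k = (X_n(k,1),\ldots,X_n(k,n))$. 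This places us in the setting of Theorem~\ref{thm:largedev} $ii)$ with $W = \oneto{p}\times\oneto{n}$, $Y_{(l,i)} = X_n(l,i)$, $I = \{k\}\times\oneto{n}$, and $K = (\oneto{p}\setminus\{k\})\times\oneto{n}$, which are disjoint.

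Next I would bound $R(n,k,z)$ by applying Lemma~\ref{lem:RnBoundMP} to the $(p-1)\times n$ matrix $X_n^{(k)}$:
\[
\abs{R(n,k,z)} \leq \sqrt{\sum_{i,j\in\oneto{n}}\abs{F_{ij}(X_n^{(k)})}^2} \leq n\sqrt{p-1}\left(1+\frac{\abs{z}}{\Im(z)}\right) \leq n\sqrt{p}\left(1+\frac{\abs{z}}{\Im(z)}\right).
\]
Then Theorem~\ref{thm:largedev} $ii)$ with $\varepsilon = 1/4$ and $D = 3$ furnishes a constant $C_{\frac14,3}\geq 0$ such that, for all $n\in\N$ and all $k\in\oneto{p}$,
\[
\Prob\left(\abs{B(n,k,z)} > \frac{n^{1/4}\sqrt{p}\,(1+\abs{z}/\Im(z))}{n}\right) \leq \Prob\left(\abs{S(n,k,z)} > n^{1/4}R(n,k,z)\right) \leq \frac{C_{\frac14,3}}{n^3}.
\]

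Finally I would take the union bound over $k\in\oneto{p}$ exactly as in the proof of Lemma~\ref{lem:SummandA-WIG}: since $p/n\to y$, the threshold $n^{1/4}\sqrt{p}\,(1+\abs{z}/\Im(z))/n = O(n^{-1/4})$ tends to $0$, while $p\cdot C_{\frac14,3}/n^3 = O(n^{-2})$ is summable, so Borel-Cantelli gives $\max_{k\in\oneto{p}}\abs{B(n,k,z)}\to 0$ almost surely. I do not anticipate a genuine obstacle; the only points needing care are the normalization bookkeeping — the prefactor $1/n^2$ of $B$ against the $n\sqrt{p}\sim n^{3/2}$ growth of $R$, leaving a net decay $n^{\varepsilon-1/2}$ — and checking the independence/disjointness hypotheses of Theorem~\ref{thm:largedev}, which hold precisely because deleting the $k$-th row separates $\alpha_k$ from $X_n^{(k)}$.
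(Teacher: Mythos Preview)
Your proposal is correct and follows essentially the same route as the paper's proof: bound $R(n,k,z)$ via Lemma~\ref{lem:RnBoundMP}, apply Theorem~\ref{thm:largedev}~$ii)$, then union bound and Borel--Cantelli. The only cosmetic difference is your choice $\varepsilon=1/4$ versus the paper's $\varepsilon=1/8$ (either works, since any $\varepsilon<1/2$ makes the threshold $n^{\varepsilon}\sqrt{p}/n\to 0$), and you are more explicit than the paper in verifying the disjointness and independence hypotheses of Theorem~\ref{thm:largedev}.
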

\begin{proof}
Using the terms $S(n,k,z)$ and $R(n,k,z)$ defined above, we	find by Lemma~\ref{lem:RnBoundMP} that
\[
\abs{R(n,k,z)} \leq n\sqrt{p}c(z),
\]
where $c(z)=1+\abs{z}/\Im(z)$.
Therefore, using Theorem~\ref{thm:largedev} $ii)$ with $\varepsilon=1/8$ and $D=3$, we obtain a constant $C_{\frac{1}{8},3}\geq 0$, such that for all $n\in\N$ and all $k\in\oneto{p}$:
\[
\Prob\left(\abs{B(n,k,z)} > \frac{n^{\frac{1}{8}}n\sqrt{p}}{n^2}c(z)\right) \leq \Prob\left(\abs{S(n,k,z)}>n^{\frac{1}{8}}R(n,k,z)\right) \leq \frac{C_{\frac{1}{8},3}}{n^3}.
\]
Using the union bound as in the proof of Lemma~\ref{lem:SummandA-MP} concludes the statement.
\end{proof}
For $C(n,k,z)$, we define the terms
\[
S'(n,k,z) \defeq
\sum_{i\in\oneto{n}} (\alpha_k(i)^2-1) \left[X_n^{(k)T}\left(\frac{1}{n}X_n^{(k)}X_n^{(k)T}-z\right)^{-1} X_n^{(k)}\right](i,i) 	
\]
and 
\[
R'(n,k,z) \defeq \sqrt{\sum_{i\in\oneto{n}} \bigabs{\left[X_n^{(k)T}\left(\frac{1}{n}X_n^{(k)}X_n^{(k)T}-z\right)^{-1} X_n^{(k)}\right](i,i)}^2}
\]	
to employ Theorem~\ref{thm:largedev} $i)$.

\begin{lemma}
\label{lem:SummandC-MP}
In \eqref{eq:ABCD-MP}, $\max_{k\in\oneto{p}}\abs{C(n,k,z)}\to 0$ almost surely as $n\to\infty$.
\end{lemma}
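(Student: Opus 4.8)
The plan is to mirror exactly the proof of Lemma~\ref{lem:SummandC-WIG}, since $C(n,k,z)$ here has the same structure as there: it is a sum of centered random variables $\alpha_k(i)^2-1$ weighted by the diagonal entries of a resolvent-type matrix, divided by a normalizing power of $n$. First I would invoke Lemma~\ref{lem:RnBoundMP} (with $F(X_n^{(k)})$ as defined there) to bound the deviation normalizer. Note that $R'(n,k,z)$, being the square root of a sum over only the diagonal entries, is dominated by the full Frobenius-type sum, so Lemma~\ref{lem:RnBoundMP} yields
\[
\abs{R'(n,k,z)} \leq \sqrt{\sum_{i,j\in\oneto{n}}\bigabs{F_{ij}(X_n^{(k)})}^2} \leq n\sqrt{p}\,c(z),
\]
where $c(z) = 1 + \abs{z}/\Im(z)$.

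Next I would apply Theorem~\ref{thm:largedev}~$i)$ to the family $(\alpha_k(i))_{i\in\oneto{n}} = (X_n(k,i))_{i\in\oneto{n}}$, which is independent and centered with uniformly bounded absolute moments of all orders by the MP-scheme assumptions; here the role of the index set $I$ is played by $\oneto{n}$ and the role of $K$ by the indices of the rows of $X_n^{(k)}$ (all rows except the $k$-th), so that the weight functions $B_i[Y_K] = F_{ii}(X_n^{(k)})$ are measurable functions of the independent-of-$\alpha_k$ entries. Choosing $\varepsilon = 1/8$ and $D = 3$ gives a constant $C_{\frac{1}{8},3}\geq 0$ such that for all $n\in\N$ and all $k\in\oneto{p}$:
\[
\Prob\left(\abs{C(n,k,z)} > \frac{n^{\frac{1}{8}}n\sqrt{p}}{n^2}c(z)\right) \leq \Prob\left(\abs{S'(n,k,z)} > n^{\frac{1}{8}}R'(n,k,z)\right) \leq \frac{C_{\frac{1}{8},3}}{n^3}.
\]

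Then I would take the union bound over $k\in\oneto{p}$ exactly as in Lemma~\ref{lem:SummandA-MP}, observing that $p = p_n \sim yn$ and that $n^{\frac{1}{8}}n\sqrt{p}/n^2 = O(n^{-3/8})\to 0$, so that
\[
\Prob\left(\max_{k\in\oneto{p}}\abs{C(n,k,z)} > \frac{n^{\frac{1}{8}}n\sqrt{p}}{n^2}c(z)\right) \leq \frac{p\,C_{\frac{1}{8},3}}{n^3},
\]
which is summable in $n$. An application of the Borel-Cantelli lemma then yields $\max_{k\in\oneto{p}}\abs{C(n,k,z)}\to 0$ almost surely. I do not anticipate a genuine obstacle: the only point requiring a moment's care is checking that the diagonal bound for $R'$ really follows from Lemma~\ref{lem:RnBoundMP} (it does, trivially, since we are summing fewer nonnegative terms), and that the exponents balance so that the threshold tends to zero while the probability bound stays summable even after multiplying by $p\sim yn$ — choosing $D=3$ comfortably absorbs this.
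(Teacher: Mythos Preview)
Your proof is correct and essentially identical to the paper's: bound $R'(n,k,z)$ via Lemma~\ref{lem:RnBoundMP}, apply Theorem~\ref{thm:largedev}~$i)$ with $\varepsilon=1/8$ and $D=3$, take a union bound over $k\in\oneto{p}$, and conclude by Borel--Cantelli. One minor slip: the family to which Theorem~\ref{thm:largedev}~$i)$ is applied should be $(\alpha_k(i)^2-1)_{i\in\oneto{n}}$, not $(\alpha_k(i))_{i\in\oneto{n}}$ --- these are the centered, independent, uniformly $\Lcal_p$-bounded variables actually appearing in $S'(n,k,z)$ (as you yourself correctly identified in your first paragraph).
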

\begin{proof}
Using the terms $S'(n,k,z)$ and $R'(n,k,z)$ defined above, we	find by Lemma~\ref{lem:RnBoundMP} that
\[
\abs{R'(n,k,z)} \leq n\sqrt{p}c(z),
\]
where $c(z)=1+\abs{z}/\Im(z)$.
Therefore, using Theorem~\ref{thm:largedev} $i)$ with $\varepsilon=1/8$ and $D=3$, we obtain a constant $C_{\frac{1}{8},3}\geq 0$, such that for all $n\in\N$ and all $k\in\oneto{p}$:
\[
\Prob\left(\abs{C(n,k,z)} > \frac{n^{\frac{1}{8}}n\sqrt{p}}{n^2}c(z)\right) \leq \Prob\left(\abs{S'(n,k,z)}>n^{\frac{1}{8}}R'(n,k,z)\right) \leq \frac{C_{\frac{1}{8},3}}{n^3}.
\]
Using the union bound as in the proof of Lemma~\ref{lem:SummandA-MP} concludes the statement.
\end{proof}

\begin{lemma}
\label{lem:SummandD-MP}
In \eqref{eq:ABCD-MP}, $\max_{k\in\oneto{p}}\abs{D(n,k,z)}\to 0$ almost surely as $n\to\infty$.
\end{lemma}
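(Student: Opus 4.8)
The plan is to show that $D(n,k,z)$ is in fact bounded \emph{deterministically} by a quantity of order $1/n$ which does not depend on $k$; the almost sure convergence is then immediate. The cancellation that makes this work is exactly the one referred to in Step~2, and it is produced by having worked with $y_n=p/n$ rather than with the limit $y$.

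First I would rewrite the trace term in $D(n,k,z)$ using the cyclic invariance of the trace:
\[
\tr\left[X_n^{(k)T}\left(\tfrac1n X_n^{(k)}X_n^{(k)T}-z\right)^{-1}X_n^{(k)}\right]=\tr\left[\left(\tfrac1n X_n^{(k)}X_n^{(k)T}-z\right)^{-1}X_n^{(k)}X_n^{(k)T}\right].
\]
Since deleting the $k$-th row of $X_n$ deletes the $k$-th row and column of $X_nX_n^T$, the matrix $\tfrac1n X_n^{(k)}X_n^{(k)T}$ is precisely the $k$-th principal minor $V_n^{(k)}$ of $V_n=\tfrac1n X_nX_n^T$. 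Applying the resolvent identity $(A-z)^{-1}A=I+z(A-z)^{-1}$ (as already used in the proof of Lemma~\ref{lem:RnBoundMP}) with $A=V_n^{(k)}$, a $(p-1)\times(p-1)$ matrix, then gives
\[
\frac{1}{n^2}\tr\left[X_n^{(k)T}\left(\tfrac1n X_n^{(k)}X_n^{(k)T}-z\right)^{-1}X_n^{(k)}\right]=\frac{p-1}{n}+\frac{z}{n}\tr\left(V_n^{(k)}-z\right)^{-1}.
\]

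Next I would substitute $y_n=p/n$ and $s_n(z)=\tfrac1p\tr(V_n-z)^{-1}$ into $D(n,k,z)=-\tfrac{1}{n^2}\tr[\cdots]+y_n+y_nzs_n(z)$: the term $-\tfrac{p-1}{n}$ combines with $y_n=\tfrac pn$ to leave $\tfrac1n$, while $y_nzs_n(z)=\tfrac zn\tr(V_n-z)^{-1}$, so that
\[
D(n,k,z)=\frac1n+\frac zn\left[\tr(V_n-z)^{-1}-\tr\left(V_n^{(k)}-z\right)^{-1}\right].
\]
Finally, Corollary~\ref{cor:tracedifference} applied to the real symmetric (hence Hermitian) matrix $V_n$ bounds the bracket by $1/\Im(z)$, uniformly in $k$, whence
\[
\max_{k\in\oneto{p}}\abs{D(n,k,z)}\le\frac1n\left(1+\frac{\abs{z}}{\Im(z)}\right)\xrightarrow[n\to\infty]{}0,
\]
which in particular holds almost surely. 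I do not expect a genuine obstacle here: the whole content is the algebraic identity above, and the only points needing care are tracking the cancellation $-\tfrac{p-1}{n}+y_n=\tfrac1n$ and correctly identifying $\tfrac1n X_n^{(k)}X_n^{(k)T}$ as a principal minor of $V_n$, so that Corollary~\ref{cor:tracedifference} applies verbatim.
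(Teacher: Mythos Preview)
Your proof is correct and follows essentially the same route as the paper: both use the cyclic trace identity together with the resolvent relation $(A-z)^{-1}A=I+z(A-z)^{-1}$ to obtain the exact expression $D(n,k,z)=\tfrac{1}{n}+\tfrac{z}{n}\bigl[\tr(V_n-z)^{-1}-\tr(V_n^{(k)}-z)^{-1}\bigr]$, and then bound the bracket by $1/\Im(z)$ via Corollary~\ref{cor:tracedifference}. Your explicit identification of $\tfrac{1}{n}X_n^{(k)}X_n^{(k)T}$ as the $k$-th principal minor of $V_n$ is precisely what justifies the application of Corollary~\ref{cor:tracedifference}; the paper handles this point only with the parenthetical remark that the symbol $X_n^{(k)}$ there refers to a different construction.
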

\begin{proof}
With the observations a) and b) in the proof of Lemma~\ref{lem:RnBoundMP} and setting $X\defeq X_n^{(k)}$, we calculate
\begin{align*}
&-\frac{1}{n^2}\tr \left[X^T\left(\frac{1}{n}XX^T-z\right)^{-1}X\right]
= -\frac{1}{n^2} \tr\left[\left(\frac{1}{n}XX^T-z\right)^{-1}XX^T\right]\\
&= -\frac{1}{n} \tr\left[I_{p-1} + z \left(\frac{1}{n}XX^T-z\right)^{-1}\right]
=-\frac{p}{n} + \frac{1}{n} - \frac{z}{n}\tr\left(\frac{1}{n}XX^T-z\right)^{-1}
\end{align*}
Hence, using that $y_n=p/n$ and with Corollary~\ref{cor:tracedifference} (Note that our construction of $X_n^{(k)}$ differs from that in the corollary), we obtain
\begin{align*}
\abs{D(n,k,z)} &= \bigabs{ -\frac{p}{n} + \frac{1}{n} - \frac{z}{n}\tr\left(\frac{1}{n}X_n^{(k)}X_n^{(k)T}-z\right)^{-1} + y_n + y_n z\frac{1}{p}\tr\left(\frac{1}{n}X_nX_n^T-z\right)^{-1}}\\
&\leq \frac{1}{n} + \frac{\abs{z}}{n\Im(z)}.
\end{align*}
Since this bound holds uniformly for all $k\in\{1,\ldots,p\}$, it follows that 
\[
\max_{k\in\oneto{p}}\abs{B(n,k)} \leq \frac{1}{n} + \frac{\abs{z}}{n\Im(z)} \xrightarrow[n\to\infty]{} 0 \quad \text{surely.}
\]
	
\end{proof}

\begin{theorem}
\label{thm:sufficient}
In above situation, we find for any fixed $z\in\C_+$ that 
\[
\max_{k\in\oneto{p}}\abs{\Omega_n^{(k)}(z)}\xrightarrow[n\to\infty]{} 0 \quad \text{almost surely}.
\]
\end{theorem}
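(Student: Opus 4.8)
The plan is to read off the conclusion from the decomposition \eqref{eq:ABCD-MP}, exactly as was done in the Wigner case (Theorem~\ref{thm:sufficient-WIG}). Recall that
\[
\Omega_n^{(k)}(z) = A(n,k) + B(n,k,z) + C(n,k,z) + D(n,k,z),
\]
so by the triangle inequality
\[
\max_{k\in\oneto{p}}\abs{\Omega_n^{(k)}(z)} \leq \max_{k\in\oneto{p}}\abs{A(n,k)} + \max_{k\in\oneto{p}}\abs{B(n,k,z)} + \max_{k\in\oneto{p}}\abs{C(n,k,z)} + \max_{k\in\oneto{p}}\abs{D(n,k,z)}.
\]
First I would note the elementary fact that if finitely many sequences of non-negative random variables each converge to $0$ almost surely, then so does their sum, since a finite union of $\Prob$-null sets is again a $\Prob$-null set. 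This reduces the theorem to the almost sure vanishing of each of the four maxima on the right-hand side.

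Then I would simply invoke the four preparatory lemmas in turn: Lemma~\ref{lem:SummandA-MP} for the term $A$, Lemma~\ref{lem:SummandB-MP} for $B$, Lemma~\ref{lem:SummandC-MP} for $C$, and Lemma~\ref{lem:SummandD-MP} for $D$. Each of these lemmas asserts precisely that the corresponding $\max_{k\in\oneto{p}}\abs{\cdot}$ tends to $0$ almost surely as $n\to\infty$. Substituting these four statements into the displayed inequality gives $\max_{k\in\oneto{p}}\abs{\Omega_n^{(k)}(z)}\to 0$ almost surely, which is the assertion. The only thing to check in this assembly step is that all four lemmas refer to the same decomposition \eqref{eq:ABCD-MP} and the same fixed $z\in\C_+$, which they do, so the combination is immediate and there is no real obstacle at this level.

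The substance of the argument lives inside the four lemmas, not in the present theorem. For $A$, $B$ and $C$ the engine is the large-deviation bound Theorem~\ref{thm:largedev}: one recognises the relevant linear or quadratic form in the independent, centered entries $\alpha_k(i)$, controls the associated remainder norm $R$ (resp.\ $R'$) through the Frobenius--operator norm estimate of Lemma~\ref{lem:RnBoundMP}, chooses $\varepsilon$ and $D$ in Theorem~\ref{thm:largedev} so that the resulting deviation probability decays faster than $n^{-1}$ even after taking the union bound over the $p = O(n)$ indices $k$, and then concludes by Borel--Cantelli. For $D$ the engine is instead the deterministic trace-difference estimate of Corollary~\ref{cor:tracedifference}, which — after the algebraic simplification using $y_n = p/n$ that produces the cancellation built into $\Omega_n^{(k)}$ — yields a bound of order $1/n$ uniformly in $k$. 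Thus, if any step deserves to be called the hard part, it is the bookkeeping in those lemmas (in particular getting the exponent in Theorem~\ref{thm:largedev} to beat the extra $\sqrt{p}$ appearing in Lemma~\ref{lem:RnBoundMP}); the proof of Theorem~\ref{thm:sufficient} itself is then just a one-line synthesis.
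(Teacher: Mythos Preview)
Your proposal is correct and matches the paper's proof exactly: the paper also just invokes the decomposition \eqref{eq:ABCD-MP} together with Lemmas~\ref{lem:SummandA-MP}--\ref{lem:SummandD-MP}. Your write-up is in fact more detailed than the paper's one-line synthesis.
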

\begin{proof}
This follows directly by the decomposition \eqref{eq:ABCD-MP} with Lemma~\ref{lem:SummandA-MP}, Lemma~\ref{lem:SummandB-MP}, Lemma~\ref{lem:SummandC-MP} and Lemma~\ref{lem:SummandD-MP}.
\end{proof}

\addchap{Bibliography}
\defbibheading{bibliography}{}
\printbibliography

\end{document}